\def\section{\@startsection{section}{1}%
  \z@{1.1\linespacing\@plus\linespacing}{.8\linespacing}%
  {\normalfont\Large\scshape\centering}}
\theoremstyle{plain}
\newtheorem*{thmA}{Theorem B}
\newtheorem*{thmB}{Theorem A}
\newtheorem*{conj*}{Root Groups Conjecture}
\newcommand{\etype}[1]{\renewcommand{\labelenumi}{(#1{enumi})}}
\newtheorem*{thm1.2}{(1.2) Theorem}
\newtheorem*{thm1.3}{(1.3) Theorem}
\newtheorem*{thm1.4}{(1.4) Theorem}
\newtheorem*{prop*}{Proposition}
\def\fun{cross\ }
\newtheorem{prop}{Proposition}[section]
\newtheorem{thm}[prop]{Theorem}
\newtheorem{ques}[prop]{Question}
\newtheorem{cor}[prop]{Corollary}
\newtheorem{lemma}[prop]{Lemma}
\theoremstyle{definition}
\newtheorem{Def}[prop]{Definition}
\newtheorem*{Def*}{Definition}
\newtheorem{Defsnot}[prop]{Definitions and notation}
\newtheorem{example}[prop]{Example}
\newtheorem{note}[prop]{Note}
\newtheorem*{notation*}{Notation}
\newtheorem{remark}[prop]{Remark}
\newtheorem*{remark*}{Remark}
\newtheorem{hyp1}[prop]{Hypothesis}
\newcommand{\ff}{\mathbb{F}}
\newcommand{\uu}{\mathbb{U}}
\newcommand{\ga}{\alpha}
\newcommand{\gb}{\beta}
\newcommand{\gc}{\gamma}
\newcommand{\gd}{\delta}
\newcommand{\gre}{\epsilon}
\newcommand{\gl}{\lambda}
\newcommand{\gro}{\omega}
\newcommand{\gvp}{\varphi}
\newcommand{\gr}{\rho}
\newcommand{\gt}{\tau}
\newcommand{\sminus}{\smallsetminus}
\newcommand{\lan}{\langle}
\newcommand{\ran}{\rangle}
\def\eroman{\etype{\roman}}
\def\Eref{Example~\ref}
\newcommand{\hal}{\frac{1}{2}}
\newcommand{\half}{\frac{1}{2}}
\numberwithin{equation}{section}
\begin{document}
\title[Weakly primitive axial algebras]{Weakly primitive axial algebras}
\author[Louis Halle Rowen , Yoav Segev]
{Louis Halle Rowen$^*$\qquad Yoav Segev}

\address{Louis Rowen\\
         Department of Mathematics\\
         Bar-Ilan University\\
         Ramat Gan\\
         Israel}
\email{rowen@math.biu.ac.il}
\address{Yoav Segev \\
         Department of Mathematics \\
         Ben-Gurion University \\
         Beer-Sheva 84105 \\
         Israel}
\email{yoavs@math.bgu.ac.il}
\thanks{$^*$The first author was supported by the ISF grant 1994/20 and the Anshel Pfeffer Chair}

\keywords{ axial algebra, axis, fusion rules, primitive idempotent, weakly primitive idempotent,  Jordan
type}

\subjclass[2010]{Primary: 17A01,  17C27;
 Secondary: 16S36, 17D99}

\begin{abstract}
In  earlier work we studied the structure of primitive axial
algebras of Jordan type (PAJ's), not necessarily commutative, in terms
of their primitive axes. In this paper we weaken primitivity and permit several pairs of (left and right) eigenvalues satisfying  more general fusion rules,
bringing in interesting new examples such as the band semigroup
algebras and other commutative and noncommutative examples. Also we broaden our investigation and describe 2-generated algebras in which only one of the generating axes is weakly primitive and
  satisfies the fusion rules, on condition that its zero-eigenspace is one dimensional. We also characterize when both axes
 satisfy the fusion rules (weak PAJ's), and describe precisely the 2-dimensional axial algebras. In contrast to the previous case,   there are  weak PAJ's of   dimension~$> 3$ generated by two axes.

\end{abstract}
\date{\today}

 \maketitle

\tableofcontents
\section{Introduction}

In recent years, interest was renewed in commutative algebras
generated by semisimple idempotents. These algebras were introduced as  ``axial algebras'' in  the  paper of
\cite{HRS},
in which they investigated the simplest nontrivial case, which they called ``primitive axial algebras of
  Jordan type." In previous papers \cite{RoSe1,RoSe2,RoSe3, RoSe4}, we
generalized this notion to the noncommutative setting. For an algebra $A$ which is not necessarily commutative, an
   {\it axis} is a left and right semisimple idempotent $a\in A$, such that   $a(ya)=(ay)a$ for all $y \in A.$
  The axis $a$ is ``primitive" if the
1-eigenspace of $a$ both from the left and from the right is $\ff a.$ In the theory of axial algebras, the notion of  ``fusion rules'' is central, and is inspired by Albert's theory \cite{A1,A} of idempotents in Jordan algebras. The paper \cite{HRS} deals mostly with $2$-generated commutative algebras whose two generating  axes have at most one additional fixed eigenvalue  other than 0 and 1.  In previous work~\cite{RoSe2,RoSe3},  extending the terminology of \cite{HRS} by defining a PAJ to be   a not necessarily commutative, primitive axial algebra of Jordan  type, with one left and one
right eigenvalue other than 0 and 1, we characterized PAJ's
 modulo the commutative case.

In \cite{RoSe4} we also determined the idempotents in 2-generated PAJ's, in order to answer basic questions about axes, leaving the
general case for later.

In this paper we cast the net further, extending the fusion rules to handle more than one extra eigenvalue, and
weakening the condition of ``primitive'' to ``weakly primitive,'' in order to
include some other interesting noncommutative examples  such as band semigroup
algebras, but we still obtain essentially the same theory. At times
we remove all conditions, and address the question
of when an idempotent indeed is a weakly primitive axis, and when it satisfies the fusion rules.

\begin{Defsnot}\label{not1}$ $
\begin{enumerate}
\item $A$ always denotes an  algebra {\it (possibly not commutative)},
over a field~$\ff$ of characteristic $\ne 2.$ 

\item
For  $y\in A,$ write $L_y$ for the left  multiplication map
$z\mapsto yz$, and  $R_y$ for the right  multiplication map
$z\mapsto zy$. $A_\mu(L_y)$ denotes the $\mu$-eigenspace of $L_y$,
for $\mu\in \ff$, which is permitted to be $0$; similarly for $A_\nu(R_y).$

 \item
 A {\it left axis}
$a$ is an 
idempotent  for which $A$ is a direct sum of its
left eigenspaces with respect to $L_a$. A  {\it right axis} is
defined analogously. A {\it  (2-sided)  axis} $a$  is a left axis
which is also a    right axis, for which $L_a R_a = R_a L_a$.

If $S_L: = S_L(a)$ is the set of left eigenvalues and $S_R:= S_R(a)$ is  the set of right eigenvalues  of an axis $a$, then $A$ decomposes as a direct sum of subspaces
\begin{equation}
    \label{dec18} A = \oplus _{\mu\in S_L, \ \nu\in S_R} A_{\mu,\nu}(a), \end{equation} where
$A_{\mu,\nu}(a):= A_{\mu}(L_a)\cap A_{\nu}(R_a), \quad \mu\in S_L, \ \nu\in S_R.$
 The eigenvalues $(1,1)$ (and possibly  $(0,0)$) play a special role.
We write
\[
S(a):=S_L(a)\times S_R(a)\setminus\{(0,0), (1,1)\},
\]
and  when $a$ is understood,
we write $S$ for $S(a)$.

$A_{\mu,\nu }(a)$, for $(\mu,\nu)\in S(a)$, will be called a {\it two-sided eigenspace}, and its elements  {\it two-sided eigenvectors}.
 An element in $A_{\mu,\nu }(a)$ will specifically be called a
$(\mu,\nu)$-{\it eigenvector} of $a,$ with {\it eigenvalue} $(\mu,\nu)$.
We sometimes write
$A_\mu(a)$ for $A_{\mu,\mu}(a).$
 Thus, $A$~decomposes as a direct sum of subspaces
\begin{equation}
    \label{dec17} A= A_1(a) \oplus A_0(a) \oplus A_S(a),
\end{equation}
where  $A_S(a)=  \oplus  \{A_{\mu,\nu}(a): (\mu,\nu)\in S(a)\}.$    (We also permit the case that  $A_S(a) = 0$; in particular one could have $S(a)=\emptyset$.)

In the presence of \eqref{dec17} we call $a$ an {\it $S$-axis}, and for $y\in A$ we write $$y = y_1 +y_0 +  y_S,$$ where $y_i\in A_i(a)$, $i=0,1,$ and $y_S\in A_S(a),$ so that $y_S=\sum_{(\mu,\nu)\in S}y_{\mu,\nu}.$

 Let
 \begin{align*}
     &S^\circ(a) = \{ (\mu,\nu)\in S(a): A_{\mu,\nu}(a)\ne 0\}.\\
     &S^{\dagger}(a)=\{(\mu,\nu)\in S^{\circ}(a): \mu\ne\nu\}.\\
     &\bar S(a) = \{(\mu,\nu) \in S^\dag(a) :A_{\mu,\nu}^2(a)\ne 0\}.
\end{align*}

\item
  We say that  $a$ is a {\it $\mu$-left axis} when  $S_L \subseteq \{0,1,\mu\}$ with $\mu\in \ff\sminus\{0,1\}$. 
 A  {\it $\nu$-right axis} is defined analogously.

\item \label{dec112}   A {\it $|1|$-axis} is an $S$-axis  $a$ for which $|S| =1$.
In particular when $S = \{(\mu,\nu)\}$ we delete the brackets and call $a$ a $(\mu,\nu)$-axis.

A~{\it $[\mu,\nu]$-axis} is an axis which is either a  $(\mu,\nu)$-axis
  or a $(\nu,\mu)$-axis.


\item An $S$-axis $a$ is  {\it
 commutative} if $(\mu,\nu)\in S^\circ(a)$ implies $\mu=\nu.$ (Note that this implies that $a$ commutes with all elements in $A.$)
 Otherwise we say that   $a$ is {\it
not commutative}. 


\item
An $S$-axis  $a$ satisfies the ``fusion rules''  if the following hold:
\begin{enumerate}
\item $A_{\mu,\nu}(a)A_{\mu',\nu'}(a)=0, \quad \forall (\mu,\nu)\ne (\mu',\nu') \in S.$

\item $A_{\mu,\nu}(a)^2 \subseteq  A_0(a)+  A_{1}(a), \quad \forall (\mu,\nu) \in S.$

\item $ (A_0(a)+  A_{1}(a) ) (A_0(a)+  A_{1}(a) ) \subseteq  A_0(a)+  A_{1}(a) .$

 \item
 $(A_{1}(a)+A_0(a)  A_{\mu,\nu}(a),\,
 A_{\mu,\nu}(a)(A_{1}(a)+A_0(a)) \subseteq A_{\mu,\nu}(a),$ for all all $(\mu,\nu)\in S.$

\end{enumerate}

In particular, the decomposition \eqref{dec17} is a
noncommutative   grading of $A$ as a $\mathbb Z_2$-graded algebra, i.e.,
\begin{equation}\label{dec119}
    A=\overbrace{A_{1}(a) \oplus A_{0}(a)  }^{\text {$(+)$-part}} \oplus
(\overbrace{\oplus _{(\mu,\nu)\in S}\, A_{\mu,\nu}(a)}^{\text {$(-)$-part}}),
\end{equation}

 Note that (8)(a) is vacuous for a $(\mu,\nu)$-axis.

Note that  axes need not satisfy any fusion
rule.


\medskip

\item
 $X\subseteq A$ denotes a set of  axes. $\langle\langle X \rangle\rangle$ denotes the
subalgebra of $A$ generated by $X$. We say that $X$ {\it generates}
$A$ if $A= \langle\langle X \rangle\rangle.$

\item
  In this paper, an  {\it axial algebra} $A$ is an  algebra  generated by a set~$X$ of axes.
$A$ is {\it finitely generated} if $X$ is a finite set, and $A$ is
{\it $n$-generated} if $|X|=n.$




\item An axis $a$ is {\it weakly primitive} if $A_1(a) = \ff a.$
 An axis $a$ is  {\it left primitive}
if $A_1(L_a) = \ff a.$ Similarly for  {\it right primitive}. An axis
which is both left and right primitive is called {\it
primitive}.

\item For a given weakly primitive axis $a\in A,$ and any  $y\in A$, we write $y_1 = \ga_y a.$
\end{enumerate}
\end{Defsnot}

Thus, a $(0,1)$-axis is left primitive, and a   $(1,0)$-axis is right primitive.

\subsection{The main results}$ $

Our main results are on algebras $A=\lan\lan a,b\ran\ran,$ where $a,b$ are idempotents.  In \S2, we assume that $\dim A=2,$ in which case we can get basically get complete information about $A.$  Let us state   the two main results, characterizing when $A$ is a  weak PAJ.

 In this subsection we assume that $a$ is a weakly primitive axis satisfying the fusion rules such that $\dim A_0(a)\le 1,$ and that $b$ is an axis satisfying the fusion rules.

\begin{thmB}[Theorems~\ref{yrho}, \ref{comm1}, and \ref{gr+xi=1}]
Assume that $A$ is commutative.  Then $\dim A\le 4,$ and  one of the following holds.
\begin{enumerate}\eroman
\item
$A$ is an HRS algebra (defined below in Remark~\ref{rem03}.)

\item
$\dim A=3,\ \dim A_1(b)=2,$ and either
\begin{enumerate}
\item
$\dim A_{\half,\half}(b)=1,$ and $A$ is as in Example~\ref{4dim2}.
\item
$\ \dim A_0(b)=1,$ and $A$  is as in Example \ref{3dim01}.
\end{enumerate}

\item
$b$ is weakly primitive and $\dim A_0(b)=1,$   $\dim A=4,$ and $A$ is as in Example \ref{egc4}.

\item  $\dim A_1(b)=\dim A_0(b)=2,$ and $A$ is as in Example \ref{4dim}.

\end{enumerate}
\end{thmB}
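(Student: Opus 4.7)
The plan is to analyze $A$ through the $a$-decomposition $A = \ff a \oplus A_0(a) \oplus A_S(a)$, with $\dim A_1(a) = 1$ by weak primitivity and $\dim A_0(a) \le 1$ by hypothesis, so the whole task reduces to controlling $A_S(a)$. Writing $b = \ga_b a + b_0 + b_S$ with $b_S = \sum_{\mu \in S^\circ(a)} b_{\mu,\mu}$ (off-diagonal $A_{\mu,\nu}(a)$ vanish by commutativity), I would expand $b^2 = b$ along the grading~\eqref{dec17}. Using the fusion rules (8)(a)--(d) for $a$, the $A_S(a)$-component of $b^2$ reduces to $2\ga_b \sum_\mu \mu\, b_{\mu,\mu} + 2 b_0 b_S$, yielding the key relation $(1 - 2\ga_b\mu)\, b_{\mu,\mu} = 2 b_0 b_{\mu,\mu}$ on each $A_{\mu,\mu}(a)$, while the $A_1$- and $A_0$-parts record $\ga_b(1-\ga_b)a$ and $b_0 - b_0^2$ in terms of the two-sided contribution $b_S^2$.

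Since $\dim A_0(a) \le 1$, the operator $L_{b_0}$ acts as a scalar on each $A_{\mu,\mu}(a)$, so the relation above identifies a single eigenvalue $\mu_0$ for which $b_{\mu,\mu} \ne 0$, hence $b_S \in A_{\mu_0,\mu_0}(a)$. Combined with the fact that $A = \langle\langle a, b \rangle\rangle$ is spanned by short words in $a$ and $b$ whose $A_S$-parts come from $b_S$ and $a b_S$, this bounds $\dim A_S(a) \le 2$ and hence $\dim A \le 4$. The four cases are then dispatched to the three cited theorems according to the primitivity and eigenspace structure of $b$: case~(i) (both $a$ and $b$ primitive) follows from Theorem~\ref{comm1} and the HRS classification; case~(iii) ($b$ weakly primitive but not primitive, $\dim A_0(b) = 1$) follows from Theorem~\ref{yrho}, which is the Example~\ref{egc4} situation governed by a defect parameter $\rho$; and cases~(ii) and~(iv), where $\dim A_1(b) = 2$, follow from Theorem~\ref{gr+xi=1} via the relation $\gamma + \xi = 1$ among the structure constants, matching Examples~\ref{4dim2}, \ref{3dim01}, and~\ref{4dim} according to whether $\dim A_{1/2,1/2}(b) = 1$, $\dim A_0(b) = 1$, or $\dim A_0(b) = 2$.

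The main obstacle I foresee is establishing the concentration of $b_S$ in a single eigenspace $A_{\mu_0,\mu_0}(a)$: the scalar-action argument above breaks down in the degenerate configurations $b_0 = 0$ or $1 - 2\ga_b\mu = 0$ for more than one $\mu$, and in such cases one has to exploit the fusion rules for $b$ simultaneously with those for $a$ to rule out further $a$-eigenspaces. This interaction between the two sets of fusion rules, together with the parameter matching that distinguishes the four explicit example classes, is the technical core carried out in the three referenced theorems.
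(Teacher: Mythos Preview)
Your central claim --- that the idempotent relation $(1-2\ga_b\mu)\,b_{\mu,\mu}=2b_0b_{\mu,\mu}$ forces $b_S$ to lie in a single $A_{\mu_0,\mu_0}(a)$ --- is simply false, and the counterexamples are the very algebras the theorem is classifying. Example~\ref{4dim} has $S^\circ(a)=\{(\mu,\mu),(\nu,\nu)\}$ with $\mu\ne\nu$, and Example~\ref{egc4} has $S^\circ(a)=\{(2,2),(\half,\half)\}$; in both, $b_{\mu,\mu}$ and $b_{\nu,\nu}$ are nonzero. The relation you derived only says that $b_0 b_{\mu,\mu}=\frac{1-2\ga_b\mu}{2}\,b_{\mu,\mu}$ for each $\mu$ separately (this is \eqref{622} in the commutative case); the scalar varies with $\mu$, and nothing in your argument links different $\mu$'s. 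The fact that $\dim A_0(a)\le 1$ tells you $A_{\mu,\mu}(a)=\ff b_{\mu,\mu}$ (Lemma~\ref{Ser23}), but it does not bound the number of $\mu$'s in $S^\circ(a)$.

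The paper's bound $|S^\circ(a)|\le 2$ is obtained by genuinely invoking the fusion rules for $b$, not just for $a$: in Theorem~\ref{yrho}(i) one computes $y_\rho^2$ for $\rho\in\{0,1\}$ (where $y_\rho=a+\frac{1}{2\rho-1}\sum\varepsilon_{\eta,\eta}b_{\eta,\eta}$), uses $y_\rho^2\in A_1(b)+A_0(b)$ to write it in the basis $\{y_\rho,y_{1-\rho},y'_\rho,y'_{1-\rho}\}$, and matches $b_{\eta,\eta}$-coefficients to get a \emph{quadratic} equation in $\varepsilon_{\eta,\eta}$; in Theorem~\ref{comm1}(iii)(e) the bound comes from a delicate reduction through $A_{\half,\half}$. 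Your dispatching is also inverted: Theorem~\ref{yrho} is the one that handles $\dim A_1(b)=2$ (cases (ii) and (iv)), Theorem~\ref{comm1} handles the weakly-primitive-$b$ case leading to Example~\ref{egc4} (case (iii)), and Theorem~\ref{gr+xi=1} is the wrap-up that yields the HRS algebras in case (i).
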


\begin{thmA}[Theorem~\ref{Yoav1}]\label{mnon}
Assume that $A$ is not commutative.   Then one of the following holds.
\begin{enumerate}\eroman
\item
$A_0(a)=0,$ and $A$ is as in Example \ref{b0zeroa}(2).

\item
$\dim A_0(a)=1,$ and either
\begin{enumerate}
\item
$\dim A_1(b)=2,$ and $A$ is as in Example \ref{noncom2}.

\item
$b$ is weakly primitive and  $A$ is either as in Example \ref{E1} or  as in Example \ref{egnon}.
\end{enumerate}
\end{enumerate}
\end{thmA}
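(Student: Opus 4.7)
The plan is to decompose $b$ along the $a$-eigenspace grading and split on $\dim A_0(a)\in\{0,1\}$, then combine the two idempotency relations, the fusion rules for $a$, and the non-commutativity hypothesis to match $A$ against one of the four listed examples.

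Since $a$ is weakly primitive and satisfies the fusion rules, \eqref{dec17} gives $A = \ff a \oplus A_0(a) \oplus A_S(a)$, which is $\zz_2$-graded by \eqref{dec119}. Write
\[
b = \ga_b a + b_0 + b_S, \qquad b_0\in A_0(a),\ b_S = \sum_{(\gm,\gn)\in S^\circ(a)} b_{\gm,\gn}.
\]
Using $a b_0 = b_0 a = 0$, direct computation yields $[a,b]=\sum (\gm-\gn) b_{\gm,\gn}$, so non-commutativity of $A = \lan\lan a,b\ran\ran$ forces at least one $b_{\gm,\gn}\ne 0$ with $(\gm,\gn)\in S^\dag(a)$. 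Expanding $b=b^2$ along the $a$-grading and applying fusion rules (a)--(d) for $a$ produces (1) a scalar identity in $\ff a + A_0(a)$ from the $(+)$-part, and (2) for each nonzero $b_{\gm,\gn}$,
\[
\ga_b(\gm+\gn)\,b_{\gm,\gn} + b_0 b_{\gm,\gn} + b_{\gm,\gn}b_0 = b_{\gm,\gn}
\]
inside $A_{\gm,\gn}(a)$.

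If $A_0(a)=0$, then $b_0 = 0$ and the $(-)$-identity collapses to the scalar condition $\ga_b(\gm+\gn)=1$ on each active $b_{\gm,\gn}$. I would argue that at most one pair $(\gm,\gn)\in S^\dag(a)$ can contribute (two active pairs with distinct $\gm+\gn$ give contradictory constraints on $\ga_b$), and that its ``swap'' $(\gn,\gm)$ appears only as dictated by the fusion rules of $b$. The $(+)$-part equation then reduces to $b_S^2 = \ga_b(1-\ga_b)a$, after which the remaining products among $a$, $b_{\gm,\gn}$, $b_{\gn,\gm}$ are pinned down; this identifies $A$ precisely with Example~\ref{b0zeroa}(2), giving conclusion (i). If instead $\dim A_0(a)=1$, write $A_0(a)=\ff d_0$, and split on $\dim A_1(b)$. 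When $\dim A_1(b)=2$, there is a second $1$-eigenvector of $b$, and solving $L_b v = R_b v = v$ on each $a$-graded piece (constrained by $L_bR_b=R_bL_b$) pins this extra vector to a specific linear combination of $a$ and $d_0$; imposing $b=b^2$ with this additional relation matches the structure of Example~\ref{noncom2}, yielding (ii)(a). When $b$ is also weakly primitive, the idempotent equation for $b$ projected onto each $a$-component, together with fusion rule~(a) for $a$ applied to every cross product $b_{\gm,\gn}b_{\gm',\gn'}$, cuts $\dim A \le 4$ and splits into two sub-configurations depending on whether the active pair $(\gm,\gn)\in S^\dag(a)$ involves a zero coordinate (matching Example~\ref{E1}) or not (matching Example~\ref{egnon}).

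The most delicate step will be sub-case (ii)(b): I need to simultaneously control the $a$- and $b$-decompositions on the noncommutative part and rule out additional eigenvalues on both sides. The cleanest route I foresee is to observe that any hypothetical second active pair $(\gm',\gn')\in S^\dag(a)$ would produce, via fusion rule~(b) applied to $b_{\gm',\gn'}^2$ together with the vanishing cross products, an element of $A_0(a)$ independent of $d_0$, contradicting $\dim A_0(a)=1$; a parallel dimension-counting argument inside $A_0(b)$ then eliminates extra $b$-eigenvalues beyond those already present in Examples~\ref{E1} and~\ref{egnon}, completing the classification.
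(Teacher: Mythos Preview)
Your proposal contains several concrete errors that would make the argument fail. In case (i) you conclude ``at most one pair $(\gm,\gn)\in S^\dag(a)$ can contribute,'' but Example~\ref{b0zeroa}(2) explicitly allows arbitrarily many such pairs, all with $\gm+\gn=1$; your scalar constraint $\ga_b(\gm+\gn)=1$ is correct but only forces all the $\varepsilon_{\gm,\gn}$ to coincide, and the sharper information $\ga_b=1$ comes from the vanishing of both annihilators $w_1,w_2$ of Corollary~\ref{half11}, not from the $(-)$-part of $b^2=b$ alone. In case (ii)(a) you say the extra $1$-eigenvector of $b$ is a linear combination of $a$ and $d_0$; in fact (Theorem~\ref{eigcom}(iii)(c)) it is $y_1=a+\sum_{(\gm,\gn)\in S^\circ}\varepsilon_{\gm,\gn}b_{\gm,\gn}$, with nonzero projection onto every $A_{\gm,\gn}(a)$, and the substantive content of Theorem~\ref{rho1} is an inductive Vandermonde argument showing $\ga_b=1$, all $b_{\gm,\gn}^2=0$, and finally $\varepsilon_{\gm,\gn}=1$ for every pair.

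Most seriously, in case (ii)(b) you assert $\dim A\le 4$, but both target algebras have unbounded dimension: Example~\ref{E1} has $\dim A=|S|+2$ and Example~\ref{egnon} has $\dim A=|T|+3$. The distinction between them is not about zero coordinates in $(\gm,\gn)$; it is that Example~\ref{E1} has $\ga_b=0$ with $S^\circ\subseteq\{(\gm,\gn):\gm+\gn=1\}$, whereas Example~\ref{egnon} has $\ga_b=1$ and an extra commutative component $(2,2)\in S^\circ(a)$ with $b_{2,2}^2\ne 0$. Your proposed mechanism (``a hypothetical second active pair produces an element of $A_0(a)$ independent of $d_0$'') cannot work, since Lemma~\ref{b0sq} already gives $b_{\gm,\gn}^2=0$ for every $(\gm,\gn)\in S^\dag$ once $\ga_b\ne\half$ (and $\ga_b\ne\half$ by Lemma~\ref{nonchalf}), so these pairs contribute nothing to $A_0(a)$ and arbitrarily many of them coexist. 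The paper instead classifies the eigenvectors of $b$ via Theorems~\ref{Ser04} and~\ref{eigcom}, uses Corollary~\ref{rho0} to pin down $A_0(b)$, and then in Theorem~\ref{Yoav1} forces $\ga_b\in\{0,1\}$ and tracks the possible commutative pairs $(\eta,\eta)\in S^\circ(a)$ through a chain of claims (d1)--(d8).
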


 Thus, the only  HRS algebras which extend to    PAJ's which are not commutative are   $\ff,$ $\ff \times \ff,$ $B(\half,1)$ and its $2$-dimensional quotient, and $B(2,1).$

\subsection{ Description of the main auxilary results.
}$ $
\medskip

Next, we   describe the auxiliary results leading to the main theorems, which also are of independent interest since they permit us to study axes in an axial algebra which do not satisfy the fusion rules.
\medskip

\noindent
{\bf Lemma \ref{pr01} and Theorem~\ref{dime2}}. 
{\it The general case when $\dim A=2.$}
\medskip

 From now on let   $a$ be a weakly primitive axis satisfying the fusion rules. We build up our hypotheses on the idempotent $b$ step by step.
 \medskip

\noindent
{\bf Theorem~\ref{Ser02}} {\it  Necessary and sufficient criteria for an element    $b\in A$ to be an idempotent that satisfies $(ba)b = b(ab).$  }

\medskip
Next, we also assume that $b$  is an axis.

\medskip

\noindent
{\bf Theorem~\ref{Ser2}} {\it Necessary and sufficient criteria for    $b\in A$  to be an axis.}

\medskip

\noindent
{\bf Theorem~\ref{b0zero}}  {\it Characterization of the two classes for which $b$ is an axis, and $A_0(a)=0$.}
\medskip

{\it We further assume that $a$ is a weakly primitive axis satisfying the fusion rules,such that $\dim A_0(a)\le 1$}.

\noindent
{\bf Theorem~\ref{Ser04}}
{\it Characterization of the eigenvectors of $b$ contained  in the subspace $ V(a) = \{ y\in A:  \ga_y= 0 = y_0\},$ and their eigenvalues, under the hypothesis that $b$ is an axis.}
\medskip

\noindent
{\bf Theorem~\ref{eigcom}}
{\it Determination of the $(\gr,\gr)$-eigenvalues of $b,$ for  $\gr\ne\half,$ and the eigenspace $A_{\gr,\gr}(b),$ under the hypothesis that $b$ is an axis.}

 Finally, we also assume that $b$ is an axis satisfying the fusion rules. We have a full characterization of the PAJ's for which $\dim A_0(a) =1:$

\subsection{Preliminary observations}

\begin{remark}\label{b}
    Note that if $b\in A$ is an idempotent, and $T\subset \ff\times\ff$ is a finite subset such that $A=\oplus_{(\rho,\xi)\in T} A_{\rho,\xi}(b),$ then $b$ is an axis, i.e., $L_bR_b=R_bL_b.$  Indeed, it is enough to check this on an element $x\in A_{\rho,\xi}(b),$  and for such $x$ this is obvious.
\end{remark}

\begin{lemma}\label{dec7} For any $S$-axis $a$ and any $y \in A$,
\begin{enumerate}\eroman
\item $y_0, y_1, y_{\mu,\nu}\in \langle\langle a,y
\rangle\rangle$, for each $(\mu,\nu)$ in $S.$

 \item If $S^\circ = \emptyset,$    then $\dim  \langle\langle a,b\rangle\rangle =2.$ 
\end{enumerate}
\end{lemma}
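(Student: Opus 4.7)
The plan for (i) is to exploit the commuting semisimple pair $L_a,R_a$ on the vector space $A$. Since $a$ is a 2-sided axis, $L_aR_a=R_aL_a$, and by \eqref{dec18} each operator is diagonalizable with spectrum in the finite set $S_L$ (resp.\ $S_R$). I would then invoke Lagrange interpolation to build projections
\[
p_\mu(x)=\prod_{\mu'\in S_L\sminus\{\mu\}}\frac{x-\mu'}{\mu-\mu'},\qquad q_\nu(x)=\prod_{\nu'\in S_R\sminus\{\nu\}}\frac{x-\nu'}{\nu-\nu'},
\]
so that $p_\mu(L_a)$ projects $A$ onto $A_\mu(L_a)$ and $q_\nu(R_a)$ onto $A_\nu(R_a)$. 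Because $L_a$ and $R_a$ commute, the composition $p_\mu(L_a)\,q_\nu(R_a)$ is well-defined and projects onto $A_\mu(L_a)\cap A_\nu(R_a)=A_{\mu,\nu}(a)$.

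Applied to $y$, this realizes $y_{\mu,\nu}=p_\mu(L_a)\,q_\nu(R_a)\,y$ as an explicit $\ff$-linear combination of terms $L_a^iR_a^jy$---that is, words built from $y$ by iterated left- and right-multiplication by $a$ in a fixed parenthesization. Each such word lies in $\lan\lan a,y\ran\ran$, hence $y_{\mu,\nu}\in\lan\lan a,y\ran\ran$; specializing $(\mu,\nu)=(1,1)$ and $(0,0)$ recovers $y_1$ and $y_0$.

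For (ii), the hypothesis $S^\circ(a)=\emptyset$ forces $A_{\mu,\nu}(a)=0$ for every $(\mu,\nu)\in S$, so $A=A_1(a)\oplus A_0(a)$. In the paper's standing context, with $a$ weakly primitive (so $A_1(a)=\ff a$) and $b$ an idempotent, I would use part~(i) to write $b=\alpha a+b_0$ with $b_0=b-\alpha a\in A_{0,0}(a)\cap\lan\lan a,b\ran\ran$. Since $b_0\in A_{0,0}(a)$, both $ab_0$ and $b_0a$ vanish, so $ab=ba=\alpha a$; the idempotency $b^2=b$ then collapses to $b_0^2=b_0+(\alpha-\alpha^2)a\in\operatorname{span}(a,b)$. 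Thus $\operatorname{span}(a,b)$ is closed under multiplication, so $\lan\lan a,b\ran\ran=\operatorname{span}(a,b)$, of dimension exactly $2$ provided $b\notin\ff a$.

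The one conceptual point to check in (i) is that Lagrange interpolation is legitimate in the possibly non-associative setting. This causes no trouble: $L_a$ and $R_a$ act purely as linear endomorphisms of the vector space $A$, and the interpolation manipulates them only via scalar combinations and composition, so the (non-)associativity of $A$ never enters. The hidden hypotheses for (ii)---that $a$ is weakly primitive and that $b$ is a non-trivial idempotent, in accordance with the paper's 2-generated setup---are what upgrade the bound from $\le 2$ to exactly $2$.
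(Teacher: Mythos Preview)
Your proof is correct and takes essentially the same approach as the paper: for (i), iterated left and right multiplications by $a$ together with Vandermonde-type solving, with your Lagrange interpolation being a cleaner packaging of the same idea. For (ii), you correctly identify and supply the implicit hypotheses (weak primitivity of $a$, idempotency of $b$) that the paper's terse one-line argument relies on but does not state.
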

\begin{proof}  (i) Write $y = y_1 + y_0 + \sum y_{\mu,\nu}.$ Then $a(a(\dots (ay))) = y_1 + \sum \mu^i y_{\mu,\nu}$ and $((ya)\dots a)a = y_1 + \sum \nu^j y_{\mu,\nu},$   so we can solve for all $y_{\mu,\nu}$ except for $y_{0,1}$ and $y_{1, 0}$ in terms of $y, ay, a(ay), \dots,  ya, (ya)a,\dots .$
Then we get $y_1$ from $aya$, and $y_0$ from $aya -y.$ Finally we get $y_{0,1}$ and $y_{1, 0}$  from  $a y -y$ and $ya -y.$

(ii)  $b = \ga_b a +b_0$. Thus $ab = \ga_b a =ba$, proving the  assertion.
\end{proof}

\begin{remark} \label{Miyfhol0} In \cite{RoSe3} we showed (with considerable
effort) that in any  algebra, which is not commutative, generated by two primitive axes $a$
and $b$ each having at most one left and one right eigenvalue other than 0 and 1, satisfying the fusion rules, that $a$ and $b$ are $[\mu,\nu]$-axes
  with $\mu+\nu=1$, by \cite[Theorem~2.5, Proposition~2.9, and
Example~2.6]{RoSe1}.
\end{remark}

\begin{lemma}\label{rempr}  Suppose $a$ is an $S$-axis satisfying the fusion rules.
For any subset $S' $ of $S,$
$$A_1(a) + A_0(a) +   \sum _{ (\mu,\nu)\in S'} A_{\mu,\nu}(a)$$ is a subalgebra of $A,$ in which the axis~$a$  satisfies the fusion rules.
\end{lemma}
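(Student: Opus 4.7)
The plan is to show that $B := A_1(a) + A_0(a) + \sum_{(\mu,\nu)\in S'} A_{\mu,\nu}(a)$ is closed under multiplication by inspecting all products of pairs of homogeneous summands, and then to observe that the fusion rules for $a$ on $B$ are immediate restrictions of those on $A$.

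First I would note that the decomposition \eqref{dec17} for $A$ induces a direct sum decomposition
\[
B = B_1(a)\oplus B_0(a) \oplus \bigoplus_{(\mu,\nu)\in S'} B_{\mu,\nu}(a),
\]
where $B_1(a)=A_1(a)$, $B_0(a)=A_0(a)$, and $B_{\mu,\nu}(a)=A_{\mu,\nu}(a)$ for $(\mu,\nu)\in S'$. This is because each of these subspaces is contained in $B$ and is an eigenspace (or intersection of eigenspaces) of $L_a$ and $R_a$ inside the ambient space $A$, so the restrictions of $L_a$ and $R_a$ to $B$ have $B$ as a direct sum of their common eigenspaces; in particular $a$ is a $2$-sided axis of $B$ in the sense of \ref{not1}(3), with $L_aR_a=R_aL_a$ inherited from $A$.

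Next I would verify closure under multiplication by running through the cases given by fusion rules (a)--(d) of \ref{not1}(8). The products $A_i(a)A_j(a)$ with $i,j\in\{0,1\}$ lie in $A_1(a)+A_0(a)\subseteq B$ by (c); the products $A_i(a)A_{\mu,\nu}(a)$ and $A_{\mu,\nu}(a)A_i(a)$ with $i\in\{0,1\}$ and $(\mu,\nu)\in S'$ lie in $A_{\mu,\nu}(a)\subseteq B$ by (d); the product $A_{\mu,\nu}(a)A_{\mu',\nu'}(a)$ with $(\mu,\nu)\ne(\mu',\nu')$ both in $S'$ is $0$ by (a); and $A_{\mu,\nu}(a)^2$ with $(\mu,\nu)\in S'$ lies in $A_0(a)+A_1(a)\subseteq B$ by (b). Hence $B\cdot B\subseteq B$, so $B$ is a subalgebra.

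Finally, the fusion rules (a)--(d) for $a$ as an $S'$-axis of $B$ are exactly the specializations of the fusion rules for $a$ on $A$ to the indices in $\{(1,1),(0,0)\}\cup S'$; since these specialized containments were just the ones used to establish closure, they hold automatically. There is no real obstacle here: the only thing one has to be careful about is checking that no fusion output spills outside $B$, which is why (b) and (c) land in $A_0(a)+A_1(a)$ rather than in some $A_{\mu',\nu'}(a)$ with $(\mu',\nu')\in S\setminus S'$; this is precisely what the hypothesis that $a$ satisfies the fusion rules on $A$ guarantees.
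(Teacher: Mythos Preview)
Your proposal is correct and follows exactly the same approach as the paper's proof, which simply says that by the fusion rules all products of components land in $A_1(a)+A_0(a)$ or in some $A_{\mu,\nu}(a)$ with $(\mu,\nu)\in S'$, and that the fusion rules for $a$ in the subalgebra hold a fortiori. You have merely spelled out the case analysis explicitly.
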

\begin{proof}
    By the fusion rules, all products of components are in $A_1(a) + A_0(a)$ or in $ A_{\mu,\nu}(a)$ for some $ (\mu,\nu)\in S'$. The last assertion is a fortiori.
\end{proof}

We now are ready for notions about an axial algebra.

\begin{Def} An axial algebra $A =  \langle\langle X \rangle\rangle$
is
 {\it  weakly primitive} (resp.~{\it left primitive}, resp.~{\it right primitive}, resp.~{\it primitive})  if each axis of $X$ is
weakly primitive (resp.~left primitive, resp.~right primitive, resp.~primitive).

 A  {\it weak PAJ} is a
weakly primitive axial algebra generated by axes satisfying the fusion rules.

 A  {\it  PAJ} is a
 primitive axial algebra generated by axes satisfying the fusion rules.

$A$ is   of  {\it type} $S$, if each axis of $X$ is an  $S$-axis.


As a special case, when  $S$ is  deleted, it means  $S= \{(\mu,\nu)\}$, in accordance with~\cite{RoSe4}. A~{\it CPAJ} is a commutative
PAJ.
\end{Def}

\begin{remark}\label{rem03}
Some well-known classes of algebras are axial algebras.$ $
\begin{enumerate}
    \item Jordan algebras generated by primitive idempotents are CPAJ's of type $\half$, in the sense of \cite{HRS}.
 \item \cite{RoSe3} characterized all  CPAJ's $\lan\lan a,b\ran\ran$   with $|S(a)|, |S(b)|\le 1$, showing that they   all are in the list of  \cite[Theorem~1.1]{HRS}.
  We shall call these axial algebras {\it HRS algebras}.
 \item For another noncommutative (but associative) example
recall that a \textbf{band}  is a semigroup all of whose elements are idempotents. Thus, its semigroup algebra $B$ is an axial algebra, but only with left and right eigenvalues $0$ and $1$.
On the other hand, the fusion rules fail for nontrivial band algebras, so to get a PAJ   we will have to modify the multiplication table, cf.~\Eref{side}.
\end{enumerate}
\end{remark}


The theory of PAJ's with one left and one right eigenvalue $\ne 0,1$ was expounded in \cite{RoSe4}.
 One major
objective of this paper is to extend these results to weakly
primitive axial algebras with at least one axis with several eigenvalues, satisfying   more general fusion rules.


\subsection{Seress' Lemma}$ $

Let us strengthen some of the fusion rules.

\begin{Def}\label{Se} An  $S$-axis $a$  is    \textbf{left Seress}
 if $$A_{0}(a)A_1(a) \subseteq A_{0}(a),\qquad
 A_1(a) A_{0}(a) =0, \qquad A_1(a)^2 = A_1(a),$$
$$
A_{\mu,\nu}(a)(A_1(a)+ A_{0}(a))\subseteq A_{\mu,\nu}(a), \quad \forall (\mu,\nu)\in S.$$

Right Seress is defined analogously. Seress means left Seress and right Seress.
\end{Def}

\begin{remark}
  Any  weakly primitive  $S$-axis  satisfying the \fun\ fusion rules  is Seress, since $A_1(a)=\ff a$.
\end{remark}

\begin{lemma}[{Seress' lemma}]\label{Seress}
  Any left Seress  $S$-axis $a$   satisfies
\begin{equation}
    \label{Serr} a(yz) = (ay)z + a(y_0z_0)
\end{equation} for any $ y \in A$ and $z\in \ff a +
A_0(a).$ If $a$ satisfies the fusion rules then $$a(yz) +\ff a = (ay)z + \ff a. $$
\end{lemma}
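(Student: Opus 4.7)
The plan is a direct verification by decomposing both sides according to the eigenspace decomposition \eqref{dec17} and applying the left Seress rules componentwise.

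First I would write $y = y_1 + y_0 + y_S$ with $y_S = \sum_{(\mu,\nu)\in S}y_{\mu,\nu}$, and $z = \gl a + z_0$ with $\gl\in\ff$ and $z_0\in A_0(a)$. The key properties I will use are: $ay_1 = y_1 a = y_1$ (because $y_1\in A_{1,1}(a)$); $ay_0 = y_0 a = 0$; $ay_{\mu,\nu} = \mu y_{\mu,\nu}$ and $y_{\mu,\nu}a = \nu y_{\mu,\nu}$; and the left Seress rules $A_1(a)A_0(a)=0$ and $A_{\mu,\nu}(a)A_0(a)\subseteq A_{\mu,\nu}(a)$.

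Next I would compute $yz$ piece by piece. The $y_1$-piece gives $y_1 z = \gl y_1 a + y_1 z_0 = \gl y_1$, using $A_1(a)A_0(a)=0$. The $y_0$-piece gives $y_0 z = \gl y_0 a + y_0 z_0 = y_0 z_0$. For each $(\mu,\nu)\in S$, $y_{\mu,\nu}z = \gl\nu y_{\mu,\nu} + y_{\mu,\nu}z_0$, where by the left Seress rule $y_{\mu,\nu}z_0\in A_{\mu,\nu}(a)$. Summing,
\[
yz \;=\; \gl y_1 + y_0 z_0 + \gl\!\!\sum_{(\mu,\nu)\in S}\!\!\nu\, y_{\mu,\nu} + \sum_{(\mu,\nu)\in S}\!\!y_{\mu,\nu}z_0.
\]
All terms except $y_0 z_0$ are already sums of eigenvectors of $L_a$ with known eigenvalues ($1$, $\mu$, $\mu$ respectively), so applying $L_a$ I get
\[
a(yz) \;=\; \gl y_1 + a(y_0 z_0) + \gl\!\!\sum_{(\mu,\nu)\in S}\!\!\mu\nu\, y_{\mu,\nu} + \sum_{(\mu,\nu)\in S}\!\!\mu\, y_{\mu,\nu}z_0.
\]

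On the other hand, $ay = y_1 + \sum \mu\, y_{\mu,\nu}$, so running the same kind of decomposition on $(ay)z$ yields exactly
\[
(ay)z \;=\; \gl y_1 + \gl\!\!\sum_{(\mu,\nu)\in S}\!\!\mu\nu\, y_{\mu,\nu} + \sum_{(\mu,\nu)\in S}\!\!\mu\, y_{\mu,\nu}z_0.
\]
Subtracting gives the identity $a(yz) - (ay)z = a(y_0 z_0)$, which is the first assertion. For the second assertion, if $a$ satisfies the fusion rules then rule (c) gives $y_0 z_0\in A_0(a)+A_1(a)$, whence $a(y_0 z_0)\in A_1(a)=\ff a$ (using weak primitivity, which applies in the running context), giving $a(yz)+\ff a = (ay)z+\ff a$. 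There is no substantive obstacle here; the proof is a bookkeeping exercise in the eigenspace decomposition, and the only step to be careful about is keeping track of which components of $yz$ are already eigenvectors of $L_a$ (so that $L_a$ acts by scalar) versus the one component $y_0 z_0$ for which that fails.
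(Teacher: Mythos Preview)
Your proof is correct and follows essentially the same approach as the paper: both decompose $y$ and $z$ along the eigenspace decomposition, compute $(ay)z$ and $a(yz)$ term by term using the left Seress rules, and observe that the only discrepancy is the term $a(y_0z_0)$. Your write-up is in fact slightly cleaner, since you substitute $z_1=\gl a$ explicitly from the start and you flag that the final step $a(y_0z_0)\in\ff a$ uses weak primitivity (the paper's line ``$a(y_0z_0)\in a(\ff a+A_0(a))\subseteq\ff a$'' tacitly relies on $A_1(a)=\ff a$ as well).
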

\begin{proof}$ (ay)z =  (y_1+\sum \mu y_{\mu,\nu})z = y_1z_1 +\sum  \mu  y _{\mu,\nu}z_1+\sum \mu  y _{\mu,\nu}z_0,$
 whereas \begin{equation} \begin{aligned} a(yz) & = a(y_1z_1 + y_1z_0 + y_0 z_1 +y_0z_0 +
\sum \mu y_{\mu,\nu}z_1 +\mu \sum  y_{\mu,\nu} z_0)\\ & = a(y_1z_1) + a(y_0z_0)+\sum  \mu
y_{\mu,\nu}z_1 + \sum \mu y_{\mu,\nu} z_0  .\end{aligned}\end{equation}
Note that $a(y_1z_1) = y_1z_1$ since $y_1z_1 \in A_1(a).$

The last assertion follows since $a(y_0z_0) \in a(\ff a + A_0(a)) \subseteq \ff a.$
 \end{proof}

Equation~\eqref{Serr} leaves us with the term $a(y_0z_0) $, and next we search for a
situation where it is 0.

\subsection{Frobenius forms}$ $

The requirement $A_0(a)^2 \subseteq A_0(a) $ is conspicuously missing in Definition~\ref{Se}. It can be proved in the following situation.

\begin{Def} A non-zero bilinear form
$(\cdot\, ,\, \cdot)$ on an algebra $A$ is called {\bf Frobenius} if
the form associates with the algebra product, that is,
\[(x,yz)=(xy,z)\]
for all $x,y,z\in A$. \end{Def}

It was shown in \cite[Theorem 6.3]{RoSe4} that every  PAJ generated by 1-axes  has a
symmetric Frobenius form.  We shall extend this to weak PAJ's,
in the sequel to this paper.

The relevance to Seress' lemma is:

\begin{lemma}\label{Ser3} Suppose $A$ has a symmetric
 Frobenius form $(\phantom{w},\phantom{w} )$. For any
weakly primitive axis~$a$,

\begin{enumerate}\eroman
\item
 $(a,y) = (a,y_1)$ for all $y \in A.$

\item $A_0(a)^2 \subseteq  A_0(a)$ if $a$ satisfies the fusion rule of Definition~\ref{not1}(8)(c) and $(a,a)\ne 0.$ In this case, $ a(yz) = (ay)z $
  for any $ y \in A$ and $z\in \ff a +
A_0(a).$\end{enumerate} \end{lemma}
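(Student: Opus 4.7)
The plan is to leverage two elementary facts: first, that $a = a\cdot a$ is idempotent, and second, the Frobenius identity $(xy,z)=(x,yz)$ together with symmetry of the form. This lets us convert eigenvalue equations for $L_a$ and $R_a$ into scalar equations $(1-\mu)(a,y)=0$ or $(1-\nu)(a,y)=0$ that pin down when $(a,\cdot)$ must vanish.

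For part (i), I decompose $y = y_1 + y_0 + \sum_{(\mu,\nu)\in S} y_{\mu,\nu}$ using~\eqref{dec17} and show $(a,y_0)=0$ and $(a,y_{\mu,\nu})=0$ for each $(\mu,\nu)\in S$. Since $a y_0 = 0$, Frobenius gives $(a,y_0)=(a\cdot a,y_0)=(a,ay_0)=0$. For $y_{\mu,\nu}$ with $\mu\ne 1$, Frobenius yields $(a,y_{\mu,\nu}) = (a\cdot a, y_{\mu,\nu}) = (a, ay_{\mu,\nu}) = \mu(a,y_{\mu,\nu})$, forcing $(a,y_{\mu,\nu})=0$. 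The one case this misses is $\mu = 1$ (i.e.\ $(1,\nu)\in S$, so $\nu\ne 1$); there I use symmetry of the form and right multiplication: $(a,y_{1,\nu}) = (y_{1,\nu},a) = (y_{1,\nu}\cdot a, a) = \nu(a,y_{1,\nu})$, again forcing zero. Combined, $(a,y)=(a,y_1)$.

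For part (ii), fusion rule Definition~\ref{not1}(8)(c) gives $A_0(a)^2 \subseteq A_0(a) + A_1(a) = A_0(a) + \ff a$. So for $x,y\in A_0(a)$ write $xy = \alpha a + w$ with $w\in A_0(a)$. Applying (i), $(a,xy) = (a,(xy)_1) = \alpha(a,a)$. On the other hand, Frobenius gives $(a,xy) = (ax,y) = (0,y) = 0$. Since $(a,a)\ne 0$, we conclude $\alpha = 0$, so $xy\in A_0(a)$, establishing $A_0(a)^2\subseteq A_0(a)$.

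For the final assertion, take any $z = \beta a + z_0 \in \ff a + A_0(a)$ and any $y\in A$. Then $y_0\in A_0(a)$ and $z_0\in A_0(a)$, so by part (ii) we have $y_0 z_0\in A_0(a)$ and therefore $a(y_0 z_0)=0$. Plugging this into Seress' Lemma~\ref{Seress} (which applies since any weakly primitive $S$-axis satisfying the fusion rules is left Seress) yields $a(yz) = (ay)z + a(y_0 z_0) = (ay)z$, as desired. The only mild subtlety is the $\mu = 1$ case in (i), which is why one needs the symmetry hypothesis on the form; the rest is a direct bookkeeping argument.
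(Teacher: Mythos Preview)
Your argument is correct and follows essentially the same route as the paper's proof: in (i) you exploit $a=a^2$ and the Frobenius identity to force $(a,y_{\mu,\nu})=0$ via the left eigenvalue, then handle the residual $\mu=1$ case with symmetry and the right eigenvalue, exactly as the paper does; in (ii) you use (8)(c) plus weak primitivity to land $xy$ in $\ff a\oplus A_0(a)$, kill the $\ff a$-component via $(a,xy)=(ax,y)=0$ and $(a,a)\ne 0$, and then invoke Seress' Lemma for the final identity, just as the paper does. The only cosmetic difference is that you spell out the $\mu=1$ case and the appeal to the left-Seress hypothesis more explicitly than the paper.
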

\begin{proof}(i)  For $z \in  A_{\mu,\nu}(a),$
$$\mu(a,z) = (a,az) = (a^2,z) =
(a,z).$$
Hence $(a,z) = 0$ unless $\mu = 1$.    Similarly,  $(a,z) = (z,a)=0$ unless $\nu = 1$.
We cannot have  $\mu = \nu = 1.$ So $(a,z) = 0$.
  Also, for $z\in A_0(a),$ we have $(a,z) = (a,az) =0.$
We conclude that
$(a,y) = 0 + (a, y_1) .$

(ii) For $z_1,z_2 \in  A_{0}(a),$ we have $z_1z_2 \in A_1(a)
+A_0(a).$ But $(a,z_1 z_2) = (az_1,z_2) = 0$. Since $A_1(a) = \ff a,$ $z_1z_2 \in A_0(a)$
by (i). The last assertion then follows from  \eqref{Serr}.
\end{proof}

\begin{ques}\label{exten} Is the requirement $(a,a)\ne 0$ in Lemma~\ref{Ser3} superfluous?
\end{ques}

\begin{example}
    Just as axial algebras of Jordan type were inspired by Jordan algebras, one  has {\it noncommutative Jordan algebras}, introduced by Albert. Given an algebra $A$ over a field $F$ of characteristic $\ne 2,$ define $A^+$ to have the new multiplication $\circ$ given by $x\circ y = \half( xy+yx)$). Albert  \cite[p.~575]{A} called $A$ {\it Jordan admissible} if $A^+ $ is a Jordan algebra.
Flexible, Jordan admissible algebras $A$,   called   {\it noncommutative Jordan algebras,}
    were studied by Schafer \cite{S}, who proved that they have a symmetric Frobenius form, for which  where $(x,x)\ne 0$ for any idempotent $x$ and $(z,z)=0$ for all nilpotent $z$ (the old terminology being ``trace-admissible'').

      If $J$ is noncommutative Jordan then any $(\mu,\nu)$-eigenvector of $J$ becomes a $\half(\mu+\nu)$-eigenvector of $J^+$, so $\mu+\nu =1,$ since the only nontrivial eigenvalue in a Jordan algebra is $\half.$ Note that this is the case for Example~\ref{egnon1} but not for
      Example~\ref{egnon} since 2 is an eigenvalue. \end{example}
\section{Axial algebras of dimension  2}$ $

Algebras of dimension 2 are such a special case that we can examine them thoroughly, to see how close
they are to being axial, and weak PAJ's. We have some phenomena  of weakly primitive axes, even for
dimension 2.

\begin{lemma}[The general case for dimension 2]\label{pr01} Let  $A$ be an algebra generated by
nontrivial idempotents $a,b$ satisfying $ab = \rho a + \xi b$, and $ba = \xi' a+ \rho'b $.

\begin{enumerate}\eroman
\item \begin{enumerate}
    \item $a$ has left eigenvalue $\xi$ and right eigenvalue $\rho'$.

If $\xi\ne 1,$ then the left decomposition of $b$ with respect to $a$ is
 $$b =-\frac \rho {\xi
-1} a + \left(b + \frac \rho {\xi -1} a \right).$$
(In particular $b_0 = 0.$)

 \item Symmetrically, $b$ has left eigenvalue $\xi'$ and  right eigenvalue $\rho$.
\end{enumerate}

\item   \begin{enumerate}
    \item  $a$ is left semisimple,
unless $\xi = 1$ and $\rho\ne 0.$

 \item  Similarly, the idempotent $a$ is right semisimple, unless $\rho' = 1$ and $\xi'\ne 0.$
\end{enumerate}

 \item \begin{enumerate}
     \item A semisimple idempotent $a$ is an axis, if and only if the following equation holds:
  \begin{equation}\label{seer3}\rho(1-\rho') = \xi'(1 - \xi).\end{equation}

  \item A semisimple idempotent  $b$ is an axis iff
\begin{equation}\label{seer21}\rho'(1-\rho) = \xi(1 - \xi').\end{equation}

 \item  $a$ and $b$ are axes,
 if and only if  either
 \begin{itemize}
     \item $\rho =\xi'$ and $\rho' = \xi,$ so $A$ is commutative, or
     \item $\rho +  \xi = 1 = \rho'+\xi',$ so $A$ is not commutative unless $\rho =  \xi = \rho'=\xi'= \half.$
     \end{itemize}
       \item  We cannot have $\xi = \rho' =1,$  or
 $\xi' = \rho =1.$
  \end{enumerate}

  These solutions are significant in view of \cite[Theorem
B]{RoSe1}.

\item \begin{enumerate}
    \item  In the commutative case, the axis $a$ satisfies the fusion rules if and only if $\xi-1+2\rho\xi=0.$ Similarly, the axis $b$ satisfies the fusion rules if and only if $\rho-1+2\rho\xi=0.$ Thus if both $a$ and~$b$ satisfy the fusion rules, then
$\xi =\rho \in \{\half, -1\}.$

Consequently the only weakly primitive 2-dimensional axial algebras generated by two axes satisfying the fusion rules are the well known HRS algebras    ($\rho = \xi \in \{\half ,-1\}$).

\item  In the noncommutative case, if both $a,b$ are axes, then $A_{\xi,\rho'}(a) = \ff (b-a)= A_{\xi',\rho}(b);$  hence $a$ and $b$ are weakly primitive and
Seress. The axis $a$ satisfies the fusion rules, if
and only if $\xi = \xi'$ (and thus $\rho = \rho'$). This matches
\cite[Theorem~2]{RoSe1}, which says that $A$ is a PAJ
 if and only if $\xi = \xi'$ and $\rho = \rho'$ with  $\xi + \rho = 1,$  as well as $\xi \notin
\{0,1\}.$ In this case $A$ has no other
nontrivial idempotents.

 When $\xi \ne \xi'$, the only
nontrivial idempotent other than the axes $a$ and $b$ is $e=\frac
1{\xi'-\xi}(a-b)$, which is left and right semisimple, but not an axis.
\end{enumerate}

\end{enumerate}
\end{lemma}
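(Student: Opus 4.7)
The plan is to prove all four parts by direct computation in the two-dimensional algebra $A = \ff a + \ff b$, using only the multiplication table $a^2 = a$, $b^2 = b$, $ab = \rho a + \xi b$, $ba = \xi' a + \rho' b$. Throughout I would exploit the symmetry $(a,b,\rho,\xi,\rho',\xi') \leftrightarrow (b,a,\rho',\xi',\rho,\xi)$, so that every assertion about $a$ automatically yields a twin for $b$.

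For (i), I would look for a left eigenvector of $L_a$ of the form $b + ca$: the requirement $a(b+ca) = \xi(b+ca)$ reduces to the scalar equation $\rho + c = \xi c$, giving $c = \rho/(\xi-1)$ whenever $\xi \ne 1$, which both exhibits $\xi$ as a left eigenvalue and yields the stated decomposition of $b$; the right-side analogue produces $\rho'$. For (ii), two distinct left eigenvalues make $L_a$ diagonalizable when $\xi \ne 1$; when $\xi = 1$, expanding $(L_a - I)b = \rho a$ shows $L_a$ is the identity (trivially semisimple) exactly when $\rho = 0$, and is a single nonsemisimple Jordan block otherwise.

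For (iii), I would compute $L_a R_a b$ and $R_a L_a b$ by direct expansion; equality reduces (after the trivial check on $a$) to the single equation $\rho(1-\rho') = \xi'(1-\xi)$, which together with its twin gives (iii)(a),(b). For (iii)(c), subtracting the two axis equations yields $\rho + \xi = \rho' + \xi'$, and the substitution $\rho = \xi' + t$, $\rho' = \xi + t$ turns the first axis equation into the factored identity $t(1 - \xi - \xi' - t) = 0$. The two factors correspond exactly to the commutative case ($t = 0$) and the noncommutative case $\rho+\xi = 1 = \rho'+\xi'$; within the latter, $ab = ba$ forces $\xi+\xi' = 1$ and collapses everything to $\rho = \xi = \rho' = \xi' = \half$. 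For (iii)(d), if $\xi = \rho' = 1$, then (ii) forces $\rho = \xi' = 0$, so $ab = b = ba$ and $a^2 = a$ make $a$ act as the two-sided identity on $A$, contradicting nontriviality.

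For (iv)(a), using the $\xi$-eigenvector $b' = b + \tfrac{\rho}{\xi-1}a$ from (i), and noting $A_0(a) = 0$ and $A_1(a) = \ff a$, the only nontrivial fusion rule is $(b')^2 \in \ff a$; direct expansion of $(b')^2$ yields $\xi - 1 + 2\rho\xi = 0$, and its twin for $b$ is $\rho - 1 + 2\rho\xi = 0$. Subtracting forces $\xi = \rho$, and $(2\xi-1)(\xi+1) = 0$ gives $\xi \in \{\half, -1\}$, the HRS algebras. For (iv)(b), $\rho = 1 - \xi$ makes the constant $c$ from (i) equal to $-1$, so $b-a$ is the $(\xi,\rho')$-eigenvector of $a$; the symmetric computation places it also in $A_{\xi',\rho}(b)$, giving weak primitivity and Seress on both sides. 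For the fusion check I would expand
\[(b-a)^2 = (1-\rho-\xi')a + (1-\xi-\rho')b = (\xi-\xi')(a-b)\]
using $\rho+\xi = 1 = \rho'+\xi'$; hence $(b-a)^2 \in \ff a$ precisely when $\xi = \xi'$ (whence $\rho = \rho'$). To classify nontrivial idempotents $e = \alpha a + \beta b$ I would set $e^2 = e$: the two scalar equations force either $e \in \{a,b\}$ or $e$ proportional to $a-b$, and the formula for $(a-b)^2$ pins down the scalar, yielding the unique $e$ in the statement. Finally, semisimplicity of $L_e$ and $R_e$ follows from trace $1$ and determinant $0$, while a direct computation of $L_eR_e - R_eL_e$ on $\{a,b\}$ shows it vanishes only when $\xi + \xi' = 1$, which within the noncommutative case would force $\rho = \xi'$ and hence commutativity, a contradiction. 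The main obstacle is recognizing the factorization $t(1 - \xi - \xi' - t) = 0$ in (iii)(c), which cleanly separates the two branches; everything else is bookkeeping.
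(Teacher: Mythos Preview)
Your proposal is correct and follows essentially the same direct-computation approach as the paper: eigenvector search in (i)--(ii), expanding $(ab)a=a(ba)$ for (iii)(a), and squaring the off-diagonal eigenvector for the fusion checks in (iv). The only cosmetic differences are that in (iii)(c) you factor via the substitution $t=\rho-\xi'$, whereas the paper substitutes $\rho'=\xi+\rho-\xi'$ and factors $\rho(1-\xi-\rho)=\xi'(1-\xi-\rho)$; and in (iv)(b) you verify semisimplicity of $e$ by trace/determinant and test $[L_e,R_e]$ on the basis $\{a,b\}$, while the paper produces explicit $0$-eigenvectors of $L_e,R_e$ and tests the axis condition on $a+b$.

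One small caution: your blanket claim that the idempotent equations force $e\in\{a,b\}$ or $e\propto a-b$ is only valid when $\xi\ne\xi'$. When $\xi=\xi'$ (so $\rho=\rho'$, $\rho+\xi=1$) one has $ab+ba=a+b$ and every $\alpha a+(1-\alpha)b$ is idempotent, so the two scalar equations collapse to the single condition $\alpha+\beta=1$. The paper's proof separates these cases via the factorization $(\beta-1)(\xi'-\xi)=0$ from the formula for $(b-\beta a)^2$; you should do likewise rather than assert a unique extra idempotent in general.
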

\begin{proof}
    (i)
We need
to get a left eigenvalue $\xi$ for $a$.  If $\xi = 1$ then $a$ is a $\xi$-eigenvector, so we may assume that $\xi \ne 1.$ If
$\rho = 0,$ then $b$ is a $\xi$-eigenvector of $a.$ So we may assume that $\rho \ne 0.$ We solve
$$ \xi ( a + \gb b)   =
a ( a + \gb b) =a +\gb( \rho a+ \xi b) = (1 + \gb \rho) a  + \xi \gb
b.$$ First solving $1 + \gb \rho = \xi$ yields
  \begin{equation}\label{seer0}
  \gb = \frac{\xi-1}\rho.
\end{equation}
 Thus $a$ has  left eigenvalue $\xi$, and, by symmetry, $a$ has right eigenvalue $\rho'$.

 (ii) By (i), $a$ is left semisimple unless $\xi = 1.$ Assume $\xi=1.$ If $\rho = 0$ then  $ab= b$, so $a$ is the left identity element of $A$, which
is left semisimple, so assume that $\rho\ne 0.$
Then $ab = \rho a + b,$ implying $(L_a-1)b =ab-b=   \rho a,$ and
$$ (L_a-1)^2b =a(ab-b)-(ab-b)  = \rho  (a-a) = 0,$$ so $b$ is a generalized
eigenvector  of multiplicity 2.

(iii)(a)
We need $(ab)a = a(ba).$
We solve
$$(\rho a + \xi b)a = a( \xi' a
+ \rho'b) ,$$ i.e.,
$$\rho a + \xi ( \xi' a
+ \rho'b)  =  \xi' a + \rho' (\rho a + \xi b),$$ or
\begin{equation}\label{seer2}\rho + \xi  \xi' = \xi'  + \rho' \rho ,\end{equation} yielding \eqref{seer3}.

\indent \indent (b) Reverse the roles of
$a$ and $b$.

\indent \indent (c)
  Indeed if either condition  holds then clearly  \eqref{seer3} and  \eqref{seer21} hold. Conversely, assume that   \eqref{seer3} and  \eqref{seer21} hold. Subtracting \eqref{seer21} from \eqref{seer3} yields $$\rho -\rho'
= \xi' - \xi,$$  or $\rho' = \xi +
\rho - \xi'.$ Plugging into \eqref{seer3} we get $\rho( \xi' + 1- \xi -\rho) = \xi'(1 - \xi)$,
which says
$$\rho (1 -
\xi -\rho) =  \xi'(1 - \xi -\rho).$$ In other words,

  \begin{equation}\label{seer4} \xi' = \rho \text{ or }\xi + \rho = 1.
  \end{equation}

  But if $\xi'=\rho $ then \eqref{seer3} or \eqref{seer21}    shows that $\rho' = \xi .$ Likewise if $\rho = 1-\xi \ne 0$ then  \eqref{seer3} and \eqref{seer21} shows $\rho' = 1-\xi'.$

(d)  If  $\xi = \rho' =1,$ then  $\rho =\xi' = 0,$ so $ab=ba=b,$ contrary to $a\ne 1.$
\medskip

(iv)(a) Since $A$ is commutative,
\[
\ff a \ni (\rho a + ( \xi-1)b)^2 = \rho^2 a +  ( \xi-1)^2 b + 2\rho   ( \xi-1)(\rho a+ \xi b)
\]
so
\begin{equation}\label{7i}
(\xi-1)^2+2\rho(\xi-1)\xi=0.
\end{equation}
If $\xi=1,$ then $\rho=0,$ but then $a$ is the identity of $A,$ contrary to hypothesis.  So $\xi-1+2\rho\xi=0.$
Since $\xi'=\rho$ and $\rho'=\xi,$ if $b$ satisfies the fusion rules $\rho-1+2\rho\xi=0.$  Thus if both $a,b$ satisfy the fusion rules, $\xi=\rho.$  Then we get $\rho-1+2\rho^2=0,$ so $\rho\in\{-1,\half\}.$

 \medskip

\indent \indent (b)
  $\xi+\rho =1 = \xi' + \rho',$   in view of (iii)(c). To show that $A_{\xi,\rho'}(a) = \ff (b-a)= A_{\xi',\rho}(b),$ note that
 $a(b-a) = \rho a + \xi b - a = \xi (b-a)$, and similarly
$(b-a)a = \rho'(b-a).$ Then by (iii)(c), $A_1(a) = \ff a,$ and $A_0(a) =  0,$ and
likewise for $b.$  Hence $a$ and $b$ are weakly primitive. For the fusion rules, we need to consider $(b- a)^2.$  More generally we compute $(b-\gb
a)^2$ for $\gb \in \ff.$

$ab+ba = \rho a + \xi b +  \xi' a + \rho' b = (\xi' + 1 -\xi)a + (\xi+
1-\xi')b$, so
  \begin{equation}\label{seer}
\begin{aligned} (b-\gb a)^2 & = b+ \gb^2 a -\gb(ab+ba) \\ &=  (1-\gb(1-\xi'+ \xi))b -\gb(\xi ' -
\xi +1 -\gb )a . \end{aligned}
\end{equation}
Taking $\gb =1,$ yields $(b- a)^2 = (\xi - \xi')(b- a)\in \ff(b- a), $ which is
$0$ if and only if $\xi = \xi'$.

  $(b-\gb a)^2 \in \ff (b-\gb a)$ only when the two terms on
the right of \eqref{seer}, $1-\gb(1+\xi- \xi')$ and $\xi'  - \xi +1 -\gb$, are
equal. Thus
 $$ 1 -\gb +\gb(\xi'-\xi) = 1 -\gb +(\xi'  - \xi),$$ so  \begin{equation}\label{seer1}(\gb -1)(\xi'  -
\xi) =0.\end{equation}

When  $\xi' \ne \xi,$ then $\gb =1$, and the only
nontrivial idempotent other than $a$ and $b$ is $e=\frac
1{\xi'-\xi}(a-b)$.

Let us check the left and right eigenvectors for $e$.
$$(a-b)(a+b) = a-b +(\rho a + \xi b)- (\xi' a
+ \rho'b) = (\rho +\rho')(a -  b),$$

so $(a-b)((a+b) - \frac{\rho +\rho'}{\xi'-\xi}(a-b)) =0,$ i.e., $
(a+b) - \frac{\rho +\rho'}{\xi'-\xi}(a-b)$ is a right 0-eigenvector
of $e$, and symmetrically $$(a+b)(a-b) =  a-b -(\rho a + \xi b)+
(\xi' a + \rho'b) = (\xi +\xi')(a -  b),$$ so $(a+b) - \frac{\xi
+\xi'}{\xi'-\xi}(a-b)$ is a left 0-eigenvector of $e$. Hence $e$ is left and right semisimple.

Next, $(a+b)((a-b)(a+b)) = (\rho +\rho')(a -  b)^2$  whereas
$$((a+b)(a-b))(a+b) = (\xi +\xi')(a -  b)^2.$$ Thus, $e$ is  an axis if and only
if $\rho +\rho' = \xi +\xi' = 2-(\rho +\rho'),$ so $\rho +\rho'=1.$ But then $\xi = \rho'$, so $A$ is commutative, a contradiction.
\end{proof}

We can use Lemma~\ref{pr01} to study a  nice infinite class of primitive axial algebras (not necessarily of dimension 2).

\begin{example}\label{pr1} Define
$\uu_E(\mu,\nu)$ as the algebra generated by a set of idempotents $E
= \{ x_i: i \in I\}$, satisfying $x_ix_j = \nu x_i + \mu x_j$ for
all $i\ne j \in I,$ and $\mu \ne \nu$ or $\mu = \nu \notin \{0,1\}.$

\begin{enumerate}\eroman

\item $E$ clearly spans $\uu_E(\mu,\nu)$.

\item Let $a = x_1$. Then \begin{equation}\label{eq7} a  \sum \gc_j x_j = \left(\gc_1+ \mu (\sum _{j\ne 1}\gc_j)\right) a + \nu \sum _{j\ne 1} \gc_j x_j,
\end{equation}
       for $\gc_j\in \ff.$ Hence, we have the following possible left eigenvalues for~$a$:
\begin{enumerate}
    \item  $1$, with  left eigenspace   $\ff a$,   or       \item        $\nu,$  with
     left  eigenspace $\{ \sum \gc_j x_j : \mu (\sum_{j\ne 1} \gc_j) = (\nu-1)\gc_1\}$ (which has codimension 1 in $A$) when $\mu\ne 0$, and $\sum_{j\ne 1} \ff x_j$ when $\mu= 0$, $\nu \ne 1$.
\end{enumerate}
When $\mu =0$ and $\nu = 1$, then $a$ is the left identity.

Symmetrically, the right eigenvalues for~$a$ are $1$ and $\mu.$
\item
It follows that each idempotent $x_i$ is semisimple. By Lemma~\ref{pr01}(iii), each $x_i$ is an axis if and only if $\mu =\nu$ or $\mu+\nu = 1$.

\item If 1 is not a left eigenvalue then there is only one left eigenvalue, so for $a$ to be semisimple we must have $ax_i =0$
for each $i$. Likewise if~1 is not a right eigenvalue then $x_i a=0$
for each $i$. In other words, for $\mu,\nu \ne 0$ we have $\ga_{x_j}\ne 0$ for each $i$.

 \item (The commutative   case) Assume that  $\mu = \nu.$ The base of the $\mu$-eigenspace of $a$ is $\{\mu a + (\mu-1) x_j : j >1\}$.

 For  $|I|=2,$ when  $\mu\in \{\half, -1$\}, we have the HRS algebras.

 For $|I|>2$ and for nonzero $j\ne j',$
\begin{equation}
    \begin{aligned}
    (\mu  a + &(\mu-1) x_j)(\mu a + (\mu-1) x_j')\\ & =  \mu ^2 a +\mu(\mu-1) ax_j +\mu(\mu-1) ax_{j'} +(\mu-1)^2 x_{j}x_{j'} \\ &=\mu^2(2\mu-1) a+ \mu^2( \mu-1)(x_j+x_{j'})+\mu ( \mu-1)^2(x_j+x_{j'}) \\ & = (1-2\mu)a +\mu ( \mu-1)(2\mu-1)(x_j+x_{j'}).
    \end{aligned}
\end{equation}

Thus  the fusion rules are satisfied only for $\mu= \half,$ since the cases $ \mu = \nu\in \{ 0,  1\}$ have been excluded.
 For all other $\mu$ we have primitive
  axes which fail the fusion rules.


\item (The noncommutative   case) Recall from (iii) when $E$ is a set of axes that $\mu+\nu = 1$. \begin{enumerate}
    \item For $\mu \notin \{0, 1\},$
 in \cite[Lemma~5.5]{RoSe4} $\uu_E(\mu,\nu)$  is denoted as $\uu_E(\mu)$,   and
  is shown there to be a
 PAJ.

\item We are left with $\mu= 0$ or $\nu = 0.$
\begin{itemize}
    \item For $\mu = 0$ and $\nu = 1$ we get $x_i x_j =  x_i,$ and $E$ is a
semigroup called the {\it left-zero band}  \cite{W}. Thus $A:=
\uu_E(0,1)$ is the semigroup algebra of the  left-zero band
 $E$. Hence $A$ is associative.   $A_1({x_i})= \ff x_i$. Also $Z:= A_{0,1}(x_i) = \sum_j \ff
 (x_i-x_j)$ is independent of~$i$, and so $A_0(x_i)=0$.

$x_iZ =0,$ for each $i$, implies $AZ =0.$  Hence  $Z$  is a square zero
ideal. The fusion rules
 clearly hold, and every $x_i$ is a left, but not right, primitive $(0,1)$-axis. $A$ is a left primitive axial algebra of
type $(0,1)$.

Any element of the form $x_i+z$ for $z \in Z$ is idempotent,
satisfying $x_j(x_i+z) = x_jx_i+x_jz = x_j$ and $$(x_i+z)x_j =
 x_ix_j+ z x_j= x_i+z,$$ so $x_i+z$ is an $(0,1)$-axis.

Every nonzero idempotent has the form $x_i+z$ for $z \in Z$, since
$(\gb x_i +z)^2 = \gb^2 x_i +\gb z,$ implying $\gb=1.$

\item
Symmetrically, for $\mu = 1$ and  $\nu = 0$ we get $x_i x_j =  x_j,$
which   defines the {\it right-zero band} semigroup, and
  $\uu_E(0,1)$ is the semigroup algebra of the right
zero-band semigroup~$E$, where each nontrivial idempotent is a right
primitive $(0,1)$-axis satisfying the fusion rules.
\end{itemize}
\end{enumerate}
\end{enumerate}
 \end{example}

\begin{remark} We get nothing new by mixing these relations in Example~\ref{pr1}, taking the algebra generated by a set of idempotents $E
= \{ x_i: i \in I\}$, satisfying $x_ix_j = \mu_{i,j} x_i + \nu_{i,j} x_j$ for
all $i,j \in I,$ where $\mu_{i,j},\nu _{i,j}\in \{0, 1\}$ with $\mu_{i,j}+\nu _{i,j} =1.$

 For example, take $ab =b$ and $ba = b.$ Then $a$ and $b$ are
Seress, with $b$ primitive, but $a$ is the unit element. Note that
this algebra is really $\ff b \times \ff (a-b),$ and one can check that this kind of modification
always yields ``decomposable'' presentations of this sort.
\end{remark}

Let us summarize the general situation for dimension 2, reformulating Lemma~\ref{pr01}.

\begin{thm}\label{dime2}$ $\eroman
\begin{enumerate}
    \item Suppose $A=  \langle\langle a,b\rangle\rangle $ has dimension $2$, where $a,b$ are nontrivial axes.
Then $a$ and $b$ are weakly primitive axes, and writing $ab = \rho a
+ \xi b$ and $ba = \xi' a + \rho' b$,
the
eigenvalues of $a$ are $1$ and $(\xi,\rho')$ and the eigenvalues of $b$ are $1$ and $(\xi',\rho).$

  The idempotents of $A $ are $b$, and all elements of the form
\begin{enumerate}
    \item $\frac 1{1 +
\gb }(a +\gb b)$  when $\xi '+
\rho =
 \rho' + \xi =1$ and  $\gb \ne
-1$, or

\item $
    \frac 1{1 +
\gb(\xi '+ \rho)}(a +\gb b),$
  when $\xi' + \rho \ne 1$ and $\gb = \frac{\rho'+
 \xi-1}{\xi '+ \rho -1}$.
\end{enumerate}

\item Either

\begin{enumerate}\eroman

\item (The commutative case)
$\xi' =\rho$ and $ \rho' = \xi  $. In this   $a$ has eigenvalues   1 and  $\xi,$  and $b$ has eigenvalues  1 and  $\rho.$
The axes satisfy  the fusion rules if and only if $\xi = \rho \in \{ 0, \half, -1\}.$ (If $\xi = \rho= 0$ then $ab = ba =0,$  i.e., $A\cong \ff \times \ff.$)

\smallskip
or
\smallskip

\item  (The noncommutative case)
$\xi + \rho =1$ and $\xi' + \rho' = 1,$  and $\rho, \rho'\ne \half$.

In  case $(\xi,\rho'), (\xi',\rho)\notin\{(1,1)\},$ the axis $a$ satisfies the fusion rules if and only if $\xi = \xi'$ (implying
$\rho = \rho'$), in which case $b$ also satisfies the fusion rules, and $A\cong   \uu_E(\xi,\rho)$.

In case $(\xi,\rho)=(1,0),$ then $A$ is a left-zero band and in case $(\xi,\rho)=(0,1),$
then $A$ is a right-zero band (see Example \ref{pr1}(vi)).

 \end{enumerate}

\end{enumerate}
\end{thm}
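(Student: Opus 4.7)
The plan is to treat Theorem \ref{dime2} as a consolidation and reformulation of Lemma \ref{pr01}, so most of the work is careful bookkeeping of cases already handled there. First I would observe that since $\dim A = 2$ and $A$ is generated by the nontrivial idempotents $a, b$, these elements are linearly independent (a linear relation between two nonzero idempotents other than the identity would force one to be a scalar multiple of the other, hence equal) and form a basis, so the relations $ab = \rho a + \xi b$ and $ba = \xi' a + \rho' b$ are merely a coordinate choice. Weak primitivity of $a$ is immediate: $A_1(a) \supseteq \ff a$, and if $A_1(a) = A$ then $L_a$ would be the identity, forcing $a$ to be the identity of $A$ and contradicting nontriviality; symmetrically for $b$. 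The eigenvalues $(\xi, \rho')$ of $a$ and $(\xi', \rho)$ of $b$ come verbatim from Lemma \ref{pr01}(i).

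For the idempotent enumeration in (1), I would expand
\begin{equation*}
(a + \gb b)^2 = (1 + \gb(\xi' + \rho))a + \gb(\gb + \rho' + \xi)b
\end{equation*}
via the defining relations and match coefficients for proportionality to $a + \gb b$. For $\gb \ne 0$ this reduces to $\gb(1 - \xi' - \rho) = 1 - \rho' - \xi$ with rescaling factor $\lambda = 1 + \gb(\xi' + \rho)$. When $\xi' + \rho = \rho' + \xi = 1$, the equation is satisfied for every $\gb \ne -1$ (the excluded value $\gb = -1$ producing the square-zero element $a - b$ rather than an idempotent), giving (1)(a). When $\xi' + \rho \ne 1$, $\gb$ is determined uniquely as in (1)(b). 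The idempotent $b$ is then added as a separate case, corresponding to taking coefficient $1$ on $b$ rather than on $a$.

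For part (2), the dichotomy between the commutative and noncommutative alternatives is precisely Lemma \ref{pr01}(iii)(c). In the commutative case $\rho = \xi'$ and $\rho' = \xi$, so the eigenvalue pairs collapse to $\xi$ and $\rho$; Lemma \ref{pr01}(iv)(a) then gives the fusion-rule characterization $\xi = \rho \in \{\half, -1\}$, and I would add back the degenerate $\xi = \rho = 0$ case in which $ab = ba = 0$ and $A \cong \ff \times \ff$ via orthogonal idempotents. In the noncommutative case, $\xi + \rho = 1 = \xi' + \rho'$ together with Lemma \ref{pr01}(iv)(b) forces $\xi = \xi'$ and $\rho = \rho'$ for simultaneous fusion rules, which is precisely the 2-generator presentation of $\uu_E(\xi, \rho)$ from Example \ref{pr1}. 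The band cases correspond to $ab \in \{a,b\}$, identifying $A$ with the 2-generator left- or right-zero band algebra of Example \ref{pr1}(vi).

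The main obstacle, though not deep, is the careful bookkeeping in the idempotent enumeration: one must verify that the two parametrizations in (1)(a) and (1)(b), together with $b$ as an isolated case, exhaust all idempotents of $A$ without double-counting, and handle the degenerate $\gb = -1$ and non-idempotent square-zero element $a - b$ correctly, all while respecting the forbidden combinations $\xi = \rho' = 1$ and $\xi' = \rho = 1$ of Lemma \ref{pr01}(iii)(d).
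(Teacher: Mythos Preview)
Your proposal is correct and follows essentially the same route as the paper's proof: both treat the theorem as a reformulation of Lemma~\ref{pr01}, compute $(a+\gb b)^2$ and solve the proportionality condition $1+\gb(\xi'+\rho)=\gb+\rho'+\xi$ for the idempotent enumeration, and then invoke Lemma~\ref{pr01}(iii)(c) for the commutative/noncommutative dichotomy and Lemma~\ref{pr01}(iv)(a),(b) for the fusion-rule characterizations. The only minor difference is that the paper appeals to \cite{HRS} to obtain $\xi\in\{0,\half,-1\}$ in the commutative case, whereas you recover $\{\half,-1\}$ directly from Lemma~\ref{pr01}(iv)(a) and adjoin $\xi=\rho=0$ as the orthogonal-idempotent degeneration; both are valid.
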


\begin{proof}

(i) The eigenvalues of~$a$ and $b$
come from Lemma~\ref{pr01}(i).  $$(a +\gb b)^2 =  a + \gb^2 b + \gb (ab+ba) = (1 + \gb(\xi' + \rho))a + \gb(\gb +\rho'+ \xi)b. $$
 We want this to be proportional to $a + \gb b$,  i.e., $$1 + \gb(\xi' + \rho) =
\gb +\rho'+
 \xi,$$
or $$\gb ({\xi' + \rho-1})=  {\rho'+
 \xi-1} .$$  Both sides are 0 if and only if $\xi' + \rho = 1$ and either $\gb =0$ or \begin{equation}\label{ugen9}\xi' + \rho =
 \rho' + \xi =1.\end{equation} In case $\gb =0$ we get the idempotent $a.$
In the case of \eqref{ugen9}, dividing by $1+\gb,$ we get an idempotent as in (a) for any value of
 $\gb\ne -1$. Otherwise we get
  $$\gb = \frac{\rho'+
 \xi-1}{\xi '+ \rho -1},$$ yielding (b).

(ii)(a) Of course $A$ is commutative; by (i), the eigenvalues of $a$ are  $1$ and~$\xi$, and the eigenvalues of $b$ are  $1$ and $\rho$.  By Lemma~\ref{pr01}(iv), if $a$ satisfies the fusion rules,
then $\xi=\xi'=\rho=\rho',$ in which case $b$  also  satisfies the fusion rules.  So $A$ is a
CPAJ, so $\xi\in\{0,\half,-1\},$ by \cite{HRS}.

\indent \indent (b)  Since Case (iia) does not occur, we must have $\xi\ne\rho'$ and $\xi'\ne\rho,$ by equations \eqref{seer3} and \eqref{seer21}.
Then
the first assertion holds by Lemma~\ref{pr01}(iii)(c).  If $(\xi,\rho')=(1,1),$
then $a$ is the identity of $A,$ a contradiction.  Similarly for~$(\xi',\rho).$ The  axes $a$ and $b$ satisfy the fusion rules by Lemma~\ref{pr01}(iv)(b).
\end{proof}

\begin{cor}\label{ugen1} Hypotheses as in Theorem~\ref{dime2}, there are infinitely many idempotents of $A=  \langle\langle a,b
\rangle\rangle $, if and only if $A \cong  \uu_E(\mu,\nu)$ for  $\mu +
\nu = 1.$ Then the idempotents are $\{(1-\gc) a + \gc_0b: \gb \in
\ff\}$, and these are all weakly primitive $(\mu, \nu)$-axes   satisfying the fusion rules,  and  are  primitive axes when $\mu \ne 0,1.$
\end{cor}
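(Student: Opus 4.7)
The plan is to read off from Theorem~\ref{dime2}(i) precisely when a one-parameter family of idempotents appears, then identify the resulting algebra with $\uu_E(\mu,\nu)$ for $\mu+\nu=1$, and finally verify the axis and fusion properties of the whole family. Theorem~\ref{dime2}(i) lists the idempotents of $A$ beyond $a$ and $b$ as either the one-parameter family $\tfrac{1}{1+\gb}(a+\gb b)$, $\gb\ne-1$ (which occurs exactly when $\xi'+\rho=\rho'+\xi=1$), or a single additional idempotent; so ``infinitely many idempotents'' is equivalent to $\xi'+\rho=\rho'+\xi=1$.

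Next I split this condition via Theorem~\ref{dime2}(ii). In the commutative subcase $\xi'=\rho$ and $\rho'=\xi$, so the condition becomes $\xi=\rho=\half$, giving $ab=ba=\half(a+b)$ and hence $A\cong\uu_E(\half,\half)$. In the noncommutative subcase the axis relations already give $\xi+\rho=\xi'+\rho'=1$; combining with $\xi'+\rho=1$ forces $\xi=\xi'$ and $\rho=\rho'$, so $ab=\rho a+\xi b$ and $ba=\rho b+\xi a$, i.e.\ $A\cong\uu_E(\xi,\rho)$ with $\xi+\rho=1$. The converse is immediate: any $\uu_E(\mu,\nu)$ with $\mu+\nu=1$ has $ab=\nu a+\mu b$, $ba=\nu b+\mu a$, so $\xi'+\rho=\rho'+\xi=\mu+\nu=1$.

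Finally I reparametrize via $\gc=\gb/(1+\gb)$, rewriting the family as $\{(1-\gc)a+\gc b:\gc\in\ff\}$, with $\gc=1$ formally recovering $b$. A short direct computation using $ab=\nu a+\mu b$ and $ba=\nu b+\mu a$ gives $xx'=\nu x+\mu x'$ for any two members $x,x'$ of this family, simultaneously confirming idempotency and showing that the family is itself (a presentation of) $\uu_E(\mu,\nu)$ inside $A$. Then Example~\ref{pr1}(ii)--(iii) gives that each such $x$ is a weakly primitive $(\mu,\nu)$-axis, and the fusion rules follow from Example~\ref{pr1}(v)--(vi). Primitivity when $\mu\ne 0,1$ then follows because $L_x$ has distinct eigenvalues $1$ and $\mu$ on a $2$-dimensional space, hence is semisimple with $1$-eigenspace $\ff x$, and similarly for $R_x$. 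The main (mild) obstacle is the bookkeeping needed to match the pair $(\xi,\rho)$ of axis data with the pair $(\mu,\nu)$ of $\uu_E$ in both subcases of Theorem~\ref{dime2}(ii); once that match is in place, everything else is a direct calculation or a citation to Example~\ref{pr1}.
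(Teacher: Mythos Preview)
Your proof is correct and follows essentially the same line as the paper's, reading off from Theorem~\ref{dime2} when the one-parameter family of idempotents occurs and then identifying the result with $\uu_E(\mu,\nu)$ for $\mu+\nu=1$. You are slightly more self-contained: where the paper cites \cite[Lemma~5.3]{RoSe4} for the commutative case you derive $\rho=\xi=\tfrac12$ directly from Theorem~\ref{dime2}(i), and you also spell out (via the computation $xx'=\nu x+\mu x'$ and Example~\ref{pr1}) the axis and fusion-rule claims for the whole family, which the paper's proof leaves implicit.
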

\begin{proof} For $A$ commutative  there are only finitely many idempotents unless $\mu = \nu = \half,$
by \cite[Lemma~5.3]{RoSe4}, in which case we have  $A \cong  \uu_E(\half,\half)$.
For $A$ not commutative   there are infinitely many idempotents if and only if  $\xi '+
\rho =
 \rho' + \xi =1$,
 by Theorem~\ref{dime2},
 i.e., $\mu ' =\mu $ and $\nu ' =\nu$.
\end{proof}
\begin{remark}
    We have recovered   the known  PAJ's when $\mu \ne 0,1$. Otherwise we have the left-zero or
right-zero band algebra.
\end{remark}

\begin{hyp1}
Throughout the remainder of this paper we assume that $a$ is a weakly primitive axis satisfying the fusion rules with $\dim A_0(a)\le 1.$
\end{hyp1}

\section{2-generated axial algebras with a weakly primitive axis}$ $

Since the case $\dim A = 2$ was already handled in Theorem~\ref{dime2}, we assume from now on that $\dim A \ge 3.$
 Here is an easy example of a commutative axial algebra in which the axis
$a$  is not weakly primitive.

\begin{example} \label{try01} Let  $ A$
be the 3-dimensional commutative algebra spanned by idempotents
$a,b$ and an element~$y,$   satisfying the relations
\[
ab  = y =  ay   =by,\quad y^2=0.
\]

Clearly $A_1(a) = \ff a + \ff y,$ $A_0(a) = \ff (b-y),$ and $(b-y)^2 =
b -2y,$ so the axis $a$ is  not weakly primitive, and  the fusion rules fail.
\end{example}

To avoid such noise, we assume henceforth that $a$ is weakly primitive.

\subsection{The case of $\lan \lan a,b \ran \ran$  when the axis $a$ is weakly primitive}\label{str}$ $

\begin{note}\label{assmp}
 Throughout the remainder of this section, $A = \lan \lan a,b \ran \ran$ is an algebra of dimension  $\ge 3$, where   $a$ is a  weakly primitive $S$-axis of $A$ satisfying the fusion rules, cf.~Definition~\ref{not1}(8), and $b =\ga_b a +b_0 +\sum _{ (\mu,\nu)\in S}b_{\mu,\nu}$ is an idempotent.   We take $ S^\circ: = S^\circ(a),$  $S^{\dagger}: = S^{\dagger}(a)$ and $\bar S:=\bar S(a),$ cf.~Definition~\ref{not1}(3).    We shall go as far as we can without assuming that $b$ also satisfies the fusion rules.
  Also define \begin{equation}
      \label{Vdef} V = \{ y\in A:  \ga_y= 0 = y_0\}, \qquad    V_{\mu,\nu}(a)=V\cap A_{\mu,\nu}(a),
  \end{equation} $$  V(a)=\sum _{(\mu,\nu)\in S^\circ(a)}V\cap A_{\mu,\nu}(a).$$
\end{note}

\begin{lemma}\label{rempr1}   $b_{\mu,\nu}\ne 0$ for each $(\mu,\nu)\in S^\circ$.
\end{lemma}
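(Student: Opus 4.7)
The plan is to argue by contradiction, exploiting Lemma~\ref{rempr} to show that if some component $b_{\mu,\nu}$ vanishes for a pair $(\mu,\nu) \in S^\circ$, then the generators $a,b$ both sit inside a proper subalgebra that misses $A_{\mu,\nu}(a)$, forcing $A_{\mu,\nu}(a)=0$ and contradicting $(\mu,\nu)\in S^\circ$.

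Concretely, fix $(\mu,\nu)\in S^\circ$ and suppose toward a contradiction that $b_{\mu,\nu}=0$. Set $S' = S\setminus\{(\mu,\nu)\}$ and define
\[
W \;=\; A_1(a) \,+\, A_0(a) \,+\, \sum_{(\mu',\nu')\in S'} A_{\mu',\nu'}(a).
\]
By Lemma~\ref{rempr} (with this choice of $S'$), $W$ is a subalgebra of $A$, and of course $a \in A_1(a)\subseteq W$. Using the decomposition $b = \ga_b a + b_0 + \sum_{(\mu',\nu')\in S} b_{\mu',\nu'}$ together with the assumption $b_{\mu,\nu}=0$, every remaining summand of $b$ lies in $W$, so $b\in W$.

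It then follows that $A = \langle\langle a,b\rangle\rangle \subseteq W$. In particular $A_{\mu,\nu}(a) \subseteq W$. But the decomposition \eqref{dec18} is a direct sum, and $(\mu,\nu)\notin \{(0,0),(1,1)\}\cup S'$, so $W\cap A_{\mu,\nu}(a)=0$. Hence $A_{\mu,\nu}(a)=0$, contradicting $(\mu,\nu)\in S^\circ(a)$.

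I do not foresee a real obstacle here; the whole argument hinges on recognizing that Lemma~\ref{rempr} packages the fusion rules into a ``drop one eigenspace'' subalgebra, after which the contradiction is immediate from directness of the eigenspace decomposition. The only mild point to check is that $(\mu,\nu)$ really is excluded from the index set of $W$, which is automatic since $(\mu,\nu)\in S$ means $(\mu,\nu)\ne (0,0),(1,1)$ and we removed it from $S'$ by definition.
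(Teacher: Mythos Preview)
Your proof is correct. Both your argument and the paper's hinge on the fusion rules, but the packaging differs: you invoke Lemma~\ref{rempr} with $S'=S\setminus\{(\mu,\nu)\}$ to exhibit an explicit subalgebra $W$ that omits $A_{\mu,\nu}(a)$ yet contains both generators, forcing $A\subseteq W$ and hence $A_{\mu,\nu}(a)=0$. The paper instead argues tersely that the fusion rules together with $b_{\mu,\nu}^2=0$ give $A\,b_{\mu,\nu}\subseteq(\ff a+A_0(a))\,b_{\mu,\nu}\subseteq\ff b_{\mu,\nu}$ and symmetrically on the right, and then declares that ``the remaining part follows.'' Your route via Lemma~\ref{rempr} makes the subalgebra mechanism transparent and avoids the reader having to unpack what the paper's ideal-style observation about $\ff b_{\mu,\nu}$ is really saying; the underlying idea---that once $b_{\mu,\nu}=0$ nothing in $\langle\langle a,b\rangle\rangle$ can land in $A_{\mu,\nu}(a)$---is the same in both.
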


\begin{proof}
   The fusion rules for $a$ and the fact that $b_{\mu,\nu}^2=0,$ imply that $Ab_{\mu,\nu}\subseteq (\ff a+A_0(a))b_{\mu,\nu}\subseteq \ff b_{\mu,\nu}.$  Similarly $b_{\mu,\nu}A\subseteq \ff b_{\mu,\nu}.$
 The remaining part follows.
\end{proof}

Here are our main structure theorems.
We can proceed fairly far just with the property that $(ba)b = b(ab).$
\begin{thm}\label{Ser02} Suppose $(ba)b = b(ab).$ Then
    \begin{enumerate}\eroman
  \item \begin{equation}\label{61}\sum _{(\mu,\nu)\in S} (\mu-\nu) b_{\mu,\nu}^2 =0,\end{equation} and \begin{equation}\label{62}
\nu b_{\mu,\nu}b_0- \mu b_0b_{\mu,\nu}  = \ga_b (\mu+\nu-1)(\mu-\nu)b_{\mu,\nu} , \, \forall (\mu,\nu)\in S.
\end{equation}

        \item $b_0 b_{\mu,\nu}, b_{\mu,\nu}b_0 \in \ff b_{\mu,\nu}$ for all $(\mu,\nu)\in S,$ with the precise formulas holding, writing $\varepsilon:= \varepsilon_{\mu,\nu} = \mu +\nu:$
   \begin{equation}\label{622}
   \begin{aligned}
        ((1 -\ga_b)\mu +\ga_b(1- \varepsilon) \nu)b_{\mu,\nu}  &= \varepsilon b_{\mu,\nu}b_0,\\ \quad  ((1 -\ga_b)\nu +\ga_b(1- \varepsilon) \mu)b_{\mu,\nu}  &= \varepsilon b_0 b_{\mu,\nu}.
        \end{aligned}
\end{equation}
Furthermore if $\varepsilon = 0$ then $(\mu,\nu) \notin S^\circ.$

\item
\begin{enumerate}
\item
$
(1-\ga_b\varepsilon )b_{\mu,\nu}=b_{\mu,\nu}b_0+b_0b_{\mu,\nu}.
$
\item
$b_{\mu,\mu}b_0=b_0b_{\mu,\mu}.$

\item
If $\varepsilon =1=\ga_b,$ then  $b_{\mu,\nu}b_0 = 0=b_0b_{\mu,\nu}.$

\item
If $\ga_b\ne 1$ and $\mu\ne\nu,$ then either $b_{\mu,\nu}b_0 \ne 0$ or $b_0b_{\mu,\nu}\ne 0.$

\item
If $\mu=0$ and $\nu\ne 1,$ then $b_{\mu,\nu}b_0\ne 0,$ and if $\nu=0$ and $\mu\ne 1,$ then $b_0b_{\mu,\nu}\ne 0.$
\end{enumerate}

\item
 \begin{equation}\label{6224} \begin{aligned}
     b_0(b_{\mu,\nu} b_0)  &= (b_0 b_{\mu,\nu}) b_0    = \\& = \frac{((1 -\ga_b)\mu +\ga_b(1- \varepsilon) \nu)((1 -\ga_b)\nu +\ga_b(1- \varepsilon) \mu)b_{\mu,\nu}} {\varepsilon ^2}.
 \end{aligned}\end{equation}

\item   $  (\ga _b a+b_0 )b_{\mu,\nu} = \frac{  (1 -\ga_b)\nu +\ga_b  \mu}{\varepsilon}b_{\mu,\nu} , $ and  $  b_{\mu,\nu}(\ga _b a+b_0 ) =\frac{  (1 -\ga_b)\mu +\ga_b  \nu}{\varepsilon}b_{\mu,\nu}.$


 \item     $b_{\mu,\nu}$ is a $\left(\frac{  (1 -\ga_b)\nu +\ga_b  \mu}{\varepsilon},\frac{  (1 -\ga_b)\mu +\ga_b  \nu}{\varepsilon} \right)$ eigenvector of~$b,$ iff    $b_{\mu,\nu}^2 =0.$

 For the last assertion, we exploit    $b$ being idempotent.

 \item $  b_0^2    - b_0   =  (\ga_b-   \ga_b^2 )a    - \sum  b_{\mu,\nu}^2$.
    \end{enumerate}

     Conversely, if \eqref{61} and  \eqref{62} hold for an idempotent $b$, then  $(ba)b = b(ab)$.

\end{thm}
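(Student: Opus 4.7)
The key observation is that, since $a$ is an $S$-axis satisfying the fusion rules, the decomposition $b = \ga_b a + b_0 + \sum b_{\mu,\nu}$ gives immediately $ab = \ga_b a + \sum \mu b_{\mu,\nu}$ and $ba = \ga_b a + \sum \nu b_{\mu,\nu}$. The fusion rules for $a$ further force $b_{\mu,\nu} b_{\mu',\nu'} = 0$ whenever $(\mu,\nu)\ne(\mu',\nu')$, place both $b_0 b_{\mu,\nu}$ and $b_{\mu,\nu} b_0$ in $A_{\mu,\nu}(a)$, and place $b_0^2$ and $b_{\mu,\nu}^2$ in $A_0(a)+A_1(a)$. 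This provides a clean eigenspace decomposition for every product that will appear.

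Expanding the hypothesis $(ba)b - b(ab) = 0$ using these facts, it reads
\[
\sum_{(\mu,\nu)\in S}\bigl[\ga_b(\mu-\nu)(1-\varepsilon)b_{\mu,\nu} + (\nu b_{\mu,\nu}b_0 - \mu b_0 b_{\mu,\nu})\bigr] + \sum(\nu-\mu)\, b_{\mu,\nu}^2 = 0.
\]
The last sum lies in $A_0(a)+A_1(a)$ while each bracketed term lies in $A_{\mu,\nu}(a)$, so directness of the decomposition \eqref{dec17} forces each piece to vanish separately, producing \eqref{61} and \eqref{62}. This proves (i), and the converse is obtained by reading the same calculation in reverse. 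Next I would expand $b^2 = b$ analogously: the $A_0+A_1$ component yields (vii), while each $A_{\mu,\nu}$ component yields $(1-\varepsilon\ga_b)b_{\mu,\nu} = b_0 b_{\mu,\nu} + b_{\mu,\nu} b_0$, which is (iii)(a). Viewing (iii)(a) and \eqref{62} as a $2\times 2$ linear system in the unknowns $b_{\mu,\nu}b_0$ and $b_0 b_{\mu,\nu}$, its coefficient matrix has determinant $-\varepsilon$; for $\varepsilon\ne 0$ it solves uniquely, giving \eqref{622} and hence (ii), while for $\varepsilon = 0$ consistency of the degenerate system combined with Lemma~\ref{rempr1} forces $(\mu,\nu)\notin S^\circ$.

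The remaining parts then fall out. Part (v) is immediate from $(\ga_b a + b_0)b_{\mu,\nu} = \ga_b\mu b_{\mu,\nu} + b_0 b_{\mu,\nu}$ combined with \eqref{622}. Part (vi) then follows from $b\cdot b_{\mu,\nu} = (\ga_b a + b_0)b_{\mu,\nu} + b_{\mu,\nu}^2$ (cross-terms again vanishing by the fusion rules), reducing the eigenvector condition to $b_{\mu,\nu}^2 = 0$. Part (iv) is obtained by applying $b_0$ to either formula in \eqref{622} and re-applying \eqref{622}, and the equality $b_0(b_{\mu,\nu}b_0) = (b_0 b_{\mu,\nu})b_0$ then holds because both sides are the product of the same two scalars from \eqref{622} acting on $b_{\mu,\nu}$. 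The cases of (iii)(b)--(e) are direct specializations: $\mu=\nu$ in \eqref{62} gives (b); $\varepsilon = \ga_b = 1$ zeroes both scalars in \eqref{622}, giving (c); in (d), if both products vanish then (iii)(a) forces $\varepsilon\ga_b = 1$, and \eqref{62} with $\mu\ne\nu$ then forces $\varepsilon = 1$, hence $\ga_b = 1$, contradicting the hypothesis; (e) is read off directly from \eqref{622}. The main obstacle I anticipate is the careful bookkeeping---verifying which eigenspace each product lands in so that the term-by-term matching is legitimate, and in particular handling the degenerate case $\varepsilon = 0$ in (ii).
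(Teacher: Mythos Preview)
Your approach is correct and essentially identical to the paper's: both expand $(ba)b$ and $b(ab)$ via the eigenspace decomposition of $b$, match components in $A_1(a)\oplus A_0(a)$ versus $A_{\mu,\nu}(a)$ to obtain \eqref{61} and \eqref{62}, then extract (iii)(a) and (vii) from $b=b^2$ and combine (iii)(a) with \eqref{62} to solve for $b_{\mu,\nu}b_0$ and $b_0b_{\mu,\nu}$. The only cosmetic difference is that you phrase the last step as solving a $2\times 2$ linear system with determinant $-\varepsilon$, whereas the paper substitutes directly; the remaining parts (iii)--(vi) and the converse are handled the same way.
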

\begin{proof} (i) and (ii).  We may assume that $S^{\circ}\ne 0$. As in
\cite[Lemma~2.4(iv)]{RoSe1}, we compute $b(ab)=(ba)b.$
\[
\begin{aligned}
b(ab) = b&\left(\ga_b a+  \sum _{(\mu,\nu)\in S} \mu b_{\mu,\nu} \right)
 =  \ga_b ba +  \sum _{\mu,\nu} \mu b b_{\mu,\nu}\\
& =\ga_b \left(\ga_b a+  \sum _{\mu,\nu} \nu b_{\mu,\nu}\right) +\sum _{\mu,\nu} \mu \left(\ga_b a+b_0 +  \sum _{\mu',\nu'} b_{\mu',\nu'}
\right) b_{\mu,\nu}\\
& =\ga_b ^2 a + \sum _{\mu,\nu} \sum _{\mu',\nu'} \mu b_{\mu',\nu'}b_{\mu,\nu} + \sum _{\mu,\nu} \ga_b(\mu^2 +\nu )b_{\mu,\nu}+ \sum _{\mu,\nu}  \mu
b_0  b_{\mu,\nu}
\\
& =\ga_b ^2 a + \sum _{\mu,\nu} \mu b_{\mu,\nu}^2 + \sum _{\mu,\nu} \ga_b(\mu^2 +\nu )b_{\mu,\nu}+ \sum _{\mu,\nu}  \mu
b_0  b_{\mu,\nu},
\end{aligned}
\]

since $ b_{\mu,\nu}b_{\mu',\nu'}=0$ for $   (\mu,\nu) \ne (\mu',\nu').$
On the other hand,
\[
\begin{aligned}(ba)b &= \left(\ga_b a+ \sum _{\mu,\nu} \nu b_{\mu,\nu}\right) b = \ga_b ab+ \sum _{\mu,\nu} \nu b_{\mu,\nu} b\\
& =  \ga_b (\ga_b a+  \sum _{\mu,\nu}  \mu b_{\mu,\nu} ) + \sum _{\mu,\nu}  \nu b_{\mu,\nu}(\ga_b a+b_0+ \sum _{\mu',\nu'}b_{\mu',\nu'})\\
& =\ga_b^2 a + \sum _{\mu,\nu} \sum _{\mu',\nu'} \nu b_{\mu,\nu}b_{\mu',\nu'} + \sum _{\mu,\nu} \ga_b( \mu +\nu ^2)b_{\mu,\nu}+
 \sum _{\mu,\nu} \nu b_{\mu,\nu}b_0
 \\ & =\ga_b^2 a + \sum _{\mu,\nu}\nu b_{\mu,\nu}^2 + \sum _{\mu,\nu} \ga_b( \mu +\nu ^2)b_{\mu,\nu}+
 \sum _{\mu,\nu} \nu b_{\mu,\nu}b_0.
\end{aligned}
\]

Matching components in  $ A_{1}(a) \oplus
A_{0}(a)$ yields \begin{equation}\label{65}
    \sum _{\mu,\nu}  \mu b_{\mu,\nu}^2 = \sum _{\mu,\nu}  \nu
 b_{\mu,\nu}^2 .
\end{equation}

i.e., $\sum _{\mu,\nu} (\mu-\nu) b_{\mu,\nu}^2=0,$ which is \eqref{61}.

Matching components in $A_{\mu,\nu}(a)$ yields $$\ga_b (\mu^2+\nu)b_{\mu,\nu} +\mu b_0b_{\mu,\nu} = \ga_b(\nu^2 +\mu)b_{\mu,\nu} +\nu b_{\mu,\nu}b_0,$$   yielding \eqref{62}.

Next,
\begin{equation}\label{bmn7} \begin{aligned}
    \ga_b a & + b_0 + \sum  _{(\mu,\nu)\in S} b_{\mu,\nu}  = b   = b^2  =  \ga_b^2 a  +   b_0^2 + \sum  b_{\mu,\nu}^2 \\& + \sum (\ga_b \mu b_{\mu,\nu} + \ga_b \nu b_{\mu,\nu})  + \sum (b_0 b_{\mu,\nu} + b_{\mu,\nu}b_0).
\end{aligned}   \end{equation}

    Matching components in  $b_{\mu,\nu}$ yields
    \begin{equation}\label{b2match} b_{\mu,\nu}= \ga_b (\mu+\nu)b_{\mu,\nu} + b_0 b_{\mu,\nu} + b_{\mu,\nu}b_0,
    \end{equation} or
    \begin{equation}
        \label{bmn77} (1-  \ga_b (\mu+\nu))b_{\mu,\nu} = b_0 b_{\mu,\nu} + b_{\mu,\nu}b_0
    \end{equation}

    Reformulating \eqref{62} yields
 $$\varepsilon b_{\mu,\nu}b_0 =\mu (b_0 b_{\mu,\nu}+ b_{\mu,\nu}b_0)
 +\ga_b (\varepsilon -1)(\mu-\nu)b_{\mu,\nu} ,$$
 i.e.

Matching with \eqref{bmn77} yields
   $$\varepsilon b_{\mu,\nu}b_0 =\mu  (1-  \ga_b \varepsilon)b_{\mu,\nu}
 +\ga_b (\varepsilon-1)(\mu-\nu)b_{\mu,\nu} ,$$

so

\begin{equation}
    \label{6220}
((1 -\ga_b)\mu +\ga_b(1- \varepsilon) \nu)b_{\mu,\nu}  = \varepsilon b_{\mu,\nu}b_0 .\end{equation}

Also applying symmetry,  we get  \eqref{622}.
If $\varepsilon =0,$ then we have $$(1 -\ga_b (\mu -\nu))b_{\mu,\nu} = 0=(1 -\ga_b (\nu -\mu))b_{\mu,\nu},$$ so $\ga_b(\mu-\nu) =0$,   implying $b_{\mu,\nu} =0$.

(iii) (a) Adding both equations \eqref{622} and cancelling $\varepsilon$ gives the assertion.

(b)\&(c) This is immediate from \eqref{622}.

(d) Suppose $\ga_b\ne 1, \mu\ne\nu,$ and $b_{\mu,\nu}b_0=b_0b_{\mu,\nu}=0.$  By (a), $\ga_b=\frac{1}{\varepsilon},$  so $\varepsilon\ne 1.$ Then by \eqref{622}
$
(1 -\frac{1}{\varepsilon})\mu +\frac{1}{\varepsilon}(1- \varepsilon) \nu)=0,
$
which implies $\mu=\nu,$ a contradiction.

(e) This is clear.

(iv) \begin{equation}
    \begin{aligned}
        \varepsilon^2& b_0(b_{\mu,\nu} b_0) = \varepsilon b_0 ((1 -\ga_b)\mu +\ga_b(1- \varepsilon) \nu)b_{\mu,\nu}\\& = ((1 -\ga_b)\mu +\ga_b(1- \varepsilon) \nu)((1 -\ga_b)\nu +\ga_b(1- \varepsilon) \mu)b_{\mu,\nu} \\ & = \varepsilon^2 (b_0 b_{\mu,\nu}) b_0,
    \end{aligned}
\end{equation} proving $b_0(b_{\mu,\nu} b_0)= (b_0 b_{\mu,\nu}) b_0.$

(v) $\ga_b a b_{\mu,\nu}  = \ga_b \mu b_{\mu,\nu} .$  Hence using \eqref{622}, \begin{equation}\label{625}
    \begin{aligned}
        \ga_b a b_{\mu,\nu}  +  b_0 b_{\mu,\nu} &=  \frac{(\ga_b \mu \varepsilon+ (1 -\ga_b)\nu +\ga_b(1- \varepsilon) \mu)b_{\mu,\nu}}{\varepsilon} \\& = \frac{  (1 -\ga_b)\nu +\ga_b  \mu}{\varepsilon}b_{\mu,\nu} ,
    \end{aligned}
\end{equation} and the other assertion follows symmetrically.

(vi) $b b_{\mu,\nu} = (\ga_b a+b_0)b_{\mu,\nu}+b_{\mu,\nu}^2,$ so apply (v).

(vii) Match components in  $\ff a +A_0(a) $ in \eqref{bmn7}, to get
 $$  \ga_b a  + b_0   =  \ga_b^2 a  +   b_0^2 + \sum  b_{\mu,\nu}^2 ,$$ and rearrange.

The converse follows by reversing the proof of (i).
\end{proof}



For later use, let us record   important annihilators of $b_{\mu,\nu}.$

\begin{cor}\label{half11}$ $
   Suppose that $(ba)b=b(ab),$ let $(\mu,\nu)\in S^{\circ},$ and set $\varphi_1:=(1 -\ga_b)\mu +\ga_b(1- \varepsilon) \nu,$ $\varphi_2:=(1 -\ga_b)\nu +\ga_b(1- \varepsilon) \mu,$   $w_1=\frac{\varphi_1}{\varepsilon}a-\nu b_0$ and $w_2=\frac{\varphi_2}{\varepsilon}a-\mu b_0,$ where $\varepsilon=\mu+\nu.$ Then 
    \begin{enumerate}\eroman
\item $b_{\mu,\nu}w_1=0=w_2b_{\mu,\nu}.$

     \item For $b_0 \ne 0$, $w_1=0$ $($resp.~$w_2=0)$ iff $\nu=0$ $($resp.~$\mu=0)$ and $\ga_b=1.$


    \end{enumerate}
\end{cor}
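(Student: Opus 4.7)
The plan is to prove both parts by direct computation, with the key input being the two identities of \eqref{622} from Theorem~\ref{Ser02}(ii), which I would first rewrite in the suggestive form
\[
b_{\mu,\nu}\, b_0 \;=\; \tfrac{\varphi_1}{\varepsilon}\, b_{\mu,\nu}, \qquad b_0\, b_{\mu,\nu} \;=\; \tfrac{\varphi_2}{\varepsilon}\, b_{\mu,\nu}.
\]
The elements $w_1$ and $w_2$ are manufactured precisely so that these relations, combined with $a\, b_{\mu,\nu} = \mu b_{\mu,\nu}$ and $b_{\mu,\nu}\, a = \nu b_{\mu,\nu}$, force the one-sided products in (i) to collapse.

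For part (i), expanding $b_{\mu,\nu}\, w_1 = \tfrac{\varphi_1}{\varepsilon} b_{\mu,\nu} a - \nu b_{\mu,\nu} b_0$ and substituting $b_{\mu,\nu} a = \nu b_{\mu,\nu}$ and the first displayed relation above yields two scalar multiples of $b_{\mu,\nu}$, each equal to $\tfrac{\nu\varphi_1}{\varepsilon} b_{\mu,\nu}$, which cancel. The computation of $w_2\, b_{\mu,\nu}$ is entirely symmetric, using $a b_{\mu,\nu} = \mu b_{\mu,\nu}$ and the second relation.

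For part (ii), I would use that $a \in A_1(a)$ and $b_0 \in A_0(a)$ lie in distinct $L_a$-eigenspaces and so are linearly independent whenever $b_0 \ne 0$. Hence $w_1 = \tfrac{\varphi_1}{\varepsilon} a - \nu b_0 = 0$ forces both coefficients to vanish, giving $\nu = 0$ and $\varphi_1 = 0$. Substituting $\nu = 0$ yields $\varepsilon = \mu$ and $\varphi_1 = (1-\ga_b)\mu$; since $(\mu,\nu) = (\mu,0) \in S$ excludes $(0,0)$, we have $\mu \ne 0$, whence $\ga_b = 1$. The converse is an immediate substitution, and the $w_2$ statement follows by interchanging $\mu$ and $\nu$.

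I do not anticipate any real obstacle here; the argument is essentially algebraic bookkeeping on top of \eqref{622}. The only subtleties worth flagging are that linear independence of $a$ and $b_0$ (which requires $b_0 \ne 0$) is what lets one extract two conditions from a single vanishing equation in (ii), and that the exclusion $(0,0) \notin S$ is exactly what rules out the degenerate solution $\mu = 0$ and pins down $\ga_b = 1$.
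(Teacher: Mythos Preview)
Your proof is correct and follows essentially the same approach as the paper: both arguments reduce (i) to the identities in \eqref{622}, and (ii) to the linear independence of $a$ and $b_0$ together with the exclusion $(0,0)\notin S$.
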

\begin{proof}
     (i)  $\varepsilon b_{\mu,\nu}w_1=\varphi_1 b_{\mu,\nu} a -\varepsilon  \nu b_0b_{\mu,\nu}= \varepsilon \nu b_0 b_{\mu,\nu}-\varepsilon \nu b_0 b_{\mu,\nu}=0$ by \eqref{622}, and likewise $ 0=w_2b_{\mu,\nu}$.

     (ii) For $b_0 \ne 0$,
 $w_1=0$ iff $\nu=0$ and $\varphi_1=0$, iff $\nu=0$ and $\ga_b=1.$
\end{proof}
%


\begin{cor}\label{S1}
    Suppose that $(ba)b=b(ab),$ and that $|S^\circ| =1 ,$ i.e., $S^\circ= \{(\mu,\nu)\}.$ Then either $a$ is commutative or  $b_{\mu,\nu}^2=0,$ in which case $A =\ff a + \ff b  +\ff b_{\mu,\nu} = \ff a +\ff b_0 +\ff b_{\mu,\nu}$ with the multiplication table given as in Theorem~\ref{Ser02}(vi).
\end{cor}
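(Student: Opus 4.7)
The plan is to deduce the corollary directly from the equations in Theorem~\ref{Ser02} together with the fact that $|S^{\circ}|=1$.

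First I would dispose of the commutative case. If $a$ is commutative, the first alternative of the conclusion already holds, so I may assume $a$ is not commutative. By Definition~\ref{not1}(6) this means there exists $(\mu',\nu')\in S^{\circ}(a)$ with $\mu'\ne\nu'$; since $S^{\circ}=\{(\mu,\nu)\}$, necessarily $\mu\ne\nu$. Next, observe that for every pair $(\mu',\nu')\in S\setminus S^{\circ}$ one has $A_{\mu',\nu'}(a)=0$ and hence $b_{\mu',\nu'}=0$. Consequently the sum in identity~\eqref{61} of Theorem~\ref{Ser02} collapses to the single term $(\mu-\nu)b_{\mu,\nu}^{2}=0$, and since $\mu\ne\nu$ this yields $b_{\mu,\nu}^{2}=0$.

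For the second assertion, I would exhibit $W:=\ff a+\ff b_{0}+\ff b_{\mu,\nu}$ as a subalgebra of $A$ containing $a$ and $b$. The decomposition $b=\ga_{b}a+b_{0}+b_{\mu,\nu}$ (using $|S^{\circ}|=1$) shows $b\in W$, and at the same time $\ff a+\ff b+\ff b_{\mu,\nu}=W$. To check closure I would run through the multiplication table: $a^{2}=a$, and because $a$ is Seress (indeed satisfies the fusion rules) with $b_{0}\in A_{0}(a)$, $b_{\mu,\nu}\in A_{\mu,\nu}(a)$, we get $ab_{0}=b_{0}a=0$, $ab_{\mu,\nu}=\mu b_{\mu,\nu}$, $b_{\mu,\nu}a=\nu b_{\mu,\nu}$. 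Theorem~\ref{Ser02}(ii) (equations~\eqref{622}) places $b_{0}b_{\mu,\nu}$ and $b_{\mu,\nu}b_{0}$ in $\ff b_{\mu,\nu}$. From Theorem~\ref{Ser02}(vii) combined with $b_{\mu,\nu}^{2}=0$ one obtains $b_{0}^{2}=b_{0}+(\ga_{b}-\ga_{b}^{2})a\in W$. Finally $b_{\mu,\nu}^{2}=0\in W$. Hence $W$ is a subalgebra, and since $A=\lan\lan a,b\ran\ran\subseteq W\subseteq A$, equality holds. The multiplication table of $b$ on the basis is exactly the one read off from parts (v)--(vii) of Theorem~\ref{Ser02} with $\varepsilon=\mu+\nu$ and $b_{\mu,\nu}^{2}=0$.

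There is no real obstacle here: the only point requiring a moment's care is that \eqref{61} is indexed by $S$, not $S^{\circ}$, so one needs to note that the contributions from $S\setminus S^{\circ}$ vanish automatically. Everything else is a mechanical verification that the three-dimensional span is multiplicatively closed.
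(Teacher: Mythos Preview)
Your proof is correct and follows essentially the same approach as the paper: the key step is that equation~\eqref{61} collapses to the single relation $(\mu-\nu)b_{\mu,\nu}^{2}=0$, giving $\mu=\nu$ (i.e., $a$ commutative) or $b_{\mu,\nu}^{2}=0$. You add an explicit verification that $\ff a+\ff b_{0}+\ff b_{\mu,\nu}$ is closed under multiplication, which the paper leaves implicit as a consequence of Theorem~\ref{Ser02}.
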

\begin{proof}
    By \eqref{61}, $$0= (\mu-\nu)b_{\mu,\nu}^2,$$ implying $\mu = \nu$ or $b_{\mu,\nu}^2=0.$
\end{proof}

Both possibilities  occur as HRS algebras.
Next, we proceed with the other properties of an axis.


\subsection{The case of $\lan \lan a,b \ran \ran$  for axes $a$ and $b$}\label{stra}$ $

\begin{note}\label {baxis}
For the remainder of this section, our goal is to describe axial algebras $A = \lan \lan a,b \ran \ran$ of dimension  $\ge 3$, where   {\bf $a$ is a  weakly primitive $S$-axis satisfying the fusion rules  and $b$ is an axis}.
\end{note}

\begin{thm}\label{Ser2} 
 For  $ (\mu,\nu)\in S^\circ $ the following four equations hold, letting $\varepsilon: = \mu +\nu \ne 0$ (by Theorem~\ref{Ser02}(ii)):

\begin{equation}\label{6641}
  (\mu -\nu)\Big( \frac{1-2\ga_b}\varepsilon  \Big) b_{\mu,\nu}^2 = b_{\mu,\nu}^2b_0-b_0b_{\mu,\nu}^2  . \end{equation}


\begin{equation}\label{664} \,  b_{\mu',\nu'} b_{\mu,\nu}^2= b_{\mu,\nu}^2 b_{\mu',\nu'}, \quad \forall (\mu',\nu') \in S.\end{equation}

\begin{equation}\label{6642} b_{\mu,\nu}b_0^2 -b_0^2 b_{\mu,\nu} =(\mu -\nu)\frac{1-2\ga_b+\ga_b^2\varepsilon}{\varepsilon} b_{\mu,\nu} .\end{equation}

  \begin{equation}\label{6643} b_0  b_0^2 -  b_0^2  b_0 = (1- 2\ga_b) \sum _{(\mu',\nu')\in S} \frac{\mu' -\nu'}{\mu'+\nu'} b_{\mu',\nu'}^2.\end{equation}

\end{thm}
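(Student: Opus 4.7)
The plan is to extract all four equations from the axis condition $L_bR_b = R_bL_b$ for $b$, equivalently $b(xb) = (bx)b$ for every $x\in A$, combined with the identities established in Theorem~\ref{Ser02}. Specifically, I apply this condition at $x = b_{\mu,\nu}$ to obtain \eqref{6641} and \eqref{664} simultaneously, and then derive \eqref{6642} and \eqref{6643} by commutator manipulations based on the expression $b_0^2 - b_0 = (\ga_b - \ga_b^2)a - \sum_{(\mu',\nu') \in S^\circ} b_{\mu',\nu'}^2$ from Theorem~\ref{Ser02}(vii).

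For \eqref{6641} and \eqref{664}: substituting the formulas $bb_{\mu,\nu} = \lambda\, b_{\mu,\nu} + b_{\mu,\nu}^2$ and $b_{\mu,\nu}b = \tilde\lambda\, b_{\mu,\nu} + b_{\mu,\nu}^2$ from Theorem~\ref{Ser02}(vi) (with $\lambda = \frac{(1-\ga_b)\nu + \ga_b\mu}{\varepsilon}$ and $\tilde\lambda = \frac{(1-\ga_b)\mu + \ga_b\nu}{\varepsilon}$) into $b(b_{\mu,\nu}b) = (bb_{\mu,\nu})b$, the cross terms cancel and leave
\[
bb_{\mu,\nu}^2 - b_{\mu,\nu}^2 b = (\lambda - \tilde\lambda)\,b_{\mu,\nu}^2 = \frac{(1-2\ga_b)(\nu-\mu)}{\varepsilon}\,b_{\mu,\nu}^2.
\]
I then expand the left-hand side using $b = \ga_b a + b_0 + \sum b_{\mu',\nu'}$; since $b_{\mu,\nu}^2 \in \ff a + A_0(a)$ commutes with $a$, this becomes
\[
\bigl(b_0 b_{\mu,\nu}^2 - b_{\mu,\nu}^2 b_0\bigr) + \sum_{(\mu',\nu') \in S^\circ} \bigl(b_{\mu',\nu'} b_{\mu,\nu}^2 - b_{\mu,\nu}^2 b_{\mu',\nu'}\bigr).
\]
By the fusion rules (Definition~\ref{not1}(8)(c),(d)), the first summand lies in $\ff a + A_0(a)$ and each $(\mu',\nu')$-summand in $A_{\mu',\nu'}(a)$, while the scalar multiple of $b_{\mu,\nu}^2$ on the right is entirely in $\ff a + A_0(a)$. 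Matching components in the direct-sum decomposition of $A$ then yields \eqref{6641} (from $\ff a + A_0(a)$) and \eqref{664} (one identity per $(\mu',\nu')$, from the $A_{\mu',\nu'}(a)$ parts).

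For \eqref{6642}, I evaluate $[b_{\mu,\nu},\, b_0^2 - b_0]$ in two ways. Theorem~\ref{Ser02}(vii), together with $[b_{\mu,\nu}, a] = (\nu-\mu)b_{\mu,\nu}$ and $[b_{\mu,\nu}, b_{\mu',\nu'}^2] = 0$ (by \eqref{664}), gives $[b_{\mu,\nu},\, b_0^2 - b_0] = (\ga_b - \ga_b^2)(\nu-\mu)\,b_{\mu,\nu}$; writing the same commutator as $[b_{\mu,\nu}, b_0^2] - [b_{\mu,\nu}, b_0]$ and using $[b_{\mu,\nu}, b_0] = \frac{(\mu-\nu)(1-2\ga_b+\ga_b\varepsilon)}{\varepsilon}\,b_{\mu,\nu}$ from Theorem~\ref{Ser02}(ii), one recovers $b_{\mu,\nu}b_0^2 - b_0^2 b_{\mu,\nu}$, whose scalar coefficient simplifies to $(\mu-\nu)\frac{1-2\ga_b+\ga_b^2\varepsilon}{\varepsilon}$, giving \eqref{6642}. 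For \eqref{6643}, the analogous computation with $[b_0,\, b_0^2 - b_0]$ proceeds by noting $[b_0, a] = 0$, so Theorem~\ref{Ser02}(vii) reduces the commutator to $-\sum [b_0, b_{\mu',\nu'}^2]$; \eqref{6641} gives $[b_0, b_{\mu',\nu'}^2] = (\nu'-\mu')\frac{1-2\ga_b}{\varepsilon'}\,b_{\mu',\nu'}^2$, and recognizing $[b_0, b_0^2 - b_0] = b_0 b_0^2 - b_0^2 b_0$ then produces \eqref{6643}.

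The hard step is the decomposition for \eqref{6641} and \eqref{664}: one must correctly invoke the fusion rules to recognize that, after expanding $b b_{\mu,\nu}^2 - b_{\mu,\nu}^2 b$ via $b = \ga_b a + b_0 + \sum b_{\mu',\nu'}$, the resulting summands lie in distinct pieces of the $a$-eigenspace decomposition, so that matching direct-sum components separates the single axis-condition identity into the two independent equations. Once these are established, \eqref{6642} and \eqref{6643} follow by routine commutator algebra using Theorem~\ref{Ser02}(vii).
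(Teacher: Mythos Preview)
Your argument is correct. For \eqref{6641} and \eqref{664} you follow the paper's route---apply $b(b_{\mu,\nu}b)=(bb_{\mu,\nu})b$ and split by $a$-eigenspaces---though your use of the closed formulas $bb_{\mu,\nu}=\lambda b_{\mu,\nu}+b_{\mu,\nu}^2$ and $b_{\mu,\nu}b=\tilde\lambda b_{\mu,\nu}+b_{\mu,\nu}^2$ from Theorem~\ref{Ser02}(v) collapses the computation to the single commutator identity $[b,b_{\mu,\nu}^2]=(\lambda-\tilde\lambda)b_{\mu,\nu}^2$, which is cleaner than the paper's full expansion of both triple products.

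For \eqref{6642} and \eqref{6643} your approach is genuinely different. The paper applies the axis condition a second time, at $x=b_0$, expanding $b(b_0b)$ and $(bb_0)b$ and matching $A_{\mu,\nu}(a)$-components (for \eqref{6642}) and $\ff a+A_0(a)$-components (for \eqref{6643}). You instead bootstrap: once \eqref{6641} and \eqref{664} are known, the expression $b_0^2-b_0=(\ga_b-\ga_b^2)a-\sum b_{\mu',\nu'}^2$ from Theorem~\ref{Ser02}(vii) lets you read off $[b_{\mu,\nu},b_0^2]$ and $[b_0,b_0^2]$ directly, without a second invocation of $L_bR_b=R_bL_b$. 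This shows that \eqref{6642} and \eqref{6643} are actually formal consequences of \eqref{6641}, \eqref{664}, and the identities in Theorem~\ref{Ser02} (which use only that $b$ is idempotent with $(ba)b=b(ab)$), a dependence the paper's parallel derivation obscures. The paper's route is more uniform; yours is shorter and more informative about which hypotheses are really needed.
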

\begin{proof}  Recall~\eqref{622}:
      $$     ((1 -\ga_b)\mu +\ga_b(1- \varepsilon) \nu)b_{\mu,\nu}  = \varepsilon b_{\mu,\nu}b_0  , \quad  ((1 -\ga_b)\nu +\ga_b(1- \varepsilon) \mu)b_{\mu,\nu}  = \varepsilon b_0 b_{\mu,\nu}.$$
 We compute
        $$b_{\mu,\nu} b  = b_{\mu,\nu}(\ga_b a +b_0 + \sum _{(\mu',\nu')\in S} b_{\mu',\nu'}) = \ga _b \nu b_{\mu,\nu} + b_{\mu,\nu} b_0 +b_{\mu,\nu}^2 . $$
\begin{equation}\label{mm1}
    \begin{aligned}
    &\textstyle{b(b_{\mu,\nu} b) = (\ga_b a +b_0 + \sum _{(\mu',\nu')\in S} b_{\mu',\nu'} )(\ga _b \nu b_{\mu,\nu} + b_{\mu,\nu} b_0 +b_{\mu,\nu}^2 )}
        \\ & = \ga_b ^2 \mu \nu b_{\mu,\nu}  + \ga _b  \mu b_{\mu,\nu}  b_0  +  \ga_b a b_{\mu,\nu}^2 +\ga _b \nu b_0 b_{\mu,\nu}+ b_0(b_{\mu,\nu}b_0)+    b_0 b_{\mu,\nu}^2 \\ & \quad   + \ga _b \nu  b_{\mu,\nu}^2 + b_{\mu,\nu}(b_{\mu,\nu} b_0)  +\sum _{(\mu',\nu')\in S} b_{\mu',\nu'} b_{\mu,\nu}^2  .
    \end{aligned}
\end{equation}

Symmetrically,
\begin{equation}\label{mm2}
    \begin{aligned}
    (bb_{\mu,\nu})b    = \ga_b ^2& \mu \nu b_{\mu,\nu} + \ga _b \nu b_0  b_{\mu,\nu}+ \ga_b b_{\mu,\nu}^2 a  + \ga _b \mu b_{\mu,\nu} b_0  +( b_0 b_{\mu,\nu})b_0 \\& + b_{\mu,\nu}^2b_0  + \ga_b  \mu  b_{\mu,\nu}^2     +(b_0 b_{\mu,\nu}  ) b_{\mu,\nu}  +\sum _{(\mu',\nu')\in S}  b_{\mu,\nu}^2b_{\mu',\nu'} .  \end{aligned}
\end{equation}

But $b(b_{\mu,\nu} b) = (b b _{\mu,\nu})b.$ Matching components
in $a$ and $A_0(a)$ and noting $ a b_{\mu,\nu}^2 = b_{\mu,\nu}^2 a $ shows $$ \ga _b \nu  b_{\mu,\nu}^2  + b_{\mu,\nu}(b_{\mu,\nu} b_0) + b_0b_{\mu,\nu}^2=  \ga _b \mu  b_{\mu,\nu}^2 +( b_0 b_{\mu,\nu})b_{\mu,\nu} + b_{\mu,\nu}^2b_0,$$
or

\begin{equation}\label{66414}
     \ga _b (\mu -\nu)  b_{\mu,\nu}^2 =   b_{\mu,\nu}(b_{\mu,\nu} b_0) -( b_0 b_{\mu,\nu})b_{\mu,\nu} +b_0b_{\mu,\nu}^2-b_{\mu,\nu}^2b_0.
\end{equation}

But \eqref{622} says
\begin{equation}
    \begin{aligned}
      \varepsilon(   b_{\mu,\nu}(b_{\mu,\nu} b_0) &-( b_0 b_{\mu,\nu})b_{\mu,\nu})\\ & =   b_{\mu,\nu}\left( ((1 -\ga_b)\mu +\ga_b(1- \varepsilon) \nu)\right)b_{\mu,\nu}\\& \qquad -\left((1 -\ga_b)\nu +\ga_b(1- \varepsilon) \mu\right)b_{\mu,\nu}b_{\mu,\nu}\\&  =  {(1-2\ga_b+\ga_b\varepsilon)(\mu-\nu)} b_{\mu,\nu}^2.
    \end{aligned}
\end{equation}
 Plugging into \eqref{66414}  yields

 \begin{equation}
     \ga _b (\mu -\nu)  b_{\mu,\nu}^2 =   (\mu-\nu)\left(\frac {(1-2\ga_b)}\varepsilon+\ga_b\right)  b_{\mu,\nu}^2 +b_0b_{\mu,\nu}^2-b_{\mu,\nu}^2b_0.
\end{equation}

Thus  \begin{equation}\label{66411}
      (\mu -\nu)\Big( \frac{1-2\ga_b}\varepsilon  \Big) b_{\mu,\nu}^2 = b_{\mu,\nu}^2b_0-b_0b_{\mu,\nu}^2 ,
\end{equation}
which is \eqref{6641}.

Next,
matching components in $b_{\mu',\nu'} $ for $(\mu',\nu') \ne (\mu,\nu)$ in \eqref{mm1} and \eqref{mm2}  yields \begin{equation}
    b_{\mu',\nu'} b_{\mu,\nu}^2= b_{\mu,\nu}^2b_{\mu',\nu'} ,
\end{equation}
which  is \eqref{664}.

Matching components in $b_{\mu,\nu} $ in \eqref{mm1} and \eqref{mm2}  yields
$$  \ga _b \mu  b_{\mu,\nu}b_0 +\ga _b \nu  b_0 b_{\mu,\nu}
   +b_0 (b_{\mu,\nu}   b_0) +  b_{\mu,\nu}  b_{\mu,\nu}^2 = $$ $$ = \ga _b \nu  b_0 b_{\mu,\nu}+\ga _b \mu  b_{\mu,\nu}b_0 +( b_0 b_{\mu,\nu})b_0 +  b_{\mu,\nu}^2 b_{\mu,\nu},  $$
which again is \eqref{664} since $ b_0 (b_{\mu,\nu}   b_0) =  ( b_0 b_{\mu,\nu})b_0$ by \eqref{6224}.

\begin{equation}\label{mm11}
    \begin{aligned}b(b_0 b) & = (\ga_b a +b_0 + \sum _{(\mu',\nu')\in S} b_{\mu',\nu'} )(b_0^2 +\sum_{(\mu',\nu')\in S} b_0  b_{\mu',\nu'}    )
        \\ & = \ga_b
         a b_0^2
        +\sum _{(\mu',\nu')\in S} \ga_b\mu' b_0  b_{\mu',\nu'} +b_0 {b_0^2}
      +\sum_{(\mu',\nu')\in S}  b_0  (  b_0 b_{\mu',\nu'})  \\& +\sum   _{(\mu',\nu')\in S}( b_{\mu',\nu'} b_0^2 )  +\sum _{(\mu',\nu')\in S}  b_{\mu',\nu'} (b_0b_{\mu',\nu'})
    \end{aligned}
\end{equation}
since $A_{\mu',\nu'}(a) A_{\mu,\nu}(a) =0 $ for $(\mu',\nu') \ne  (\mu,\nu).$
Symmetrically,
\begin{equation}\label{mm12}
    \begin{aligned} (bb_0)b &   = \ga_b b_0^2 a
        +\sum _{(\mu',\nu')\in S} \ga_b \nu'   b_{\mu',\nu'}b_0  + {b_0^2} b_0
     \\& +\sum_{(\mu',\nu')\in S}  (    b_{\mu',\nu'} b_0 )  b_0   +\sum   _{(\mu',\nu')\in S}( b_0^2 b_{\mu',\nu'} ) +\sum _{(\mu',\nu')\in S}   ( b_{\mu',\nu'} b_0)b_{\mu,\nu}
    \end{aligned}
\end{equation}
   Matching components in $ \ff b_{\mu,\nu }$  in \eqref{mm11} and \eqref{mm12}  yields
\begin{equation}
    \label{66421}\ga_b \mu  b_0  b_{\mu,\nu}+ b_0 (  b_0  b_{\mu,\nu } )   + b_{\mu,\nu}b_0^2  =   \ga_b \nu   b_{\mu,\nu}b_0+(   b_{\mu,\nu}  b_0 )  b_0 +b_0^2 b_{\mu,\nu}.
\end{equation}

But \eqref{622} says
\begin{equation}
    \begin{aligned}
         (b_{\mu,\nu} b_0)b_0&   =    \left(\frac { (1 -\ga_b)\mu +\ga_b(1- \varepsilon) \nu}{\varepsilon }\right)b_{\mu,\nu}b_0\\& =\left(\frac { (1 -\ga_b)\mu +\ga_b(1- \varepsilon) \nu}{\varepsilon }\right)  ^2b_{\mu,\nu},
    \end{aligned}
\end{equation}
and symmetrically
 $b_0  (  b_0 b_{\mu,\nu}) =\left(\frac { (1 -\ga_b)\nu +\ga_b(1- \varepsilon) \mu}{\varepsilon }\right)  ^2b_{\mu,\nu}      $, so subtracting yields
\begin{equation}\label{6228}
    \begin{aligned}
         &(b_{\mu,\nu} b_0)b_0- b_0  (  b_0 b_{\mu,\nu})\\
         & \textstyle{=\left(\frac { (1 -\ga_b)\mu +\ga_b(1- \varepsilon) \nu}{\varepsilon }\right)  ^2b_{\mu,\nu}- \left(\frac { (1 -\ga_b)\nu +\ga_b(1- \varepsilon) \mu}{\varepsilon }\right)  ^2b_{\mu,\nu}} \\
         &\textstyle{ = \left(\frac { (1 -\ga_b)\mu +\ga_b(1- \varepsilon) \nu}{\varepsilon }  - \frac { (1 -\ga_b)\nu +\ga_b(1- \varepsilon) \mu}{\varepsilon } \right)}\\
         &\textstyle{\quad\left(\frac { (1 -\ga_b)\mu +\ga_b(1- \varepsilon) \nu}{\varepsilon }  + \frac { (1 -\ga_b)\nu +\ga_b(1- \varepsilon) \mu}{\varepsilon }\right)b_{\mu,\nu}}
         \\ &\textstyle{ =
        \left(\frac {(1-2\ga_b+\ga_b\gre)(\mu-\nu)}{\varepsilon}\right)(1-\ga_b\gre)
        b_{\mu,\nu} =  (\mu-\nu) \frac {(1-\ga_b \varepsilon)(1-2\ga_b+\ga_b\gre) } {\varepsilon  }   b_{\mu,\nu}}
    \end{aligned}
\end{equation}
Note  that
\eqref{622} also implies
\begin{equation}\label{6229}
    \begin{aligned}
        \mu b_0  b_{\mu,\nu}   - \nu b_{\mu,\nu}b_0 & = \frac{((1 -\ga_b)\nu +\ga_b(1- \varepsilon)\mu) \mu -((1 -\ga_b)\mu +\ga_b(1- \varepsilon) \nu)\nu}{\varepsilon} b_{\mu,\nu}\\& =\ga_b(1- \varepsilon)\frac{\mu ^2 -\nu^2}{\varepsilon} b_{\mu,\nu}=  \ga_b(1- \varepsilon)( \mu-\nu)b_{\mu,\nu}.
    \end{aligned}
\end{equation}
  Plugging  \eqref{6228} and \eqref{6229} into \eqref{66421},        yields
\begin{equation}
\ga_b^2(1- \varepsilon)( \mu-\nu )b_{\mu,\nu} + b_{\mu,\nu}b_0^2  =
(\mu-\nu) \frac {(1-\ga_b \varepsilon)(1-2\ga_b+\ga_b\gre)} {\varepsilon  }     +b_0^2 b_{\mu,\nu},
\end{equation}

or \begin{equation}
    \begin{aligned}
         b_{\mu,\nu}b_0^2 -b_0^2 b_{\mu,\nu}& =
         (\mu -\nu)\frac{(1-\ga_b \varepsilon)(1-2\ga_b+\ga_b\gre) -\ga_b^2(1-\gre)\varepsilon}{\varepsilon} b_{\mu,\nu}\\
         & = (\mu -\nu)\frac{1-2\ga_b+\ga_b^2\gre}{\varepsilon} b_{\mu,\nu} ,
    \end{aligned}
\end{equation}
    which is \eqref{6642}.

   Matching parts  in \eqref{mm11} and \eqref{mm12}  of $ \ff a +A_0(a)$  yields
   $$b_0  b_0^2  + \sum _{(\mu',\nu')\in S}  b_{\mu',\nu'} (b_0b_{\mu',\nu'}) =   b_0^2  b_0 +\sum _{(\mu',\nu')\in S}  ( b_{\mu',\nu'} b_0)b_{\mu',\nu'},$$
   since $ab_0^2 = b_0^2 a, $

   yielding
      $$b_0  b_0^2 -  b_0^2  b_0 = \sum _{(\mu',\nu')\in S}   ( b_{\mu',\nu'} b_0)b_{\mu',\nu'}-  b_{\mu',\nu'} (b_0b_{\mu',\nu'}),$$
or, since by   \eqref{622}, $$((1 -\ga_b)\mu +\ga_b(1- \varepsilon)\nu )-((1 -\ga_b)\nu +\ga_b(1- \varepsilon) \mu)=(1-2\ga_b+\ga_b\gre)(\mu-\nu),$$ we get
 \begin{equation}
     \begin{aligned}
         &b_0  b_0^2 -  b_0^2  b_0\textstyle{ = \sum _{(\mu',\nu')\in S}  (\mu' -\nu')\frac{1-2\ga_b+\ga_b(\mu'+\nu')}{(\mu'+\nu')} b_{\mu',\nu'}^2} \\ &\textstyle{ = (1- 2\ga_b) \sum _{(\mu',\nu')\in S} \frac{\mu' -\nu' }{\mu'+\nu'}  b_{\mu',\nu'}^2 +\ga_b \sum _{(\mu',\nu')\in S}  (\mu' -\nu') b_{\mu',\nu'}^2}\\ &\textstyle{ =  (1- 2\ga_b)\sum _{(\mu',\nu')\in S} \frac{\mu' -\nu'}{\mu'+\nu'} b_{\mu',\nu'}^2,}
     \end{aligned}
 \end{equation}
    by \eqref{61}.
   \end{proof}

We already have a key result.

\begin{lemma}\label{b0sq}
 If $A_0(a)$ is commutative  and $\ga_b\ne\half,$
  then $\bar S=\emptyset,$ i.e., $b_{\mu,\nu}^2=0$ for all $(\mu,\nu)\in S^{\dagger}.$
  \end{lemma}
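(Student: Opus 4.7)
The plan is to invoke equation \eqref{6641} of Theorem~\ref{Ser2} directly, and show that under the hypotheses the right-hand side vanishes while the scalar coefficient on the left is nonzero.

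First I would fix $(\mu,\nu)\in S^\dagger$ and recall that, by fusion rule (8)(b) together with weak primitivity of $a$, we have
\[
b_{\mu,\nu}^2 \in A_1(a)+A_0(a) = \ff a + A_0(a).
\]
Write $b_{\mu,\nu}^2 = \lambda a + c$ with $\lambda\in \ff$ and $c\in A_0(a)$. Since $b_0\in A_0(a)$ and $ab_0 = b_0 a = 0$ (fusion rule (8)(d) applied to the $1$- and $0$-parts, or directly from $A_1(a)A_0(a)=0=A_0(a)A_1(a)$ when $a$ is Seress), the contribution of the $\lambda a$ term to the commutator $b_{\mu,\nu}^2 b_0 - b_0 b_{\mu,\nu}^2$ is zero, leaving
\[
b_{\mu,\nu}^2 b_0 - b_0 b_{\mu,\nu}^2 = c b_0 - b_0 c.
\]
Now the commutativity hypothesis on $A_0(a)$ gives $c b_0 = b_0 c$, so the whole commutator vanishes.

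Substituting this into \eqref{6641} yields
\[
(\mu-\nu)\Bigl(\frac{1-2\ga_b}{\varepsilon}\Bigr) b_{\mu,\nu}^2 = 0.
\]
Because $(\mu,\nu)\in S^\dagger$ we have $\mu\ne\nu$; because $(\mu,\nu)\in S^\circ$ Theorem~\ref{Ser02}(ii) guarantees $\varepsilon=\mu+\nu\ne 0$; and by hypothesis $\ga_b\ne\half$, so $1-2\ga_b\ne 0$. Hence $b_{\mu,\nu}^2=0$, as required.

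There is no real obstacle here: once equation \eqref{6641} is in hand, the only subtle point is observing that the $\ff a$-component of $b_{\mu,\nu}^2$ automatically commutes with $b_0$ (because $a$ annihilates $A_0(a)$ on both sides under the fusion rules), so the commutativity hypothesis on $A_0(a)$ really does kill the entire commutator on the right-hand side.
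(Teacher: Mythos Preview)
Your argument is correct and is essentially the paper's own proof, just written out more explicitly: both use equation~\eqref{6641}, observe that $b_{\mu,\nu}^2\in\ff a+A_0(a)$ commutes with $b_0$ (the $\ff a$-part because $ab_0=b_0a=0$ by definition of $A_0(a)$, the $A_0(a)$-part by the commutativity hypothesis), and conclude since $\mu\ne\nu$, $\varepsilon\ne0$, and $1-2\ga_b\ne0$. One minor remark: the vanishing $ab_0=b_0a=0$ follows directly from $b_0\in A_0(a)=A_0(L_a)\cap A_0(R_a)$, so no appeal to fusion rule~(8)(d) or the Seress property is needed there.
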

  \begin{proof} By the fusion rules for $a,$
      $b_0$ commutes  with $b_{\mu',\nu'}^2,$ for all $(\mu',\nu')\in S.$
 Hence (ii) holds by \eqref{6641}.
  \end{proof}

 \begin{lemma}\label{nonz}$ $
 \begin{enumerate}\eroman
     \item $ \mu b_0b_{\mu,\nu} =\nu b_{\mu,\nu}b_0$
      holds, if and only if
  $\mu = \nu$ or $\mu+\nu = 1$  or $\ga_b=0$.

 \item  If $b_0$ commutes with $A_0(a)$ (in particular if $b_0 =0$ or $A_0(a)=\ff b_0)$ then either $\ga_b=\half$ or $\sum _{(\mu',\nu')\in S} \frac{\mu' -\nu'}{\mu'+\nu'} b_{\mu',\nu'}^2 =0. $

 \end{enumerate}

\end{lemma}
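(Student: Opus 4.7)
The plan is to read off both parts directly from identities that are already in hand — namely, the formulas \eqref{622} of Theorem~\ref{Ser02}(ii) and the commutator identity \eqref{6643} of Theorem~\ref{Ser2} — so no fresh computation with $(ba)b$ or $b(ab)$ is required.

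For part (i), I would combine the two formulas of \eqref{622} linearly so as to isolate the expression $\mu b_0 b_{\mu,\nu} - \nu b_{\mu,\nu}b_0$. Multiplying the $b_{\mu,\nu}b_0$ equation by $\nu$ and the $b_0 b_{\mu,\nu}$ equation by $\mu$ and subtracting, the $(1-\ga_b)\mu\nu$ cross-terms cancel; after dividing by $\varepsilon$ (nonzero by Theorem~\ref{Ser02}(ii) since $(\mu,\nu)\in S^\circ$) one obtains
$$\mu b_0 b_{\mu,\nu} - \nu b_{\mu,\nu}b_0 \;=\; \ga_b(1-\varepsilon)(\mu-\nu)\,b_{\mu,\nu},$$
which is exactly the identity \eqref{6229} extracted inside the proof of Theorem~\ref{Ser2}. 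Since $b_{\mu,\nu}\ne 0$ by Lemma~\ref{rempr1}, the right-hand side vanishes precisely when $\ga_b=0$, $\varepsilon=1$ (i.e.\ $\mu+\nu=1$), or $\mu=\nu$, which is the claimed equivalence.

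For part (ii), I would plug into \eqref{6643}, whose right-hand side is $(1-2\ga_b)\sum_{(\mu',\nu')\in S}\frac{\mu'-\nu'}{\mu'+\nu'}b_{\mu',\nu'}^2$, and show that the hypothesis forces the left-hand side $b_0 b_0^2 - b_0^2 b_0$ to be zero. Using the fusion rule $A_0(a)^2\subseteq A_1(a)+A_0(a)$ together with $A_1(a)=\ff a$, decompose $b_0^2 = \lambda a + c$ with $c\in A_0(a)$. Since $ab_0 = b_0 a = 0$, the $\lambda a$ summand contributes nothing to either $b_0 b_0^2$ or $b_0^2 b_0$, so the commutator reduces to $b_0 c - c b_0$, which vanishes by the hypothesis that $b_0$ commutes with $A_0(a)$. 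Hence \eqref{6643} collapses to $0 = (1-2\ga_b)\sum \frac{\mu'-\nu'}{\mu'+\nu'}b_{\mu',\nu'}^2$, giving the stated dichotomy; the special cases $b_0=0$ and $A_0(a)=\ff b_0$ are clearly covered.

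There is no real obstacle here: the lemma is essentially a packaging of two identities already established. The only bookkeeping is (a) confirming $\varepsilon \ne 0$ in part (i), which is guaranteed by Theorem~\ref{Ser02}(ii), and (b) verifying that the $\ff a$ component of $b_0^2$ drops out of the commutator in part (ii), which follows from $a$ being a weakly primitive axis with $ab_0=b_0 a=0$.
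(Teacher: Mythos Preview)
Your proof is correct and essentially follows the paper's approach. For (i) the paper simply cites \eqref{62}, which is already in the form $\nu b_{\mu,\nu}b_0 - \mu b_0 b_{\mu,\nu} = \ga_b(\mu+\nu-1)(\mu-\nu)b_{\mu,\nu}$, whereas you rederive this same identity from the formulas \eqref{622}; for (ii) you use \eqref{6643} exactly as the paper does, supplying the extra detail that the $\ff a$-component of $b_0^2$ drops out of the commutator since $ab_0=b_0a=0$.
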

\begin{proof}
(i)  Immediate from \eqref{62}.
 \medskip

(ii) By \eqref{6643}, $(1- 2\ga_b) \sum _{(\mu',\nu')\in S} \frac{\mu' -\nu'}{\mu'+\nu'} =0.$
\end{proof}

\begin{remark}\label{Acomm}
    When $A$ is commutative, Theorem~\ref{Ser2} is superfluous, as are \eqref{61} and \eqref{62}.

      Continuing,  our only constraint is \eqref{622}, which controls the multiplication of $b_{\mu,\nu}$ with $\ff b_0$, and Theorem~\ref{Ser02}(vii), which could be viewed as a formula for $b_0^2.$ Otherwise, we have no control over $A_0(a)$, which could have arbitrarily large dimension, and we have no information about the product of its elements.
\end{remark}

\subsection{The case $b_0=0$}$ $
\medskip

We are ready to characterize all axial algebras~$A$ in the case that $b_0=0.$ They are rather straightforward.

\begin{example}\label{b0zeroa} Here are two classes  with $ b_0=0.$
    \begin{enumerate}
        \item ($A$   commutative). $A = \ff a \oplus \ff b_{\mu,\mu},$ with $\mu=\frac{1}{2\ga_b},$  $b = \ga_b a +b_{\mu,\mu}$
        and $b_{\mu,\mu}^2 = \ga_b(1-\ga_b)a$. Then
        \begin{equation}
            \label{bsqu} b^2 = \ga_b^2 a + \ga_b(1-\ga_b)a + 2\ga_b \mu b_{\mu,\mu} =\ga_b a +b_{\mu,\mu} =b,
        \end{equation}
       and the fusion rules hold for $a.$  Then  $\dim A=~2$, with $$ab = \ga_b a +\mu b_{\mu,\mu}=  \ga_b a +\mu (b-\ga_b a) = \ga_b(1-\mu)a +\mu b.$$
          By Lemma~\ref{pr01}(i), where $\xi = \mu$ and $\rho = \ga_b(1-\mu)$ the second eigenvalue of $b $ is $\ga_b(1-\mu).$

          Until now, $\ga_b$ is arbitrary.
          By Lemma~\ref{pr01}(iv),  $b$ satisfies the fusion rules if and only if $0 =\ga_b(1-\mu) -1 +2\frac{1}{2\ga_b}\ga_b(1-\mu) = \ga_b - \ga_b\mu  -\mu. $ Thus $\frac{\ga_b}{1+\ga_b} = \mu = \frac{1}{2\ga_b}$, implying $2\ga_b^2 = 1+\ga_b,$ which has the roots $\ga_b = 1$ or $\ga_b = -\half.$ Thus $\mu =\half$ or $\mu = -2$. This matches the results in \cite{HRS}.

         \item ($A$   not commutative). $\mu+\nu =1$ for all $(\mu,\nu)\in S^\circ,$ and $S^{\dagger}\ne\emptyset.$
         \[
         \textstyle{A=\ff a+\sum_{(\mu,\nu)\in S^{\circ}}\ff b_{\mu,\nu},}
         \]
         where  $b = a +\sum_{(\mu,\nu)\in S^{\circ}} b_{\mu,\nu},$ and $b_{\mu,\nu}^2 = 0 $ for each $(\mu,\nu)\in S^{\circ}.$

         Clearly each $b_{\mu,\nu}$ is also  a $(\mu,\nu)$-eigenvector of $b$, so $b$ automatically satisfies the fusion rules.  $I:=\sum_{(\mu,\nu)\in S^{\circ}}\ff b_{\mu,\nu}$ is a square zero ideal in $A,$ with $A/I \cong \ff a. $
    \end{enumerate}
\end{example}

Now we shall show that these are the only examples for $b_0=0$.

\begin{thm}\label{b0zero}
    Suppose $ b_0=0.$ Then  $A_0(a)=0$.
    Furthermore,
     \begin{enumerate}\eroman

          \item If $S^{\dagger}=\emptyset,$ then $A$ is as in Example~\ref{b0zeroa}(i).
            \item If $S^{\dagger}\ne\emptyset,$ then $A$ is as in Example~\ref{b0zeroa}(ii).
        \end{enumerate}
\end{thm}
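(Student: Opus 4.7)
The plan is to specialize the identities of Theorem~\ref{Ser02} to $b_0=0$ and combine them with the idempotent relation $b^2=b$. For each $(\mu,\nu)\in S^\circ$ we have $b_{\mu,\nu}\ne 0$ by Lemma~\ref{rempr1}, so when $b_0=0$ the right-hand side of \eqref{622} vanishes and we obtain
\[
(1-\ga_b)\mu + \ga_b(1-\varepsilon)\nu \;=\; 0 \;=\; (1-\ga_b)\nu + \ga_b(1-\varepsilon)\mu,
\]
where $\varepsilon=\mu+\nu\ne 0$ by Theorem~\ref{Ser02}(ii). Adding the two equations gives $\varepsilon(1-\ga_b\varepsilon)=0$, whence $\ga_b\varepsilon=1$; subtracting and using $\ga_b\varepsilon=1$ gives $2(\mu-\nu)(1-\ga_b)=0$, so for each $(\mu,\nu)\in S^\circ$ either $\mu=\nu$ or $\ga_b=1$.

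In Case~(i), $S^\dagger=\emptyset$ forces every pair in $S^\circ$ to be diagonal $(\mu,\mu)$ with $\mu=1/(2\ga_b)$; since $\ga_b$ is fixed, this determines $\mu$ uniquely, so $S^\circ$ has at most one element. (If $S^\circ=\emptyset$, then $b=\ga_b a$ collapses to a trivial case.) Writing $b=\ga_b a + b_{\mu,\mu}$, expanding $b^2=b$ and matching components in the decomposition $A_{\mu,\mu}(a)\oplus(A_1(a)+A_0(a))$ yields $2\ga_b\mu=1$ and $b_{\mu,\mu}^2 = \ga_b(1-\ga_b)\,a$; in particular the $A_0(a)$-component of $b_{\mu,\mu}^2$ vanishes. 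Hence $A=\ff a+\ff b_{\mu,\mu}$, and the eigenspace decomposition forces $A_0(a)=0$, so we recover Example~\ref{b0zeroa}(i).

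In Case~(ii), choosing $(\mu,\nu)\in S^\dagger$ forces $\ga_b=1$, and then $\ga_b\varepsilon=1$ forces $\mu+\nu=1$ for every $(\mu,\nu)\in S^\circ$ (any diagonal pair in $S^\circ$ must therefore be $(\half,\half)$). Now $b=a+\sum_{(\mu,\nu)\in S^\circ}b_{\mu,\nu}$, and the fusion rules for $a$ kill all cross terms in $b^2$, so $b^2=b$ reduces to $\sum_{(\mu,\nu)\in S^\circ}b_{\mu,\nu}^2=0$. Since $\dim A_0(a)\le 1$ makes $A_0(a)$ commutative and $\ga_b=1\ne \half$, Lemma~\ref{b0sq} yields $b_{\mu,\nu}^2=0$ for every $(\mu,\nu)\in S^\dagger$; the possibly remaining term $b_{\half,\half}^2$ must then also vanish from the summed equation. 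Thus every $b_{\mu,\nu}^2=0$, so $A=\ff a+\sum_{(\mu,\nu)\in S^\circ}\ff b_{\mu,\nu}$, whence $A_0(a)=0$ and $A$ matches Example~\ref{b0zeroa}(ii).

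The only delicate step is the cleanup at the end of Case~(ii): Lemma~\ref{b0sq} kills only off-diagonal squares, so disposing of the possible $(\half,\half)$-term requires invoking the consequence $\sum b_{\mu,\nu}^2=0$ of $b^2=b$ \emph{after} the off-diagonal ones have been eliminated. Everything else is a direct specialization of already established identities together with the idempotent relation.
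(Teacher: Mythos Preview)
Your proof is correct and follows essentially the same route as the paper's. Both arguments specialize \eqref{622} (you directly, the paper via Corollary~\ref{half11}) to $b_0=0$ to force $\varphi_1=\varphi_2=0$, deduce $\ga_b\varepsilon=1$ and the dichotomy $\mu=\nu$ or $\ga_b=1$, and then use $b^2=b$ in each case to pin down the $b_{\mu,\nu}^2$. Your use of Lemma~\ref{b0sq} in Case~(ii) is exactly the paper's use of \eqref{6641} packaged one level up, and your explicit closure argument for $A=\ff a+\sum\ff b_{\mu,\nu}$ (hence $A_0(a)=0$) makes explicit what the paper leaves implicit in ``this yields Example~\ref{b0zeroa}.''
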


\begin{proof} Notation as in Corollary~\ref{half11}, $ w_i= \varphi_i a $ for $i=1,2,$ so   $0 = \varphi_1  = \varphi_2 $ by  Corollary~\ref{half11}(i).  Then
   \begin{equation}
       \label{b0z}  \begin{aligned}
0=\varphi_1+\varphi_2&=(1-\ga_b)\varepsilon+\ga_b(1-\varepsilon)\varepsilon\\
0=\varphi_1-\varphi_2&=(1-\ga_b)(\mu-\nu)+\ga_b(1-\varepsilon)(\nu-\mu)
     \end{aligned}
   \end{equation}
     Thus, cancelling $\varepsilon$ we get $1-\ga_b\epsilon=0.$ Hence $\ga_b \ne 0,$ so Lemma~\ref{nonz}(i)  implies $\mu = \nu$ or $\mu + \nu=1$. If $\mu = \nu$ then $\ga_b\mu = \frac{1}{2}.$

     (i) $b = \ga_b a +b_{\mu,\mu}$ since $S^\dagger=\emptyset$. Now \eqref{bsqu} rearranged says $$\ga_b a +b_{\mu,\mu} =b =b^2 = \ga_b^2 a +  b_{\mu,\mu}^2 + 2\ga_b \mu b_{\mu,\mu},$$
     implying  $b_{\mu,\mu}^2 =  (\ga_b- \ga_b^2)a.$

     (ii) For $\mu\ne\nu,$
     cancelling $\mu-\nu$ in \eqref{b0z}, we get $1-2\ga_b+\ga_b\varepsilon=0.$ Since $\ga_b\varepsilon=1,$ we see that $\ga_b=1,$ and then also $\varepsilon=1.$ By   \eqref{6641}, $b_{\mu,\nu}^2 =0.$

     Now, for $\mu = \nu$ we have $\mu = \frac{1}{2\ga_b}= \half.$
     Thus $b = a + b_{\half,\half}+\sum_{(\mu,\nu)\in S^{\dagger}} b_{\mu,\nu}.$
Furthermore $b = b^2 = a^2 + b_{\half,\half}^2 +b_{\half,\half}+\sum_{(\mu,\nu)\in S^{\dagger}} b_{\mu,\nu}, $ proving $b_{\half,\half}^2=0.$ This yields Example~\ref{b0zeroa}(ii).
\end{proof}

 \subsection{Axial algebras having a 1-axis}\label{3d}$ $


 Now we can establish the situation when $|S^\circ |=1,$ and $b_0\ne 0.$ Any  PAJ generated  by two 1-axes has dimension $\le 3,$  and is an HRS algebra if it is commutative, cf.~\cite{RoSe2,RoSe3}.

\begin{lemma}\label{ax1}
    If $a$ is a $( \mu,\nu)$-axis, and $b = \ga_b a  + b_0 +b_{\mu,\nu}$, 
    then $\mu,\nu \ne 0.$
\end{lemma}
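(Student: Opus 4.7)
I would argue by contradiction, handling $\mu=0$ first; the case $\nu=0$ is symmetric. Suppose $\mu=0$. Since $(\mu,\nu)\ne(0,0)\in S$, $\nu\ne 0$, and so $\varepsilon=\nu\ne 0$. From $\dim A\ge 3$ and $\dim A_0(a)\le 1$, $A_0(a)=\ff b_0$ and $b_{\mu,\nu}\ne 0$ (else $\dim A=2$). Because $\mu\ne\nu$ and $|S^\circ(a)|=1$, Corollary~\ref{S1} forces $b_{\mu,\nu}^2=0$.

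The key step is to exhibit two distinct nontrivial two-sided eigenspaces of $b$. Substituting $\mu=0$, $\varepsilon=\nu$ into~\eqref{622} gives
\[
b_0b_{\mu,\nu}=(1-\ga_b)b_{\mu,\nu},\qquad b_{\mu,\nu}b_0=\ga_b(1-\nu)b_{\mu,\nu},
\]
and combined with $ab_{\mu,\nu}=0$ and $b_{\mu,\nu}^2=0$ we get $b\,b_{\mu,\nu}=(1-\ga_b)b_{\mu,\nu}$ and $b_{\mu,\nu}\,b=\ga_bb_{\mu,\nu}$, so $b_{\mu,\nu}\in A_{1-\ga_b,\ga_b}(b)$. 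Since $ab=\ga_ba$ while $ba=\ga_ba+\nu b_{\mu,\nu}$, the element $a$ is only a right eigenvector of $b$, so I would correct it by the ansatz $v=a-c\,b_{\mu,\nu}$; automatically $vb=\ga_bv$, and $bv=\ga_bv$ forces $c(1-2\ga_b)=\nu$. The value $\ga_b=\tfrac12$ must be ruled out separately: at that value the matrix of $L_b$ in $\{a,b_0,b_{\mu,\nu}\}$ has a nontrivial Jordan block at eigenvalue $\tfrac12$, contradicting $b$ being an axis.

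Hence with $\ga_b\ne\tfrac12$ the element $v=a-\tfrac{\nu}{1-2\ga_b}b_{\mu,\nu}$ lies in $A_{\ga_b,\ga_b}(b)$, producing two distinct members $(\ga_b,\ga_b),(1-\ga_b,\ga_b)\in S^\circ(b)$. In the Section-3.5 context (PAJ generated by two $1$-axes) this already contradicts $|S^\circ(b)|=1$; equivalently, the fusion rule $A_{\ga_b,\ga_b}(b)\cdot A_{1-\ga_b,\ga_b}(b)=0$ for $b$ forces
\[
0=b_{\mu,\nu}\cdot v=b_{\mu,\nu}a-\tfrac{\nu}{1-2\ga_b}b_{\mu,\nu}^2=\nu\,b_{\mu,\nu},
\]
which is the desired contradiction since $\nu\ne 0$ and $b_{\mu,\nu}\ne 0$. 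Thus $\mu\ne 0$, and by the symmetric argument (yielding $v\cdot b_{\mu,\nu}=\mu b_{\mu,\nu}\ne 0$ when $\nu=0$) we obtain $\nu\ne 0$. I expect the trickiest point to be the semisimplicity exclusion of $\ga_b=\tfrac12$, since at that value the two eigenspaces above would merge and the contradiction would dissolve.
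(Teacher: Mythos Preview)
Your route is quite different from the paper's and it has a genuine gap. The paper's argument is a one-liner: if $\mu=0$ then $ab=\ga_b a+\mu b_{\mu,\nu}=\ga_b a$, so $a$ is an eigenvector for $R_b$; since $b$ is an axis, the paper asserts that $a$ is then also an eigenvector for $L_b$, and comparing with $ba=\ga_b a+\nu b_{\mu,\nu}$ forces $\nu=0$. No fusion rules for $b$ are invoked, and no eigenspace of $b$ is explicitly constructed.

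Your argument, by contrast, rests on hypotheses the lemma does not supply. The standing assumption at this point is only Note~\ref{baxis}: $b$ is an axis. It is not assumed that $b$ is a $|1|$-axis, nor that $b$ satisfies the fusion rules. So neither ``$|S^\circ(b)|=1$'' nor the product rule $A_{1-\ga_b,\ga_b}(b)\cdot A_{\ga_b,\ga_b}(b)=0$ is available, and your contradiction $b_{\mu,\nu}\cdot v=\nu\,b_{\mu,\nu}\ne 0$ has nothing to collide with. Even granting the fusion rules for $b$, your case split omits $\ga_b\in\{0,1\}$: there $(\ga_b,\ga_b)\in\{(0,0),(1,1)\}$, so $v$ lies in $A_0(b)$ or $A_1(b)$ and rule~(8)(a) does not apply; rule~(8)(d) then only gives $b_{\mu,\nu}v\in\ff b_{\mu,\nu}$, which is no contradiction. (Concretely, take $\mu=0$, $\nu=1$, $\ga_b=0$: one checks directly that $b$ is a weakly primitive axis satisfying the fusion rules with $S^\circ(b)=\{(1,0)\}$, yet $\mu=0$ --- so your extra hypotheses still would not close this case.) Your exclusion of $\ga_b=\tfrac12$ via the Jordan block of $L_b$ and your identification $b_{\mu,\nu}\in A_{1-\ga_b,\ga_b}(b)$ are both correct, but they do not by themselves deliver the conclusion.
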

\begin{proof}
    If $\mu = 0$ then $ab = \ga_b a +\mu b_{\mu,\nu} = \ga_b a $, so $a$ is a left eigenvector for~$b.$ Since $b$ is an axis, $a$ is a right eigenvector for $b.$
    But $ba = \ga_b a  +\nu b_{\mu,\nu},  $
so $\nu = \mu = 0,$ which is impossible. Symmetrically, $\nu \ne 0.$\end{proof}
\begin{example} \label{try1}

  Let  $ A:= A_{\operatorname{exc},3}(\{a,b\};\mu,\nu,\mu',\nu') $
denote the 3-dimensional algebra spanned by idempotents $a,b$ and an
element~$y,$ where   $(\mu,\nu), (\mu',\nu')  \notin \{(0,0),(1,1)\},$ satisfying the relations $y^2=0$ and
 \begin{equation}\label{dec1115} \begin{aligned} &
ab  =   \mu y,\quad  ay= \mu y, \quad ya = \nu
y,   \\ &   ba =     \nu
y,  \quad by = \nu'y, \quad    yb = \mu'
y .
\end{aligned}\end{equation}

\begin{enumerate}\eroman
 \item  One checks easily that the eigenspaces of $a$ are $\ff a = A_1(a),$ $\ff (b-y) = A_0(a),$ and $\ff y = A_{\mu,\nu}(a).$
  \item   $a$ is  a   weakly primitive $( \mu,\nu)$-axis by Remark~\ref{b}.

\item
 $b-y$ is an idempotent, and $\mu'+\nu'$ =1.
(Indeed
 $(b-y)^2- (b-y)\in \ff y \cap (A_0(a) + \ff a)=0.$
Then
    $ y-(\mu'+\nu')y= (b-y)^2- (b-y) =0, $
implying $\mu'+\nu'=1.$)

  \item
       $b(ab) = b\mu y  = \mu \nu' y$
       whereas $(ba)b = \nu yb = \mu' \nu y ,$
implying $b(ab) =  (ba)b $ if and only if
  \begin{equation}
      \label{p2} \mu \nu'   = \mu' \nu.
  \end{equation}
By Lemma~\ref{ax1}, $\mu,\nu \ne 0.$

\item {\it From now on, assume that  \eqref{p2} holds.}
Then $b(\nu' a - \nu y) =0$ and $(\mu' a - \mu y)b =0.$ But \eqref{p2}  implies that the vectors $\nu' a - \nu y$ and $\mu' a - \mu y $ are proportional.  Let   $W = \ff(\mu' a - \mu y)$;
hence
the eigenspaces of $b$ are $\ff b = A_1(b),$ $W = A_0(b),$ and $\ff y = A_{\mu',\nu'}(b),$
     implying  $b$ also  is a weakly primitive axis.

\item  The algebra $A_{\operatorname{exc},3}(\{a,b\};\mu,\nu,\mu',\nu')$  is a PAJ, iff $\mu'=\mu, \nu'=\nu$ and $\mu+\nu=1.$

Indeed, to check the fusion rules for $b,$ first note that $A_0(b)\ff y = \ff(\mu'a-\mu y)y \subseteq \ff y. $  Also $$(\mu'a-\mu y)^2=(\mu')^2a-\mu'\mu (ay+ya)=(\mu')^2a-\mu'\mu (\mu+\nu)y,$$ which we need to be in $A_0(b).$  Hence $b$ satisfies the fusion rules if and only if $\mu +\nu =1$  which, in addition to \eqref{p2} and (iii) shows $\mu = \mu'$ and $\nu = \nu'$. In other words, $a$ and $b$ satisfy the fusion rules, if and only if  $\mu    = \mu' $
and $ \nu = \nu'$ with  $\mu +\nu =1$.

\item  $ A_{\operatorname{exc},3}(\{a,b\};\mu,\nu) $ denotes  $ A:= A_{\operatorname{exc},3}(\{a,b\};\mu,\nu,\mu,\nu) ,$    which is a PAJ  iff $\mu+\nu = 1,$ to be generalized to  the ``$S$-exceptional  algebra'' in Example~\ref{E1} below.
\end{enumerate}
\end{example}

Here is the situation, complementing \cite{HRS}, assumptions as in~Note~\ref{assmp}.

\begin{prop}\label{prop nc}
Let $A=\lan\lan a,b\ran\ran,$ as in Note \ref{baxis}. Suppose that  $a$ is a
$(\mu,\nu)$-axis with $\mu\ne\nu$. Let $y =  b_{\mu,\nu}.$   Then

     $$A = \ff a \oplus \ff b \oplus  \ff y.$$ with multiplication table
     \begin{equation}\label{dec11151} \begin{aligned} &
ab  = \ga_b  a +  \mu y,\quad  ay= \mu y, \quad ya = \nu
y,   \\ &   ba =  \ga_b  a +\nu
y,  \quad by = \rho y, \quad    yb = \xi
y, \text{ and }\gr+\xi=1.\\
&y^2=0.
\end{aligned}\end{equation}

Conversely, if $A=\ff a+\ff b+\ff y,$ with $a, b$ idempotents and multiplication table as in \eqref{dec11151}, then $a$ is a weakly primitive $(\mu,\nu)$-axis in $A$ satisfying the fusion rules.

One of the following holds, which also determines when   $b$ is an axis:
\begin{enumerate}\eroman
\item  $\ga_b =0$.  $ A \cong A_{\operatorname{exc},3}(\{a,b\};\mu,\nu,\rho ,\xi),$ with $\rho,\xi$ given below in the proof. Further $b$ is an axis iff $\mu\gr=\xi\nu,$ as in Example \ref{try1}, and the information about $A$ is given there.

\item   $\ga_b =1,$
and either
\begin{itemize}
    \item[(1)]
$(\gr,\xi)=(0,1), \mu=0,$ $A_1(b)=\ff b+\ff (a+\nu y),$ and $A_{\gr,\xi}(b)=\ff y,$ or $(\gr,\xi)=(1,0),$ $A_1(b)=\ff b+\ff(a+\mu y),$ and $A_{\gr,\xi}=\ff y,$  or
\item[(2)]
$\gr,\xi\ne 0, \frac{\mu}{\gr}=\frac{\nu}{\xi}$  and $A_1(b)=\ff b+\ff( a + \frac{\mu}{\gr} y),$ and $A_{\gr,\xi}(b)=\ff y.$
\end{itemize}
In both cases $b$ is not weakly primitive and $b$ satisfies  the fusion rules iff $\mu+\nu=1,$ in which case $\gr=\mu$ and $\xi=\nu.$

\item   $\ga_b\ne 0,1$.  In this case $b$ is an axis iff the third eigenvalue of $b$ (other than $1$ and $(\gr,\xi)$) is $(\ga_b,\ga_b),$ and
\[
A_{\ga_b,\ga_b}(b)=\ff (\ga_b a+\gc y),\text{ and }(2\ga_b-1)\gc=\mu+\nu,\text{ where }\gc=\frac{\mu-\nu}{\gr-\xi}.
\]
The axis $b$ is  weakly primitive, but  $b$ does not satisfy the fusion rules.

\end{enumerate}
\end{prop}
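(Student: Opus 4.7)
The plan is to use Corollary~\ref{S1} to reduce $A$ to three dimensions, extract the multiplication table from the fusion rules on $a$ together with Theorem~\ref{Ser02}(vi), and then run a case analysis on $\ga_b$. Since $\mu \ne \nu$, the axis $a$ is not commutative, so Corollary~\ref{S1} gives $y^2 = b_{\mu,\nu}^2 = 0$ and $A = \ff a + \ff b + \ff y = \ff a + \ff b_0 + \ff y$. Combined with $\dim A \ge 3$ from Note~\ref{assmp}, this forces $\{a,b_0,y\}$ linearly independent, so $b_0 \ne 0$ and $A_0(a) = \ff b_0$. From the fusion rules for $a$ one reads off $ab = \ga_b a + \mu y$ and $ba = \ga_b a + \nu y$. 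Since $b$ is an axis, $(ba)b = b(ab)$, so Theorem~\ref{Ser02}(vi) applies to $y$: because $y^2=0$ it gives $by = \rho y$ and $yb = \xi y$ for the explicit $\rho, \xi$ of that theorem, with $\rho + \xi = (\mu+\nu)/\varepsilon = 1$. This establishes \eqref{dec11151}.

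For the converse, starting from the multiplication table I would set $b_0 := b - \ga_b a - y$, and directly verify $ab_0 = b_0 a = 0$, $ay = \mu y$, $ya = \nu y$, so that the three eigenspaces of $a$ are $\ff a, \ff b_0, \ff y$, making $a$ a weakly primitive $(\mu,\nu)$-axis. For the fusion rules on $a$, the only nontrivial check is $b_0^2 \in A_0(a) + A_1(a)$; expansion using $\rho+\xi = 1$ yields $b_0^2 = b_0 + \ga_b(1-\ga_b)a$, after which $yb_0, b_0 y \in \ff y$ and $y^2=0$ are immediate.

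The main content is the three cases. For $\ga_b = 0$, $ab = \mu y$ and $ba = \nu y$ identify $A$ with $A_{\operatorname{exc},3}(\{a,b\};\mu,\nu,\rho,\xi)$, and Example~\ref{try1}(iv) provides $b(ab) = (ba)b \iff \mu\rho = \nu\xi$ as the axis criterion. For $\ga_b = 1$, writing $z = \alpha a + \beta b + \gc y$ and solving $L_b z = z$ reduces to the single linear constraint $\alpha \nu = \gc\xi$, giving a $2$-dimensional left $1$-eigenspace; the right-hand version together with the axis condition pins down the stated bases for $A_1(b)$, splitting into the generic subcase $\mu/\rho = \nu/\xi$ and the two endpoint subcases $(\rho,\xi) \in \{(0,1),(1,0)\}$ which force $\mu=0$, resp.\ $\nu=0$. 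The only fusion-rule check that bites is $A_1(b)^2 \subseteq A_1(b)$; expanding $(a + \tfrac{\mu}{\rho}y)^2 = a + \tfrac{\mu\varepsilon}{\rho}y$ shows this holds precisely when $\varepsilon = \mu+\nu = 1$, in which case $\rho = \mu$ and $\xi = \nu$. Finally, for $\ga_b \ne 0,1$, the trace of $L_b$ on $A$ is $\ga_b + \rho + 1$ with two known eigenvalues $1$ and $\rho$, so the third is $\ga_b$; symmetrically for $R_b$. Solving $L_b z = \ga_b z$ on $\ff a + \ff y$ produces the eigenvector in $A_{\ga_b,\ga_b}(b)$ in the stated form. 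Weak primitivity of $b$ is then clear (only $b$ sits in $A_1(b)$), and squaring the $\ga_b$-eigenvector yields a $y$-component outside $A_0(b) + A_1(b)$, so the fusion rules fail.

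The main obstacle is the $\ga_b = 1$ case: the $1$-eigenspace of $b$ becomes $2$-dimensional, the analysis splits into the generic and two degenerate subcases (in which $R_b$ or $L_b$ acts as the identity on part of $A$), and only after the bookkeeping of $A_1(b)$ in each subcase does the clean criterion $\mu+\nu=1$ for the fusion rules emerge.
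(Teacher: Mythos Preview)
Your proof is correct and follows essentially the same route as the paper's: both use Corollary~\ref{S1} (equivalently \eqref{61}) for $y^2=0$ and the three-dimensionality of $A$, Theorem~\ref{Ser02}(vi) for $\rho+\xi=1$, the same $b_0:=b-\ga_b a-y$ computation for the converse, and the same case split on $\ga_b$.

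The one genuine difference is in case~(iii): the paper searches directly for a third eigenvector $z=a+\gc_2 b+\gc_3 y$, computes $bz$ and $zb$, and rules out $\omega\in\{0,1\}$ to force $\gc_2=0$ and $(\omega_1,\omega_2)=(\ga_b,\ga_b)$. Your trace argument is a clean shortcut---the matrix of $L_b$ in the basis $\{a,b,y\}$ is lower triangular with diagonal $(\ga_b,1,\rho)$, so the third eigenvalue is read off immediately. This is nicer than the paper's approach. One small point: your reason for the failure of the fusion rules (``a $y$-component outside $A_0(b)+A_1(b)$'') is slightly imprecise; the cleaner argument is that $A_0(b)=0$ here, so $A_0(b)+A_1(b)=\ff b$, and $z^2=a+\gc(\mu+\nu)y$ has zero $b_0$-component while $b$ does not, so $z^2\notin\ff b$.
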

\begin{proof}
Since $b$ is an axis (see Note \ref{baxis}),  \eqref{61} implies $y^2= 0,$ so Theorem~\ref{Ser02}(vi) implies
that $y\in A_{\gr,\xi}(b),$ as in the theorem; in particular $\gr+\xi=1.$ 

 Note that $\ff a+\ff b+\ff y$ is closed under multiplication and contains $a$ and $b,$ so it equals $A.$

Conversely,  \eqref{dec11151} implies that $a$ is an axis and $a$ satisfies the fusion rules. Indeed, let $b_0:=b-\ga_ba-y.$  Then
\[
ab_0=ab-\ga_ba-\mu y=\ga_ba+\mu y-\ga_ba-\mu y=0,
\]
and similarly $b_0a=0,$ so $A=\ff a+\ff b_0+\ff y,$
proving $a$ is an axis. We check the fusion rules for $a.$
\begin{gather*}
b_0^2=b+\ga_b^2a-2\ga_b^2a-\ga_b(\mu+\nu)y-by-yb+\ga_b(\mu+\nu)y\\
=\ga_ba+b_0+y-\ga_b^2a-y=b_0+(\ga_b-\ga_b^2)a\in \ff a+\ff b_0.
\end{gather*}
Next $b_0y=(b-\ga_ba-y)y=\gr y-\ga_b\mu y\in\ff y,$ and similarly $yb_0\in\ff y.$  So $a$ satisfies the fusion rules.




  Suppose first that
 $\ga_b =0$. Then \eqref{dec11151} shows $ A \cong A_{\operatorname{exc},3}(\{a,b\};\mu,\nu,\rho ,\xi), $
 and by Example \ref{try1}, $b$ is an axis iff  $\mu\gr=\xi\nu.$

Assume $\ga_b\ne 0.$ $b$ has two eigenvectors: $b$ and $y,$ with $y\in A_{\gr,\xi}(b).$ We look for the third eigenvector $z = \gc_1 a +\gc_2 b + \gc_3 y $ of $b.$ The third eigenvector $z$ of $b$ is independent of $b$ and $y$. Hence, $\gc_1\ne 0;$ we normalize $\gc_1 =1$.

We have
\begin{equation}\label{dec1125}\begin{aligned}zb=(a +\gc_2 b + \gc_3 y )b & = (\ga_b a + \mu y) +\gc_2 b + \gc_3 \xi y \\ &= \ga_b a  +\gc_2 b + (\mu +\gc_3 \xi )y.\end{aligned}\end{equation}

\begin{equation}\label{dec11251}\begin{aligned} bz = b( a +\gc_2 b + \gc_3 y ) & =  (\ga_b a + \nu y) +\gc_2 b + \gc_3 \rho y \\ &=    \ga_ba  +\gc_2 b + (  \nu +\gc_3  \rho  )y.\end{aligned}\end{equation}

We want the   eigenvalue $\gro=(\gro_1,\gro_2)$ of $z.$ If $\gro_2 = 0$ then $\ga_b=0,$ a contradiction.  Similarly $\gro_1\ne 0.$

Suppose that $\gro_2 = 1$. In this case matching the coefficient of $a$ in \eqref{dec1125} implies  $\ga_b =1,$ and $\gc_3=\mu+\gc_3\xi.$  This shows that $\gro_2=1,$ and   the coefficient of $y$ in \eqref{dec11251}, yields $\gc_3 = \nu + \gc_3\gr.$ If $\gr=0,$ then $\gc_3=\nu, \xi=1,$ and $\mu=0.$   Similarly, (ii)(1) holds if $\xi=0.$


Suppose that  $ \gr,\xi\ne 0.$  Then  $\gc_3 = \frac{ \mu }{1-\xi}  = \frac{ \mu }{\rho},$  and $\gc_3=\frac{\nu}{\xi}.$  In this case $A_1(b)=\ff b+\ff (a+\frac{\mu}{\gr}y).$

$b$ satisfies the fusion rules if and only if $a+(\mu+\nu)\frac{\mu}{\gr}y =(a+\frac{\mu}{\gr}y)^2 \in \ff b + \ff (a+\frac{\mu}{\gr}y),$ which is true if and only if $\mu+\nu = 1.$


So we may assume that $\gro_1,\gro_2 \ne 0, 1.$  Then by \eqref{dec1125} and \eqref{dec11251}, $\gc_2=0$ and $(\gro_1,\gro_2)=(\ga_b,\ga_b).$  We have
\begin{equation}\label{+}
\begin{aligned}
\ga_b  \gc_3  &=  \mu +\gc_3 \xi,\\
\ga_b\gc_3 &=\nu+\gc_3\rho.
\end{aligned}
\end{equation}
Adding we get $2\ga_b\gc_3=\mu+\nu+\gc_3,$ so $\mu+\nu=(2\ga_b-1)\gc_3.$  Subtracting we get $\mu-\nu=(\gr-\xi)\gc_3.$  Thus (iii) holds, and since $z^2\notin \ff b,$ we see that $b$ does not satisfy the fusion rules. \end{proof}



To show the necessity of the hypotheses that $b$ is an axis, consider the following modification of the exceptional algebra:

\begin{example}
    $A = \ff a +\ff b_0 + \ff y +\ff y',$ where $b=   b_0 +y$, $b_0 ^2 = b_0 , \mu+\nu=1,$  $ay = \mu y,$ $ay' = \mu y',$  $y a =\nu y$, $y' a =\nu y'$, $ b _0 y= \nu y+  y'$, $ yb_0=\mu y- y',$   $b_0 y' = \gb_1 y +\gb_1' y',$  $ y'b _0 = \gb_2 y+ \gb_2' y'$, and
    $y^2 = (y')^2=yy' = y'y = 0.$ Note that $  A_{\mu,\nu}(a)= \ff y +\ff y'$, so $a$ is a primitive $(\mu,\nu)$-axis when $\mu,\nu \notin \{0,1\}$.
    Then $b^2 =b_0^2+ b_0y  +yb_0
    = b_0 +(\mu+\nu )y
    +y'-y'=b,$ so $a$ satisfies the fusion rules, and $b$ is idempotent. Note that $\dim A = 4.$

\end{example}

In the  case $A$ is commutative, we have  explicit information given in \cite[Proposition~2.12]{RoSe1} when the axis $b$ also is  primitive and satisfies the fusion rules. See Examples~\ref{4dim2}, \ref{3dim01}, and Theorem~\ref{yrho} for the situation when $b$ is not primitive.
Since we are unable to obtain results when $\dim A_0(a)\ge 2:$

\subsection{Preliminary consequences of the hypothesis $A_0(a)=\ff b_0$}\label{A01}$ $ 

 Having characterized the case $b_0=0,$ we assume from now on that $b_0 \ne 0.$

\begin{lemma}
    \label{Ser23}  The following are equivalent for an axis $b$:
\begin{enumerate}\eroman
    \item   $A_0(a) = \ff b_0,$
       \item
$b_0^2,b_{\mu,\nu}^2 \in \ff a +\ff b_0$ for all $(\mu,\nu)\in S,$
   \item
$A =  \ff a +\ff b_0+\sum_{(\mu,\nu)\in S} \ff b_{\mu,\nu}$, so $\dim A= |S^\circ| +2$.
\end{enumerate}
 In this case, $A_{\mu,\nu}(a)=\ff b_{\mu,\nu}$ for each $(\mu,\nu)\in S^\circ.$
\end{lemma}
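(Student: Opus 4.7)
My plan is to prove the equivalence cyclically, (i)$\Rightarrow$(ii)$\Rightarrow$(iii)$\Rightarrow$(i), and then read off the last sentence from the proof of (iii).

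For (i)$\Rightarrow$(ii), I would simply invoke fusion rule (8)(b)--(c) for the axis $a$: both $b_0^2$ and each $b_{\mu,\nu}^2$ lie in $A_1(a)+A_0(a)=\ff a+A_0(a)$. Under the hypothesis $A_0(a)=\ff b_0$ this becomes $\ff a+\ff b_0$, which is (ii). The reverse direction (iii)$\Rightarrow$(i) is equally immediate: projecting the spanning set in (iii) onto the eigenspace $A_{0,0}(a)=A_0(a)$ sends $a$ to $0$, $b_0$ to $b_0$, and each $b_{\mu,\nu}$ with $(\mu,\nu)\in S$ (so $(\mu,\nu)\neq(0,0)$) to $0$, leaving $A_0(a)=\ff b_0$.

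The substantive step is (ii)$\Rightarrow$(iii). I would set $B:=\ff a+\ff b_0+\sum_{(\mu,\nu)\in S}\ff b_{\mu,\nu}$, note that $a\in B$ and $b=\ga_b a+b_0+\sum b_{\mu,\nu}\in B$, and then verify closure of $B$ under multiplication by inspecting all bilinear products of generators. Specifically:
\begin{itemize}
\item $a^2=a$, and $a\cdot b_0=b_0\cdot a=0$ because $b_0\in A_0(a)$;
\item $a\cdot b_{\mu,\nu}=\mu b_{\mu,\nu}$ and $b_{\mu,\nu}\cdot a=\nu b_{\mu,\nu}$ since $b_{\mu,\nu}\in A_{\mu,\nu}(a)$;
\item $b_0\cdot b_{\mu,\nu},\ b_{\mu,\nu}\cdot b_0\in\ff b_{\mu,\nu}$ by Theorem~\ref{Ser02}(ii);
\item $b_{\mu,\nu}\cdot b_{\mu',\nu'}=0$ for $(\mu,\nu)\neq(\mu',\nu')$ by fusion rule (8)(a);
\item $b_0^2\in\ff a+\ff b_0$ and $b_{\mu,\nu}^2\in\ff a+\ff b_0$ by the hypothesis (ii).
\end{itemize}
Thus $B$ is a subalgebra containing both generators, so $B\supseteq\lan\lan a,b\ran\ran=A$, and the reverse inclusion is trivial, giving $A=B$.

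For the dimension count and the concluding sentence, I would use that the summands $\ff a$, $\ff b_0$, and $\ff b_{\mu,\nu}$ (for $(\mu,\nu)\in S^\circ$, the rest being zero) sit in pairwise distinct components of the two-sided eigenspace decomposition \eqref{dec18}, so they are linearly independent; combined with $b_0\neq 0$ (the standing assumption of \S3.4) and Lemma~\ref{rempr1}, this gives $\dim A=|S^\circ|+2$. Finally, since $A_{\mu,\nu}(a)$ is the $(\mu,\nu)$-component of the decomposition \eqref{dec18} of $A$, and the only element of our spanning set lying in $A_{\mu,\nu}(a)$ is $b_{\mu,\nu}$ itself, we conclude $A_{\mu,\nu}(a)=\ff b_{\mu,\nu}$ for each $(\mu,\nu)\in S^\circ$. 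The only place one needs to be careful is ensuring the products $b_0 b_{\mu,\nu}$ and $b_{\mu,\nu}b_0$ stay inside a \emph{one-dimensional} subspace spanned by $b_{\mu,\nu}$; that is precisely the content of Theorem~\ref{Ser02}(ii), so no extra obstacle arises.
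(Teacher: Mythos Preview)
Your proof is correct and follows essentially the same approach as the paper's own argument. The paper's proof is extremely terse (``By the fusion rules'' for (i)$\Rightarrow$(ii); ``By Theorem~\ref{Ser02}(ii), the space \ldots\ is closed under multiplication and contains $a$ and $b$, so is all of $A$'' for (ii)$\Rightarrow$(iii); ``follows at once'' for (iii)$\Rightarrow$(i) and the last assertion), and you have simply unpacked each of these steps in detail, checking all pairwise products explicitly and making the eigenspace projection argument for the last sentence precise.
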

\begin{proof} $(i)\Rightarrow (ii)$
By the fusion rules.

     $ (ii)\Rightarrow (iii) $
By Theorem~\ref{Ser02}(ii), the space
    $\ff a +\ff b_0 +\sum_{(\mu,\nu)\in S} \ff b_{\mu,\nu}$
    is closed under multiplication and contains $a$ and $b$, so is all of $A.$

 $ (iii)\Rightarrow (i) $ follows at once, as does the the last assertion.
\end{proof}

\begin{hyp1}\label{hyp1} We assume from now on that $\dim A_0(a)=1,$ so that $A_0(a)=\ff b_0.$
\end{hyp1}

\begin{cor}  \label{Ser24}
    The following are equivalent:
    \begin{enumerate} \eroman
   \item   $A$ is commutative.
     \item   $ab = ba$.
     \item $S^\dagger = \emptyset.$
    \end{enumerate}
\end{cor}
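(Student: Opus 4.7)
The plan is to show $(i)\Rightarrow (ii)\Rightarrow (iii)\Rightarrow (i)$, with the equivalence $(ii)\Leftrightarrow (iii)$ being a direct calculation and $(iii)\Rightarrow (i)$ requiring a basis-element check.

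First, $(i)\Rightarrow (ii)$ is immediate from the definition of commutativity. For $(ii)\Leftrightarrow (iii)$, I would expand using the decomposition $b=\ga_b a+b_0+\sum_{(\mu,\nu)\in S^\circ}b_{\mu,\nu}$ and compute
\[
ab-ba \;=\; \sum_{(\mu,\nu)\in S^\circ}(\mu-\nu)\,b_{\mu,\nu}.
\]
Since by Lemma~\ref{Ser23} (in its refined form under Hypothesis~\ref{hyp1}) the spaces $A_{\mu,\nu}(a)=\ff b_{\mu,\nu}$ are independent, the vanishing of $ab-ba$ is equivalent to $\mu=\nu$ for each $(\mu,\nu)\in S^\circ$, i.e.\ to $S^\dag=\emptyset$.

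For $(iii)\Rightarrow (i)$, I would invoke Lemma~\ref{Ser23} to write $A=\ff a+\ff b_0+\sum_{(\mu,\nu)\in S^\circ}\ff b_{\mu,\nu}$ and then check commutativity on this spanning set. The products $a\cdot b_0=0=b_0\cdot a$ (since $b_0\in A_0(a)$) and $a\cdot b_{\mu,\nu}=\mu b_{\mu,\nu}=\nu b_{\mu,\nu}=b_{\mu,\nu}\cdot a$ (using $\mu=\nu$ from $S^\dag=\emptyset$) handle the interactions with $a$. For distinct $(\mu,\nu),(\mu',\nu')\in S^\circ$ the fusion rules of Definition~\ref{not1}(8)(a) give $b_{\mu,\nu}b_{\mu',\nu'}=0=b_{\mu',\nu'}b_{\mu,\nu}$. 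The remaining non-trivial case, $b_0 b_{\mu,\mu}=b_{\mu,\mu}b_0$, is exactly Theorem~\ref{Ser02}(iii)(b).

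The only mild obstacle is making sure the bookkeeping in Lemma~\ref{Ser23} produces a spanning set whose pairwise products are all covered by results already proved; once that is in place, the verification is a short enumeration of the mixed products above.
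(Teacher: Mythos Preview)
Your proof is correct and follows essentially the same approach as the paper: the paper also proves $(i)\Rightarrow(ii)\Rightarrow(iii)\Rightarrow(i)$, uses the decomposition from Lemma~\ref{Ser23}, and the key nontrivial step in $(iii)\Rightarrow(i)$ is exactly $b_0 b_{\mu,\mu}=b_{\mu,\mu}b_0$ (the paper derives this directly from \eqref{62}, which is what you cite as Theorem~\ref{Ser02}(iii)(b)). The only cosmetic difference is that the paper writes the check for general $y,z\in A$ rather than on basis vectors, but the content is identical.
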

\begin{proof}
$(i)\Rightarrow (ii) $ Obvious.

$(ii) \Rightarrow (iii)$ $\mu=\nu$ for each $(\mu,\nu)\in S^\circ$.

$(iii) \Rightarrow (i)$ Let $y= \ga_y a+ y_0 + \sum y_{\mu,\mu}$ and $z = \ga_z a+ z_0 + \sum z_{\mu,\mu}.$
Then $$yz = \ga_y\ga_z a + y_0z_0 +\sum (\ga_y\mu z_{\mu,\mu} +y_0z_{\mu,\mu}) +\sum (\ga_z\mu y_{\mu,\mu} +y_{\mu,\mu}z_0)+\sum y_{\mu,\mu}z_{\mu,\mu}.$$
Since $A_0(a)=\ff b_0,$ $y_0z_0 = z_0y_0.$   By Lemma~\ref{Ser23},  $A_{\mu,\mu}(a) = \ff b_{\mu,\mu},$ so $y_{\mu,\mu}z_{\mu,\mu} = z_{\mu,\mu} y_{\mu,\mu}$.   By \eqref{62},
$\mu b_{\mu,\mu}b_0- \mu b_0b_{\mu,\mu}  = \ga_b (\mu+\mu-1)(\mu-\mu)b_{\mu,\mu} =0,$ for all $(\mu,\mu)\in S,$ implying $b_0b_{\mu,\mu} = b_{\mu,\mu}b_0.$ Consequently, $yz=zy.$
\end{proof}

Here is a special case in which  the fusion rules are  satisfied.

\begin{example}\label{E1}[The ``$S$-exceptional  algebra'']
Suppose  $S\subseteq \{ (\mu,\nu) \in \ff \times \ff :   \mu+\nu =1\}$, and let $W$ denote the vector space $\ff a + \ff b+ \sum_ {(\mu,\nu)\in S} \ff b_{\mu,\nu}$. We define  multiplication on~$W$ according to the following   rules,  with the sums taken over all $ (\mu,\nu)\in S  $:

The elements  $a $ and $b$ are idempotents,
\begin{equation}\label{eq111}
   ab = \sum \mu b_{\mu,\nu}, \qquad ba = \sum \nu b_{\mu,\nu},
\end{equation}

\begin{equation}\label{eq112}
   a b_{\mu,\nu} =  \mu b_{\mu,\nu} = b_{\mu,\nu} b, \qquad  b_{\mu,\nu}a =  \nu b_{\mu,\nu} = bb_{\mu,\nu},
\end{equation}
\begin{equation}\label{eq113}
    b_{\mu,\nu}  b_{\mu',\nu'}= 0, \quad \forall (\mu,\nu), (\mu',\nu') \in S.
\end{equation}

As in Lemma~\ref{dec7}(i), each $b_{\mu,\nu} \in \lan\lan a,b\ran\ran,$ so we see that $W$ is the algebra $A = \lan\lan a,b\ran\ran $.
Taking $b_0 = b -\sum  _{\mu,\nu}b_{\mu,\nu}$, we see that $A$ has eigenspaces $A_1(a) = \ff a,$ $A_0(a) = \ff b_0,$
and $A_{\mu,\nu}(a) = \ff b_{\mu,\nu}$; $a$ is a weakly primitive axis of type~$S$, and $\ga_b = 0$.    Furthermore
$$b_0^2 = b +\sum  _{\mu,\nu} b_{\mu,\nu}^2 -\sum  _{\mu,\nu} (\nu+\mu) b_{\mu,\nu}  =b -\sum  _{\mu,\nu}   b_{\mu,\nu} =b_0, $$   implying  $a$ satisfies the fusion rules.

Note that \eqref{eq113} also holds with respect to $b$ since the $b_{\mu,\nu}$ also are  eigenvectors for $b.$
Thus, by symmetry, $b$ is a weakly primitive  $T$-axis satisfying the fusion rules, where $T= \{(\nu,\mu) : (\mu,\nu) \in S(a)\},$  and thus $A$ is a weak PAJ, which we call the  {\it $S$-exceptional  algebra}.

In fact if $(0,1), (1,0) \notin S$ then  $A$ is a PAJ.

\end{example}

  \begin{remark}$ $
Example~\ref{E1} provides more examples of weak PAJ's of arbitrary dimension $|S|+2$, under our definition in this paper. In previous papers \cite{RoSe2,RoSe3,RoSe4} we had considered only $|1|$-axes, cf.~Example~\ref{ax1}(vii).
 \end{remark}

Let us record some more information.

 \begin{lemma}\eroman
     \label{fu1} Suppose that   $b$ satisfies the fusion rules, with $b_{\mu,\nu}^2 = 0,$ and let $\varepsilon=\mu+\nu.$
\begin{enumerate}

    \item
        $b_{\mu,\nu}A_{\gr,\xi}(b)=A_{\gr,\xi}(b)b_{\mu,\nu}=0,$  for every $(\gr,\xi)\in S^\circ(b).$

       \item  Let $(\rho,\xi)\in S^\circ(b),$ and  let $y = u_y + y'\in A_{\gr,\xi}(b),$ where $$u_y=\ga_ya+\gc_0 b_0\in \ff a + \ff b_0, \qquad y' =\sum_{(\mu',\nu')\in S^{\circ}}\gc_{\mu',\nu'}b_{\mu',\nu'}.$$ The following assertions hold:
       \begin{enumerate}
           \item Either $\ga_y=\gc_0=0$, i.e., $u_y=0$ and $y\in V$, or $\mu=\nu$ or $\ga_b=1.$
           \item Let $u:=(\ga_b-\frac{1}{\varepsilon})a+b_0.$ Then  $u_y\in\ff u.$


           %
              \end{enumerate}
 \end{enumerate}
\end{lemma}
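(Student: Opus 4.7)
The strategy is to first prove (i), then use the vanishing $b_{\mu,\nu}y = 0 = y b_{\mu,\nu}$ to extract two linear equations in $\ga_y$ and $\gc_0$; their sum yields (ii)(b), and their difference combined with the sum gives (ii)(a).

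For (i), Theorem~\ref{Ser02}(vi) together with $b_{\mu,\nu}^2 = 0$ places $b_{\mu,\nu}$ in the two-sided $b$-eigenspace $A_{\tau,\sigma}(b)$, where $\tau = \tfrac{(1-\ga_b)\nu + \ga_b\mu}{\varepsilon}$ and $\sigma = \tfrac{(1-\ga_b)\mu + \ga_b\nu}{\varepsilon}$. One checks immediately that $\tau + \sigma = 1$, hence $(\tau,\sigma) \notin \{(0,0),(1,1)\}$ and so $(\tau,\sigma) \in S$. If $(\tau,\sigma) \ne (\gr,\xi)$, the fusion rules for $b$ directly give $b_{\mu,\nu}A_{\gr,\xi}(b) = A_{\gr,\xi}(b)b_{\mu,\nu} = 0$. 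If $(\tau,\sigma) = (\gr,\xi)$, then $b_{\mu,\nu}y \in A_{\gr,\xi}(b)^2 \subseteq A_0(b) + A_1(b)$ by the fusion rules for $b$; meanwhile, the fusion rules for $a$ together with Hypothesis~\ref{hyp1} (so $A_{\mu,\nu}(a) = \ff b_{\mu,\nu}$ by Lemma~\ref{Ser23}) and $b_{\mu,\nu}^2 = 0$ force $b_{\mu,\nu}A \subseteq \ff b_{\mu,\nu} \subseteq A_{\gr,\xi}(b)$. Since $A_{\gr,\xi}(b) \cap (A_0(b)+A_1(b)) = 0$, the product vanishes; the right-sided statement is symmetric.

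For (ii), expand $0 = b_{\mu,\nu}y$ using the fusion rules for $a$ to kill the cross-terms $b_{\mu,\nu}b_{m,n}$ for $(m,n) \ne (\mu,\nu)$, the hypothesis $b_{\mu,\nu}^2 = 0$, $b_{\mu,\nu}a = \nu b_{\mu,\nu}$, and Theorem~\ref{Ser02}(ii) for $b_{\mu,\nu}b_0$; this yields the relation
\[
\varepsilon\ga_y\nu + \gc_0\bigl[(1-\ga_b)\mu + \ga_b(1-\varepsilon)\nu\bigr] = 0, \qquad (\ast)
\]
and symmetrically $yb_{\mu,\nu} = 0$ gives
\[
\varepsilon\ga_y\mu + \gc_0\bigl[(1-\ga_b)\nu + \ga_b(1-\varepsilon)\mu\bigr] = 0. \qquad (\ast\ast)
\]
Adding $(\ast)$ and $(\ast\ast)$ and dividing by the nonzero factor $\varepsilon$ yields $\varepsilon\ga_y + \gc_0(1 - \ga_b\varepsilon) = 0$, i.e., $\ga_y = \gc_0(\ga_b - \tfrac{1}{\varepsilon})$, which is precisely $u_y = \gc_0 u$; this proves (b).

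Subtracting $(\ast\ast)$ from $(\ast)$ gives $(\nu - \mu)\bigl[\varepsilon\ga_y - \gc_0(1 - 2\ga_b + \ga_b\varepsilon)\bigr] = 0$. Assume $\mu \ne \nu$; then the bracket vanishes and, combined with the sum relation $\varepsilon\ga_y = \gc_0(\ga_b\varepsilon - 1)$, produces $\gc_0(1 - 2\ga_b + \ga_b\varepsilon) = \gc_0(\ga_b\varepsilon - 1)$, i.e., $2\gc_0(1 - \ga_b) = 0$. Since $\operatorname{char}\ff \ne 2$, either $\ga_b = 1$ or $\gc_0 = 0$, and in the latter case the sum relation forces $\ga_y = 0$ as well; this establishes (a). The main subtlety is the coincidence subcase $(\tau,\sigma) = (\gr,\xi)$ in the proof of (i), which is resolved by sandwiching $b_{\mu,\nu}y$ between $\ff b_{\mu,\nu}$ and $A_0(b) + A_1(b)$; once (i) is in hand, the rest is linear algebra in $(\ast)$ and $(\ast\ast)$.
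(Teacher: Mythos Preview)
Your proof is correct. Part (i) is exactly the paper's argument. For part (ii) you take a slightly different, more direct route: you expand $b_{\mu,\nu}y = 0$ and $yb_{\mu,\nu} = 0$ using Theorem~\ref{Ser02}(ii) to obtain the two linear equations $(\ast),(\ast\ast)$ in $(\ga_y,\gc_0)$, then add and subtract. The paper instead invokes the pre-packaged annihilators $w_1,w_2$ of Corollary~\ref{half11}, argues that $u_y \in \ff w_1 \cap \ff w_2$ when both are nonzero, and then compares $w_1,w_2$ with $u$ case by case ($u_y=0$, $\mu=\nu$, $\ga_b=1$). The two arguments are equivalent at bottom---your equation $(\ast)$ is precisely the statement $u_y \in \ff w_1$ and $(\ast\ast)$ is $u_y \in \ff w_2$---but your add/subtract manipulation is cleaner: it delivers (b) in one line and avoids the separate case analysis, including the borderline cases $\nu=0$ or $\mu=0$ with $\ga_b=1$ that the paper handles only implicitly via the symmetry between $w_1$ and $w_2$.
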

 \begin{proof}
 (i) Note that by Theorem \ref{Ser02}(vi), $b_{\mu,\nu}\in A_{\gr_1,\xi_1}(b),$ for some $(\gr_1,\xi_1).$
 If $b_{\mu,\nu}\notin A_{\gr,\xi}(b),$ the assertion follows from the fusion rules of $b.$ Suppose $b_{\mu,\nu}\in A_{\gr,\xi}(b).$  Note that $Ab_{\mu,\nu}\subseteq\ff b_{\mu,\nu},$ so for $y\in A_{\gr,\xi}(b),$ $$yb_{\mu,\nu}, b_{\mu,\nu}y\in \ff b_{\mu,\nu}\cap (\ff b+A_0(b))=0.$$

 (ii) Let $w_1, w_2$ be as in Corollary \ref{half11}(i). We first show:
 \begin{equation}\label{w}
            \text{If }w\in \{w_1,w_2\}\text{ is such that }w\ne 0,\text{  then }u_y \in \ff w.
 \end{equation}
We assume $w=w_1\ne 0.$ The argument when $w=w_2$ is similar.
 We have $b_{\mu,\nu}w_1 = 0.$ By (i), $b_{\mu,\nu}y=0,$ and since $b_{\mu,\nu}b_{\mu',\nu'}=0,$ for all  $(\mu',\nu')\in S^{\circ},$ also $b_{\mu,\nu}u_y =0.$ Hence, if $u_y\notin \ff w_1,$ then $b_{\mu,\nu}A=0.$ In particular, $b_{\mu,\nu}a=0,$ so $\nu=0.$  But also $b_{\mu,\nu}b_0=0,$ so, by Theorem \ref{Ser02}(ii), $\varphi_1=0,$ so $w_1=0,$ a ~contradiction.  Hence $u_y\in\ff w_1.$

  (a) Assume that $\ga_b\ne 1.$ By Corollary ~\ref{half11}(ii), $w_i\ne 0,$ so $u_y\in \ff w_i,$ for $i=1,2.$
Suppose also that $u_y\ne 0.$  Then $w_2\in\ff w_1,$  since  $w_1,w_2\in\ff u_y.$  But then, comparing coefficients of $b_0\ne 0$ in Corollary ~\ref{half11},  $\mu w_1=\nu w_2,$ and comparing coefficients of $a$, $\varphi_1\mu=\varphi_2\nu,$ i.e.,  $(1 -\ga_b)\mu^2 +\ga_b(1- \varepsilon)\mu \nu = (1 -\ga_b)\nu^2 +\ga_b(1- \varepsilon)\mu \nu,$ so, since $\varepsilon=\mu+\nu\ne 0,$  $\mu = \nu.$

(b)  If $u_y=0,$ this is clear.

Suppose $\mu=\nu.$  Then $\frac{1}{\mu}w_1=\frac{1-\ga_b\varepsilon}{\varepsilon}a-b_0=(\frac{1}{\varepsilon}-\ga_b)a-b_0$.  But $u_y\in\ff w_1,$
by \eqref{w}.  Hence (b) holds in this case as well.

Suppose $\ga_b=1.$  Then, $\nu\ne 0$ since $w_1\ne 0.$ Then $\frac{1}{\nu}w_1=\frac{1-\varepsilon}{\varepsilon}a-b_0=(\frac{1}{\epsilon}-\ga_b)a-b_0,$ and again (b) holds.
Now (a) completes the proof.


\end{proof}





\subsection{The eigenspaces of $b$}\label{bfus}$ $

So far we have described 2-generated axial algebras  $A = \lan \lan a,b \ran \ran$, where   $a$ is a  weakly primitive $S$-axis satisfying the fusion rules, and $b$ is an axis, with the further restriction  $\dim A_0(a)= 1$, i.e.,  $A_0(a) = \ff b_0.$  Now we turn to the eigenspace decomposition of $A$ with respect to the axis $b$.

Since we determined the structure of $A$ in Corollary~\ref{S1} when $|S^\circ| =1,$ we assume that $|S^\circ| \ge 2.$ For  $(\mu,\nu)\in S^{\circ}$ we write $\varepsilon_{\mu,\nu}=\mu+\nu.$ Also, for $y\in A,$ we write $ y = \ga_y a +\gc_0 b_0 + \sum_{(\mu,\nu)\in S^\circ} \gc_{\mu,\nu} b_{\mu,\nu}. $

\begin{lemma}\label{Ser03}
For any $0 \ne y \in A, $
\begin{enumerate}\eroman
\item
            $by = \ga_b \ga_y a+ \gc_0 b_0^2+\sum_{(\mu,\nu)\in S} \gc_{\mu,\nu} b_{\mu,\nu}^2$  \\
            $$+\sum_{(\mu,\nu)\in S}\left( \left(\frac{ \ga_b \gc _{\mu,\nu} +(1-\ga_b)\gc_0 }{\varepsilon_{\mu,\nu}}\right) \mu  +\left(\frac{   (1-\ga_b) \gc_{\mu,\nu} +\varepsilon_{\mu,\nu}\ga_y +\ga_b(1-\varepsilon_{\mu,\nu})\gc_0   }{\varepsilon_{\mu,\nu}} \right)\nu\right)b_{\mu,\nu}.$$

        \item  $y$ is a   left $\rho$-eigenvector of $b$, if and only if the following two conditions hold:
        \begin{enumerate}
            \item  $\ga_b\ga_y a +\gc_0 b_0 ^2+\sum \gc_{\mu,\nu} b_{\mu,\nu}^2 = \rho (\ga_y a +\gc_0 b_0) .$
        \item  $\left(\frac{ \ga_b \gc _{\mu,\nu} +(1-\ga_b)\gc_0 }{\varepsilon_{\mu,\nu}}\right) \nu  +\left(\frac{(1-\ga_b) \gc_{\mu,\nu} +\varepsilon_{\mu,\nu}\ga_y +\ga_b(1-\varepsilon_{\mu,\nu})\gc_0)}{\varepsilon_{\mu,\nu}}\right)\mu =\rho \gc_{\mu,\nu}  ,$ for all $(\mu,\nu)\in S^\circ$.
    \noindent  In particular, for $(\mu,\mu)\in S^{\circ}$ we get
        \[
        \gc_0   +\varepsilon_{\mu,\mu}(\ga_y -\ga_b\gc_0) =(2\rho-1) \gc_{\mu,\mu}  ,
        \]
         \end{enumerate}

        \item Furthermore, when $y$ is a   left $\rho$-eigenvector of $b$, if $\gc_{\mu,\nu} =0$ for $(\mu,\nu)\in S^\circ,$ then $ \ga_y  =  \left(  -  (1-\ga_b)\frac{  \mu }{\nu  }- \ga_b(1-\varepsilon_{\mu,\nu})   \right)\frac{\gc_0}{\varepsilon_{\mu,\nu}}.$

  \item  $y$ is a   right $\xi$-eigenvector of $b$, if and only if the following two conditions hold:
        \begin{enumerate}
            \item  $\ga_b\ga_y a +\gc_0 b_0 ^2+\sum \gc_{\mu,\nu} b_{\mu,\nu}^2 = \xi (\ga_y a +\gc_0 b_0) .$
        \item  $\left(\frac{ \ga_b \gc _{\mu,\nu} +(1-\ga_b)\gc_0 }{\varepsilon_{\mu,\nu}}\right) \mu  +\left(\frac{   (1-\ga_b) \gc_{\mu,\nu} +\varepsilon_{\mu,\nu}\ga_y +\ga_b(1-\varepsilon_{\mu,\nu})\gc_0   }{\varepsilon_{\mu,\nu}} \right)\nu =\xi \gc_{\mu,\nu}  ,$ for all $(\mu,\nu)\in S^\circ$.
   \end{enumerate}

    \end{enumerate}   \end{lemma}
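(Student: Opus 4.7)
The plan is a direct bilinear expansion of $by$ (and, for (iv), of $yb$), with parts (ii)--(iv) then reading off by matching components in the direct sum $A=\ff a\oplus\ff b_0\oplus\bigoplus_{(\mu,\nu)\in S^\circ}\ff b_{\mu,\nu}$ that is guaranteed by Lemma~\ref{Ser23}(iii) under Hypothesis~\ref{hyp1}. All the structural work for the proof has already been done in Theorem~\ref{Ser02}; what remains is essentially bookkeeping.

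For (i), I would expand $by=(\ga_b a+b_0+\sum_{(\mu',\nu')\in S^\circ} b_{\mu',\nu'})\cdot y$ by distributivity and simplify using three inputs: the $a$-eigenvalue relations $ab_{\mu,\nu}=\mu b_{\mu,\nu}$, $b_{\mu,\nu}a=\nu b_{\mu,\nu}$, and $ab_0=b_0a=0$; the fusion rule $b_{\mu',\nu'}b_{\mu,\nu}=0$ for $(\mu',\nu')\ne(\mu,\nu)$ (built into the hypothesis that $a$ satisfies the fusion rules); and the formulas for $b_0 b_{\mu,\nu}$ and $b_{\mu,\nu}b_0$ from equation~\eqref{622} of Theorem~\ref{Ser02}(ii). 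The component of $by$ in $\ff a+\ff b_0$ collects immediately as $\ga_b\ga_y a + \gc_0 b_0^2 + \sum \gc_{\mu,\nu} b_{\mu,\nu}^2$ (each $b_{\mu,\nu}^2$ lying in $\ff a+\ff b_0$ by Lemma~\ref{Ser23}(ii)). For the $b_{\mu,\nu}$-component, four contributions survive: $\ga_b\gc_{\mu,\nu}\mu$ from $\ga_b a\cdot \gc_{\mu,\nu}b_{\mu,\nu}$; $\ga_y\nu$ from $b_{\mu,\nu}\cdot\ga_y a$; and the two fractions supplied by \eqref{622} applied to $\gc_0 b_{\mu,\nu} b_0$ and to $\gc_{\mu,\nu}b_0 b_{\mu,\nu}$. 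After multiplying through by $\varepsilon_{\mu,\nu}=\mu+\nu$, the $\mu$-terms collapse via the identity $\ga_b\varepsilon+\ga_b(1-\varepsilon)=\ga_b$, producing exactly the stated coefficient.

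Parts (ii) and (iv) then follow at once. For (ii), the condition $by=\rho y$ decomposes along the direct sum into the $\ff a+\ff b_0$-identity (which is (ii)(a)) and scalar identities on each $\ff b_{\mu,\nu}$ (which is (ii)(b)). Part (iv) is obtained by the analogous computation of $yb$: the only changes are that $\ga_y a$ now acts on the right of each $b_{\mu,\nu}$ (contributing $\ga_y\mu$ instead of $\ga_y\nu$), and the roles of $b_0 b_{\mu,\nu}$ and $b_{\mu,\nu}b_0$ are interchanged, so that $\mu$ and $\nu$ trade places in the two fractions from \eqref{622}. For (iii), substituting $\gc_{\mu,\nu}=0$ into (ii)(b) kills three of the four terms and leaves a single linear equation in $\ga_y$, which I would solve by dividing by $\varepsilon_{\mu,\nu}\nu$ (the case $\nu=0$ being harmless, since the surviving equation is then vacuous). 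The main, if mild, obstacle is keeping track of which of the two expressions in \eqref{622} applies to $b_0 b_{\mu,\nu}$ versus $b_{\mu,\nu}b_0$, so that $\mu$ and $\nu$ land in the correct positions of the final formula.
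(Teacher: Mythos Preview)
Your proposal is correct and follows essentially the same route as the paper: expand $by$ bilinearly, collect the $\ff a+\ff b_0$ part as $\ga_b\ga_y a+\gc_0 b_0^2+\sum\gc_{\mu,\nu}b_{\mu,\nu}^2$, and use \eqref{622} (the paper packages two of your four $b_{\mu,\nu}$-contributions via Theorem~\ref{Ser02}(v), but that is just \eqref{622} plus $\ga_b a\cdot b_{\mu,\nu}$ pre-combined). One small correction to your aside in (iii): when $\nu=0$ the surviving equation is not vacuous but reads $(1-\ga_b)\gc_0\mu/\varepsilon_{\mu,\nu}=0$; the paper simply does not treat this boundary case, and the stated formula in (iii) is only meaningful for $\nu\ne 0$.
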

\begin{proof} $A =  \ff a +\ff b_0+\sum_{(\mu,\nu)\in S} \ff b_{\mu,\nu}$  by Lemma~\ref{Ser23}.
     By the fusion rules, the $\ff a +A_0(a)$ part of $by$ is
\begin{equation}\begin{aligned}
    (\ga_b a +   b_0 )(\ga_y a +\gc_0 b_0 )+& (\sum b_{\mu,\nu})(\sum \gc_{\mu,\nu} b_{\mu,\nu})\\& = \ga_b\ga_y a+\gc_0 b_0^2 +\sum \gc_{\mu,\nu} b_{\mu,\nu}^2.
\end{aligned}
\end{equation}
Using Theorem~\ref{Ser02}(v) and \eqref{622}, the $A_{\mu,\nu}(a) $ part of $by$ is \begin{equation*}
    \begin{aligned}
        &(\ga_b  a +b_0)\gc_{\mu,\nu} b_{\mu,\nu} + b_{\mu,\nu}(\ga_y a + \gc_0 b_0) \\
    & = \left(\frac{  (1 -\ga_b)\nu +\ga_b  \mu}{\varepsilon_{\mu,\nu}}\gc_{\mu,\nu} +\ga_y    \nu+  \frac{(1 -\ga_b)\mu +\ga_b(1- \varepsilon_{\mu,\nu}) \nu}{\varepsilon_{\mu,\nu}}\gc_0  \right) b_{\mu,\nu} \\& = \left(\left(\frac{ \ga_b \gc _{\mu,\nu} +(1-\ga_b)\gc_0 }{\varepsilon_{\mu,\nu}}\right) \mu  +\left(\frac{   (1-\ga_b) \gc_{\mu,\nu} +\varepsilon_{\mu,\nu}\ga_y +\ga_b(1-\varepsilon_{\mu,\nu})\gc_0   }{\varepsilon_{\mu,\nu}} \right)\nu \right)b_{\mu,\nu} .
    \end{aligned}
\end{equation*}

    (i), (ii) are immediate.

    (iii) By (ii),
     $\left(\frac{(1-\ga_b)\gc_0 }{\varepsilon_{\mu,\nu}}\right) \mu  +\left(\frac{   \varepsilon_{\mu,\nu}\ga_y +\ga_b(1-\varepsilon_{\mu,\nu})\gc_0   }{\varepsilon_{\mu,\nu}} \right)\nu =\gc_{\mu,\nu}\rho=0 ,$ so
     \begin{equation}
         \begin{aligned}
             \varepsilon_{\mu,\nu} \ga_y  & =\frac{  -  (1-\ga_b)\gc_0\mu - \ga_b(1-\varepsilon_{\mu,\nu})\gc_0\nu}{\nu  }\\& =\left(  -  (1-\ga_b)\frac{  \mu }{\nu  }- \ga_b(1-\varepsilon_{\mu,\nu})   \right)\gc_0.
         \end{aligned}
     \end{equation}

     (iv) By left-right symmetry.

    \end{proof}

Here is a common sort of eigenvector.

\begin{lemma}
    \label{Ser041}     Assume that $  y=\sum_{(\mu,\nu)\in S^{\circ}}\gc_{\mu,\nu}b_{\mu,\nu}\in V $.
    \begin{enumerate}\eroman
        \item  Suppose that $y$ is a $(\rho,\xi)$-eigenvector of~$b$.  Then
\begin{equation}\label{896}
    \rho  = \frac{(1-\ga_b)   {  \nu} +\ga_b\mu  }{\varepsilon_{\mu,\nu}}, \qquad  \xi  = \frac{(1-\ga_b)   {  \mu} +\ga_b\nu  }{\varepsilon_{\mu,\nu}}.
\end{equation}
 for each $(\mu,\nu)$ such that $\gc_{\mu,\nu}  \ne 0$. (Such $(\mu,\nu)\in S ^\circ$ exists since $y\ne 0.$)   Consequently $\rho + \xi =1.$

 Also for each $(\mu,\nu)$ such that $\gc_{\mu,\nu}\ne 0,$ $\mu(\gr-\ga_b)= \nu(\xi- \ga_b).$  In addition, $\sum_{(\mu,\nu)\in S^{\circ}}\gc_{\mu,\nu}b_{\mu,\nu}^2=0.$ Furthermore,
 \begin{enumerate}\eroman
     \item If $\ga_b=\half,$ then $\rho=\xi=\half$.
     \item If $\gr=\ga_b\ne\half,$ then $\mu=0,$ for each $(\mu,\nu)\in S^{\circ}$ such that $\gc_{\mu,\nu}\ne 0.$
     \item  If $\xi=\ga_b\ne\half,$ then $\nu=0,$ for each $(\mu,\nu)\in S^{\circ}$ such that $\gc_{\mu,\nu}\ne 0.$
     \item If $\gr\ne\ga_b\ne\xi,$ then $\frac{\nu}{\mu}=\frac{\gr-\ga_b}{\xi-\ga_b},$ for each $(\mu,\nu)\in S^{\circ}$ such that $\gc_{\mu,\nu}\ne 0.$
 \end{enumerate}
\item Conversely, if   $\rho + \xi =1,$ and $\mu(\rho-\ga_b)= \nu(\xi-\ga_b),$ for all $(\mu,\nu)\in S^{\circ}$ such that $\gc_{\mu,\nu}\ne 0,$ and $\sum_{(\mu,\nu)\in S^{\circ}}\gc_{\mu,\nu}b_{\mu,\nu}^2=0,$ then $y$ is a $(\rho,\xi)$-eigenvector of~$b.$  In particular,

\item
The vector $ \sum_{(\mu,\nu)\in S} \gc'_{\mu,\nu}  b_{\mu,\nu}$ is a $(\half,\half) $ eigenvector of $b,$ if and only if, either $\ga_b=\half,$ or $\ga_b\ne\half$ and $\gc'_{\mu,\nu}=0,$ for all $(\mu,\nu)\in S^{\dagger},$ and $\sum_{(\mu,\mu)\in S}   \gc'_{\mu,\mu}  b_{\mu,\mu}^2 = 0.$
    \end{enumerate}

\end{lemma}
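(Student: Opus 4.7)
The plan is to specialize the explicit formulas of Lemma~\ref{Ser03} to the hypothesis $y\in V$, i.e., $\ga_y = 0 = \gc_0$. Under this restriction, Lemma~\ref{Ser03}(i) reduces to
\[
by = \sum_{(\mu,\nu)\in S^\circ}\gc_{\mu,\nu}\, b_{\mu,\nu}^2 \;+\; \sum_{(\mu,\nu)\in S^\circ}\gc_{\mu,\nu}\,\frac{(1-\ga_b)\nu+\ga_b\mu}{\varepsilon_{\mu,\nu}}\,b_{\mu,\nu},
\]
with the analogous formula for $yb$ obtained by transposing $\mu$ and $\nu$ in the numerator of the second sum. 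This simplified expression is the engine behind the whole lemma.

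For part~(i), assume $by = \gr y$ and $yb = \xi y$. Since $\gr y,\,\xi y \in V$, the $\ff a + \ff b_0$ components of $by$ and $yb$ must vanish, giving $\sum \gc_{\mu,\nu}b_{\mu,\nu}^2 = 0$. Matching coefficients of each $b_{\mu,\nu}$ with $\gc_{\mu,\nu}\ne 0$ yields the two formulas~\eqref{896}. Adding them produces $\gr + \xi = (\mu+\nu)/\varepsilon_{\mu,\nu} = 1$. A direct computation gives $\gr - \ga_b = (1-2\ga_b)\nu/\varepsilon_{\mu,\nu}$ and $\xi - \ga_b = (1-2\ga_b)\mu/\varepsilon_{\mu,\nu}$, from which $\mu(\gr-\ga_b) = \nu(\xi-\ga_b)$ is immediate. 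The sub-parts (a)--(d) are then direct consequences of these two identities: $\ga_b = \half$ makes both differences zero and so $\gr = \xi = \half$; if $\ga_b \ne \half$, the vanishing of $\gr-\ga_b$ (resp.~of $\xi-\ga_b$) forces the corresponding coordinate in $\{\mu,\nu\}$ to be zero; and in the generic case~(d), both differences are nonzero and the ratio $\nu/\mu = (\gr-\ga_b)/(\xi-\ga_b)$ is extracted directly.

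For the converse~(ii), the hypotheses $\gr + \xi = 1$ and $\mu(\gr-\ga_b) = \nu(\xi-\ga_b)$ together form a linear system in $\gr,\xi$ whose unique solution recovers the pair of formulas~\eqref{896}; substituting these into the explicit expression for $by$ above, and invoking $\sum\gc_{\mu,\nu}b_{\mu,\nu}^2 = 0$ to kill the first sum, collapses $by$ to $\gr y$, and symmetrically $yb = \xi y$. For part~(iii), apply part~(i) with $\gr = \xi = \half$: when $\ga_b = \half$ the formulas hold identically, so any $y\in V$ satisfying the vanishing of $\sum\gc'_{\mu,\nu}b_{\mu,\nu}^2$ qualifies; when $\ga_b \ne \half$, the relation $\gr - \ga_b = \half - \ga_b \ne 0$ forces $\nu/\varepsilon_{\mu,\nu} = \half$, and symmetrically $\mu/\varepsilon_{\mu,\nu} = \half$, so $\mu = \nu$ for every $(\mu,\nu)$ with $\gc'_{\mu,\nu}\ne 0$. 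This eliminates every index in $S^\dagger$ and leaves only the diagonal constraint $\sum_{(\mu,\mu)\in S^\circ}\gc'_{\mu,\mu}b_{\mu,\mu}^2 = 0$. The converse implication in both alternatives is handled by part~(ii).

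The argument is essentially bookkeeping, so there is no single big obstacle; the step requiring the most care is verifying that the $\ff a + \ff b_0$-part of $by$, after the specialization $\ga_y = \gc_0 = 0$, really is exactly $\sum\gc_{\mu,\nu}b_{\mu,\nu}^2$ with no residual contributions (cross terms of the form $b_0b_{\mu,\nu}$ or $b_{\mu,\nu}b_0$ all live in $\ff b_{\mu,\nu}$ by \eqref{622}, not in $\ff a + \ff b_0$). Once this point is cleanly pinned down, all the statements in (i)--(iii) drop out as one-line implications from the two displayed identities $\gr-\ga_b = (1-2\ga_b)\nu/\varepsilon_{\mu,\nu}$ and $\xi-\ga_b = (1-2\ga_b)\mu/\varepsilon_{\mu,\nu}$.
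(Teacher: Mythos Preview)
Your proof is correct and follows essentially the same approach as the paper: both specialize Lemma~\ref{Ser03} to the case $\ga_y=\gc_0=0$, read off the formulas~\eqref{896} by matching components, and extract the relations $\gr-\ga_b=(1-2\ga_b)\nu/\varepsilon_{\mu,\nu}$ and $\xi-\ga_b=(1-2\ga_b)\mu/\varepsilon_{\mu,\nu}$ to handle the sub-cases and the converse. Your write-up is in fact a bit more explicit than the paper's in spelling out why the $\ff a+\ff b_0$-part of $by$ reduces to $\sum\gc_{\mu,\nu}b_{\mu,\nu}^2$.
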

\begin{proof} (i) If $\ga_b = \half$ then, by Lemma \ref{Ser03}(ii)(b), and (iv)(b) $\rho =\xi=\frac{\half \mu +\half \nu}{\mu+\nu}= \half.$
 Also the equality of  Lemma~\ref{Ser03}(ii)(a) holds, so (a) follows.

So assume $\ga_b\ne \half.$
   By Lemma~\ref{Ser03}(ii)(b), if $\gc_{\mu,\nu}\ne 0,$
    \begin{equation}
        \label{Ser043} \rho  = \frac{(1-\ga_b)   {  \nu} +\ga_b\mu  }{\varepsilon_{\mu,\nu}} ,
    \end{equation}
    and by Lemma \ref{Ser03}(iv)(b),
\begin{equation}
    \label{Ser042}  \xi  = \frac{(1-\ga_b)   {  \mu} +\ga_b\nu  }{\varepsilon_{\mu,\nu}} .
\end{equation}
Adding yields $\rho + \xi = \frac{(1-\ga_b)  \varepsilon_{\mu,\nu}  +\ga_b \varepsilon_{\mu,\nu}} {\varepsilon_{\mu,\nu}}=1.$  Also, $\sum_{(\mu,\nu)\in S^{\circ}}\gc_{\mu,\nu}b_{\mu,\nu}^2=0,$ by Lemma~\ref{Ser03}(ii)(a).

 Now by \eqref{Ser043},
 $(\mu +\nu)\rho = (1-\ga_b)   {  \nu} +\ga_b\mu  $ so
 $\mu(\gr-\ga_b)=\nu  ((1-\ga_b) -\rho) = \nu(\xi- \ga_b).$  For the next assertions,
substitute into Lemma~\ref{Ser03}(i),(ii), canceling $\gc_{\mu,\nu}.$ The remaining parts of (i) follow from  this.

 (ii)  $\mu(\rho- \ga_b)=\nu(\xi-\ga_b) = \nu(1-\rho-\ga_b)$ implies
 $$\rho = \frac{(1 -\ga_b)\nu +\ga_b\mu}{\mu+\nu} , $$ and together with $\xi = 1-\rho$ we  have \eqref{896}.  Then the equalities given in Lemma~\ref{Ser03}(ii)(a), (ii)(b) and (iv)(b) are satisfied, so $y$ is a $(\gr,\xi)$-eigenvector of~$b.$

 (iii) This follows from (i) and (ii).
\end{proof}

\begin{thm}\label{Ser04} Take $V$ as in  \eqref{Vdef}, and assume that $$ y = \ga_y a +\gc_0 b_0 + \sum \gc_{\mu,\nu} b_{\mu,\nu}$$ is a nonzero  $(\rho,\xi)$-eigenvector of $b$.
    \begin{enumerate}\eroman

    \item If $\rho\ne \xi$, then $\gc_0 = 0 =  \ga_y $ so $y\in V$.

 \item Suppose that $\rho = \xi.$   Then
 \begin{enumerate}
     \item
 \begin{equation}\label{11}
 {(2\rho -1)} \gc_{\mu,\nu}    =    \varepsilon_{\mu,\nu}\ga_y+ (1-\varepsilon_{\mu,\nu}\ga_b) \gc_0= \varepsilon_{\mu,\nu} (\ga_y -\ga_b \gc_0) + \gc_0.
 \end{equation}
for all $(\mu,\nu)\in S^\circ,$ where $\varepsilon_{\mu,\nu}=\mu+\nu.$
  In particular, when $\rho= \half$, $\varepsilon :={\varepsilon_{\mu,\nu}}  $ is independent of $(\mu,\nu)\in S^\circ$,
\begin{equation}
    \label{141} \ga_y= (\ga_b -\frac{1}{\varepsilon})  \gc_0,
\end{equation}
and
  \begin{equation}\label{112}
  (\rho-\ga_b) \gc_{\mu,\nu}    = (1- \ga_b)\gc_0
 \end{equation}
 for all $(\mu,\nu)\in S^\dagger.$

 \item
 In particular, if $\gc_0=0,$ and $\gr\ne\ga_b,$ the $y$ belongs to the commutative subalgebra $\ff a+\ff b_0+\sum_{(\mu,\nu)\in S^{\circ}}\ff b_{\mu,\mu}.$
 \end{enumerate}

\item Suppose that $y\in V$. Then the following assertions are equivalent:
\begin{enumerate} \eroman
    \item  $\rho = \xi $.

   \item $\rho = \xi=\half$.

  \item $A$ is commutative or $\ga_b=\half$.
\end{enumerate}

  \item For $(\mu,\mu)\in S,$ $b_{\mu,\mu} $ is a $(\half,\half)$ eigenvector of $b$ if and only if $b_{\mu,\mu}^2 = 0.$
 \item  Suppose that $\rho = \xi$ and $y\notin V$.
\begin{enumerate}
 \item If $\gc_{\mu ,\nu }= 0 $ for some $(\mu,\nu)\in S^{\circ}$, then $\gc_0\ne 0$ and  $\varepsilon_{\mu,\nu} = \frac{\gc_0}{\ga_b \gc_0 -\ga_y}.$ Consequently, $\ga_y = 0$ if and only if $\varepsilon_{\mu,\nu} = \frac{1}{\ga_b}$.

   \item If    $\rho = \half$, then $\gc_0\ne 0$  and $\varepsilon =\mu'+\nu'$ is fixed for all $(\mu',\nu')\in S^{\circ}.$

   Let $\mu =\frac{\varepsilon}{2} $, and
 write $b_{\mu,\mu}^2 = \theta'_1 a +\theta'_0b_0.$
\begin{itemize}
\item [(1)] If $A$ is commutative, then either $A=\ff a+\ff b_0,$ or $A=\ff a+\ff b_0+\ff b_{\mu,\mu}.$

\item[(2)]
If $\ga_b=\half,$ then $S^{\dagger}=\emptyset$. Also, normalizing $\gc_0 =1,$ we have,
$$y=\left(\ga_b -\frac{1}{\varepsilon}  \right) a +b_0 + \gc_{\mu,\mu}b_{\mu,\mu}\in A_{\half,\half}(b)$$ iff
\begin{equation}\label{half2}
\textstyle{(\mu,\mu)\in S^{\circ},\quad \theta'_0 \ne 0,\quad \theta'_1= \half\theta'_0,\quad \gc_{\mu,\mu}  =1 - \frac{1}{2\theta'_0}.}
\end{equation}
Thus in this case, \eqref{half2} holds  iff  $A$ is commutative and $3$-dimensional and $A_{\half,\half}(b)=\ff y$. Otherwise $A_{\half,\half}(b)=0.$

    \item[(3)] If $\ga_b\ne\half,$ then
    \[
\textstyle{y= \left(\ga_b -\frac{1}{\varepsilon}  \right) a +b_0 + \gc_{\mu,\mu}b_{\mu,\mu}+2\frac{\ga_b-1}{2\ga_b-1}\sum_{(\mu,\nu)\in S^{\dagger}}b_{\mu,\nu}},
\]
and $y\in A_{\half,\half}(b)$ iff
    \begin{equation}\label{half13}
    \begin{aligned}
&\textstyle{(\mu,\mu)\in S^{\circ},\quad \theta'_0 \ne 0,\quad \theta'_1 \ne \ga_b \theta'_0,\quad \gc_{\mu,\mu}  =1 - \frac{1}{2\theta'_0}\text{ and }}\\
& \textstyle{\mu =  \frac{(1-2\ga_b)\theta'_0}{\theta'_1-\ga_b\theta'_0}.}
\end{aligned}
\end{equation}
Thus, in this case, $A_{\half,\half}(b) = \ff y+ V_{\half,\half}(b)$ iff \eqref{half13}  holds, and otherwise $A_{\half,\half}(b)=V_{\half,\half}(b).$ Furthermore $\bar S = \emptyset.$

    \end{itemize}
\end{enumerate}

    \end{enumerate}
\end{thm}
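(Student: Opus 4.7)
The proof rests almost entirely on unpacking Lemma~\ref{Ser03}. The central observation is that because $b$ is an \emph{axis} (not merely an idempotent with $(ba)b=b(ab)$), the conditions Lemma~\ref{Ser03}(ii)(a) and (iv)(a) impose the \emph{same} equation on the $\ff a+\ff b_0$ component of $by=\xi y=\rho y$, namely
\[
\ga_b\ga_y a+\gc_0 b_0^2+\sum\gc_{\mu,\nu}b_{\mu,\nu}^2=\rho(\ga_y a+\gc_0 b_0)=\xi(\ga_y a+\gc_0 b_0).
\]
Part (i) is immediate from this identity: if $\rho\ne\xi$ then $\ga_y a+\gc_0 b_0=0$, so $\ga_y=\gc_0=0$ and $y\in V$.

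For (ii), assume $\rho=\xi$ and work with the $A_{\mu,\nu}(a)$-components in Lemma~\ref{Ser03}(ii)(b) and (iv)(b). Adding the two equations, the cross terms combine because $\mu+\nu=\varepsilon_{\mu,\nu}$, and everything collapses to $(2\rho-1)\gc_{\mu,\nu}=\varepsilon_{\mu,\nu}(\ga_y-\ga_b\gc_0)+\gc_0$, which is \eqref{11}; specializing $\rho=\half$ immediately yields \eqref{141}. Subtracting the two equations and then eliminating $\varepsilon_{\mu,\nu}\ga_y$ by means of \eqref{11} gives $(\rho-\ga_b)\gc_{\mu,\nu}=(1-\ga_b)\gc_0$ for all $(\mu,\nu)\in S^\dagger$, which is \eqref{112}. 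Part (ii)(b) follows at once: if $\gc_0=0$ and $\rho\ne\ga_b$ then $\gc_{\mu,\nu}=0$ on $S^\dagger$, so $y\in\ff a+\ff b_0+\sum_{(\mu,\mu)\in S^\circ}\ff b_{\mu,\mu}$, which is a commutative subalgebra (using Lemma~\ref{Ser23} and \eqref{62} with $\mu=\nu$).

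Part (iii) follows by substituting $y\in V$ (so $\gc_0=\ga_y=0$) into \eqref{11}: we obtain $(2\rho-1)\gc_{\mu,\nu}=0$, and since $y\ne 0$ forces some $\gc_{\mu,\nu}\ne 0$, we conclude $\rho=\half$. The equivalence with (c) comes straight from Lemma~\ref{Ser041}(i) via $\rho-\xi=(1-2\ga_b)(\nu-\mu)/(\mu+\nu)$. Part (iv) is simply the specialization of Theorem~\ref{Ser02}(vi) to $\mu=\nu$, which gives both eigenvalues equal to $\half$.

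For part (v), assume $\rho=\xi$ and $y\notin V$. For (a), \eqref{11} with $\gc_{\mu,\nu}=0$ gives $0=\varepsilon_{\mu,\nu}(\ga_y-\ga_b\gc_0)+\gc_0$; if $\gc_0=0$ then $\ga_y=0$ and $y\in V$, a contradiction, so $\gc_0\ne 0$ and we solve for $\varepsilon_{\mu,\nu}$. For (b) with $\rho=\half$, the same equation applied to every $(\mu,\nu)\in S^\circ$ forces $\gc_0\ne 0$ and a common value $\varepsilon=\varepsilon_{\mu,\nu}$, whence $\ga_y=\ga_b-1/\varepsilon$ after normalizing $\gc_0=1$. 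Sub-case (1): Corollary~\ref{Ser24} gives $S^\dagger=\emptyset$, and $\varepsilon=2\mu$ fixes a unique $(\mu,\mu)\in S^\circ$ (if any). Sub-case (2): \eqref{112} with $\ga_b=\half$ and $\gc_0\ne 0$ forces $S^\dagger=\emptyset$; writing $b_{\mu,\mu}^2=\theta'_1 a+\theta'_0 b_0$ and substituting $b_0^2=(\ga_b-\ga_b^2-\theta'_1)a+(1-\theta'_0)b_0$ from Theorem~\ref{Ser02}(vii) into Lemma~\ref{Ser03}(ii)(a), the $b_0$-coefficient yields $\theta'_0\ne 0$ and $\gc_{\mu,\mu}=1-1/(2\theta'_0)$, and the $a$-coefficient yields $\theta'_1=\half\theta'_0$, reproducing \eqref{half2}. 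Sub-case (3): \eqref{112} fixes $\gc_{\mu,\nu}=2(\ga_b-1)/(2\ga_b-1)$ on $S^\dagger$, and Lemma~\ref{b0sq} gives $\bar S=\emptyset$; the same coefficient-matching in Lemma~\ref{Ser03}(ii)(a) (with the $b_{\mu,\nu}^2$ terms on $S^\dagger$ vanishing) yields $\theta'_0\ne 0$, $\gc_{\mu,\mu}=1-1/(2\theta'_0)$, and, after substituting $\ga_y=\ga_b-1/\varepsilon$ into the $a$-coefficient, the relation $\varepsilon(\theta'_1-\ga_b\theta'_0)=(1-2\ga_b)\theta'_0$, which yields $\mu=\varepsilon/2=(1-2\ga_b)\theta'_0/[2(\theta'_1-\ga_b\theta'_0)]$. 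This is exactly \eqref{half13}; the complementary possibility $(\mu,\mu)\notin S^\circ$ gives $A_{\half,\half}(b)=V_{\half,\half}(b)$.

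The most delicate step is the coefficient-matching in sub-case (v)(b)(3): one must carefully substitute Theorem~\ref{Ser02}(vii) into Lemma~\ref{Ser03}(ii)(a) while also carrying the values of $\gc_{\mu,\nu}$ on $S^\dagger$ prescribed by \eqref{112}, and then verify that $\theta'_1-\ga_b\theta'_0\ne 0$ so that the formula for $\mu$ is well-defined.
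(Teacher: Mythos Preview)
Your proposal is correct and follows essentially the same route as the paper's own proof: both arguments derive (i) from comparing Lemma~\ref{Ser03}(ii)(a) with its right-hand analogue, obtain \eqref{11} and \eqref{112} by adding and subtracting Lemma~\ref{Ser03}(ii)(b) and (iv)(b), invoke Theorem~\ref{Ser02}(vi) for (iv), and handle (v)(b) by substituting Theorem~\ref{Ser02}(vii) into Lemma~\ref{Ser03}(ii)(a) and matching $a$- and $b_0$-coefficients. The only cosmetic difference is that for (iii)$(a)\Rightarrow(b)$ you read $\rho=\tfrac12$ directly off \eqref{11}, whereas the paper cites Lemma~\ref{Ser041} to get $\rho+\xi=1$; both are one-line observations.
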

\begin{proof}
    (i)
The symmetric equation to Lemma~\ref{Ser03}(ii)(a), computing $yb = \xi y$,
yields $$ \ga_b\ga_y a +\gc_0 b_0^2 +\sum \gc_{\mu,\nu} b_{\mu,\nu}^2 = \xi (\ga_y a + \gc_0 b_0),$$ which, together with Lemma~\ref{Ser03}(ii)(a), yields  $\ga_y a + \gc_0 b_0 = 0,$  since $\rho\ne \xi;$ hence $\ga_y =\gc_0 =0$.

 (ii)
 There is some $(\mu,\nu)\in S^\circ$ for which $\gc_{\mu,\nu}\ne 0.$
We use Lemma~\ref{Ser03}(ii)(b) for $\rho$ and the analogous equation for $\xi=\rho$ on the right, to get:
\begin{equation}\label{same}
    \begin{aligned}
    &\textstyle{\left(\frac{ \ga_b \gc _{\mu,\nu} +(1-\ga_b)\gc_0 }{\varepsilon}\right) \mu  +\left(\frac{   (1-\ga_b) \gc_{\mu,\nu} +\varepsilon\ga_y +\ga_b(1-\varepsilon)\gc_0   }{\varepsilon} \right)\nu =\rho \gc_{\mu,\nu} }\\
    & = \textstyle{\left(\frac{ \ga_b \gc _{\mu,\nu} +(1-\ga_b)\gc_0 }{\varepsilon}\right) \nu  +\left(\frac{   (1-\ga_b) \gc_{\mu,\nu} +\varepsilon\ga_y +\ga_b(1-\varepsilon)\gc_0   }{\varepsilon} \right)\mu ,}
    \end{aligned}
\end{equation}
 Adding the left and right sides yields
 $$  \ga_b \gc _{\mu,\nu} +(1-\ga_b)\gc_0 +  (1-\ga_b) \gc_{\mu,\nu} +\varepsilon\ga_y +\ga_b(1-\varepsilon)\gc_0    =2\rho\gc_{\mu,\nu},$$ which yields
 \eqref{11}. Equation \eqref{141} is an immediate consequence.

When $\mu \ne \nu,$ equating the two sides of  \eqref{same} yields

$$  \ga_b \gc _{\mu,\nu} +(1-\ga_b)\gc_0 = (1-\ga_b) \gc_{\mu,\nu} +\varepsilon\ga_y +\ga_b(1-\varepsilon)\gc_0   ,$$

or
  \begin{equation}
     \label{111a}   {\varepsilon}\ga_y     =(2\ga_b -1)\gc_{\mu,\nu} +(1- 2\ga_b +\varepsilon \ga_b)\gc_0.
 \end{equation}

 Plugging into \eqref{11}  yields
 $$(2\rho-1) \gc_{\mu,\nu} -  (1-\varepsilon\ga_b)\gc_0    =(2\ga_b -1)\gc_{\mu,\nu} +(1- 2\ga_b +\varepsilon \ga_b)\gc_0,$$
 so $(2\gr-2\ga_b)\gc_{\mu,\nu}=(2-2\ga_b)\gc_0,$
or \eqref{112}.
\medskip


(iii) $(a) \Rightarrow (b)$ $\rho+\xi =1$ by Lemma~\ref{Ser041}.

$(b) \Rightarrow (c)$
 Rewrite \eqref{Ser043} as $\rho \mu + \rho \nu =(1-\ga_b)   {  \nu} +\ga_b\mu ,$ or
 \begin{equation}
     \label{Ser046}  (\rho - \ga_b)\mu = (1-\rho -\ga_b) \nu=(\xi-\ga_b)\nu.
 \end{equation}

Since $\xi = \rho = \half,$ we have either $\ga_b = \half$, or for  $\ga_b \ne \half$, $\mu=\nu$ for each $(\mu,\nu)\in S^\circ,$ so $A$ is commutative.

$(c) \Rightarrow (a)$ When $A$ is commutative, \eqref{Ser046} implies
$\rho - \ga_b= \xi-\ga_b ,$ so $\rho = \xi.$
When $\ga_b = \half,$
\eqref{Ser043} says
$\rho  = \frac{\half  {  \nu} +\half\mu  }{\varepsilon} = \half, $ and symmetrically $\xi = \half.$
\medskip

(iv) By Theorem \ref{Ser02}(vi).
\medskip

(v)
(a)  If $\gc_{\mu,\nu}=0$ for   $(\mu,\nu)\in S^{\circ}$ and $\gc_0=0,$ then $\ga_y=0$ by \eqref{11}, so $y\in V,$ a contradiction. By \eqref{11}, $\ga_b\gc_0-\ga_y\ne 0,$ and the equation for $\varepsilon _{\mu,\nu} $ follows.  Then if $\ga_y=0,$ we see that $\varepsilon_{\mu,\nu} =\frac{1}{\ga_b}.$
\medskip

(b) Since $y\notin V,$ \eqref{141} shows $\gc_0\ne 0$.
By \eqref{11}, $\varepsilon _{\mu',\nu'}  (\ga_y -\ga_b \gc_0) + \gc_0=0.$  Since $\gc_0\ne 0,$ normalizing $\gc_0=1,$ we see that that $\varepsilon =\frac{1}{\ga_b-\ga_y}$   is fixed for all $(\mu',\nu')\in S^{\circ},$ and also $\ga_y=\ga_b-\frac{1}{\varepsilon}.$
\medskip

(1) If $(\mu,\mu)\in S^\circ$ then $\mu = \frac{\varepsilon}{2},$ so of course (1) holds.
\medskip

(2)\&(3)
If $S^{\dagger}\ne\emptyset,$ we can take $(\mu,\nu)\in S^{\dagger}.$
 If $\ga_b=\half,$ then, by \eqref{112},
$\gc_0=0,$ a contradiction. Hence, in this case, $\ga_b\ne\half,$ so $\bar S = \emptyset$ by Lemma~\ref{b0sq}, that is $b_{\mu,\nu}^2=0,$ for each $(\mu,\nu)\in S^{\dagger}.$ This also shows that if $\ga_b=\half,$ then $S^{\dagger}=\emptyset.$ Also,  \eqref{112} shows $\gc_{\mu,\nu}=2\frac{\ga_b-1}{2\ga_b-1}$ when $\ga_b\ne \half.$

We need to check that $y$ satisfies the condition of Lemma~\ref{Ser03}(ii)(a). In case $\ga_b\ne \half,$ we have
$$\textstyle{\ga_b(\ga_b -\frac{1}{\varepsilon} ) a + b_0 ^2+ \gc_{\mu,\mu} b_{\mu,\mu}^2 +2\frac{\ga_b-1}{2\ga_b-1}\sum_{(\mu,\nu)\in S^\dagger} b_{\mu,\nu}^2 = \half \left((\ga_b -\frac{1}{\varepsilon}  ) a + b_0\right).}$$
Since $\sum_{(\mu,\nu)\in S^\dagger} b_{\mu,\nu}^2=0,$ both in case $\ga_b\ne \half$ and in case $\ga_b=\half$ (as $S^{\dagger}=\emptyset$),
 substituting for $b_0^2 = b_0   + (\ga_b-   \ga_b^2 )a - b_{\mu,\mu}^2   - \sum _{(\mu,\nu)\in S^\dagger}  b_{\mu,\nu}^2$ by Theorem~\ref{Ser02}(vii), 
$$\textstyle{  \ga_b(\ga_b -\frac{1}{\varepsilon}) a +  b_0   + (\ga_b-   \ga_b^2 )a +(\gc_{\mu,\mu}  -1) b_{\mu,\mu}^2  = \half \left((\ga_b -\frac{1}{\varepsilon}  ) a + b_0\right) ,}$$

or $$\textstyle{ -\frac{\ga_b}{\varepsilon} a + \half b_0   + \ga_b a  +(\gc_{\mu,\mu}  -1) b_{\mu,\mu}^2    = \half (\ga_b -\frac{1}{\varepsilon}  ) a,  }$$

or \begin{equation}\textstyle{
 \frac{1}{2\varepsilon}(\ga_b(\varepsilon-2)+1) a + \half b_0   +(\gc_{\mu,\mu}  -1) b_{\mu,\mu}^2  = 0.}
\end{equation}

If $(\mu,\mu)\notin S^{\circ},$ or $\gc_{\mu,\mu}  =1$ or $ b_{\mu,\mu}^2  = 0,$ we have a contradiction.
Thus
\begin{equation}
    \label{193} \frac{1}{2\varepsilon}(\ga_b(\varepsilon-2)+1) =    (1- \gc_{\mu,\mu}  ) \theta'_1, \qquad  \half   =  (1- \gc_{\mu,\mu}  ) \theta'_0 .
\end{equation} Hence $\theta'_0\ne 0,$ and $ \gc_{\mu,\mu}  =1 - \frac{1}{2\theta'_0},$ and dividing the two parts of
 \eqref{193} yields $ \ga_b+\frac{1-2\ga_b}{\varepsilon} = \frac{\theta'_1}{\theta'_0}.$  If $\ga_b\ne\half,$ then $\frac{1}{\gre} =  \frac{\frac{\theta'_1}{\theta'_0}-\ga_b}{1-2\ga_b}=\frac{\theta'_1-\ga_b\theta'_0}{(1-2\ga_b)\theta'_0}$. Hence $\theta'_1 \ne \ga_b \theta'_0,$ and $\mu =  \frac{(1-2\ga_b)\theta'_0}{2(\ga_b\theta'_0 - \theta'_1)}$. If $\ga_b=\half,$ then $\theta'_1=\half\theta'_0.$ \end{proof}

\begin{remark}\label{rhohalf}
    If $y\in V$ is a $(\rho,\rho) $-eigenvector of $b$ and $\ga_b =\half$ (which is the case when $A$ is not commutative, by Theorem~\ref{Ser04}(iii)), then $\rho = \half$ by Lemma~\ref{Ser041}, and then
$\dim A_{\rho,\rho}(b)$ could be rather large, since we could have several independent vectors in  Lemma~\ref{Ser041}(iii).
\end{remark}


In what follows,   assume that $$ y = \ga_y a +\gc_0 b_0 + \sum \gc_{\mu,\nu} b_{\mu,\nu}$$ is a nonzero  $(\rho,\rho)$-eigenvector of $b$.

\begin{thm}\label{eigcom}
Assume that $\gr\ne\half.$
Write  $\varepsilon_{\mu,\nu} = \mu +\nu$, $ \sum _{(\mu,\nu)\in S}{\mu}b_{\mu,\nu}^2 = \theta_1 a + \theta_0 b_0 $, and
 $ \sum_{(\mu,\nu)\in S} b_{\mu,\nu}^2 = \theta'_1 a + \theta'_0b_0$.  Then  $2( \theta_1 a + \theta_0 b_0 )= \sum _{(\mu,\nu)\in S}\varepsilon_{\mu,\nu}b_{\mu,\nu}^2 $.
\begin{enumerate}\eroman
\item Suppose that $ \gc_0 =0.$ Then
$\ga_y\ne 0$ and  $\gc_{\mu,\nu}= \frac{\varepsilon_{\mu,\nu}}{2\rho -1} \ga_y$ for all $(\mu,\nu)\in S^\circ,$ i.e., $y = \ga_y y_\rho,$ where $$y_\rho = a +  \frac{1}{2\rho -1}\sum_{(\mu,\nu)\in S}  \varepsilon_{\mu,\nu}  b_{\mu,\nu}.$$ Also, $\theta_0= 0,$ and $\rho-\ga_b = \frac{2}{2\rho -1}\theta_1.$  Furthermore if $S^{\dagger}\ne\emptyset,$ then $\gr=\ga_b.$

Conversely, the following three conditions together imply that   $y_\rho$ is an eigenvector of $b:$
\begin{itemize}
    \item[(1)] $\theta_0= 0,$

    \item[(2)]$\gr-\ga_b=\frac{2}{2\rho -1}\theta_1,$

    \item[(3)] $\gr=\ga_b$ when $A$ is not commutative.
\end{itemize}



\item Assume that  $\gc_0\ne 0,$  and  normalize $\gc_0=1.$
Let
         \begin{equation}\label{72} y_\rho'=\ga_b a +b_0 + \frac{1}{2\rho -1} \sum b_{\mu,\nu}.  \end{equation}
then
          \begin{equation}\label{71} y = \gc_\rho y_\rho + y'_\rho,
 \end{equation}
where $\gc_\rho = \ga _y- \ga_b $ implying $\dim A_{\rho,\rho}(b) \le 2,$
and
     \begin{enumerate}


\item
\begin{equation}\label{5.4}
\gc_{\mu,\nu}    =  \frac{\varepsilon_{\mu,\nu}(\ga_y-\ga_b   ) +1}{2\gr-1}.
\end{equation}
\begin{equation}\label{671}
    0   = \ga_y(\ga_b + \frac{2} {2\rho-1} \theta_1   -\rho )+(\ga_b-   \ga_b^2 )    -\frac  {2\ga_b}{2\rho-1} \theta_1  +\frac  {2(1-\rho )}{2\rho-1} \theta'_1
    \end{equation}
 or
    \begin{equation}\label{theta1}
2\theta_1(\ga_y-\ga_b)= (\ga_y-\ga_b)(\gr-\ga_b)(2\gr-1)+\ga_b(\gr-1)(2\gr-1)+2(\gr-1)\theta'_1.
    \end{equation}
    \begin{equation}\label{672}
    0   =  \frac{2\theta_0} {2\rho-1} \ga_y  + (1-\rho) -\frac  {2\ga_b\theta_0 }{2\rho-1}  +\frac  {2(1-\rho )}{2\rho-1} \theta'_0,
    \end{equation}
    or
    \begin{equation}\label{theta0}
2\theta_0(\ga_y-\ga_b)=(2\gr-1)(\gr-1)+2(\gr-1)\theta'_0.
    \end{equation}
Also,
\begin{equation}\label{NC}
(\gr-\ga_b)\gc_{\mu,\nu}=1-\ga_b,\quad\forall (\mu,\nu)\in S^{\dagger}.
\end{equation}

\item
For $\gr=1,$
$y_\gr' = b \in A_{1}(b)$.  For  $\gr\ne 1,$ $y_\rho'\in A_{\rho,\rho}(b)$ if and only if  $\theta_0' =  \half - \rho$ and $\theta_1' = \ga_b (\half -\rho) $, and  $\ga_b=\half$ when $A$ is not commutative.

\item
If $y_{\gr}\notin A_{\gr,\gr}(b),$ then $y$ is determined as follows.  If $\gr-\ga_b\ne \frac{2}{2\gr-1}\theta_1,$ then $\ga_y$ is given by \eqref{671} and $\gc_{\mu,\nu}$ by \eqref{5.4}.  If $\theta_0\ne 0,$ then $\ga_y$ is given by \eqref{672} and $\gc_{\mu,\nu}$ by \eqref{5.4}.  Finally if both $\gr-\ga_b= \frac{2}{2\gr-1}\theta_1,$ and $\theta_0=0,$ then by (i), $A$ is not commutative, and $\gr=\ga_b,$ so $\gc_{\mu,\nu}$ are given by \eqref{NC}, for all $(\mu,\nu)\in S^{\dagger},$ and by \eqref{5.4}, either $\ga_y=\ga_b,$ or $\ga_y$ is given by \eqref{5.4}, and again all $\gc_{\mu,\nu}$  are given by \eqref{5.4}.
\end{enumerate}

\item
\begin{enumerate}
\item
 $\dim A_{\gr,\gr}(b)=2,$ iff $A_{\gr,\gr}(b)=\ff y_{\gr}+ \ff y'_{\gr}.$

\item
$A_{\gr,\gr}=\ff y_{\gr}$ iff (i) (1), (2) and (3) hold, and $y'_{\gr}\notin A_{\gr,\gr}(b).$


\item
\begin{itemize}
\item[(1)]
For $A$   not commutative, $\dim A_{\gr,\gr}(b)=2$ implies that $\gr=\ga_b=1.$
    \item [(2)]
$A_1(b)=\ff b + \ff \left(a + \sum_{(\mu,\nu)\in S}  \varepsilon_{\mu,\nu}  b_{\mu,\nu}\right)$ iff $ \sum _{(\mu,\nu)\in S}{\mu}b_{\mu,\nu}^2 = \frac{1-\ga_b}{2}a.$

\end{itemize}
\item If $\gr\ne 1,$ then $\dim A_{\gr,\gr}(b)=2$ iff
$
\gr=\half-\theta'_0=\ga_b+\frac{2}{2\gr-1}\theta_1,
$
and, in case $\ga_b\ne 0,$ also $\gr=\half-\frac{\theta'_1}{\ga_b}.$
In particular $A$ is commutative, and there is at most one $\gr\not\in \{\half, 1\}$ for which  $\dim A_{\gr,\gr}(b)=2.$


         \item    If $\theta_0 \ne 0$, then there are at most three possible values of $\rho$  defining a $(\rho,\rho)$-eigenvector,
         and $\dim A_{\rho,\rho}(b) = 1$.


\item If $\theta_0 = 0,$ and $\gr\ne 1,$ then $\rho =  \half-\theta_0'$; if moreover    $\rho-\ga_b \ne \frac{2}{2\rho -1}\theta_1,$   then $\dim A_{\gr,\gr}(b)= 1.$
          \end{enumerate}


    \end{enumerate}

\end{thm}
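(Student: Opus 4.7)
The plan is to base the proof on the two characterizing equations established earlier: Lemma~\ref{Ser03}(ii)(a), which says $\ga_b\ga_y a + \gc_0 b_0^2 + \sum \gc_{\mu,\nu}b_{\mu,\nu}^2 = \gr(\ga_y a + \gc_0 b_0)$, together with \eqref{11} from Theorem~\ref{Ser04}(ii), which for $\gr=\xi$ reads $(2\gr-1)\gc_{\mu,\nu} = \varepsilon_{\mu,\nu}(\ga_y - \ga_b\gc_0) + \gc_0$. The preliminary identity $\sum \varepsilon_{\mu,\nu}b_{\mu,\nu}^2 = 2(\theta_1 a + \theta_0 b_0)$ follows at once from \eqref{61}, since it forces $\sum\mu b_{\mu,\nu}^2 = \sum\nu b_{\mu,\nu}^2$. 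Because $\gr\ne\half$, one solves \eqref{11} for $\gc_{\mu,\nu}$ in terms of $\ga_y$ and $\gc_0$, substitutes into Lemma~\ref{Ser03}(ii)(a) after using Theorem~\ref{Ser02}(vii) to rewrite $b_0^2$, and matches the $a$- and $b_0$-components to extract all remaining relations.

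For part (i), $\gc_0=0$ turns \eqref{11} into $\gc_{\mu,\nu} = \frac{\varepsilon_{\mu,\nu}}{2\gr-1}\ga_y$; since $y\ne 0$ this forces $\ga_y\ne 0$ and $y=\ga_y y_\gr$. Substituting into the scalar condition and using the preliminary identity gives $(\ga_b + \frac{2\theta_1}{2\gr-1})a + \frac{2\theta_0}{2\gr-1}b_0 = \gr a$, so $\theta_0=0$ and $\gr-\ga_b=\frac{2}{2\gr-1}\theta_1$. When $S^\dagger\ne\emptyset$, \eqref{112} specializes to $(\gr-\ga_b)\gc_{\mu,\nu}=0$, and $\gc_{\mu,\nu}\ne 0$ (because $\varepsilon_{\mu,\nu}\ne 0$ by Theorem~\ref{Ser02}(ii)) forces $\gr=\ga_b$. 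The converse is a direct verification. For part (ii), normalize $\gc_0=1$; equation \eqref{11} gives \eqref{5.4} at once, so $y=(\ga_y-\ga_b)y_\gr+y_\gr'$ and $\dim A_{\gr,\gr}(b)\le 2$. To derive \eqref{theta1} and \eqref{theta0}, substitute \eqref{5.4} and $b_0^2=b_0+(\ga_b-\ga_b^2)a-\sum b_{\mu,\nu}^2$ (Theorem~\ref{Ser02}(vii)) into the scalar condition and separate the $a$- and $b_0$-parts; \eqref{NC} is just \eqref{112} rewritten. For (ii)(b), the case $\gr=1$ is immediate; for $\gr\ne 1$, setting $\ga_y=\ga_b$ in \eqref{theta1} and \eqref{theta0} reduces them to $0=\ga_b(\gr-1)(2\gr-1)+2(\gr-1)\theta_1'$ and $0=(2\gr-1)(\gr-1)+2(\gr-1)\theta_0'$, yielding $\theta_0'=\half-\gr$ and $\theta_1'=\ga_b(\half-\gr)$; in the non-commutative subcase, applying \eqref{NC} to $\gc_{\mu,\nu}=\frac{1}{2\gr-1}$ gives $(2\ga_b-1)(\gr-1)=0$, forcing $\ga_b=\half$. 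Item (ii)(c) is bookkeeping: \eqref{theta1} determines $\ga_y$ when (i)(2) fails, \eqref{theta0} when $\theta_0\ne 0$, and otherwise one relies on \eqref{NC} together with \eqref{5.4}.

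For part (iii), the bound $A_{\gr,\gr}(b)\subseteq\ff y_\gr+\ff y_\gr'$ from (i) and (ii) makes (a) and (b) immediate. For (c)(1), existence of $y_\gr$ in the non-commutative case forces $\gr=\ga_b$ by (i)(3), while for $\gr\ne 1$ existence of $y_\gr'$ forces $\ga_b=\half$ by (ii)(b); these combine with $\gr\ne\half$ only if $\gr=1$, in which case (i)(3) gives $\ga_b=1$. For (c)(2), at $\gr=1$ we have $y_\gr'=b$ automatically, so $\dim A_1(b)=2$ iff $y_1=a+\sum\varepsilon_{\mu,\nu}b_{\mu,\nu}\in A_1(b)$, which by (i) amounts to $\theta_0=0$ and $2\theta_1=1-\ga_b$, i.e., $\sum\mu b_{\mu,\nu}^2=\frac{1-\ga_b}{2}a$. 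For (d), (c)(1) rules out the non-commutative case, so $A$ is commutative; merging the conditions from (i) with those from (ii)(b) gives the stated equalities. For (e), when $\theta_0\ne 0$ equation \eqref{theta0} solves uniquely for $\ga_y$ as a rational function of $\gr$; substituting into \eqref{theta1} yields a polynomial in $\gr$ of degree at most three, hence at most three admissible values of $\gr$, and for each such $\gr$ the eigenvector is unique up to scalar (since (i) rules out $y_\gr$), giving $\dim=1$. For (f), $\theta_0=0$ together with $\gr\ne 1$ reduces \eqref{theta0} to $(2\gr-1)+2\theta_0'=0$, so $\gr=\half-\theta_0'$; the failure of (i)(2) removes $y_\gr$, leaving a unique $y_\gr'$-type eigenvector and $\dim=1$.

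The principal obstacle is the bookkeeping in (iii): one must systematically track when each of the two candidate eigenvectors $y_\gr$ and $y_\gr'$ contributes to $A_{\gr,\gr}(b)$, and separate the commutative from the non-commutative regime. Each individual step, however, reduces to a direct substitution into \eqref{11}, Lemma~\ref{Ser03}(ii)(a), and Theorem~\ref{Ser02}(vii), with the identity $\sum\varepsilon_{\mu,\nu}b_{\mu,\nu}^2=2(\theta_1 a+\theta_0 b_0)$ collapsing the $b_{\mu,\nu}^2$-sums to scalar combinations of $a$ and $b_0$.
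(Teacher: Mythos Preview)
Your proposal is correct and follows essentially the same approach as the paper's own proof: both arguments are driven by the coefficient formula \eqref{11}, the scalar condition of Lemma~\ref{Ser03}(ii)(a) combined with the substitution $b_0^2 = b_0 + (\ga_b-\ga_b^2)a - \sum b_{\mu,\nu}^2$ from Theorem~\ref{Ser02}(vii), the relation \eqref{112} for the noncommutative constraints, and \eqref{61} for the preliminary identity $\sum\varepsilon_{\mu,\nu}b_{\mu,\nu}^2 = 2(\theta_1 a + \theta_0 b_0)$. The only cosmetic difference is that the paper records the matched $a$- and $b_0$-components first in the forms \eqref{671}, \eqref{672} and then rewrites them as \eqref{theta1}, \eqref{theta0}, whereas you pass directly to the latter; likewise in (iii)(e) the paper obtains the cubic after cancelling the common factor $(\gr-1)$, which is implicit in your degree bound.
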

\begin{proof}  \eqref{61} says $\sum _{(\mu,\nu)\in S}{\mu}b_{\mu,\nu}^2 =\sum _{(\mu,\nu)\in S}{\nu}b_{\mu,\nu}^2$, so
$$2 \sum _{(\mu,\nu)\in S}{\mu}b_{\mu,\nu}^2 = \sum _{(\mu,\nu)\in S}({\mu}+\nu)b_{\mu,\nu}^2 .$$

(i) $\ga_y\ne 0,$ else, by \eqref{11}, $y=0.$ Then, by \eqref{11}, $\gc_{\mu,\nu}= \frac{\varepsilon_{\mu,\nu}}{2\rho -1} \ga_y,$ for all $(\mu,\nu)\in S^{\circ}.$ In particular $\gc_{\mu,\nu}\ne 0.$ Then
  Lemma~\ref{Ser03}(ii)(a) says  $$\ga_b\ga_y a  +\sum  \ga_y\frac{\varepsilon_{\mu,\nu}}{2\rho -1}  b_{\mu,\nu}^2 = \rho \ga_y a  .$$

Since $\ga_y\ne 0$,
  $( \rho-\ga_b )a = \frac{2}{2\rho -1}  \sum  \mu b_{\mu,\nu}^2 .$ Hence $\theta_0 = 0$
and $\rho-\ga_b = \frac{2}{2\rho -1}\theta_1.$  Also, if $A$ is not commutative, then given $(\mu^{\dagger},\nu^{\dagger})\in S^{\dagger}$, $\gc_{\mu^{\dagger},\nu^{\dagger}}\ne 0,$ so by \eqref{112}, $\gr=\ga_b,$ since $\gc_0=0.$

Conversely, when (1), (2), and (3) are satisfied,   we get the conditions of ~Lemma~\ref{Ser03}(ii), so $y_\rho$ is a $(\rho,\rho)$ eigenvector.

 \medskip



(ii)
(a)
By \eqref{11},
\[
\gc_{\mu,\nu}    =  \frac{\varepsilon_{\mu,\nu}(\ga_y-\ga_b   ) +1}{2\gr-1},
\]
for all $(\mu,\nu)\in S^{\circ}.$ so
\[
    \begin{aligned}
 y =\ga_y a +b_0 +   \frac{1}{2\gr-1}\sum _{(\mu,\nu) \in S} (\varepsilon_{\mu,\nu}(\ga_y-\ga_b   ) +1) b_{\mu,\nu}\\ =  \ga_y a  +\frac{1}{2\gr-1}\ga_y\sum _{(\mu,\nu) \in S}\!  \varepsilon_{\mu,\nu} b_{\mu,\nu} +b_0 +   \frac{1}{2\gr-1}\sum _{(\mu,\nu) \in S} (-\varepsilon_{\mu,\nu} \ga_b    +1) b_{\mu,\nu},
    \end{aligned}
\] yielding \eqref{71}. In particular $A_{\rho,\rho}(b) \subseteq \ff y_\rho + \ff y'_\rho,$ implying $\dim A_{\rho,\rho}(b) \le 2. $

Now
Lemma~\ref{Ser03}(ii)(a), together with    Theorem~\ref{Ser02}(vii), says  \begin{equation}\label{364}
      \begin{aligned}
           \rho (& \ga_y a + b_0) = \ga_b\ga_y a + b_0 ^2+\sum  \frac{\varepsilon_{\mu,\nu}(\ga_y-\ga_b   ) +1 }{2\rho-1}b_{\mu,\nu}^2 \\ & = \ga_y(\ga_b a + \frac{2} {2\rho-1} \sum {\mu}b_{\mu,\nu}^2 )+ b_0 ^2+\frac  {1}{2\rho-1} \sum  (-\varepsilon_{\mu,\nu} \ga_b    +1 )b_{\mu,\nu}^2 \\ & = \ga_y(\ga_b a + \frac{2} {2\rho-1} \sum {\mu}b_{\mu,\nu}^2 )+ b_0 + (\ga_b-   \ga_b^2 )a  \\& \qquad  - \sum  b_{\mu,\nu}^2+\frac  {1}{2\rho-1} \sum  (-\varepsilon_{\mu,\nu} \ga_b    +1 )b_{\mu,\nu}^2 \\ & = \ga_y(\ga_b a + \frac{2} {2\rho-1} \sum {\mu}b_{\mu,\nu}^2 )+ b_0 + (\ga_b-   \ga_b^2 )a  \\& \qquad +\frac  {2}{2\rho-1} \sum  (-\mu \ga_b    +1-\rho )b_{\mu,\nu}^2.
      \end{aligned}  \end{equation}

    Substituting for $ \sum {\mu}b_{\mu,\nu}^2  $ and
 $ \sum b_{\mu,\nu}^2  $ gives us \begin{equation}\label{6780}
      \begin{aligned} 0 & = \ga_y(\ga_b a + \frac{2} {2\rho-1} (\theta_1 a + \theta_0 b_0) -\rho a)+ (1-\rho) b_0 + (\ga_b-   \ga_b^2 )a  \\& \qquad -\frac  {2 \ga_b}{2\rho-1} (\theta_1 a + \theta_0 b_0   )  +\frac  {2(1-\rho )}{2\rho-1} (\theta'_1 a + \theta'_0b_0)
      \end{aligned}  \end{equation}
 Matching components of $a$ and $b_0$
 yields
 \[
    0   = \ga_y(\ga_b + \frac{2} {2\rho-1} \theta_1   -\rho )+(\ga_b-   \ga_b^2 )    -\frac  {2\ga_b}{2\rho-1} \theta_1  +\frac  {2(1-\rho )}{2\rho-1} \theta'_1
    \]
    and
    \[
    0   =  \frac{2\theta_0} {2\rho-1} \ga_y  + (1-\rho) -\frac  {2\ga_b\theta_0 }{2\rho-1}  +\frac  {2(1-\rho )}{2\rho-1} \theta'_0.
\]
This show \eqref{671} and \eqref{672}. Finally \eqref{5.4} comes from \eqref{112}.
\medskip

\indent (b)
    Taking $y=y'_{\rho},$ we have $\ga_y=\ga_b.$  If $\gr=1,$ then $y'_{\rho}=b.$  Suppose $\rho\ne 1.$ By \eqref{671},
    \[
    (1-\rho)\ga_b+\frac  {2(1-\rho )}{2\rho-1} \theta'_1=0.
    \]
    Cancelling $\gr-1,$ we get $\theta_1'=\frac{\ga_b(1-2\gr)}{2},$ as asserted.  Also, by \eqref{672}, $$1+\frac  {2}{2\rho-1} \theta'_0=0,$$ so $\theta'_0=\half-\gr,$ as asserted.  Suppose $A$ is not commutative.  Since $\gc_{\mu,\nu}=\frac{1}{2\gr-1},$ for all $(\mu,\nu)\in S^{\circ},$ \eqref{5.4} shows that
    $\frac{\gr-\ga_b}{2\gr-1}=1-\ga_b,$ hence $\ga_b=\half.$
    \medskip

\indent (c)
This is clear.
\medskip



(iii)
(a)  This follows from (i) and \eqref{71}.
\medskip

\indent (b)  By (i) and (a), $\gr=\ga_b=1.$
\medskip





\indent (c)
(1)  Suppose that $A$ is not commutative and $\dim A_{\gr,\gr}(b)=2.$ By (a), $y_{\gr}, y'_{\gr}\in A_{\gr,\gr}(b).$ Since $y_{\gr}\in A_{\gr,\gr}(b),$  (i)(3) says that $\gr=\ga_b.$
But also $y'_{\gr}\in A_{\gr,\gr}(b),$ so applying \eqref{NC} to $y'_{\gr},$ we get $\frac{\gr-\ga_b}{2\gr-1}=1-\ga_b.$  But $\gr=\ga_b,$ so $\gr=\ga_b=1.$
\medskip

\indent \indent (2) If $\dim A_1(b)=2,$ then $A_1(b)=\ff b\oplus \ff \left( a+\sum_{(\mu,\nu)\in S^{\circ}}\varepsilon_{\mu,\nu}b_{\mu,\nu}\right)$  by~(a). The rest of part (2) follows from (i).




\indent (d)   holds by (a),  (i)(b) and (ii)(b) and (c)(1).


\indent (e)  If $\theta_0 \ne 0,$ then solving for $\ga_y$ in \eqref{672}, and plugging  into \eqref{671} gives a cubic equation for~$\rho.$ Since by (i), $y_{\gr}\notin A_{\gr,\gr}(b),$ (a) shows that $\dim A_{\gr,\gr}(b)=1.$

\indent (f) If $\theta_0 = 0,$ \eqref{672} yields $\rho =  \half-\theta_0'.$
Again $\dim A_{\gr,\gr}(b)=1,$ as in the proof of (e).
\medskip

\end{proof}

\begin{remark}\label{t0t1}
The following observation will be used a number of times.  In the notation of Theorem \ref{eigcom}, if there exists a unique $(\mu,\mu)\in S^{\circ}(a)$ such that $b_{\mu,\mu}^2\ne 0,$ then $b_{\mu,\mu}^2=\theta'_1a+\theta'_0b_0,$ $\theta_1=\mu\theta'_1$ and $\theta_0=\mu\theta'_0.$
\end{remark}

\begin{cor}\label{rho0}
       Notation as in Theorem~\ref{eigcom}, write $\rho_0 = 0.$    $$y_{\rho_0}  = a - \sum_{(\mu,\nu)\in S} \varepsilon_{\mu,\nu}  b_{\mu,\nu}, \qquad y_{\rho_0}' = \ga_b a +b_0 - \sum_{(\mu,\nu)\in S}   b_{\mu,\nu}.$$
       There are four mutually exclusive types for the axial algebra $A$:

       \begin{enumerate}\eroman
        \item  $ A_0(b) = \ff y_{\rho_0} +\ff y_{\rho_0}' $, the only case  where  $\dim A_0(b) = 2.$ This occurs if and only if  $A$ is commutative and 
       both  $ \sum  \mu b_{\mu,\nu}^2 =  \half \ga_b a$ and  $ \sum   b_{\mu,\nu}^2 =  \half(\ga_b a +b_0).$
 \item  $ A_0(b) = \ff y_{\rho_0}' $. This occurs if and only if
 \begin{itemize}
 \item[(1)] $ \sum  \mu b_{\mu,\nu}^2 \ne  \half \ga_b a$, and $\ga_b\ne 0$ when $A$ is not commutative, and
 \item[(2)]
 $ \sum  b_{\mu,\nu}^2 =  \half(\ga_b a +b_0),$ and $\ga_b=\half$ when $A$ is not commutative.
 \end{itemize}
  Then  $\dim A_0(b) = 1.$
  \item  $ A_0(b) = \ff y_{\rho_0}$. This occurs if and only if
  \begin{itemize}
\item[(1)]
   $ \sum  \mu b_{\mu,\nu}^2 = \half \ga_b a  $, and $\ga_b=0$ when $A$ is not commutative, and
   \item[(2)]$ \sum   b_{\mu,\nu}^2 \ne  \half(\ga_b a +b_0)$ or $\ga_b\ne \half$, when $A$ is not commutative.
   \end{itemize}
   \item    $$ A_0(b) = \ff (\ga a +b_0+   \sum_{(\mu,\nu)\in S^\circ}  \gc_{\mu,\nu}  b_{\mu,\nu})  = \ff((\ga-\ga_b )y_{\rho_0} +y_{\rho_0}') .$$ This occurs when $\gc_{\mu,\nu}=\varepsilon_{\mu,\nu} (\ga-\ga_b)      -1,$ and if $A$ is not commutative then $\ga_b\ne 0,$ $\gc_{\mu,\nu}=1-\frac{1}{\ga_b},$ for all $(\mu,\nu)\in S^{\dagger}\,$ and $\ga=\ga_b+\frac{1-2\ga_b}{\varepsilon_{\mu,\nu}\ga_b}.$  In particular $\varepsilon_{\mu,\nu}$ is determined, for all $(\mu,\nu)\in S^{\dagger}.$  Also
   \begin{itemize}
       \item[(1)] for $\theta_0\ne 0,$  $\ga=\frac{ 1 +  {2\ga_b\theta_0 }   - {2 }\theta'_0}{2\theta_0},$ and
       \[0   = \ga (\ga_b -2 \theta_1   )+(\ga_b-   \ga_b^2 )    +{2\ga_b} \theta_1 -2 \theta'_1.
       \]

       \item[(2)] for $\theta_0=0,$ and $\theta_1\ne\half\ga_b,$ then $\ga=\frac{(\ga_b-   \ga_b^2 ) +2\ga_b \theta_1 -2\theta'_1}{\ga_b -2 \theta_1},$ and
       \[
       0   =  -2\theta_0 \ga  + 1 +  {2\ga_b\theta_0 }   - {2 }\theta'_0.
       \]
      \item[(3)] If  both $\theta_0=0$ and $\theta_1=\half\ga_b,$ then $\theta'_0=\half,$ and $\theta'_1=\half\ga_b,$ $A$ is not commutative, $\ga_b\notin\{0,\half\},$ $\gc_{\mu,\nu}=1-\frac{1}{\ga_b},$ for all $(\mu,\nu)\in S^{\dagger},$ and
      $\ga=\ga_b-\frac{1-2\ga_b}{\ga_b\varepsilon_{\mu,\nu}},$ for all $(\mu,\nu)\in S^{\dagger}.$  In particular, $\varepsilon:=\varepsilon_{\mu,\nu}$ is independent of $(\mu,\nu),$ for $(\mu,\nu)\in S^{\dagger}.$
      \item[(4)] If $\ga_b=1,$ and $A$ is not commutative, then $\ga=1+\frac{1}{\varepsilon},$ with $(\mu^{\dagger},\nu^{\dagger})\in S^{\dagger},$
      \[
      y=\ga a+b_0+\sum_{(\mu,\mu)\in S^{\circ}}\left(1-\frac{2\mu}{\varepsilon}\right)b_{\mu,\mu}.
      \]
   \end{itemize}
    \end{enumerate}
\end{cor}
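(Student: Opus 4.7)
The plan is to specialize Theorem~\ref{eigcom} to $\rho = \rho_0 = 0$. Since $\frac{1}{2\rho_0 - 1} = -1$, the vectors $y_\rho$ and $y_\rho'$ defined there become exactly the $y_{\rho_0}$ and $y_{\rho_0}'$ displayed in the corollary. By Theorem~\ref{eigcom}(i) together with Theorem~\ref{eigcom}(ii)(a), every $(0,0)$-eigenvector of $b$ lies in $\ff y_{\rho_0} + \ff y_{\rho_0}'$, so $\dim A_0(b) \le 2$, and the four enumerated cases exhaust the possibilities: both generators lie in $A_0(b)$, exactly one does, or only a proper nontrivial combination does.

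For cases (i)--(iii) I would directly translate Theorem~\ref{eigcom}(i) and (ii)(b) at $\rho = 0$. Part (i) requires $\theta_0 = 0$ and $\rho - \ga_b = \frac{2}{2\rho-1}\theta_1$, which at $\rho = 0$ becomes $\theta_1 = \half \ga_b$, i.e., $\sum \mu b_{\mu,\nu}^2 = \half \ga_b a$, with the extra noncommutative constraint $\rho = \ga_b$, i.e., $\ga_b = 0$. Part (ii)(b) with $\rho = 0 \ne 1$ requires $\theta'_0 = \half$ and $\theta'_1 = \half \ga_b$, i.e., $\sum b_{\mu,\nu}^2 = \half(\ga_b a + b_0)$, with the extra noncommutative constraint $\ga_b = \half$. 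Case (i) demands the conjunction of both sets of conditions; the noncommutative side constraints $\ga_b = 0$ and $\ga_b = \half$ are incompatible, so dimension $2$ forces $A$ commutative (this also follows at once from Theorem~\ref{eigcom}(iii)(c)(1)). Cases (ii) and (iii) combine one set of conditions with the negation of the other.

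For case (iv), one writes $y = (\ga - \ga_b) y_{\rho_0} + y_{\rho_0}'$ with $\ga \ne \ga_b$; direct expansion, using $y_{\rho_0}$ and $y_{\rho_0}'$ displayed above, gives the stated expression for $\gc_{\mu,\nu}$ via the specialization of \eqref{5.4}. Substituting into the $\rho = 0$ specializations of \eqref{671} and \eqref{672} produces the two linear identities in $\ga$ displayed in subcases (iv)(1) and (iv)(2). Subcase (1) handles $\theta_0 \ne 0$ and solves uniquely for $\ga$; subcase (2) handles $\theta_0 = 0$ with $\theta_1 \ne \half \ga_b$ and again solves for $\ga$. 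In subcase (3), $\theta_0 = 0$ and $\theta_1 = \half \ga_b$, and both identities collapse to force $\theta'_0 = \half$ and $\theta'_1 = \half \ga_b$, matching exactly the conditions that would place $y_{\rho_0}$ and $y_{\rho_0}'$ individually in $A_0(b)$; hence one must be in the noncommutative regime with $\ga_b \notin \{0, \half\}$. Then the noncommutative relation \eqref{NC} reads $\gc_{\mu,\nu} = 1 - 1/\ga_b$ for every $(\mu,\nu) \in S^{\dagger}$, which, matched against the formula for $\gc_{\mu,\nu}$, forces $\varepsilon_{\mu,\nu}$ to be constant on $S^{\dagger}$ and determines $\ga$. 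Subcase (4) is then the further specialization $\ga_b = 1$.

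The main obstacle will be the bookkeeping in case (iv), especially the degenerate subcase (iv)(3), where the two linear equations for $\ga$ vanish identically and only the noncommutative relation \eqref{NC} supplies information. Throughout, one must verify that the constructed $y$ is nonzero, that it defines a $1$-dimensional subspace genuinely different from those in (ii) and (iii) (i.e., $\ga \ne \ga_b$), and that no additional eigenvectors sneak in — this last point is controlled by Theorem~\ref{eigcom}(iii)(c)(1), which forbids $\dim A_0(b) = 2$ outside the commutative setting.
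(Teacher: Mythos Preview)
Your proposal is correct and follows essentially the same approach as the paper: both proofs specialize Theorem~\ref{eigcom} to $\rho=0$, read off the conditions for $y_{\rho_0}$ and $y'_{\rho_0}$ individually to lie in $A_0(b)$ from parts (i) and (ii)(b), and in case~(iv) substitute $\rho=0$ into \eqref{671}, \eqref{672}, and \eqref{NC} to pin down $\ga$ and $\gc_{\mu,\nu}$, with the degenerate subcase~(3) handled exactly as you describe.
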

\begin{proof}
    We substitute ${\rho_0} =0$ in Theorem~\ref{eigcom}.
\medskip

(i)  $\theta_0 =0$ by Theorem~\ref{eigcom}(i)(1), and $\theta_1=\half\ga_b,$ by  Theorem~\ref{eigcom}(i)(2), so   $ \sum  \mu b_{\mu,\nu}^2 =  \half \ga_b a$.
 $\theta'_1=\half{\ga_b}$ and $\theta'_0=\half,$ by Theorem~\ref{eigcom}(ii)(b), so   $ \sum   b_{\mu,\nu}^2 =  \half (\ga_b a+b_0)$. Finally  $\dim A_0(b)=2,$ comes from Theorem~\ref{eigcom}(iii)(a).

(ii)\&(iii)  By Theorem~\ref{eigcom}(i) and Theorem~\ref{eigcom}(ii)(b),  (ii)  occurs iff $y'_{\gr_0}\in A_0(b)$ and $y_{\gr_0}\notin A_0(b),$ and (iii) occurs iff $y'_{\gr_0}\notin A_0(b),$ and $y_{\gr_0}\in A_0(b).$

(iv) Take $y\in A_0(b)$ and write $y=\ga_ya+\gc_0b_0+\sum_{(\mu,\nu)\in S^{\circ}}\gc_{\mu,\nu}b_{\mu,\nu}.$ First note that $\gc_0\ne 0$, for otherwise, by Theorem \ref{eigcom}(i), $y_{\gr_0}\in A_0(b),$ so we would have~(i) or (iii).  We normalize $\gc_0 = 1, $ so $\ga_y = \ga.$ Note that if $\ga_y=\ga_b,$ then $y=y_{\gr_0}'$ and we are in case (ii).

$(\ga_y-\ga_b )y_{\rho_0} +y_{\rho_0}'   = (\ga_y-\ga_b )a - \sum_{(\mu,\nu)\in S} \varepsilon_{\mu,\nu}  (\ga_y-\ga_b ) b_{\mu,\nu}+ \ga_b a +b_0 - \sum_{(\mu,\nu)\in S}   b_{\mu,\nu}  $.

Suppose $A$ is not commutative. Then, by \eqref{NC},  $-\ga_b\gc_{\mu,\nu}=(1-\ga_b),$ for all $(\mu,\nu)\in S^{\dagger}.$ Thus $\ga_b\ne 0,$ and $\gc_{\mu,\nu}=1-\frac{1}{\ga_b},$ for all $(\mu,\nu)\in S^{\dagger}.$  Then by \eqref{11}, the equality $\frac{1}{\ga_b}-1=\varepsilon_{\mu,\nu}(\ga_y-\ga_b)+1$ holds and yields the value of $\ga_y,$ and  $\varepsilon_{\mu,\nu}$ is fixed.

Substituting $\rho = 0$ in \eqref{671} and \eqref{672} yields \begin{equation}\label{6711}
    0   = \ga_y (\ga_b -2 \theta_1   )+(\ga_b-   \ga_b^2 )    +{2\ga_b} \theta_1 -2 \theta'_1,
    \end{equation} and
    \begin{equation}\label{6721}
    0   =  -2\theta_0 \ga_y  + 1 +  {2\ga_b\theta_0 }   - {2 }\theta'_0,
    \end{equation}
 implying (1) or (2), unless both $\theta_0=0,$ and $\theta_1=\half\ga_b.$ But then we get $\theta'_0=\half$ and $\theta'_1=\half\ga_b.$

 Thus, if $A$ is commutative, we are in case (i). Suppose that $A$ is not commutative. If $\ga_b=\half,$ we are in case (ii), while if $\ga_b=0,$ we are in case~(iii), and    (3) holds for $\ga_b\notin\{0,\half\}$. (4) is obtained by plugging in $\gc_{\mu,\nu}=\varepsilon_{\mu,\nu} (\ga-\ga_b)      -1.$
\end{proof}

\begin{lemma}\label{calcu}
Notation  as in Theorem \ref{eigcom}, assume that $A$ is commutative, and let $\gr,\xi\in \ff\setminus\{\half\}.$  Then
\begin{enumerate}\eroman
\item
$y_{\gr}y_{\xi}=a+\left(\frac{1}{2\gr-1}+\frac{1}{2\xi-1}\right)\sum_{(\eta,\eta)\in S^{\circ}}2\eta^2 b_{\eta,\eta}+\frac{1}{(2\gr-1)(2\xi-1)}\sum_{(\eta,\eta)\in S^{\circ}}4\eta^2b_{\eta,\eta}^2.$

\item
$y_{\gr}y'_{\xi}=\ga_b a+\left(\frac{1}{2\gr-1}+\frac{1}{2\xi-1}\right)\sum_{(\eta,\eta)\in S^{\circ}}\eta b_{\eta,\eta}+\frac{1}{(2\gr-1)(2\xi-1)}\sum_{(\eta,\eta)\in S^{\circ}}2\eta b_{\eta,\eta}^2.$

\item
$y'_{\gr}y'_{\xi}=\ga_ba+b_0-\sum b_{\eta,\eta}^2+\left(\frac{1}{2\gr-1}+\frac{1}{2\xi-1}\right)\sum_{(\eta,\eta)\in S^{\circ}}\half b_{\eta,\eta}\\ \qquad  \qquad  \qquad \qquad\qquad +\frac{1}{(2\gr-1)(2\xi-1)}\sum_{(\eta,\eta)\in S^{\circ}}b_{\eta,\eta}^2.$

\item
Let $y=\ga y_{\gr}+y'_{\gr}$ and $z=\gb y_{\xi}+y'_{\xi}.$ Then
\begin{enumerate}
\item
$y=(\ga_b+\ga)a+b_0+\frac{1}{2\gr-1}\sum_{(\eta,\eta)\in S^{\circ}}(2\ga\eta+1)b_{\eta,\eta}.$

\item
\begin{equation}\label{yz}
\begin{aligned}
yz=(\ga&\gb+(\ga+\gb+1)\ga_b)a+b_0-\sum_{(\eta,\eta)\in S^{\circ}}b_{\eta,\eta}^2\\
&+ \frac{1}{(2\gr-1)(2\xi-1)}\sum_{(\eta,\eta)\in S^{\circ}}(2\ga\eta+1)(2\gb\eta+1)b_{\eta,\eta}^2\\
&+\left(\frac{1}{2\gr-1}+\frac{1}{2\xi-1}\right)\sum_{(\eta,\eta)\in S^{\circ}}\half(2\ga\eta+1)(2\gb\eta+1)b_{\eta,\eta}.
\end{aligned}
\end{equation}
\end{enumerate}

\item
Let $z=\gb y_{\xi}+y'_{\xi}.$  Then
\begin{equation}\label{ygr z}
\begin{aligned}
y_{\gr}z=&\textstyle{(\ga_b+\gb)a+\left(\frac{1}{2\gr-1}+\frac{1}{2\xi-1}\right)\sum_{(\eta,\eta)\in S^{\circ}}(2\gb\eta^2+\eta)b_{\eta,\eta}}\\
  &\qquad+\textstyle{\frac{1}{(2\gr-1)(2\xi-1)}\sum_{(\eta,\eta)\in S^{\circ}}(4\gb\eta^2+2\eta)b_{\eta,\eta}^2.}
\end{aligned}
\end{equation}

\item
Let $y=\ga y_{\gr}+y'_{\gr}.$ Then
\begin{equation*}
\begin{aligned}
by&=(\ga+1)\ga_ba+b_0-\sum_{(\eta,\eta)\in S^{\circ}}b_{\eta,\eta}^2
+\frac{1}{2\gr-1}\sum_{(\eta,\eta)\in S^{\circ}}(2\ga\eta+1)b_{\eta,\eta}^2\\
&\qquad +\frac{\gr}{2\gr-1}\sum_{(\eta,\eta)\in S^{\circ}}(2\ga\eta+1)b_{\eta,\eta}.
\end{aligned}
\end{equation*}

\item Let $\xi=\xi_0=0,$ and take $w=\gt y_{\xi_0}+y'_{\xi_0},$ in place of $y_{\xi}$ in \eqref{yz}, Then
\begin{enumerate}
\item We have
\begin{equation}\label{wy}
\begin{aligned}
wy=(\gt\ga&+(\ga+\gt+1)\ga_b)a+b_0-\sum_{(\eta,\eta)\in S^{\circ}}b_{\eta,\eta}^2\\
& -\frac{1}{2\gr-1}\sum_{(\eta,\eta)\in S^{\circ}}(2\ga\eta+1)(2\gt\eta+1)b_{\eta,\eta}^2\\
&-\frac{\gr}{2\gr-1}\sum_{(\eta,\eta)\in S^{\circ}}(2\ga\eta+1)(2\gt\eta+1)b_{\eta,\eta}.
\end{aligned}
\end{equation}

\item
We have
\begin{equation}\label{zero0}
bw=(\gt+1)\ga_ba+b_0-\sum_{(\eta,\eta)\in S^{\circ}}(2\gt\eta+2)b_{\eta,\eta}^2.
\end{equation}


\end{enumerate}
\item
Suppose that $\gr+\xi\ne 1,$ let $y=\ga y_{\gr}+y'_{\xi}$ and $z=\gb y_{\xi}+y'_{\xi}.$  Then,
\begin{enumerate}
\item
if $yz=0,$ and $\gr+\xi\ne 1,$ then $\{\eta\mid (\eta,\eta)\in S^{\circ}(a)\}\subseteq \{-\frac{1}{2\ga}, -\frac{1}{2\gb}\},$ and $\sum_{(\eta,\eta)\in S^{\circ}}b_{\eta,\eta}^2=(\ga\gb+(\ga+\gb+1)\ga_b)a+b_0.$

\item
If $y_{\gr}z=0,$ and $\gr+\xi\ne 0,$ then $\{\eta\mid (\eta,\eta)\in S^{\circ}\}\subseteq \{-\frac{1}{2\gb}\}.$

\item
If $ay_{\gr}=\half y_{\gr}+\gl y_{\gr}^2,$ then $(1-4\gl)\eta=\half,$ for all $(\eta,\eta)\in S^{\circ}(a),$ in particular, $\mid S^{\circ}(a)\mid\le 1.$
\end{enumerate}
\end{enumerate}
\end{lemma}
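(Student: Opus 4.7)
Since $A$ is commutative, Corollary~\ref{Ser24} gives $S^{\dagger}=\emptyset$, so every element of $S^{\circ}$ has the form $(\eta,\eta)$ and $\varepsilon_{\eta,\eta}=2\eta$. Hence the definitions of $y_{\gr}$ and $y_{\gr}'$ in Theorem~\ref{eigcom}(i) and \eqref{72} simplify to
\[
y_{\gr}=a+\frac{1}{2\gr-1}\sum_{(\eta,\eta)\in S^{\circ}} 2\eta\, b_{\eta,\eta},\qquad y_{\gr}'=\ga_b a+b_0+\frac{1}{2\gr-1}\sum_{(\eta,\eta)\in S^{\circ}} b_{\eta,\eta}.
\]
My overall plan is to verify (i)--(iii) by direct expansion, then derive everything else by linearity, specialization, and coefficient matching.

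\textbf{Core expansions (i)--(iii).} These are multiplications of sums against sums, and rely on only three facts: the fusion decomposition $A=\ff a\oplus \ff b_0\oplus\bigoplus \ff b_{\eta,\eta}$; the action $a\cdot a=a$, $a\cdot b_0=0$, $a\cdot b_{\eta,\eta}=\eta b_{\eta,\eta}$, and $b_{\eta,\eta}b_{\eta',\eta'}=0$ for $\eta\ne\eta'$; and the scalar formulas $b_0 b_{\eta,\eta}=b_{\eta,\eta}b_0=\tfrac{1-2\ga_b\eta}{2}b_{\eta,\eta}$ derived from \eqref{622}. For (iii) I also need Theorem~\ref{Ser02}(vii) to replace $b_0^2$ by $b_0+(\ga_b-\ga_b^2)a-\sum b_{\eta,\eta}^2$. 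Crucially, in (ii) and (iii) the awkward coefficient $\tfrac{1-2\ga_b\eta}{2}$ combines with the $\ga_b\eta$ contribution from the $a$-factor so that the $\ga_b$-terms cancel, leaving the stated clean expressions; this cancellation is the one place where care is needed.

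\textbf{Derived identities (iv)--(vii).} Part (iv)(a) is just $y=\ga y_{\gr}+y_{\gr}'$ unpacked; (iv)(b) follows by bilinear expansion of $yz$ via (i)--(iii). Part (v) is the special case $\ga=0$ of (iv)(b) relabeled. For (vi), I expand $by$ directly from $b=\ga_b a+b_0+\sum b_{\eta,\eta}$ against the explicit form of $y$ given in (iv)(a); the same three facts as above yield the claim, and the identity
\[
\frac{2\ga_b\eta+1-2\ga_b\eta}{2}=\tfrac12
\]
makes the $\ga_b$-dependence collapse into the factor $\tfrac{\gr}{2\gr-1}$. Parts (vii)(a) and (vii)(b) are the specializations $\xi=0$ of (iv)(b) and (vi), using that $\tfrac{1}{2\gr-1}+\tfrac{1}{-1}=-\tfrac{2(\gr-1)}{2\gr-1}$ and $\tfrac{1}{(2\gr-1)(-1)}=-\tfrac{1}{2\gr-1}$.

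\textbf{Vanishing consequences (viii).} From the expansion \eqref{yz}, the coefficient of each $b_{\eta,\eta}$ in $yz$ is $\bigl(\tfrac{1}{2\gr-1}+\tfrac{1}{2\xi-1}\bigr)\tfrac12(2\ga\eta+1)(2\gb\eta+1)=\tfrac{\gr+\xi-1}{(2\gr-1)(2\xi-1)}(2\ga\eta+1)(2\gb\eta+1)$. When $yz=0$ and $\gr+\xi\ne 1$, this forces $(2\ga\eta+1)(2\gb\eta+1)=0$, hence $\eta\in\{-\tfrac{1}{2\ga},-\tfrac{1}{2\gb}\}$; substituting back zero-out into the $b_{\eta,\eta}^2$ and scalar parts then yields $\sum b_{\eta,\eta}^2=(\ga\gb+(\ga+\gb+1)\ga_b)a+b_0$, giving (a). Part (b) is the analogous argument applied to \eqref{ygr z}. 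Finally, (c) uses $ay_{\gr}=a+\tfrac{2}{2\gr-1}\sum \eta^2 b_{\eta,\eta}$ and (i) with $\xi=\gr$; matching $b_{\eta,\eta}$-coefficients in $ay_{\gr}-\tfrac12 y_{\gr}-\gl y_{\gr}^{2}=0$ reduces, after canceling $\eta$, to the stated relation forcing $\eta$ to be the same value for every $(\eta,\eta)\in S^{\circ}(a)$, so $|S^{\circ}(a)|\le 1$.

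The main obstacle is purely bookkeeping: the simultaneous presence of the three expansion factors $\tfrac{1}{2\gr-1}$, $\tfrac{1}{2\xi-1}$, and $\tfrac{1-2\ga_b\eta}{2}$ makes it easy to drop a term. The safest route is to fix a uniform order of expansion in (i)--(iii), record the $a$--, $b_0$--, $b_{\eta,\eta}$--, and $b_{\eta,\eta}^2$--components separately, and then treat (iv)--(viii) as purely formal consequences, never re-expanding ``raw'' products.
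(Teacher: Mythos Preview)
Your proposal is correct and follows essentially the same approach as the paper: direct expansion of the products $y_\gr y_\xi$, $y_\gr y'_\xi$, $y'_\gr y'_\xi$ using the fusion rules, \eqref{622}, and Theorem~\ref{Ser02}(vii), followed by linear combination and specialization to obtain the remaining parts. The only cosmetic difference is that the paper obtains (vi) and (vii)(b) by specializing \eqref{yz} (with $\xi=1,\ \gb=0$ and $\gr=1,\ \ga=0$ respectively), whereas you propose a direct expansion for (vi); either route works and the bookkeeping is identical.
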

\begin{proof}
(i) We have
\begin{gather*}
\textstyle{y_{\gr}y_{\xi}=\left(a+\frac{1}{2\gr-1}\sum_{(\eta,\eta)\in S^{\circ}}2\eta b_{\eta,\eta}\right)\left(a+\frac{1}{2\xi-1}\sum_{(\eta,\eta)\in S^{\circ}}2\eta b_{\eta,\eta}\right)}\\
\textstyle{=a+\left(\frac{1}{2\gr-1}+\frac{1}{2\xi-1}\right)\sum_{(\eta,\eta)\in S^{\circ}}2\eta^2 b_{\eta,\eta}+\frac{1}{(2\gr-1)(2\xi-1)}\sum_{(\eta,\eta)\in S^{\circ}}4\eta^2b_{\eta,\eta}^2.}
\end{gather*}
\medskip

(ii) We have
\begin{gather*}
\textstyle{y_{\gr}y'_{\xi}=\left(a+\frac{1}{2\gr-1}\sum_{(\eta,\eta)\in S^{\circ}}2\eta b_{\eta,\eta}\right)\left(\ga_ba+b_0+\frac{1}{2\xi-1}\sum_{(\eta,\eta)\in S^{\circ}}b_{\eta,\eta}\right)}\\
\textstyle{=\ga_b a+\left(\frac{1}{2\gr-1}+\frac{1}{2\xi-1}\right)\sum_{(\eta,\eta)\in S^{\circ}}\eta b_{\eta,\eta}+\frac{1}{(2\gr-1)(2\xi-1)}\sum_{(\eta,\eta)\in S^{\circ}}2\eta b_{\eta,\eta}^2.}
\end{gather*}
\medskip

(iii) We have
\begin{gather*}
\textstyle{y'_{\gr}y'_{\xi}=\left(\ga_ba+b_0+\frac{1}{2\gr-1}\sum_{(\eta,\eta)\in S^{\circ}}b_{\eta,\eta}\right)\left(\ga_ba+b_0+\frac{1}{2\xi-1}\sum_{(\eta,\eta)\in S^{\circ}} b_{\eta,\eta}\right)}\\
\textstyle{=\ga_b^2a+b_0^2+\left(\frac{1}{2\gr-1}+\frac{1}{2\xi-1}\right)\sum_{(\eta,\eta)\in S^{\circ}}\half b_{\eta,\eta}+\frac{1}{(2\gr-1)(2\xi-1)}\sum_{(\eta,\eta)\in S^{\circ}}b_{\eta,\eta}^2}\\
\textstyle{=\ga_ba+b_0-\sum b_{\eta,\eta}^2+\left(\frac{1}{2\gr-1}+\frac{1}{2\xi-1}\right)\sum_{(\eta,\eta)\in S^{\circ}}\half b_{\eta,\eta}}\\ +\textstyle{\frac{1}{(2\gr-1)(2\xi-1)}\sum_{(\eta,\eta)\in S^{\circ}}b_{\eta,\eta}^2.}
\end{gather*}
\medskip

(iv) (a) This is by definition.

(b) Since $(\ga y_{\gr}+y'_{\gr})(\gb y_{\gr}+y'_{\gr})=\ga\gb y_{\gr}y_{\xi}+\ga y_{\gr}y'_{\xi}+\gb y_{\xi}y'_{\gr}+y'_{\gr}y'_{\xi},$ we have
\begin{align*}
yz=&(\ga\gb+(\ga+\gb+1)\ga_b)a+b_0-\sum_{(\eta,\eta)\in S^{\circ}}b_{\eta,\eta}^2\\
&+ \frac{1}{(2\gr-1)(2\xi-1)}\sum_{(\eta,\eta)\in S^{\circ}}(4\ga\gb\eta^2+2(\ga+\gb)\eta+1)b_{\eta,\eta}^2\\
&+\left(\frac{1}{2\gr-1}+\frac{1}{2\xi-1}\right)\sum_{(\eta,\eta)\in S^{\circ}}(2\ga\gb \eta^2+(\ga+\gb)\eta+\half)b_{\eta,\eta},
\end{align*}
so \eqref{yz} holds.

\medskip

(v) We have
\begin{gather*}
\textstyle{y_{\gr}(\gb y_{\xi}+y'_{\xi})=\gb y_{\gr}y_{\xi}+y_{\gr}y'_{\xi}}\\
\textstyle{=\gb a+\left(\frac{\gb}{2\gr-1}+\frac{\gb}{2\xi-1}\right)\sum_{(\eta,\eta)\in S^{\circ}}2\eta^2 b_{\eta,\eta}+\frac{\gb}{(2\gr-1)(2\xi-1)}\sum_{(\eta,\eta)\in S^{\circ}}4\eta^2b_{\eta,\eta}^2}\\
\textstyle{+\ga_b a+\left(\frac{1}{2\gr-1}+\frac{1}{2\xi-1}\right)\sum_{(\eta,\eta)\in S^{\circ}}\eta b_{\eta,\eta}+\frac{1}{(2\gr-1)(2\xi-1)}\sum_{(\eta,\eta)\in S^{\circ}}2\eta b_{\eta,\eta}^2}\\
\textstyle{=(\ga_b+\gb)a+\left(\frac{1}{2\gr-1}+\frac{1}{2\xi-1}\right)\sum_{(\eta,\eta)\in S^{\circ}}(2\gb\eta^2+\eta)b_{\eta,\eta}}\\
\textstyle{+\frac{1}{(2\gr-1)(2\xi-1)}\sum_{(\eta,\eta)\in S^{\circ}}(4\gb\eta^2+2\eta)b_{\eta,\eta}^2.}
\end{gather*}
\medskip

(vi)  We take $\xi =1$ and $\gb=0$ in \eqref{yz}, so that $z=b.$ By \eqref{yz},
\begin{equation*}
\begin{aligned}
yb&=(\ga+1)\ga_ba+b_0-\sum_{(\eta,\eta)\in S^{\circ}}b_{\eta,\eta}^2
+\frac{1}{2\gr-1}\sum_{(\eta,\eta)\in S^{\circ}}(2\ga\eta+1)b_{\eta,\eta}^2\\
&\qquad +\left(\frac{1}{2\gr-1}+1\right)\sum_{(\eta,\eta)\in S^{\circ}}\half(2\ga\eta+1)b_{\eta,\eta}.
\end{aligned}
\end{equation*}

(vii) We take $\xi=0$ in \eqref{yz}, and replace $\gb$ by $\gt.$

(a) By \eqref{yz}
\begin{equation*}
\begin{aligned}
yw=(\gt\ga &+(\ga+\gt+1)\ga_b)a+b_0-\sum_{(\eta,\eta)\in S^{\circ}}b_{\eta,\eta}^2\\
&- \frac{1}{2\gr-1}\sum_{(\eta,\eta)\in S^{\circ}}(2\ga\eta+1)(2\gt\eta+1)b_{\eta,\eta}^2\\
&+\left(\frac{1}{2\gr-1}-1\right)\sum_{(\eta,\eta)\in S^{\circ}}\half(2\ga\eta+1)(2\gt\eta+1)b_{\eta,\eta}.
\end{aligned}
\end{equation*}

(b) Taking $\gr=1,$ and $\ga=0,$ in \eqref{yz}, then $y=b,$ so
\[
bw=(\gt+1)\ga_ba+b_0-\sum_{(\eta,\eta)\in S^{\circ}}b_{\eta,\eta}^2
- \sum_{(\eta,\eta)\in S^{\circ}}(2\gt\eta+1)b_{\eta,\eta}^2.
\]

(viii)  (a) follows from \eqref{yz}. and (b) from \eqref{ygr z}.

(c)  By (i) and by the fusion rules for $a,$ we get $y_{\gr}^2=w+\frac{1}{2\gr-1}\sum_{(\eta,\eta)\in S^{\circ}}\varepsilon_{\eta,\eta}^2 b_{\eta,\eta},$ with $w\in \ff a+\ff b_0.$  Hence, by the definition of $y_{\gr},$
\[
\sum_{(\eta,\eta)\in S^{\circ}}\eta\varepsilon_{\eta,\eta}b_{\eta,\eta}=\half\sum_{(\eta,\eta)\in S^{\circ}}\varepsilon_{\eta,\eta}b_{\eta,\eta}+\frac{\gl}{2\gr-1}
\sum_{(\eta,\eta)\in S^{\circ}}\varepsilon_{\eta,\eta}^2b_{\eta,\eta}.
\]
Thus, after cancelling $\varepsilon_{\eta,\eta}$
we get $\eta=\half+\frac{2\gl}{2\gr-1}\eta.$
\end{proof}

\section{The classification when the axis $b$  satisfies the fusion rules}\label{sb}

Throughout this section we continue to assume that $a$ is a weakly primitive $S$-axis satisfying the fusion rules, with $\dim A_0(a)=1.$  Here we also assume that $b$ is an axis satisfying the fusion rules.  Recall that
\[
b=\ga_b+b_0+\sum_{(\mu,\nu)\in S^{\circ}(a)}b_{\mu,\nu},
\]
and that $A=\lan\lan a,b\ran\ran = \ff a+\ff b_0+\sum_{(\mu,\nu)\in S^{\circ}}\ff b_{\mu,\nu}.$  

\text{\bf Throughout, we use the notation of Theorem \ref{eigcom}.}

\subsection{The case when $A$ is commutative}$ $
\medskip

In this subsection we assume that $A$ is commutative.  We start with a number of examples with the axis $b$ non-primitive but satisfying the fusion rules, the first two examples having dimension 3.

\begin{example}\label{4dim2}
Let $A=\ff a+\ff b_0+\ff b_{\half,\half},$ with $b_{\half,\half}^2=0,\ b_0^2=b_0,  b_0b_{\half,\half}=0.$
$b=a+b_0+b_{\half,\half},$ so $\ga_b=1.$
$A_1(b)={\rm Span}\{b, a+b_{\half,\half}\},$ $A_{\half,\half}(b)=\ff b_{\half,\half}.$
\end{example}

\begin{example}\label{3dim01}
Let $A=\ff a+\ff b_0+\ff b_{\mu,\mu}.$
\[
\textstyle{b_{\mu,\mu}^2=\frac{1}{4\mu}a,\quad b_0^2=b_0+\frac{\mu-1}{4\mu}a,\quad b_0 b_{\mu,\mu} =\frac{1-\mu}{2}b_{\mu,\mu},}
\]
$b=\half a+b_0+b_{\mu,\mu},$ and
\[
A_0(b)={\rm Span}\{ a-2\mu b_{\mu,\mu}\},\quad A_1(b)={\rm Span}\{b, a+2\mu b_{\mu,\mu}\}.
\]
\end{example}

\begin{example}\label{4dim}
Let $A$ be the following algebra.  $A$ is spanned over $\ff,$ by $a, b_0, b_{\mu,\mu}$ and $b_{\nu,\nu},$ where $\mu,\nu\in\ff\setminus\{0,1\}, \mu\ne\nu,$ so it is $4$-dimensional.  $A$ is commutative, and multiplication in $A$ is defined as follows.
\[
a^2=a,\quad ab_0=0,\quad ab_{\mu,\mu}=\mu b_{\mu,\mu},\quad ab_{\nu,\nu}=\nu b_{\nu,\nu}.
\]
To define $b_{\mu,\mu}^2$ and $b_{\nu,\nu}^2$ we use the following equation:
\[
\textstyle{b_{\mu,\mu}^2+b_{\nu,\nu}^2=\frac{1}{4}a+\half b_0,\quad\mu b_{\mu,\mu}^2+\nu b_{\nu,\nu}^2=\frac{1}{4} a.}
\]
We also let
\[
\textstyle{b_0^2=\half b_0,\quad b_0b_{\mu,\mu}=\frac{1-\mu}{2}b_{\mu,\mu},\quad b_0b_{\nu,\nu}=\frac{1-\nu}{2}b_{\nu,\nu},\quad b_{\mu,\mu}b_{\nu,\nu}=0.}
\]
Note that
\[
\textstyle{(\half a+b_0)b_{\mu,\mu}=\half\mu b_{\mu,\mu}+\frac{1-\mu}{2}b_{\mu,\mu}=\half b_{\mu,\mu},}
\]
and similarly $(\half a+b_0)b_{\nu,\nu}=\half b_{\nu,\nu}.$ Also
\[
\textstyle{(a+b_0)b_{\mu,\mu}=(\half a+\half a+b_0)b_{\mu,\mu}=\half b_{\mu,\mu}+\half\mu b_{\mu,\mu}=\frac{1+\mu}{2}b_{\mu,\mu},}
\]
and similarly $(a+b_0)b_{\nu,\nu}=\frac{1+\nu}{2} b_{\nu,\nu}.$
Let $b=\half a+b_0+b_{\mu,\mu}+b_{\nu,\nu},$ so $\ga_b = \half.$   Then
\begin{align*}
b^2&=\textstyle{\frac{1}{4}a+b_0^2+b_{\mu,\mu}^2+b_{\nu,\nu}^2+b_{\mu,\mu}+b_{\nu,\nu}}\\
&\textstyle{=\frac{1}{4}a+\half b_0+\frac{1}{4}a+\half b_0+b_{\mu,\mu}+b_{\nu,\nu}=b.}
\end{align*}
So $b$ is an idempotent.  Next
\begin{align*}
&\textstyle{b(a+2\mu b_{\mu,\nu}+2\nu b_{\nu,\nu})=(\half a+b_0+b_{\mu,\mu}+b_{\nu,\nu})(a+2\mu b_{\mu,\mu}+2\nu b_{\nu,\nu})}\\
&\qquad \textstyle{=\half a+\mu b_{\mu,\mu}+\nu b_{\nu,\nu}+\mu b_{\mu,\mu}+\nu b_{\nu,\nu}+2\mu b_{\mu,\mu}^2+2\nu b_{\nu,\nu}^2}\\
&\qquad =a+2\mu b_{\mu,\mu}+2\nu b_{\nu,\nu},
\end{align*}

Thus $A_1(b)={\rm Span}\{ b, a+2\mu b_{\mu,\mu}+2\nu b_{\nu,\nu}\}.$ Next
\begin{align*}
&\textstyle{b(a-2\mu b_{\mu,\mu}-2\nu b_{\nu,\nu})=(\half a+b_0+b_{\mu,\mu}+b_{\nu,\nu})(a-2\mu b_{\mu,\mu}-2\nu b_{\nu,\nu})}\\
&\qquad \textstyle{=\half a+\mu b_{\mu,\mu}+\nu b_{\nu,\nu}-\mu b_{\mu,\mu}-\nu b_{\nu,\nu}-2\mu b_{\mu,\mu}^2-2\nu b_{\nu,\nu}^2}=0.
\end{align*}
Finally,
\begin{align*}
&\textstyle{b(\half a+b_0-b_{\mu,\mu}-b_{\nu,\nu})=(\half a+b_0+b_{\mu,\mu}+b_{\nu,\nu})(\half a+b_0-b_{\mu,\mu}-b_{\nu,\nu})}\\
&\qquad =\textstyle{\frac{1}{4}a+\half b_0-b_{\mu,\mu}^2-b_{\nu,\nu}^2=0. }
\end{align*}
Thus $A_0(b)={\rm Span}\left\{y_{\rho_0} = a-2\mu b_{\mu,\mu}-2\nu b_{\nu,\nu},\ y'_{\rho_0} = (\ga_b a +b_0)-b_{\mu,\mu}-b_{\nu,\nu}\right\},$
so $A$ is as in Corollary \ref{rho0}(i), and $b$ is an axis satisfying the fusion rules.
\end{example}

\begin{thm}\label{yrho}
Suppose $A$ is commutative and let $\gr\ne\half.$ Let
\[
y_{\gr}=a+\frac{1}{2\gr-1}\sum_{(\eta,\eta)\in S^{\circ}}\varepsilon_{\eta,\eta}b_{\eta,\eta},
\]
as in Theorem \ref{eigcom}.  Assume that $y_{\gr}\in A_{\gr,\gr}(b);$  then
\begin{enumerate}\eroman
\item
If $\gr\in\{0,1\},$ then $|S^{\circ}(a)|\le 2.$  More precisely, letting $\xi=1-\gr,$ if we write using the fusion rules (and using \eqref{71}):
\begin{equation}\label{2b0}
y_{\gr}^2=\ga y_{\gr}+\gb y_{\xi}+\gc y'_{\gr}+\gd y'_{\xi},
\end{equation}
then $\varepsilon_{\eta,\eta}$ is a solution of the equation $x^2+(\ga-\gb) x+\gc-\gd,$ for all $(\eta,\eta)\in S^{\circ}(a).$ In particular $\dim A\le 4.$



\item 
 If $\dim A_0(b)=\dim A_1(b)=2,$ then $A$ is as in Example \ref{4dim}.


 \item
 Assume that $\gr\in\{0,1\}$ and let  $0\ne z:=\sum_{(\mu,\mu)\in S^{\circ}}\gc'_{\mu,\mu}b_{\mu,\mu}\in A_{\half,\half}(b).$
 \begin{enumerate}
\item
$A_{\half,\half}(b)\subseteq V(a),$ and $\sum_{(\mu,\mu)\in S^{\circ}}\gc'_{\mu,\mu}\varepsilon_{\mu,\mu}^n b^2_{\mu,\mu}=0,$ for all $n\ge 0;$  in particular $b_{\mu,\mu}^2=0,$ for each $\mu$ such that $\gc'_{\mu,\mu}\ne 0.$

\item  If $\dim A_{0}(b)=2,$ then $A$ is as in Example \ref{4dim}.

\item Suppose $\dim A_1(b)=2.$ 
\begin{itemize}
\item[(1)]
Assume that $A_{\xi,\xi}(b)\ne 0,$ for some $\xi\in \ff\setminus\{\half, 1\};$ then $\xi=0,$ and either $\dim A=3,$ and $A$ is as in Example~\ref{3dim01}, or $A$ is as in Example \ref{4dim}.

\item[(2)] The only remaining possibility is that $S^{\circ}(b)=\{(\half,\half)\}.$ Then $A$ is as in Example~\ref{4dim2}.
\end{itemize}



 \end{enumerate}

\end{enumerate}
\end{thm}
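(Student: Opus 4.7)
\textbf{Proof plan for Theorem \ref{yrho}.}

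\emph{Part (i).} Since $\gr\in\{0,1\}$, the fusion rules for $b$ give $y_\gr^2\in A_\gr(b)^2\subseteq A_1(b)+A_0(b)$. By Theorem \ref{eigcom}, that subspace is contained in $\ff y_\gr+\ff y'_\gr+\ff y_\xi+\ff y'_\xi$ (we allow $y_\xi,y'_\xi$ formally as elements of $A$ even when not eigenvectors), which justifies \eqref{2b0}. Computing $y_\gr^2$ via Lemma \ref{calcu}(i), the coefficient of $b_{\eta,\eta}$ on the left is $\frac{4\eta^2}{2\gr-1}$, while on the right it is an affine function of $\varepsilon_{\eta,\eta}=2\eta$. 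Equating and rescaling forces $\varepsilon_{\eta,\eta}$ to be a root of the fixed quadratic $x^2+(\ga-\gb)x+\gc-\gd$, so $|S^{\circ}(a)|\le 2$ and $\dim A=|S^{\circ}|+2\le 4$.

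\emph{Part (ii).} Part (i) plus the hypothesis $\dim A_0(b)+\dim A_1(b)=4$ forces $\dim A=4$ and $|S^{\circ}|=2$, say $S^{\circ}=\{(\mu,\mu),(\nu,\nu)\}$ with $\mu\ne\nu$. From $\dim A_0(b)=2$, Corollary \ref{rho0}(i) gives $\theta_0=0$, $\theta_1=\half\ga_b$, $\theta'_0=\half$, $\theta'_1=\half\ga_b$. From $\dim A_1(b)=2$, Theorem \ref{eigcom}(iii)(c)(2) gives $\theta_1=\frac{1-\ga_b}{2}$. These force $\ga_b=\half$, and the resulting equations in $b_{\mu,\mu}^2,b_{\nu,\nu}^2$, combined with the products $b_0b_{\mu,\mu},b_0b_{\nu,\nu},b_0^2$ read off from \eqref{622} and Theorem \ref{Ser02}(vii), and with $b_{\mu,\mu}b_{\nu,\nu}=0$ from the fusion rules for $a$, reproduce the multiplication table of Example \ref{4dim}.

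\emph{Part (iii)(a).} First I show $A_{\half,\half}(b)\subseteq V(a)$. Suppose $y\in A_{\half,\half}(b)\setminus V$; applying Theorem \ref{Ser04}(ii)(a) at $\gr=\half$ fixes $\varepsilon_{\mu,\mu}$, hence $|S^{\circ}|=1$. Theorem \ref{Ser04}(v)(b)(2) rules out $\ga_b=\half$ (it forces $\gc_0=0$); the case $\ga_b\ne\half$ in (v)(b)(3) demands $\theta'_0\ne 0$. But $y_\gr\in A_\gr(b)$ with $\gr\in\{0,1\}$ forces $\theta_0=0$ by Theorem \ref{eigcom}(i), and since $|S^{\circ}|=1$ this yields $\theta'_0=0$, a contradiction. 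Next, to get $\sum\gc'_{\mu,\mu}\varepsilon_{\mu,\mu}^n b_{\mu,\mu}^2=0$, iterate multiplication by $y_\gr$: since $y_\gr\in A_1(b)+A_0(b)$ and $A_{\half,\half}(b)$ is invariant under this subalgebra by the fusion rules for $b$, each $y_\gr^{\,n}z\in A_{\half,\half}(b)\subseteq V$. A direct computation shows the $(\ff a+\ff b_0)$-component of $y_\gr^{\,n}z$ is a nonzero scalar multiple of $\sum\gc'_{\mu,\mu}\varepsilon_{\mu,\mu}^{\,n}b_{\mu,\mu}^2$, which must therefore vanish. Since the $\varepsilon_{\mu,\mu}=2\mu$ are distinct, a Vandermonde inversion gives $\gc'_{\mu,\mu}b_{\mu,\mu}^2=0$ for each $\mu$.

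\emph{Parts (iii)(b) and (iii)(c).} For (iii)(b), part (i) together with the presence of $z\ne 0$ forces $|S^{\circ}|=2$, and Corollary \ref{rho0}(i) combined with (iii)(a) pins the structure constants, identifying $A$ with Example \ref{4dim}. For (iii)(c), the hypothesis $\dim A_1(b)=2$ gives $\ga_b=1-2\theta_1$ by Theorem \ref{eigcom}(iii)(c)(2), and we split according to the rest of the spectrum of $b$: if $A_{\xi,\xi}(b)\ne 0$ for some $\xi\notin\{\half,1\}$, then Lemma \ref{Ser041}(i) combined with (iii)(a) forces $\xi=0$, and dimension counting distinguishes Example \ref{3dim01} (dimension $3$) from Example \ref{4dim} (dimension $4$); otherwise $S^{\circ}(b)=\{(\half,\half)\}$, forcing $|S^{\circ}|=1$, $\dim A=3$, and the structure matches Example \ref{4dim2}.

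\emph{Main obstacle.} The subtlest point is the first claim of (iii)(a), $A_{\half,\half}(b)\subseteq V(a)$, which requires carefully interleaving Theorem \ref{Ser04}(v)(b)(2)--(3) with the rigid constraint $\theta_0=0$ supplied by Theorem \ref{eigcom}(i); the subsequent Vandermonde step and the case-by-case identification in (iii)(c) are longer but essentially routine once (iii)(a) and part (i) are in place.
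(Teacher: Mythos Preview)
Your outline is essentially correct for parts (i), (ii), and (iii)(a), and matches the paper's approach closely. Two issues deserve comment.

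\textbf{A minor slip in (iii)(a).} Your parenthetical ``(it forces $\gc_0=0$)'' about Theorem~\ref{Ser04}(v)(b)(2) is wrong: that case does not force $\gc_0=0$; on the contrary, (v)(b) begins by observing $\gc_0\ne 0$. What (v)(b)(2) \emph{does} require (via \eqref{half2}) is $\theta'_0\ne 0$, exactly as (v)(b)(3) does. So the two cases $\ga_b=\half$ and $\ga_b\ne\half$ are handled uniformly by the same contradiction $\theta_0=\mu\theta'_0\ne 0$ versus $\theta_0=0$. Your argument survives once the parenthetical is corrected.

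\textbf{A genuine gap in (iii)(c)(1).} Your claim that ``Lemma~\ref{Ser041}(i) combined with (iii)(a) forces $\xi=0$'' is not a valid argument. Lemma~\ref{Ser041} applies only to eigenvectors lying in $V$, and (iii)(a) concerns $A_{\half,\half}(b)$, not $A_{\xi,\xi}(b)$ for other $\xi$. There is no a~priori reason why a nonzero element of $A_{\xi,\xi}(b)$ with $\xi\notin\{\half,1\}$ should lie in $V$; indeed, by Theorem~\ref{eigcom}(ii) such an eigenvector generically has the form $\ga y_\xi+y'_\xi$ with nontrivial $\ff a+\ff b_0$ component. The paper's proof of (iii)(c)(1) is substantially longer: it sets $\gr=1$, splits into Case~I ($y_\xi\in A_{\xi,\xi}(b)$) versus Case~II ($y_\xi\notin A_{\xi,\xi}(b)$), and within Case~I further splits into $\xi\ne 0$ (Subcase~Ia, eliminated by an explicit computation of $y_\gr y_\xi$ via Lemma~\ref{calcu} leading to a numerical contradiction) and $\xi=0$ (Subcase~Ib, where one shows $a,u\in A_1(b)+A_0(b)$ and then all $b_{\eta,\eta}\in A_1(b)+A_0(b)$, pinning down Example~\ref{3dim01} or Example~\ref{4dim}). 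Case~II is eliminated using \eqref{theta0} and Remark~\ref{t0t1}. None of this is captured by your one-line appeal to Lemma~\ref{Ser041}, and I do not see a shortcut; you will need to carry out the case analysis.
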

\begin{proof}
We use the notation of Theorem \ref{eigcom}.

(i)  By Lemma \ref{calcu}(i), for $\gr\in\{0,1\},$
\begin{equation}\label{111}
y_{\gr}^2=a+(2\gr-1)\sum_{(\eta,\eta)\in S^{\circ}(a)} \varepsilon_{\eta,\eta}^2b_{\eta,\eta}+\sum_{(\eta,\eta)\in S^{\circ}(a)} \varepsilon_{\eta,\eta}^2b_{\eta,\eta}^2.
\end{equation}
Since $b$ satisfies the fusion rules, $y_{\gr}^2$ is a linear combination
\[
y_{\gr}^2=\ga y_{\gr}+\gb y_{\xi}+\gc y'_{\gr}+\gd y'_{\xi},
\]
by Theorem \ref{eigcom}. Thus, comparing the coefficient of $b_{\eta,\eta}$ in each side,
\[
(2\gr-1)\varepsilon_{\eta,\eta}^2=\ga(2\gr-1)\varepsilon_{\eta,\eta}+\gb(2\xi-1)\varepsilon_{\eta,\eta}+(2\gr-1)\gc+(2\xi-1)\gd, \quad \forall (\eta,\eta)\in S^{\circ}.
\]
Since $2\gr-1=-(2\xi-1),$ we get that $\varepsilon_{\eta,\eta}$ is a solution to the equation
\[
x^2+(\gb-\ga) x+\gd- \gc=0,
\]
for all $(\eta,\eta)\in S^{\circ}.$
\medskip

(ii)  By Theorem \ref{eigcom},  $y_{\gr}, y_{\gr}'\in A_{\gr}(b),$ for $\gr\in\{0,1\}.$ Also applying Theorem \ref{eigcom}(i), we get $-\ga_b=-2\theta_1,$ and $1-\ga_b=2\theta_1.$ Hence $\ga_b=\half,$ and $\theta_1=\frac{1}{4}.$ Also, $\theta_0=0,$ that is $\mu b_{\mu,\mu}^2+\nu b_{\nu,\nu}^2=\half a.$  By Theorem \ref{eigcom}(iii)(d), applied to $\gr=0,$ $\theta_0'=\half,$ and $\theta_1'=\frac{1}{4},$ that is $b_{\mu,\mu}^2+b_{\nu,\nu}^2=\frac{1}{4}a+\half b_0.$  Also, by Theorem \ref{Ser02}(iii), $b_0^2=b_0+\frac{1}{4}a-\frac{1}{4}a-\half b_0=\half b_0.$  Hence $A$ is as in Example~\ref{4dim}.
\medskip

(iii) (a)  
Set $\gr =0.$   Note first that $\theta_0=0,$ since $y_{\gr}\in A_{\gr,\gr}(b).$  Suppose $A_{\half,\half}(b)\nsubseteq V(a).$  Then, by Theorem \ref{Ser04}(v), $S^{\circ}(a)=\{(\mu,\mu)\},$ for at most one~$\mu,$ and  $\theta'_0\ne 0,$ by Theorem \ref{Ser04}(v)(b)(2\&3).  But $\theta_0=\mu\theta'_0,$ by Remark \ref{t0t1}, a contradiction.

The proof of the second part is by induction.  For $n=0,$ this follows from \ref{Ser041}(iii). Assume $\sum_{(\mu,\mu)\in S^{\circ}}\gc'_{\mu,\mu}\varepsilon_{\mu,\mu}^kb_{\mu,\mu}^2=0.$  Then
$\sum \gc'_{\mu,\mu}\varepsilon_{\mu,\mu}^kb_{\mu,\mu}\in A_{\half,\half}(b),$ by~Theorem~\ref{Ser041}(iii). But then, also
$y_{\gr}\sum \gc'_{\mu,\mu}\varepsilon_{\mu,\mu}^kb_{\mu,\mu}\in A_{\half,\half}(b).$  But
\[
y_{\gr}\sum \gc'_{\mu,\mu}\varepsilon_{\mu,\mu}^kb_{\mu,\mu}=\sum \gc'_{\mu,\mu}\mu\varepsilon_{\mu,\nu}^k b_{\mu,\mu}+(2\gr-1)\sum\gc'_{\mu,\mu}\varepsilon_{\mu,\mu}^{k+1}b_{\mu,\mu}^2.
\]
Since  $A_{\half,\half}(b)\subseteq V(a),$
$\sum\gc'_{\mu,\mu}\varepsilon_{\mu,\mu}^{k+1}b_{\mu,\mu}^2=0,$
and the induction step is complete. By a Vandermonde argument, since $\varepsilon_{\mu,\mu}\ne 1$ for  $\mu\ne\half,$  $b_{\mu,\mu}^2=0,$ for all $\mu$ such that $\gc'_{\mu,\mu}\ne 0.$
\medskip

(b)  Since $y_{\gr}\in A_0(b),$ Theorem \ref{eigcom}(i) implies that $\theta_0=0.$ Since $y'_{\gr}\in A_0(b),$ Theorem \ref{eigcom}(d) implies $\theta'_0=\half.$

If $\dim A=3,$ then $\theta_0=\mu\theta'_0$ for $(\mu,\mu)\in S^{\circ}(a),$ by Remark \ref{t0t1}, a contradiction.  Hence $\dim A=4.$

If $A_{\half,\half}(b)\ne 0,$ then, by (a), $b_{\mu,\mu}^2=0$ for some $(\mu,\mu)\in S^{\circ}(a).$  But then for $\nu\ne\mu$ such that $(\nu,\nu)\in S^{\circ},$ $\theta_0=\nu\theta'_0,$ by Remark \ref{t0t1}, a contradiction.

Hence $A_{\half,\half}(b)=0.$  But now, since $\theta'_0=\half,$  Theorem \ref{eigcom}(iii)(f), $A_{\xi,\xi}(b)=0,$ for all $\xi\notin\{0,1\},$ so by (iv), $A$ is as in Example \ref{4dim}.
\medskip

(c)
(1) Set $\gr=1$.
\medskip

{\bf Case I.}  $y_{\xi}\in A_{\xi,\xi}(b).$
\medskip

We have
\begin{equation}\label{222}
y_{\gr}y_{\xi}=a+\left(\frac{\xi}{2\xi-1}\right)\sum_{(\eta,\eta)\in S^{\circ}}4\eta^2 b_{\eta,\eta}+\frac{1}{2\xi-1}\sum_{(\eta,\eta)\in S^{\circ}}4\eta^2b_{\eta,\eta}^2,
\end{equation}

{\bf Subcase Ia.} $\xi\ne 0.$
\medskip

We claim that $y'_{\xi}\notin A_{\xi,\xi}(b).$ Indeed, suppose $y'_{\xi}\in A_{\xi,\xi}(b);$ then, since $\dim A=4,$ $A_0(b)=0.$  Now by \eqref{111} and since $\theta_0=0$ (because $y_{\gr}\in A_0(b)$), $u':=\sum_{(\eta,\eta)\in S^{\circ}(a)}\varepsilon_{\eta,\eta}^2b_{\eta,\eta}^2\in\ff a.$  Hence, since $y_{\gr}^2\in A_1(b)$ by the fusion rules for $b,$ and since the coefficient in $b$ of $b_0$ is $1,$ $y_{\gr}^2\in\ff y_{\gr}.$  But then considering the coefficient of $b_{\eta,\eta},$ we get $\varepsilon_{\eta,\eta}^2=\gl\varepsilon_{\eta,\eta},$ for a fixed $\gl\in \ff,$ for all $(\eta,\eta)\in S^{\circ}(a).$  Hence $|S^{\circ}(a)|\le 1,$ so $\dim A\le 3,$ a contradiction.

Next, by the fusion rules for $b,$ $y_{\gr}y_{\xi}\in A_{\xi,\xi}(b)=\ff y_{\xi},$ so let $\gl\in\ff$ such that $y_{\gr}y_{\xi}=\gl y_{\xi}.$  Since $y_{\xi}=a+\frac{1}{2\xi-1}\sum_{(\eta,\eta)\in S^{\circ}(a)}2\eta b_{\eta,\eta},$ \eqref{222} shows that $\frac{2\gl\eta}{2\xi-1}=\frac{4\xi\eta^2}{2\xi-1},$ for all $(\eta,\eta)\in S^{\circ}(a).$  This shows that $|S^{\circ}(a)|=1,$ so $\dim A=3.$  Hence $A_0(b)=0.$ Set $S^{\circ}(a)=\{(\mu,\mu)\}.$

By Theorem \ref{eigcom}(i), since $y_{\zeta}\in A_{\zeta,\zeta}(b),$ for $\zeta\in \{\gr,\xi\},$ we get $1-\ga_b=2\theta_1$ and $\xi-\ga_b=\frac{2}{2\xi-1}\theta_1.$  Hence $\theta_1=\frac{1-2\xi}{4}.$  By Remark \ref{t0t1}, $\theta'_1=\frac{1-2\xi}{4\mu},$ and $\theta'_0=0,$ that is, $b_{\mu,\mu}^2=\frac{1-2\xi}{4\mu}a.$ Plugging this into \eqref{222} we get
\[
(2\xi-1)y_{\gr} y_{\xi}=(2\xi-1)a+4\xi\mu^2b_{\mu,\mu}+\frac{4\mu^2}{2\xi-1}\cdot\frac{1-2\xi}{4\mu}a=(2\xi-1-\mu)a+4\xi\mu^2b_{\mu,\mu}
\]
But $(2\xi-1)y_{\gr} y_{\xi}=\gd y_{\xi},$ for some $\gd\in\ff.$  Hence $\gd=2\xi-1-\mu,$ and hence, considering the coefficient of $b_{\mu,\mu},$ we get
\begin{equation}\label{444}
(2\xi-1-\mu)\frac{2\mu}{2\xi-1}=4\xi\mu^2\implies\frac{2\xi-1-\mu}{2\xi-1}=2\xi\mu.
\end{equation}
Now by \eqref{111},
\[
y^2_{\gr}=a+4\mu^2b_{\mu,\mu}+4\mu^2\frac{1-2\xi}{4\mu}a=a+4\mu^2b_{\mu,\mu}+\mu(1-2\xi)a.
\]
By the fusion rules for $b,$ $y_{\gr}^2\in A_1(b)+A_0(b).$  Since $A_0(b)=0,$ $y_{\gr}^2\in A_1(b),$ so $y_{\gr}^2$ is a linear combination of $b$ and $y_{\gr}.$  However, the coefficient of $b_0$ in $y_{\gr}^2$ is $0,$ so $y_{\gr}^2\in\ff y_{\gr}.$  Since $y_{\gr}=a+2\mu b_{\mu,\mu},$ we see that $y_{\gr}^2=(\mu(1-2\xi)+1)y_{\gr}.$  Comparing the coefficient of $b_{\mu,\mu}$ we get
\begin{equation}\label{555}
((1-2\xi)\mu+1)2\mu=4\mu^2\implies \mu (1-2\xi)+1=2\mu \implies 2\mu\xi=1-\mu
\end{equation}
By \eqref{444} and \eqref{555},
\[\
2\xi-1-\mu=(1-\mu)(2\xi-1)\implies -\mu=-\mu(2\xi-1)
\]
Since $\mu\ne 0,$ we get $\xi=1,$ a contradiction.
\medskip

{\bf Subcase Ib.} $\xi=0.$
\medskip

We have

\begin{equation}\label{333}
y_{\xi}^2=a-\sum_{(\eta,\eta)\in S^{\circ}}4\eta^2 b_{\eta,\eta}+\sum_{(\eta,\eta)\in S^{\circ}}4\eta^2b_{\eta,\eta}^2,
\end{equation}
Let $u:=\sum_{(\eta,\eta)\in S^{\circ}} \varepsilon_{\eta,\eta}^2 b_{\eta,\eta},$ and $u':=\sum_{(\eta,\eta)\in S^{\circ}} 4\eta^2 b_{\eta,\eta}^2.$ By  the fusion rules for $b$ and by \eqref{222}, \eqref{333}, \eqref{111}, respectively,
\begin{align*}
a-u'&\in A_1(b)+A_0(b)\\
a-u+u'&\in A_1(b)+A_0(b)\\
a+u+u'&\in A_1(b)+A_0(b)
\end{align*}
Hence $2a-u, 2u\in A_1(b)+A_0(b),$ so $a,u\in A_1(b)+A_0(b).$
Multiplying $u$ by $a$ any number of times, we see that $\sum_{\eta,\eta)\in S^{\circ}} \eta^k b_{\eta,\eta}\in A_1(b)+A_0(b),$ for all $k\ge 2.$  This implies that $b_{\eta,\eta}\in A_1(b)+A_0(b),$ for all $(\eta,\eta)\in S^{\circ}(a).$  But since $b\in A_1(b)+A_0(b),$ we see that also $b_0\in A_1(b)+A_0(b),$ so $A=A_1(b)+A_0(b).$

If $\dim A=3,$ then, since $y_{\zeta}\in A_{\zeta,\zeta}(b),$ for $\zeta\in \{\gr,\xi\},$ Theorem \ref{eigcom}(i) implies that $\theta_0=0,\ \theta_1=\frac{1}{4},$ and $\ga_b=\half.$  Let $S^{\circ}(a)=\{(\mu,\mu)\}.$  Then by~Remark~\ref{t0t1}, $\theta'_0=0$ and $\theta'_1=\frac{1}{\mu}\theta_1=\frac{1}{4\mu}.$  Hence $b_{\mu,\mu}^2=\frac{1}{4\mu}a.$  Also by Theorem \ref{Ser02}(vii),
\[
\textstyle{b_0^2=b_0+(\ga_b-\ga_b^2)a-b_{\mu,\mu}^2=b_0+\frac{1}{4}a-\frac{1}{4\mu}a=\frac{\mu-1}{4\mu}a+b_0,}
\]
So $A$ is as in Example \ref{3dim01}.

If $\dim A=4,$ then, by (b), $A$ is as in example \ref{4dim}.
\medskip

{\bf Case II.} $y_{\xi}\notin A_{\xi,\xi}(b),$ for all $\xi\ne 1.$
\medskip

By \eqref{theta0} (and since $\theta_0=0$ because $y_{\gr}\in A_{\gr}(b)$), we get $0=2\xi-1+2\theta'_0.$  Hence $\theta'_0\ne 0.$  If $\dim A=3$ then, by Remark \ref{t0t1}, $\theta'_0=0$ since $\theta_0=0,$ a contradiction.  Hence $\dim A=4.$ Also, $A_{\half,\half}(b)=0,$ or else, by part (a) and by Remark \ref{t0t1}, $\theta'_0= 0.$  Thus, since $\dim A=4,$ there exist $\zeta\notin\{\half,1,\xi\},$ with $A_{\zeta,\zeta}(b)\ne 0.$  But then, as above, $0=2\zeta-1+2\theta'_0,$ a contradiction, since $\zeta\ne\xi.$
\medskip

(2)
By (a), $b_{\eta,\eta}^2=0,$ for all $(\eta,\eta)\in S^{\circ}(a),$ and $b_{\eta,\eta} \in A_{\half,\half}(b).$  Also, by Theorem \ref{eigcom}(i), $\gr-\ga_b=0$ since $\theta_1=0,$  so $\ga_b=1.$ This is Example~\ref{4dim2}. \end{proof}

\begin{example}\label{egc4} (A 4-dimensional CPAJ)
Let
\[
A=\ff a+\ff b_0+\ff b_{2,2}+\ff b_{\half,\half},
\]
with $A$ commutative, and
\[
\textstyle{b_{\half,\half}^2=0,\quad b_{2,2}^2=-\half b_0,\quad b_{\half,\half}b_{2,2}=0.}
\]
\[
\textstyle{b_0^2=\frac{3}{2}b_0,\qquad  b_0b_{\half,\half}=0,\qquad b_0b_{2,2}=-\frac{3}{2}b_{2,2}.}
\]
\[
\text{Let }b=a+b_0+b_{2,2}+b_{\half,\half}.
\]
Write
\[
\textstyle{a=b+a_0+a_{2,2}+a_{\half,\half},\quad a_0\in A_0(b),\quad a_{\eta,\eta}\in A_{\eta}(b).}
\]
The following hold.
\begin{enumerate}
\item
$a_0=-\half(b_0+3b_{2,2}),\quad a_{2,2}=-\half(b_0-b_{2,2}),\quad a_{\half,\half}=-b_{\half,\half}.$  The proof will be given in Example \ref{egnon} below.

\item

\begin{gather*}
b+a_0+a_{2,2}+a_{\half,\half}=a+b_0+b_{2,2}+b_{\half,\half}\\
\textstyle{(-\half b_0-\frac{3}{2}b_{2,2})+(-\half b_0+\half b_{2,2})+(-b_{\half,\half})=a.}
\end{gather*}
\item
It is easy to check that $b$ satisfies the fusion rules.  In fact it will be done in Example \ref{egnon}.
\end{enumerate}

\end{example}


\begin{thm}\label{comm1}
Assume that $A$ is commutative,   $\dim A_0(b)=\dim A_1(b)=1,$ and   $b$ satisfies the fusion rules. Then
\begin{enumerate}\eroman
\item
For all $(\gr,\gr)\in S^{\circ}(b),$ $\dim A_{\gr,\gr}(b)=1.$

\item
If $b_{\mu,\mu}^2=0,$ for some $(\mu,\mu)\in S^{\circ}(a),$ then $\mu=\half.$

\item
Suppose $\dim A\ge 4,$ so there exists $\gr\notin\{\half,0,1\}$ such that $A_{\gr,\gr}(b)\ne 0,$ and let
\[
y:=\ga y_{\gr}+\gb y'_{\gr}\in A_{\gr,\gr}(b).
\]
Then
\begin{enumerate}
    \item If $0\ne z:=\sum_{(\mu,\mu)\in S^{\circ}}\gc'_{\mu,\mu}b_{\mu,\mu}\in A_{\half,\half}(b),$ then $z=\gc'_{\half,\half}b_{\half,\half},$ so $b_{\half,\half}^2=0.$

    \item
Assume $A_{\half,\half}(b)\ne 0;$ then $A_{\half,\half}(a)\ne 0,$ and $b_{\half,\half}^2=0.$

    \item If $b_{\half,\half}^2=0,$ then $\ga+\gb=0;$ thus we can take $\ga=-1$ and $\gb=1,$ that is $-y_{\gr}+y'_{\gr}\in A_{\gr,\gr}(b).$

    \item
    Assume that $A_{\half,\half}(a)\ne 0.$  Then $A$ is as in Example \ref{egc4}.

\item $\dim A =  4$.
\end{enumerate}



\end{enumerate}
\end{thm}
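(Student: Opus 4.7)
The strategy exploits the full symmetry between $a$ and $b$: since both are weakly primitive axes satisfying the fusion rules with one-dimensional $0$-eigenspaces, Lemma~\ref{Ser23} can be applied with the two roles interchanged. This yields the dual decomposition $A = \ff b + A_0(b) + \bigoplus_{(\gr,\gr)\in S^\circ(b)} A_{\gr,\gr}(b)$ with each summand $1$-dimensional, spanned by some $a_{\gr,\gr}$; this gives (i) immediately. For (ii), if $b_{\mu,\mu}^2 = 0$ then by Theorem~\ref{Ser04}(iv) the element $b_{\mu,\mu}$ lies in the one-dimensional $A_{\half,\half}(b) = \ff a_{\half,\half}$, hence is proportional to $a_{\half,\half}$; then $a_{\half,\half}^2 = 0$, and the symmetric form of Theorem~\ref{Ser04}(iv) places $a_{\half,\half}$ in $A_{\half,\half}(a)$, forcing $\mu = \half$.

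For (iii)(a), the key computation is the product $yz$, where $y = \ga y_\gr + \gb y'_\gr$ gives $\ga_y = \ga + \gb\ga_b$ and $\gc_0 = \gb$. Using the commutative products $ab_{\mu,\mu} = \mu b_{\mu,\mu}$, $b_0 b_{\mu,\mu} = \tfrac12(1-2\ga_b\mu)b_{\mu,\mu}$, and $b_{\mu,\mu}b_{\mu',\mu'} = 0$ for $\mu\neq\mu'$, the coefficient of $b_{\mu,\mu}$ in $yz$ telescopes to $\gc'_{\mu,\mu}(\ga\mu + \tfrac{\gb}{2})$. Since $y$ and $z$ lie in distinct $b$-eigenspaces, $yz = 0$; moreover $y \neq 0$ forces $\ga \neq 0$, so this vanishes at a single value $\mu_0 = -\tfrac{\gb}{2\ga}$, and $z = \gc'_{\mu_0,\mu_0} b_{\mu_0,\mu_0}$. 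The condition $z \in A_{\half,\half}(b)$ combined with the formula $b b_{\mu_0,\mu_0} = \tfrac12 b_{\mu_0,\mu_0} + b_{\mu_0,\mu_0}^2$ forces $b_{\mu_0,\mu_0}^2 = 0$, and (ii) gives $\mu_0 = \half$. For (iii)(b), under $\dim A \ge 4$, Theorem~\ref{Ser04}(v)(b)(1) excludes $(\half,\half)$-eigenvectors of $b$ outside $V$, so every nonzero element of $A_{\half,\half}(b)$ satisfies the hypothesis of (iii)(a). Part (iii)(c) is the same computation specialized to $z = b_{\half,\half}$, which lies in $A_{\half,\half}(b)$ by Theorem~\ref{Ser04}(iv) once $b_{\half,\half}^2 = 0$: one finds $y b_{\half,\half} = \tfrac12(\ga+\gb) b_{\half,\half}$, and the fusion relation $y b_{\half,\half} = 0$ forces $\ga + \gb = 0$.

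For (iii)(d) and (iii)(e), assume $A_{\half,\half}(a) \neq 0$. I would first rule out the subcase $A_{\half,\half}(b) = 0$ via the symmetric form of (iii)(a): since $|S^\circ(a)| \ge 2$, some $(\mu_0,\mu_0) \in S^\circ(a)\setminus\{(\half,\half)\}$ with $\mu_0 \notin \{0,1\}$ exists, and the symmetric argument forces any nonzero element of $A_{\half,\half}(a)$ (which lies in the $b$-side analogue of $V$ by Theorem~\ref{Ser04}(v)(b)(1)) to span a nonzero $A_{\half,\half}(b)$, a contradiction. Hence $b_{\half,\half}^2 = 0$ by (iii)(b). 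Applying (iii)(c) to each $(\gr',\gr')$-eigenvector of $b$ with $\gr' \notin \{\half,0,1\}$ normalizes it (up to scaling) as $y^{(\gr')} = -y_{\gr'} + y'_{\gr'}$, and applying Lemma~\ref{calcu}(iv)(b) to $y^{(\gr'_1)} y^{(\gr'_2)} = 0$ for any two distinct such eigenvalues, the coefficient of $b_{\mu,\mu}$ with $\mu \neq \half$ forces $\gr'_1 + \gr'_2 = 1$; the same reasoning rules out three or more such eigenvalues. The residual $a$- and $b_0$-components of $y^{(1)} y^{(2)} = 0$, combined with equations \eqref{671}, \eqref{672}, and Theorem~\ref{Ser02}(vii), then collapse to $\ga_b = 1$, $\gr = 2$, $b_{2,2}^2 = -\tfrac12 b_0$, $b_0^2 = \tfrac32 b_0$, matching Example~\ref{egc4} exactly and yielding $\dim A = 4$. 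The main obstacle is this final algebraic elimination: orchestrating the over-determined system to extract Example~\ref{egc4} uniquely, while simultaneously excluding the three-eigenvalue configuration $|S^\circ(b)| = 3$.
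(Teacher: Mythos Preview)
Your treatment of parts (i), (ii), and (iii)(a)--(c) is essentially the same as the paper's, modulo phrasing. One small point: in (iii)(a), the claim ``$y\ne 0$ forces $\alpha\ne 0$'' is not literally correct (if $\alpha=0$ and $\beta\ne 0$ then $y=\beta y'_\rho\ne 0$); the correct reasoning is that $\alpha=0$ together with $\beta\ne 0$ makes $\alpha\mu+\tfrac{\beta}{2}\ne 0$ for every $\mu$, forcing $z=0$, a contradiction.

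For (iii)(d)--(e) there is a genuine gap. Your strategy hinges on the vanishing of products $y^{(\rho'_1)}y^{(\rho'_2)}$ for \emph{distinct} eigenvalues $\rho'_1,\rho'_2\notin\{0,1,\tfrac12\}$. This does exclude three or more such eigenvalues (hence $\dim A\ge 6$), and when exactly two exist it forces $\rho'_1+\rho'_2=1$. But the target is $\dim A=4$, where there is \emph{only one} such eigenvalue $\rho$, and then there is no second eigenvector to multiply against; your sketch offers no mechanism to determine $\alpha_b$, $\rho$, and $b_{\mu,\mu}^2$ in that case. The phrase ``yielding $\dim A=4$'' at the end of your paragraph conflates deriving a contradiction from the $\dim A=5$ hypothesis with actually pinning down the structure when $\dim A=4$.

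The paper's route is different and avoids this problem entirely. After obtaining $b_{\half,\half}^2=0$ and normalising $y=-y_\rho+y'_\rho$, it writes $a_{\rho,\rho}=\lambda y$ and invokes Theorem~\ref{Ser02}(v) \emph{applied to $b$}: this yields the single identity $ay=\tfrac12 y+\lambda y^2$. Comparing the $b_{\mu,\mu}$-coefficient ($\mu\ne\tfrac12$) gives $\lambda=-\tfrac12$, and then matching the $b_0$-coefficient forces $(1-2\mu)^2=(1-2\rho)^2$; a second comparison from $by=\rho y$ eliminates $\rho+\mu=1$, leaving $\rho=\mu$. With $\rho=\mu$ known, the fusion relation $y^2\in\ff b\oplus\ff w$ (where $w$ spans $A_0(b)$, computed via \eqref{theta0}) pins down $\mu=2$, and finally \eqref{theta1} gives $\alpha_b=1$. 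The identity $ay=\tfrac12 y+\lambda y^2$ is the decisive ingredient you are missing; once it is in hand, no product of distinct eigenvectors is needed, and the $\dim A=4$ case is handled directly.
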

\begin{proof}
(i) This follows from Lemma \ref{Ser23} applies to $b.$
\medskip

(ii)  By Theorem \ref{Ser041}(iii), $b_{\mu,\mu}\in A_{\half,\half}(b).$  By Theorem \ref{Ser041}(iii) applied to $b,$ $b_{\mu,\mu}\in A_{\half,\half}(a),$ so $\mu=\half.$

(iii) (a)
We have
\[
y_{\gr}z=\sum_{(\mu,\mu)\in S^{\circ}}\mu\gc'_{\mu,\mu}b_{\mu,\mu}+w,\quad w\in A_1(a)+A_0(a),
\]
and by Theorem \ref{Ser02}(v),
\[
\textstyle{y'_{\gr}z=\sum_{(\mu,\mu)\in S^{\circ}}\half\gc'_{\mu,\mu}b_{\mu,\mu}+w',\quad w'\in A_1(a)+A_0(a).}
\]
Thus,
\[
\textstyle{0=yz=(\ga y_{\gr}+\gb y'_{\gr})z=\sum_{(\mu,\mu)\in S^{\circ}}\left(\ga\mu+\half\gb\right)\gc'_{\mu,\mu}b_{\mu,\mu}+\ga w+\gb w'.}
\]
It follows that $\ga\mu+\half\gb=0,$ for each $(\mu,\mu)\in S^{\circ}(a)$ such that $\gc'_{\mu,\mu}\ne 0.$ This shows that there is a unique $\mu$ such that $\gc'_{\mu,\mu}\ne 0.$  Thus by Theorem~\ref{Ser041}(iii), $b_{\mu,\mu}^2=0,$ so by (ii), $\mu=\half,$ and (a) follows.
\medskip

(b) If $A_{\hal,\half}(b)\subseteq V(a),$ then by (a), $A_{\half,\half}(a)\ne 0,$ and $b_{\half,\half}^2=0.$ Otherwise, by Theorem \ref{Ser04}(v)(b), $\dim A\le 3,$ a contradiction.

(c)   We saw in the proof (a) that $0=yb_{\half,\half}=(\half\ga+\half\gb)b_{\half,\half},$ so $\ga+\gb=0.$

(d) Since $A_{\half,\half}(a)\ne 0,$   by symmetry and by (b),   $A_{\half,\half}(b)\ne 0$ and $b_{\half,\half}^2=0.$
We prove a series of claims.

\begin{itemize}
    \item[(1)] $b_{\half,\half}\in A_{\half,\half}(b).$ This follows from Theorem \ref{Ser02}(vi).

    \item[(2)]  $\dim A=4.$ Indeed, suppose there exists $\xi\in\ff\setminus\{0,1,\half,\gr\},$ and $z\in A_{\xi,\xi}(b),$ with $z\ne 0.$ Then, as above,  $z=-y_{\xi}+y'_{\xi}.$ Hence, by~Lemma~\ref{calcu}(viii)(a), $S^{\circ}(a)\subseteq \{(\half,\half)\}$ (because $\ga=\gb=-1$ there), implying $\dim A\le 3,$ a contradiction.

    \item[(3)]  Set $S^{\circ}(a)=\{(\half,\half),(\mu,\mu)\},$ with $\mu\ne\half.$ Then $a_{\gr,\gr}=-\half y,$ $\gr=\mu,$ $\theta'_0=\half(1-\mu)$ and $\theta'_1=\half(1-\ga_b)\mu.$

    Proof of (3):  By Lemma \ref{calcu}(iv)(a) (taking $\ga=-1$),
\begin{equation}\label{yyy}
y=(\ga_b-1)a+b_0+\frac{1}{2\gr-1}\sum_{(\eta,\eta)\in S^{\circ}}(1-2\eta)b_{\eta,\eta}.
\end{equation}
Let $\gl\in\ff,$ such that $a_{\gr,\gr}=\gl y.$  By Theorem \ref{Ser02}(v) (applied to $b$),
\begin{equation}\label{lambda}
\textstyle{ay=\half y+\gl y^2.}
\end{equation}
By \eqref{yz} (noting $\gr =\xi$ and    taking $\ga=\gb =-1$ ),
\begin{equation}\label{y2y2}
y^2=(1-\ga_b)a+b_0-b_{\mu,\mu}^2+\frac{(1-2\mu)^2}{(2\gr-1)^2}b_{\mu,\mu}^2+\frac{(1-2\mu)^2}{2\gr-1}b_{\mu,\mu}.
\end{equation}
Comparing the coefficient of $b_{\mu,\mu}$ in \eqref{lambda} we get
\[
(1-2\mu)\mu=\half(1-2\mu)+\gl(1-2\mu)^2,
\]
Cancelling $1-2\mu,$ we get $\mu-\half=\gl(1-2\mu),$ so $\gl=-\half.$  Note now that $\theta'_0\ne 0,$ else, by Remark \ref{t0t1}, $\theta_0=\mu\theta'_0=0,$ so by \eqref{theta0}, $(2\gr-1)(\gr-1)=0,$ a contradiction.

Comparing the coefficient of $b_0$ in \eqref{lambda} we get
\[
0=\half b_0-\half \left(b_0-\theta'_0+\frac{(1-2\mu)^2}{(2\gr-1)^2}\theta'_0\right).
\]
Hence cancelling $\theta'_0,$ we get $(1-2\mu)^2=(1-2\gr)^2,$ or
\begin{equation}\label{gr=mu}
(\gr-\mu)(2-2(\gr+\mu))=0.
\end{equation}
By \eqref{yz},
\begin{equation*}
\begin{aligned}
by&=b_0-\sum_{(\eta,\eta)\in S^{\circ}}b_{\eta,\eta}^2
+\frac{1}{2\gr-1}\sum_{(\eta,\eta)\in S^{\circ}}(1-2\eta)b_{\eta,\eta}^2\\
&+\frac{\gr}{2\gr-1}\sum_{(\eta,\eta)\in S^{\circ}}(1-2\eta)b_{\eta,\eta}.
\end{aligned}
\end{equation*}
Since $by=\gr y,$ we get
 \[
b_0-b_{\mu,\mu}^2+\frac{1-2\mu}{2\gr-1}b_{\mu,\mu}^2=(\ga_b-1)\gr a+\gr b_0.
\]
or $b_0+\frac{2-2\mu-2\gr}{2\gr-1}b_{\mu,\mu}^2=(\ga_b-1)\gr a+\gr b_0.$ Note that if $\gr+\mu=1,$ we get $\gr=1,$ a contradiction. Hence by \eqref{gr=mu}, $\gr=\mu,$ and
\[
\textstyle{b_{\mu,\mu}^2=\frac{2\gr-1}{2-2\gr-2\mu}\left((\ga_b-1)\gr a+(\gr-1)b_0\right)=-\half((\ga_b-1)\mu a+(\mu-1)b_0).}
\]
Thus
\[
\textstyle{\theta'_0=\half(1-\mu),\qquad \theta'_1=\half(1-\ga_b)\mu.}
\]

    \item[(4)]
    $y=(\ga_b-1)a+b_0-b_{\mu,\mu},\quad y^2=(1-\ga_b)a+b_0+(2\mu-1)b_{\mu,\mu}.$ This follows from \eqref{yyy} and \eqref{y2y2}, since $\gr=\mu.$


    \item[(5)]  Let $w\in A_0(b).$ Then $w=\tau y_{\gr_0}+y'_{\gr_0}\in A_0(b),$ for $\gr_0 =0,$ also, $\tau=\frac{1}{1-\mu}$ and
    \[
    w=(\ga_b+\tau)a+b_0-\frac{2-\mu}{1-\mu}b_{\half,\half}-\frac{\mu+1}{1-\mu}b_{\mu,\mu}.
    \] Proof of (5):  Since, by Remark \ref{t0t1}, $\theta_0=\mu\theta'_0,$ and since $\theta'_0\ne 0,$ by (3), Theorem \ref{eigcom}(i) implies  that $w\ne y_{\gr_0}.$ So we can take $w=\tau y_{\gr_0}+y'_{\gr_0}.$ By \eqref{theta0},
\begin{align*}
2\tau\mu\theta'_0&=1-2\theta'_0\quad\iff\\
\tau\mu(1-\mu)&=1-(1-\mu)=\mu.
\end{align*}
The formula for $w$ comes from Lemma \ref{calcu}(iv)(a), replacing  $y$ by $w$,  and $\ga$ by $\tau$, and noticing that since $\tau=\frac{1}{1-\mu},$ $2\tau\mu+1=\frac{\mu+1}{1-\mu}$ and $\tau+1=\frac{2-\mu}{1-\mu}.$

    \item[(6)] $\mu=2,\ \ga_b=1,\ \theta'_0=\half,\, \theta'_1=0,\ b_0^2=\frac{3}{2}b_0, \ b_{2,2}^2=\half b_0.$ 

Proof of (6):
 By the fusion rules, $y^2=\gl b+\gd w,\ \gl,\gd\in\ff.$  Comparing the coefficient of $b_0,$ we get $\gl+\gd=1.$  Comparing the coefficient of $b_{\half,\half}$ we get
\begin{gather*}
\textstyle{\gl-\gd\frac{2-\mu}{1-\mu}=0\implies 1-\gd\left(1+\frac{2-\mu}{1-\mu}=0\right)}\\
\implies 1-\mu-\gd(3-2\mu)=0.
\end{gather*}
So $\gl=\frac{\mu-2}{2\mu-3},\quad \gd=\frac{\mu-1}{2\mu-3}.$

Comparing now the coefficient of $b_{\mu,\mu}$ we get
\[
\textstyle{2\mu-1=\frac{\mu-2}{2\mu-3}+\frac{\mu-1}{2\mu-3}\!\cdot\!\frac{\mu+1}{\mu-1}=\frac{2\mu-1}{2\mu-3},}
\]
Hence $2\mu-3=1,$ so $\mu=2.$

Next, using \eqref{theta1},   since $\theta'_1=1-\ga_b$ (recalling that $\ga_y-\ga_b=-1$),
\[
-4(1-\ga_b)= -3(2-\ga_b)+3\ga_b+2(1-\ga_b)\implies -6(1-\ga_b)=-6+6\ga_b,
\]
so $\ga_b=1.$  The formula for $\theta'_0$ and $\theta'_1$ comes from (3), then, since $b_{\half,\half}^2=0,$ we have $b_{2,2}^2=\theta'_1 a+\theta'_0b_0=\half b_0.$  Finally, the formula for $b_0^2$ comes from Theorem \ref{Ser02}(viii).



To see (d), note that the multiplication table follows from (6) and  Theorem \ref{Ser02}(ii) (which gives the product $b_0b_{\eta,\eta}=\frac{1-2\eta}{2}$).
\end{itemize}

(e)   Otherwise $\dim A\ge 5,$ so $| S^{\circ}(a)|\ge 3.$ Then by Theorem \ref{Ser041}(iii), $A_{\half,\half}(a)\ne ~0,$ and hence by~Claim (2) of (d), $\dim A = 4,$ a contradiction \end{proof}





\begin{prop}\label{gr+xine1}
Assume the hypotheses of Theorem \ref{comm1}, with $\dim A\ne 3,$ and $A_{\half,\half}(b)=0.$ Then $\dim A=4.$ Put $S^{\circ}(a)=\{(\mu,\mu), (\nu,\nu)\},$ and $S^{\circ}(b)=\{(\gr,\gr), (\xi,\xi)\}.$
%
%
\begin{enumerate}\eroman
\item
$A_{\chi,\chi}(b)\ne \ff y_{\chi},$ for  $\chi\in \{\gr_0, \gr,\xi\},$ where $\gr_0=0.$
\medskip

Let $y=\ga y_{\gr}+y'_{\gr}\in A_{\gr,\gr}(b), z=\gb y_{\xi}+y'_{\xi}\in A_{\xi,\xi}(b),$ and $w=\tau y_{\gr_0}+y'_{\gr_0}\in A_0(b).$

\item we cannot have both $\ga=-\frac{1}{2\mu}$ and $\gb=-\frac{1}{2\nu}.$

\end{enumerate}

\end{prop}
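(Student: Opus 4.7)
The plan is to first establish $\dim A=4$ and pin down the structure, then handle (i) and (ii) by contradiction using the eigenvector formulas of Theorem~\ref{eigcom} together with the product expansions in Lemma~\ref{calcu}. For the structure: since $A$ is commutative, $S^\dagger(a)=\emptyset$, and Theorem~\ref{Ser041}(iii) characterizes the $(\half,\half)$-eigenvectors of $b$ in $V(a)$ as the vectors $\sum\gc_{\mu,\mu}b_{\mu,\mu}$ with either $\ga_b=\half$ or $\sum\gc_{\mu,\mu}b_{\mu,\mu}^2=0$. The hypothesis $A_{\half,\half}(b)=0$ therefore forces $\ga_b\ne\half$ and forces the linear map $(\gc_{\mu,\mu})\mapsto\sum\gc_{\mu,\mu}b_{\mu,\mu}^2$ from $\ff^{|S^\circ(a)|}$ to $\ff a+\ff b_0$ to be injective; since the codomain is two-dimensional, $|S^\circ(a)|\le 2$. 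Combined with the hypothesis $\dim A\ne 3$ (so $|S^\circ(a)|\ge 2$), this gives $|S^\circ(a)|=2$, hence $\dim A=4$ and $S^\circ(a)=\{(\mu,\mu),(\nu,\nu)\}$ with $\mu\ne\nu$. Since each nonzero eigenspace of $b$ is one-dimensional by Theorem~\ref{comm1}(i), and $\dim A_1(b)=\dim A_0(b)=1$ with $A_{\half,\half}(b)=0$, the remaining eigenvalues form $S^\circ(b)=\{(\gr,\gr),(\xi,\xi)\}$ with $\gr\ne\xi$, both in $\ff\setminus\{0,\half,1\}$.

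For part (i), I would argue by contradiction. Write $b_{\mu,\mu}^2=s_\mu a+t_\mu b_0$ and $b_{\nu,\nu}^2=s_\nu a+t_\nu b_0$; the vanishing of $A_{\half,\half}(b)$ established above forces $\{b_{\mu,\mu}^2,b_{\nu,\nu}^2\}$ to be linearly independent in $\ff a+\ff b_0$, i.e.\ $s_\mu t_\nu\ne s_\nu t_\mu$. Suppose $A_\chi(b)=\ff y_\chi$ for some $\chi\in\{\gr_0,\gr,\xi\}$. Then by Theorem~\ref{eigcom}(i) (or Corollary~\ref{rho0}(iii) for $\chi=\gr_0$), $\theta_0=\mu t_\mu+\nu t_\nu=0$ and $\chi-\ga_b=\frac{2\theta_1}{2\chi-1}$ (with $\chi=0$ yielding $\theta_1=\half\ga_b$). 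Expand $y_\chi^2$ via Lemma~\ref{calcu}(i); by the fusion rules $y_\chi^2\in\ff b+A_0(b)$, and matching coefficients in the basis $\{a,b_0,b_{\mu,\mu},b_{\nu,\nu}\}$ against $\gl b+\gd w$ (where $w$ is the basis of $A_0(b)$ given by Corollary~\ref{rho0}(iv)) yields further polynomial relations among $s_\mu,s_\nu,\ga_b,\mu,\nu,\chi$. Combined with $s_\mu t_\nu\ne s_\nu t_\mu$ and $\theta_0=0$, these should force either $\ga_b=\half$, or $\gr=\xi$, or the appearance of a nonzero element of $A_{\half,\half}(b)$—each contradicting the setup. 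The main obstacle is the case-by-case bookkeeping across the three values of $\chi$.

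For (ii), I would assume $\ga=-\frac{1}{2\mu}$ and $\gb=-\frac{1}{2\nu}$ and reach a contradiction. Applying equation \eqref{theta0} to $y\in A_{\gr,\gr}(b)$ and to $z\in A_{\xi,\xi}(b)$ (using $\ga_y-\ga_b=\ga$ and $\ga_z-\ga_b=\gb$) gives
\[
-\tfrac{\theta_0}{\mu}=(\gr-1)\bigl(2\gr-1+2\theta'_0\bigr),\qquad -\tfrac{\theta_0}{\nu}=(\xi-1)\bigl(2\xi-1+2\theta'_0\bigr).
\]
If $\theta_0=0$, both right-hand sides vanish, and since $\gr,\xi\ne 1$ we deduce $2\gr-1+2\theta'_0=0=2\xi-1+2\theta'_0$, forcing $\gr=\xi$, a contradiction. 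If $\theta_0\ne 0$ and $\gr+\xi\ne 1$, the fusion rules give $yz=0$, so Lemma~\ref{calcu}(viii)(a) supplies $\theta'_0=1$ and an explicit formula for $\theta'_1$; substituting these into \eqref{theta1} for both $y$ and $z$ and comparing yields contradictory constraints on $\mu,\nu,\gr,\xi,\ga_b$. In the sub-case $\gr+\xi=1$, Lemma~\ref{calcu}(viii)(a) is unavailable, and I would instead substitute $\xi=1-\gr$ into the displayed $b_0$-equations together with the corresponding $a$-equations from \eqref{theta1} to force a contradiction from $\mu\ne\nu$. The hard part is this last sub-case, where the missing piece of information from Lemma~\ref{calcu}(viii)(a) must be reconstructed by direct computation.
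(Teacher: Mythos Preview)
Your structural preamble (showing $\dim A=4$, $\ga_b\ne\half$, and the injectivity of $(\gc_{\mu,\mu})\mapsto\sum\gc_{\mu,\mu}b_{\mu,\mu}^2$) is correct, and in (ii) your $\theta_0=0$ sub-case is exactly the paper's claim~(5) specialized. The rest, however, has genuine gaps.

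\textbf{Part (i).} Your plan to expand $y_\chi^2$ and ``match coefficients'' is not a proof; you yourself flag it as a bookkeeping obstacle. The paper does something sharper and quite different. For $\chi=\gr_0$: once $\theta_0=0$, Theorem~\ref{eigcom}(iii)(f) forces every other $(\gr,\gr)$-eigenvalue with $\gc_0\ne 0$ to satisfy $\gr=\half-\theta'_0$, so $\gr=\xi$, a contradiction. For $\chi=\gr$ (and by symmetry $\chi=\xi$): decompose $a$ with respect to $b$, so $a_{\gr,\gr}=\lambda y_\gr$ with $\lambda\ne 0$; Theorem~\ref{Ser02}(v) applied to $b$ gives $a\cdot a_{\gr,\gr}=\half a_{\gr,\gr}+a_{\gr,\gr}^2$, hence $ay_\gr=\half y_\gr+\lambda y_\gr^2$, and Lemma~\ref{calcu}(viii)(c) then forces $|S^\circ(a)|\le 1$. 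You never invoke the $b$-decomposition of $a$, and without it there is no clear route to the contradiction.

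\textbf{Part (ii), $\theta_0\ne 0$.} Your proposal to ``substitute into \eqref{theta1} and compare'' is not sufficient. The paper's argument needs an additional relation coming from the fusion rule $y^2\in\ff b\oplus\ff w$: writing $y^2=\gl w+\gd b$ and matching the $b_{\mu,\mu}$ and $b_0$ components (using the explicit forms of $y$, $w$, and $\theta''_0$) yields $\tau=\frac{\frac{3}{2}-2\gr}{2\mu(\gr-1)}$, hence $\theta_0=\frac{\mu(\gr-1)}{2\gr-\frac{3}{2}}$. Combining this with your (correct) equation $-\frac{\theta_0}{\mu}=(2\gr+1)(\gr-1)$ gives $(4\gr+1)(2\gr-1)=0$, so $\gr=-\frac14$, and by symmetry $\xi=-\frac14$, the contradiction. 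Without the $y^2$ computation you lack this crucial relation.

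\textbf{The $\gr+\xi=1$ sub-case is a red herring.} You treat it as the hard case because Lemma~\ref{calcu}(viii)(a) does not apply. But once you assume $\ga=-\frac{1}{2\mu}$ and $\gb=-\frac{1}{2\nu}$, every product $(2\ga\eta+1)(2\gb\eta+1)$ in the expansion \eqref{yz} of $yz$ vanishes for $\eta\in\{\mu,\nu\}$, so the conclusion $\sum b_{\eta,\eta}^2=(\ga\gb+(\ga+\gb+1)\ga_b)a+b_0$ (hence $\theta'_0=1$) holds regardless of whether $\gr+\xi=1$. The paper makes no case distinction on $\gr+\xi$.
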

\begin{proof}

(i) Let $\chi\in\{\gr_0,\gr,\xi\},$ and assume that $A_{\chi,\chi}(b)=\ff y_{\chi}.$ By Theorem \ref{eigcom}(i), $\theta_0=0.$  If $\chi=\gr_0,$ then by Theorem \ref{eigcom}(iii)(f), $\gr=\xi=\half-\theta'_0,$ a contradiction.

Suppose $\chi=\gr.$  Let $\gl\in\ff,$ with  $a_{\gr,\gr}=\gl y_{\gr}.$ Of course $\gl\ne 0.$ Then, by Theorem \ref{Ser02}(v) applied to $b,$ we have
\[
\textstyle{a(\gl y_{\gr})=\half (\gl y_{\gr})+\gl^2y_{\gr}^2\implies ay_{\gr}=\half y_{\gr}+\gl y_{\gr}^2.}
\]
This contradicts Lemma \ref{calcu}(viii)(c).
\medskip

(ii)  Assume that $\ga=-\frac{1}{2\mu}$ and $\gb=-\frac{1}{2\nu}.$
We obtain a series of claims, that culminate in a contradiction.

\begin{itemize}

    \item[(1)] $b_{\mu,\mu}^2+b_{\nu,\nu}^2=(\ga\gb+(\ga+\gb+1)\ga_b)a+b_0=\theta'_1a+\theta'_0b_0.$









\medskip

This  follows  from \eqref{yz}, since $yz=0$ by the fusion rules for $b.$
\medskip

    \item[(2)] $y=(\ga_b-\frac{1}{2\mu})a+b_0+ \frac{1}{2\gr-1}(1-\frac{\nu}{\mu})b_{\nu,\nu}.$

\medskip

This follows from Lemma \ref{calcu}(iv)(a).
\medskip

     \item[(3)]  $w=(\ga_b+\tau)a+b_0-(2\tau\nu+1)b_{\nu,\nu}-(2\tau\mu+1)b_{\mu,\mu}.$


    \medskip


 This comes from Lemma \ref{calcu}(iv)(a), replacing $y$ by $w$ and $\ga$ by~$\tau.$



\medskip

\item[(4)] Write $b_{\nu,\nu}^2=\theta''_1a+\theta''_0b_0$ and $b_{\mu,\mu}^2=\theta'''_1a+\theta'''_0b_0.$ Then $\theta''_0+\theta'''_0=1,$ and
$(2\gr-1)\gr=(1-\frac{\nu}{\mu})\theta''_0,\qquad (2\xi-1)\xi=(1-\frac{\mu}{\nu})\theta'''_0.$

 \medskip

Indeed, by (1), $\theta'_0=1,$ and so $1=\theta'_0=\theta''_0+\theta'''_0.$ Note that since $\theta'_0=1,$ Theorem \ref{Ser02}(vii) shows that the $(0,0)$-component of $b_0^2$ is $0.$ Hence the $(0,0)$-component of $by$ is $\frac{1}{2\gr-1}(1-\frac{\nu}{\mu})\theta''_0.$ Since $by=\gr y,$ comparing the components in the $(0,0)$-eigenspace of $A$ we get (4) for $\gr,$ and, by symmetry, we get (4) for $\xi.$
\medskip

   \item[(5)]
$-\frac{\theta_0}{\mu}=(2\gr+1)(\gr-1),\quad -\frac{\theta_0}{\nu}=(2\xi+1)(\xi-1).$

\medskip

Indeed, since $y\in A_{\gr,\gr}(b),$ we can use \eqref{theta0}.  Note that by (1), $\theta_0'=1.$ Also $\ga_y-\ga_b=-\frac{1}{2\mu}.$ Hence (5) follows from \eqref{theta0}.

\item[(6)] $\theta_0=-\frac{1}{2\tau}.$

\medskip
  Since $w\in A_0(b),$ and since $\theta'_0=1,$ we get (6) from \eqref{theta0}.
\medskip

\item[(7)]  $\theta''_0=\frac{\frac{1}{2\tau}+\mu}{\mu-\nu}.$
\medskip


Follows from
$-\frac{1}{2\tau}=\theta_0=\nu\theta_0''+\mu(1-\theta''_0).$


%


\item[(8)] $\tau=\frac{\frac{3}{2}-2\gr}{2\mu(\gr-1)},$ so $\theta_0=\frac{\mu(\gr-1)}{2\gr-\frac{3}{2}}.$

\medskip

By \eqref{yz} (taking $z=y$) we get
\begin{equation*}
\begin{aligned}
y^2&\textstyle{=(\ga^2+(2\ga+1)\ga_b)a+b_0-\sum_{(\eta,\eta)\in S^{\circ}}b_{\eta,\eta}^2+ \frac{1}{(2\gr-1)^2}(1-\frac{\nu}{\mu})^2b_{\nu,\nu}^2}\\
&\qquad+\textstyle{\frac{1}{2\gr-1}(1-\frac{\nu}{\mu})^2b_{\nu,\nu}.}
\end{aligned}
\end{equation*}
Using the fusion rules write $y^2=\gl w+\gd b,\ \gl,\gd\in\ff.$ Now
\begin{align*}
&\gl w+\gd b=\gl(\ga_b+\tau)a+\gl b_0-\gl(2\tau\nu+1)b_{\nu,\nu}-\gl(2\tau\mu+1)b_{\mu,\mu}+\gd b\\
&=(\gl(\ga_b+\tau)+\gd\ga_b)a+(\gl+\gd)b_0-(\gl(2\tau\nu+1)-\gd)b_{\nu,\nu}-(\gl(2\tau\mu+1)-\gd)b_{\mu,\mu}.
\end{align*}
Since the coefficient of $b_{\mu,\mu}$ in $y^2$ is $0,$ $\gd=\gl(2\tau\mu+1).$  Comparing the coefficients of $b_{\nu,\nu}$ we get
\[
\textstyle{\frac{1}{2\gr-1}(1-\frac{\nu}{\mu})^2=-(\gl(2\tau\nu+1)-\gd),}
\]
so
\begin{equation}\label{2gl}
\textstyle{\frac{1}{2\gr-1}(1-\frac{\nu}{\mu})^2=2\gl\gt(\mu-\nu).}
\end{equation}

Comparing the $(0,0)$-components (using $\theta'_0=1)$ gives
\[
\textstyle{\gl+\gd=\frac{1}{(2\gr-1)^2}(1-\frac{\nu}{\mu})^2\theta''_0.}
\]
or
\[
\textstyle{2\gl(\tau\mu+1)=\frac{1}{(2\gr-1)^2}(1-\frac{\nu}{\mu})^2\theta''_0.}
\]
Substituting for $2\gl$ from \eqref{2gl} and cancelling we get
\[
\textstyle{\frac{\tau\mu+1}{\tau(\mu-\nu)}=\frac{1}{(2\gr-1)}\theta''_0.}
\]
By (8), $\theta_0''=\frac{\frac{1}{2\tau}+\mu}{\mu-\nu}.$ Using this and canceling  we get
\[
(\tau\mu+1)(2\gr-1)=\half+\tau\mu
\]
so $2\tau\mu(\gr-1)=\frac{3}{2}-2\gr.$  This show the first equality of (8).  The second equality follows from (5).
\end{itemize}
\medskip

We can now obtain the final contradiction.  By Claims (8) and  (5),
\[
-\frac{(\gr-1)}{2\gr-\frac{3}{2}}=(2\gr+1)(\gr-1).
\]
Hence $-1=(2\gr+1)(2\gr-\frac{3}{2})=4\gr^2-\gr-\frac{3}{2}.$ Hence $8\gr^2-2\gr-1=0,$ that is $(4\gr+1)(2\gr-1)=0.$  Since $\gr\ne\half,$ we conclude that $\gr=-\frac{1}{4}.$  By symmetry, we conclude that $\gr=\xi,$ a contradiction.
\end{proof}

\begin{thm}\label{gr+xi=1}
Suppose $A =\lan\lan a,b \ran\ran$ is commutative, where $a,b$ are weakly primitive axes satisfying the fusion rules, and $\dim A_0(a)= \dim A_0(b) = 1$.  Then either $A$ is as in Example \ref{egc4}, or $A$  is an HRS algebra of dimension $\le 3$.

\end{thm}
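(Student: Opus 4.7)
The plan is to combine Theorem~\ref{comm1} with Proposition~\ref{gr+xine1}, exploiting the symmetry between $a$ and $b$ (both are weakly primitive axes with $\dim A_0 = 1$ satisfying the fusion rules).

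First I would dispose of the small-dimensional cases. If $\dim A = 2$, Theorem~\ref{dime2}(ii)(a) identifies $A$ with one of the HRS algebras. If $\dim A = 3$, then $|S^\circ(a)| = 1$, so $a$ is a $1$-axis; Corollary~\ref{S1} in the commutative case, together with the classification of \cite[Theorem~1.1]{HRS}, gives that $A$ is a $3$-dimensional HRS algebra.

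Now assume $\dim A \ge 4$. The symmetry of the hypothesis between $a$ and $b$ lets us apply Theorem~\ref{comm1}(iii)(b) in both directions, so $A_{\half,\half}(a) \ne 0$ iff $A_{\half,\half}(b) \ne 0$. In the affirmative case, Theorem~\ref{comm1}(iii)(d) shows that $A$ is precisely Example~\ref{egc4}, completing that branch. So I would concentrate on ruling out $A_{\half,\half}(a) = A_{\half,\half}(b) = 0$. Theorem~\ref{comm1}(iii)(e) then forces $\dim A = 4$, and Proposition~\ref{gr+xine1} applies: we may write $S^\circ(a) = \{(\mu,\mu),(\nu,\nu)\}$ and $S^\circ(b) = \{(\rho,\rho),(\xi,\xi)\}$, with spanning vectors $y = \alpha y_\rho + y'_\rho \in A_{\rho,\rho}(b)$ and $z = \beta y_\xi + y'_\xi \in A_{\xi,\xi}(b)$, the $y'$-components being present by Proposition~\ref{gr+xine1}(i).

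The commutative fusion rules for $b$ force $yz = 0$. When $\rho + \xi \ne 1$, Lemma~\ref{calcu}(viii)(a) immediately yields $\{\mu,\nu\} \subseteq \{-\tfrac{1}{2\alpha},-\tfrac{1}{2\beta}\}$, so after relabeling $\alpha = -\tfrac{1}{2\mu}$ and $\beta = -\tfrac{1}{2\nu}$, which contradicts Proposition~\ref{gr+xine1}(ii). If $\rho + \xi = 1$ but $\mu + \nu \ne 1$, the fully symmetric argument obtained by interchanging the roles of $a$ and $b$ produces the analogous contradiction via the corresponding $a$-eigenvector decomposition of suitable vectors in $A_{\mu,\mu}(a)$ and $A_{\nu,\nu}(a)$. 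The main obstacle is therefore the remaining sub-case $\rho + \xi = 1 = \mu + \nu$. Here $\tfrac{1}{2\rho-1} + \tfrac{1}{2\xi-1} = 0$, so the middle terms in Lemma~\ref{calcu}(i)--(iii) collapse and the formulas simplify considerably; I would then expand $yz = 0$ explicitly in the basis $\{a, b_0, b_{\mu,\mu}, b_{\nu,\nu}\}$ using Theorem~\ref{Ser02}(ii)--(vii) and Theorem~\ref{eigcom}(ii), and match coefficients to force a numerical contradiction in the spirit of Claims~(5)--(8) and the final calculation of Proposition~\ref{gr+xine1}(ii) (which produced $\rho = \xi = -\tfrac{1}{4}$, incompatible with $\rho \ne \xi$).
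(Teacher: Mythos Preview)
Your overall plan through step 5 is sound and, in fact, your symmetry observation (reducing $\rho+\xi=1$ with $\mu+\nu\ne 1$ to the already-settled case by swapping $a$ and $b$) is a pleasant shortcut the paper does not make: the paper treats the whole case $\rho+\xi=1$ directly.

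The gap is in step 6. When $\rho+\xi=1$ you correctly note that $\tfrac{1}{2\rho-1}+\tfrac{1}{2\xi-1}=0$, but this means the last line of \eqref{yz} vanishes identically, so $yz$ has \emph{no} $b_{\eta,\eta}$-components at all. Consequently ``expanding $yz=0$ and matching coefficients'' yields only two scalar equations (the $a$- and $b_0$-components), which is far from enough to pin down $\alpha$ and $\beta$; in particular it cannot force $(2\alpha\eta+1)(2\beta\eta+1)=0$ as in Lemma~\ref{calcu}(viii)(a). Invoking Claims~(5)--(8) of Proposition~\ref{gr+xine1}(ii) ``in spirit'' is circular, since those claims already assume $\alpha=-\tfrac{1}{2\mu}$, $\beta=-\tfrac{1}{2\nu}$, which is exactly what needs to be shown.

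The paper closes this case by a different computation: it decomposes $a$ with respect to $b$, writes $a_{\rho,\rho}=\lambda y$, and applies Theorem~\ref{Ser02}(v) (for the axis $b$) to obtain $ay=\tfrac12 y+\lambda y^2$. Unlike $yz$, the product $ay$ does have nontrivial $b_{\eta,\eta}$-components, and comparing them gives
\[
\eta(2\alpha\eta+1)=\tfrac12(2\alpha\eta+1)+\lambda(2\alpha\eta+1)^2,\qquad \eta\in\{\mu,\nu\}.
\]
If $2\alpha\eta+1\ne 0$ for both $\eta$, this forces $\mu=\nu$ unless $\lambda=-\tfrac12$, and the paper rules out $\lambda=-\tfrac12$ (equivalently $\alpha=-1$) and $\alpha=\beta$ by a separate computation with $a_0 y$ via Theorem~\ref{Ser02}(iii). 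Hence $2\alpha\eta+1=0$ for some $\eta$, giving $\alpha=-\tfrac{1}{2\mu}$ (say), and by symmetry $\beta=-\tfrac{1}{2\nu}$, feeding into Proposition~\ref{gr+xine1}(ii). This use of $ay$ rather than $yz$ is the missing ingredient in your outline.
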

\begin{proof} By Theorem \ref{comm1}(iii)(e), $\dim A\le 4.$
 Assume $\dim A=4.$ By Theorem~ \ref{comm1}(b,d) we may assume that $A_{\half,\half}(b) = A_{\half,\half}(a) =  0,$  for otherwise $\dim A\le 3,$ so by \S\ref{3d}, $A$ is an HRS algebra. We use the notation of Proposition \ref{gr+xine1}.  We obtain a contradiction by showing  that necessarily $\ga=-\frac{1}{2\mu}$ and $\gb=-\frac{1}{2\nu}.$

 If $\gr+\xi\ne 1,$ this follows from Lemma \ref{calcu}(viii)(a), since $yz=0,$ by the fusion rules. So suppose $\gr+\xi=1.$
  We shall  obtain a contradiction  by a series of assertions.
\begin{enumerate}\eroman
\item
$y=(\ga_b+\ga)a+b_0+\frac{1}{2\gr-1}\sum_{(\eta,\eta)\in S^{\circ}}(2\ga\eta+1)b_{\eta,\eta},$
and
\begin{equation*}
\begin{aligned}
y^2=&(\ga^2+(2\ga+1)\ga_b)a+b_0-\sum_{(\eta,\eta)\in S^{\circ}}b_{\eta,\eta}^2 \\
&+\frac{1}{(2\gr-1)^2}\sum_{\eta\in \{\mu,\nu\}}(2\ga\eta+1)^2b_{\eta,\eta}^2+\frac{1}{2\gr-1}\sum_{\eta\in\{\mu,\nu\}}(2\ga\eta+1)^2b_{\eta,\eta}.
\end{aligned}
\end{equation*}

 This follows from Lemma \ref{calcu}(iv) and \eqref{yz}.
\item
$\ga,\gb\notin\{-1\},$ and $\ga\ne\gb.$
\medskip

To see this, write Write $a=\gb_ab+a_0 +\sum_{(\mu,\nu)\in S^\circ(b)}a_{\mu,\nu},$ with $a_0=\varphi w.$  By Theorem~\ref{Ser02}(iii) applied to $b,$ we have
\begin{equation}\label{a0y}
\varphi wy=\frac{1-2\gr\gb_a}{2}y,\quad \varphi wz=\frac{1-2\xi\gb_a}{2}z
\end{equation}
Recall from \eqref{wy} that
\begin{equation*}
\varphi wy=u-\frac{\varphi\gr}{2\gr-1}\sum_{(\eta,\eta)\in S^{\circ}}(2\ga\eta+1)(2\gt\eta+1)b_{\eta,\eta},\quad u\in\ff a+\ff b_0.
\end{equation*}
Then  by \eqref{a0y} applied to  $y$ and then to $z,$ comparing the coefficient of $b_{\eta,\eta},$ we get for $\eta\in\{\mu,\nu\},$
\begin{equation}\label{phi}
-\varphi\gr(2\ga\eta+1)(2\tau\eta+1)=\frac{1-2\gr\gb_a}{2}(2\ga\eta+1)\end{equation}
\begin{equation}\label{phi1}
-\varphi\xi(2\gb\eta+1)(2\tau\eta+1)=\frac{1-2\xi\gb_a}{2}(2\gb\eta+1).
\end{equation}
Next, for some fixed $\eta'\in\{\mu,\nu\},$ we want to cancel $2\ga\eta'+1$ in \eqref{phi} and $2\gb\eta'+1$ in \eqref{phi1}, since that would lead to   $2\tau\eta'+1=-\frac{1-2\gr\gb_a}{2\varphi\gr}$ and   $2\tau\eta'+1=-\frac{1-2\xi\gb_a}{2\varphi\xi},$  yielding
$\frac{1-2\gr\gb_a}{2\varphi\gr}=\frac{1-2\xi\gb_a}{2\varphi\xi},$ or $\frac{1}{\gr}=\frac{1}{\xi},$ a contradiction. Thus we need $\eta'$ such that  $2\ga\eta'+1,\  2\gb\eta'+1 \ne 0.$ Clearly  $2\gb\eta'+1\ne 0,$  for {\it some} $\eta'\in\{\mu,\nu\}.$

If $\ga=-1,$ then $2\ga\eta'+1\ne 0,$ because $\eta'\ne\half,$ since $A_{\half,\half}(a)=0.$
Similarly, when $\ga=\gb,$     $2\ga\eta'+1\ne 0$.
\medskip

\end{enumerate}
We  now obtain the final contradiction, by showing that $\ga=-\frac{1}{2\mu},$ and $\gb= -\frac{1}{2\nu}.$  Indeed write $a_{\gr,\gr}=\gl y;$  $\gl\ne 0$ since $\dim A >3.$ By Theorem \ref{Ser02}(v), applied to $b,$
\[
\textstyle{a(\gl y)=\half(\gl y)+\gl^2y^2\implies ay=\half y+\gl y^2,}
\]
so comparing the coefficient of $b_{\eta,\eta},$
\[
\eta(2\ga\eta+1)=\half(2\ga\eta+1)+\gl(2\ga\eta+1)^2,\quad\eta\in\{\mu,\nu\}.
\]
In case $2\ga\eta+1\ne 0,$ we get $\eta=\half+\gl(2\ga\eta+1),$ so
$\eta( 1 -2\gl\ga) = \half +\gl.$ If $\gl=-\half$ then, since $\eta\ne 0,$ $\ga=-1,$  contradicting (ii).  Hence $\gl\ne -\half,$ so $\eta  = \frac{ \half +\gl}{1 -2\gl\ga}.$

This implies that $2\ga\eta+1= 0$ for some $\eta \in \{\mu,\nu\}$.    Without loss we may assume that $\eta = \mu,$ so $\ga=-\frac{1}{2\mu}.$ By (ii), and by symmetry, $\gb=-\frac{1}{2\nu}.$

Hence we may assume that $\dim A\le 3,$ so by Remark \ref{rem03}(2), $A$ is an HRS algebra.

\end{proof}

In particular,  this completes the proof Theorem~A.
\subsection{The case when $A$ is not commutative}$ $
\medskip

In this subsection we assume that $A$ is noncommutative.  We start with an example.  Note that by Theorem \ref{eigcom}(iii)(c)(1), if $\dim A_{\gr,\gr}(b)=2,$ then $\gr=\ga_b=1.$

\begin{example}\label{noncom2} (Generalizing Example~\ref{4dim2})
Let $A=\ff a+\ff b_0+\sum_{(\mu,\nu)\in S^{\circ}}\ff b_{\mu,\nu},$ with $\varepsilon_{\mu,\nu}=\mu+\nu=1,$ for all  $(\mu,\nu)\in S^{\circ}.$
\[
b_{\mu,\mu}b_{\mu',\nu'}=0,\quad\text{for all }(\mu,\nu), (\mu',\nu')\in S^{\circ}.
\]
\[
b_0^2=b_0,\quad b_0b_{\mu,\nu}=b_{\mu,\nu}b_0=0,\ \forall (\mu,\nu)\in S^{\circ}.
\]
Let
\[
b=a+b_0+\sum_{(\mu,\nu)\in S^{\circ}}b_{\mu,\nu}.
\]
Clearly $b^2=b.$ Also
\begin{enumerate}
\item
$bb_{\mu,\nu}=\mu b_{\mu,\nu}$ and $b_{\mu,\nu}b=\nu b_{\mu,\nu}.$

\item
$A_0(b)=0,$ $A_1(b)=\ff b+\ff y,$ with $y=a+\sum_{(\mu,\nu)\in S^{\circ}}b_{\mu,\nu}.$

\item
$by=y^2=y.$

\item
$b$ satisfies the fusion rules.
\end{enumerate}
\end{example}

\begin{thm}\label{rho1}
Suppose that $A$ is not commutative and $\dim A_1(b)=2.$ Set $\gr=1.$ Recall from Theorem \ref{eigcom} that
\[
\textstyle{y_{\gr}=a+\sum_{(\mu,\nu)\in S^{\circ}}\varepsilon_{\mu,\nu}b_{\mu,\nu},}
\]
and that $A_1(b)=\ff b+\ff y_{\gr}.$
\begin{enumerate}\eroman
 \item \begin{enumerate}
    \item
$\ga_b=1,$  $b_{\mu,\nu}^2=0,$ for all $(\mu,\nu)\in S^{\dagger},$ and $\sum_{(\mu,\nu)\in S^{\circ}}\varepsilon_{\mu,\nu} b_{\mu,\nu}^2=0.$ Thus, in the notation of Theorem \ref{eigcom}, $\theta_0=\theta_1=0.$  

\item
$A_{\half,\half}(b)\subseteq V(a).$ 

\item
If $A_1(b)A_{\half,\half}(b)\subseteq A_{\half,\half}(b),$ then $\sum_{(\mu,\nu)\in S^{\circ}}\varepsilon_{\mu,\nu}^nb_{\mu,\nu}^2=0$ for all $n\ge 1.$

\item
$b_{\mu,\mu}^2=0,$ for all $(\mu,\mu)\in S^{\circ}.$


\item
$\sum_{(\mu,\nu)\in S^{\circ}} b_{\mu,\nu}^2 =0.$  Thus, in the notation of Theorem \ref{eigcom}, $\theta'_0=\theta'_1=0.$




\item
$A_{\gr,\gr}(b)=0,$ for $\gr\notin\{1,\half\}.$ Also for $(\mu,\nu)\in S^{\dagger},$ $b_{\mu,\nu}$ is a $\left(\frac{\mu}{\mu+\nu},\frac{\nu}{\mu+\nu}\right)$-eigenvector of $b.$

\item
$A_0(b)=0$ and if $A_1(b)$ is a subalgebra of $A,$ then $\varepsilon_{\mu,\nu}=1,$ for all $(\mu,\nu)\in S^{\circ}.$

\end{enumerate}
\item
If $b$ satisfies the fusion rules, $A$ is as in Example \ref{noncom2}.



\end{enumerate}
\end{thm}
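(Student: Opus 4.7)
My plan is to establish parts (a)--(g) of (i) in turn, then deduce (ii).

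I would start with (a): Theorem~\ref{eigcom}(iii)(c)(1) forces $\gr=\ga_b=1$ in the noncommutative case with $\dim A_{\gr,\gr}(b)=2$, so taking $\gr=1$ gives $\ga_b=1$. Items (1)--(2) of Theorem~\ref{eigcom}(i) at $\rho=1$ then yield $\theta_0=\theta_1=0$, i.e.\ $\sum \varepsilon_{\mu,\nu}b_{\mu,\nu}^2=2\sum\mu\, b_{\mu,\nu}^2=0$. Since $A_0(a)=\ff b_0$ is commutative and $\ga_b=1\neq\half$, Lemma~\ref{b0sq} yields $b_{\mu,\nu}^2=0$ for $(\mu,\nu)\in S^\dagger$.

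Next, for (b), I take $y=\ga_y a+\gc_0 b_0+\sum\gc_{\mu,\nu}b_{\mu,\nu}\in A_{\half,\half}(b)$. Equation~\eqref{112} forces $\gc_{\mu,\nu}=0$ on $S^\dagger$, and \eqref{11} gives $\gc_0=\varepsilon_{\mu,\nu}(\gc_0-\ga_y)$ for each $(\mu,\nu)\in S^\circ$. If $\gc_0\neq 0$ then $\gc_0\neq\ga_y$ and $\varepsilon=\gc_0/(\gc_0-\ga_y)$ must be constant on $S^\circ$; the only commutative pair that can then lie in $S^\circ$ is $(\varepsilon/2,\varepsilon/2)$, and (a) forces $b_{\varepsilon/2,\varepsilon/2}^2=0$. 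Lemma~\ref{Ser03}(ii)(a) combined with $b_0^2=b_0-\sum b_{\mu,\nu}^2$ (Theorem~\ref{Ser02}(vii) at $\ga_b=1$) then collapses to $\half\ga_y a+\half\gc_0 b_0=0$, contradicting $\gc_0\neq 0$. Hence $\ga_y=\gc_0=0$, so $y\in V(a)$. For (c), given the invariance hypothesis, I set $z_1:=\sum_{(\mu,\mu)}\varepsilon_{\mu,\mu}b_{\mu,\mu}$; by (a) one has $\sum \varepsilon_{\mu,\mu}b_{\mu,\mu}^2=0$, so Lemma~\ref{Ser041}(iii) places $z_1$ in $A_{\half,\half}(b)$. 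Multiplying by $y_1=a+\sum\varepsilon_{\mu,\nu}b_{\mu,\nu}$ gives $y_1 z_1=\sum \mu\varepsilon_{\mu,\mu}b_{\mu,\mu}+\sum\varepsilon_{\mu,\mu}^2 b_{\mu,\mu}^2$, which must lie in $V(a)$ by the hypothesis together with (b); hence $\sum\varepsilon_{\mu,\mu}^2 b_{\mu,\mu}^2=0$. Iterating with $z_n:=\sum\varepsilon_{\mu,\mu}^n b_{\mu,\mu}$ handles all $n\geq 1$.

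Part (d) will be the main obstacle. I need to upgrade the sum identities of (a) and (c) to the individual vanishing $b_{\mu,\mu}^2=0$, and this requires invoking the full diagonalizability of $L_b$ (from $b$ being an axis) rather than just formal identities. If some $b_{\mu,\mu}^2\neq 0$, then $L_b b_{\mu,\mu}=\half b_{\mu,\mu}+b_{\mu,\mu}^2$ places $b_{\mu,\mu}^2\in\ff a+\ff b_0$ in the image of $L_b-\half$; the explicit action of $L_b$ on $\ff a+\ff b_0$ (from $ba=a+\sum\nu b_{\mu',\nu'}$ and $bb_0=b_0-\sum b_{\mu',\nu'}^2+\sum\tfrac{(1-\varepsilon')\nu'}{\varepsilon'}b_{\mu',\nu'}$ with $\ga_b=1$), combined with the relations from Theorem~\ref{Ser2}, will force each $b_{\mu,\mu}^2=0$.

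The remaining parts then fall out: (e) is the sum of (d); for (f) the conditions of Theorem~\ref{eigcom}(i),(ii)(b) with $\theta_0=\theta_1=\theta'_0=\theta'_1=0$ and $\ga_b=1$ rule out $y_\gr,y'_\gr\in A_{\gr,\gr}(b)$ for $\gr\notin\{1,\half\}$, and Theorem~\ref{Ser02}(vi) identifies $b_{\mu,\nu}$ for $(\mu,\nu)\in S^\dagger$ as a $(\mu/\varepsilon,\nu/\varepsilon)$-eigenvector of $b$; and for (g) the argument of (b) repeated at $\gr=0$ gives $A_0(b)=0$ once $\theta'_0=0$ is known. Finally, for (ii) the fusion rules yield $A_1(b)^2\subseteq A_1(b)+A_0(b)=A_1(b)$, so (using (e)) $y_1^2=a+\sum\varepsilon_{\mu,\nu}^2 b_{\mu,\nu}$ matched against $\ga b+\gb y_1$ forces $\varepsilon_{\mu,\nu}=1$ for all $(\mu,\nu)\in S^\circ$; the remaining products $b_0^2=b_0$ and $b_0 b_{\mu,\nu}=b_{\mu,\nu}b_0=0$ follow from Theorem~\ref{Ser02}(ii) at $\ga_b=\varepsilon=1$, reproducing Example~\ref{noncom2}.
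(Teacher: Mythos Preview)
Your overall plan tracks the paper's proof closely: (a) via Theorem~\ref{eigcom}(iii)(c)(1), (i)(1)--(2) and Lemma~\ref{b0sq}; (c) by induction using Lemma~\ref{Ser041}(iii) and multiplication by $y_{1}$; (e)--(g) and (ii) essentially as in the paper. Your direct argument for (b) is fine and amounts to redoing the relevant case of Theorem~\ref{Ser04}(v)(b) by hand.

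The real issue is (d). You already have, from your own (c), that
\[
\sum_{(\mu,\mu)\in S^{\circ}} \varepsilon_{\mu,\mu}^{\,n}\, b_{\mu,\mu}^{2}=0\qquad\text{for all }n\ge 1,
\]
since the $S^{\dagger}$ contributions vanish by (a). The scalars $\varepsilon_{\mu,\mu}=2\mu$ are pairwise distinct and nonzero, so a Vandermonde argument (writing each $b_{\mu,\mu}^{2}$ in coordinates in $\ff a\oplus\ff b_0$ and inverting the Vandermonde system) gives $b_{\mu,\mu}^{2}=0$ for every $(\mu,\mu)\in S^{\circ}$ immediately. That is exactly what the paper does. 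Your proposed route via ``the full diagonalizability of $L_b$'' and the action of $L_b$ on $\ff a+\ff b_0$ is both unnecessary and, as written, not an argument: knowing that $(L_b-\tfrac12)b_{\mu,\mu}=b_{\mu,\mu}^{2}$ lands in $\ff a+\ff b_0$ does not by itself force that element to vanish, and you have not explained what extra relation from Theorem~\ref{Ser2} would close the gap. Replace that paragraph with the one-line Vandermonde conclusion from (c).

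One minor remark on (g): you do not need to ``repeat (b) at $\rho=0$''. Once (e) gives $\theta'_0=0$ (and $\theta_0=0$ from (a)), Theorem~\ref{eigcom}(iii)(f) together with (i)(3) already yields $A_{\rho,\rho}(b)=0$ for all $\rho\notin\{1,\tfrac12\}$, in particular $A_0(b)=0$; this is how the paper argues (f) and then reads off (g).
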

\begin{proof}
(i) (a)
By Theorem \ref{eigcom}(iii)(c)(1), $\ga_b=1,$  and, in the notation of Theorem \ref{eigcom}, $\theta_0=\theta_1=0,$ by Theorem \ref{eigcom}(i) (1)--(3), i.e.~$\sum_{(\mu,\nu)\in S^{\circ}}\varepsilon_{\mu,\nu}b_{\mu,\nu}^2=0.$    By Lemma \ref{b0sq}, $\bar S=\emptyset.$
\medskip

(b)
Assume that $A_{\half,\half}(b)\nsubseteq V(a).$ By Theorem \ref{Ser04}(v)(b) there exists at most one $\mu$ such that $(\mu,\mu)\in S^{\circ},$ and  if $(\mu,\mu)\in S^{\circ},$ then $b_{\mu,\mu}^2\ne 0,$ since $\ga_b=1.$  However, by (i), $b_{\mu,\mu}^2=0,$ a contradiction.
\medskip

(c)
The proof is by induction.  For $k=1,$ this follows from (i). Assume $\sum_{(\mu,\nu)\in S^{\circ}}\varepsilon_{\mu,\nu}^kb_{\mu,\nu}^2=\sum_{(\mu,\mu)\in S^{\circ}}\varepsilon_{\mu,\mu}^kb_{\mu,\mu}^2=0.$  Then
$\sum \varepsilon_{\mu,\mu}^kb_{\mu,\mu}\in A_{\half,\half}(b),$ by~Theorem \ref{Ser041}(iii). But then, also
$y_{\gr}\sum \varepsilon_{\mu,\mu}^kb_{\mu,\mu}\in A_{\half,\half}(b).$  But
\[
y_{\gr}\sum \varepsilon_{\mu,\mu}^kb_{\mu,\mu}=\sum \mu\varepsilon_{\mu,\nu}^k b_{\mu,\mu}+\sum\varepsilon_{\mu,\mu}^{k+1}b_{\mu,\mu}^2.
\]
Since $A_{\half,\half}(b)\subseteq V,$ the induction step is complete.
\medskip

(d)
By (i) and (iii), $\sum_{(\mu,\mu)\in S^{\circ}}\varepsilon_{\mu,\mu}^nb_{\mu,\mu}^2=0,$ for all $n\ge 1,$ so the claim  holds by a Vandermonde argument.
\medskip


(e)
This follows from (a) and (d).

\medskip

(f)  The first assertion is by Theorem \ref{eigcom}(iii)(f), since $\theta'_0=0.$  The second assertion holds since $b_{\mu,\nu}^2=0,$ and by Theorem \ref{Ser02}(vi), since $\ga_b=1.$

\medskip

(g)
By (f), $A_0(b)=0.$ Now
 $y_{\gr}^2=a+\sum_{(\mu,\nu)\in S^{\circ}}\varepsilon_{\mu,\nu}^2b_{\mu,\nu}\in A_1(b),$ implying $y_{\gr}^2-y_{\gr}\in A_1(b).$  But $y_{\gr}^2-y_{\gr}\in V(a),$ and $A_1(b)\cap V(a)=0$ by Theorem \ref{Ser04}(iii), since $\rho=\xi=1.$  Hence $y_{\gr}^2=y_{\gr},$ so $\varepsilon_{\mu,\nu}=1,$ for all $(\mu,\nu)\in S^{\circ}.$
\medskip

(ii)
This follows from (i)(a)--(g).
\end{proof}

\begin{lemma}\label{nonchalf}
Assume that $A$ is not commutative, that $b$ satisfies the fusion rules and that $b$ is weakly primitive. Then $\ga_b\ne\half.$
\end{lemma}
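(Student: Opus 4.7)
The plan is to argue by contradiction. Suppose $\ga_b=\half$; I will show that the noncommutativity of $A$ forces a nonzero $b$-eigenvector with distinct left and right eigenvalues, and that no such vector can exist.

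First, I would manufacture such an eigenvector. Since $b$ is weakly primitive and satisfies the fusion rules, the remark following Definition~\ref{Se} says $b$ is Seress, so $A_0(b)A_1(b)=A_1(b)A_0(b)=0$. Decomposing
\[
a=\ga_a b+a_0+\sum_{(\rho,\xi)\in S^\circ(b)}a_{\rho,\xi},\qquad a_0\in A_0(b),\ a_{\rho,\xi}\in A_{\rho,\xi}(b),
\]
Seress for $b$ gives $ab-ba=\sum_{(\rho,\xi)\in S^\circ(b)}(\xi-\rho)\,a_{\rho,\xi}$. On the other hand, reading $ab-ba$ through the $a$-decomposition of $b$ yields $ab-ba=\sum_{(\mu,\nu)\in S^\circ}(\mu-\nu)\,b_{\mu,\nu}$, which is nonzero because $A$ noncommutative forces $S^\dagger\ne\emptyset$ (Corollary~\ref{Ser24}). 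Hence some $a_{\rho,\xi}$ is nonzero with $\rho\ne\xi$; set $y:=a_{\rho,\xi}$.

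Second, since $\rho\ne\xi$, Theorem~\ref{Ser04}(i) tells me $y\in V$, so $y=\sum_{(\mu',\nu')\in S^\circ}c_{\mu',\nu'}\,b_{\mu',\nu'}$. I then compute $by$ using Theorem~\ref{Ser02}(v) together with the fusion rules for $a$:
\[
b\cdot b_{\mu',\nu'}=(\ga_b a+b_0)\,b_{\mu',\nu'}+b_{\mu',\nu'}^2=\frac{(1-\ga_b)\nu'+\ga_b\mu'}{\varepsilon_{\mu',\nu'}}\,b_{\mu',\nu'}+b_{\mu',\nu'}^2.
\]
The key point — and the only piece of algebra worth noting — is that when $\ga_b=\half$, the fraction collapses to $\frac{\half(\mu'+\nu')}{\mu'+\nu'}=\half$, uniformly in $(\mu',\nu')$. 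Therefore
\[
by=\tfrac12\,y+\sum_{(\mu',\nu')\in S^\circ}c_{\mu',\nu'}\,b_{\mu',\nu'}^2,
\]
and the rightmost sum lies in $A_1(a)+A_0(a)=\ff a+\ff b_0$ by the fusion rules for $a$. The symmetric computation, using the right-multiplication formula in Theorem~\ref{Ser02}(v), produces the identical expression for $yb$.

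Finally, since $by=\rho y$ with $y\in V$, while $V\cap(\ff a+\ff b_0)=0$ by the decomposition~\eqref{dec17}, the correction term $\sum c_{\mu',\nu'}b_{\mu',\nu'}^2$ must vanish and $(\rho-\half)y=0$ forces $\rho=\half$. Symmetrically $\xi=\half$, contradicting $\rho\ne\xi$. There is no real obstacle: the whole argument rides on the cancellation making the eigenvalue-coefficient equal to the constant $\half$ independent of $(\mu',\nu')$; this is precisely the phenomenon that collapses every eigenvector of $b$ in $V$ to eigenvalue $(\half,\half)$, leaving no room for the mixed eigenvectors that noncommutativity demands.
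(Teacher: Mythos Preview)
Your proof is correct and follows essentially the same route as the paper: pick a nonzero $(\rho,\xi)$-eigenvector of $b$ with $\rho\ne\xi$, note it lies in $V$ by Theorem~\ref{Ser04}(i), and then use the $\ga_b=\half$ collapse to force $\rho=\xi=\half$. The paper packages the last step as a citation of Lemma~\ref{Ser041}(i)(a) and takes $S^\dagger(b)\ne\emptyset$ for granted, whereas you spell out both the eigenvalue computation and the existence of such an eigenvector via the commutator $ab-ba$; this is a minor difference in presentation, not in substance.
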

\begin{proof}
By Theorem \ref{Ser04}(i), if $(\gr,\xi)\in S^{\dagger}(b),$ then $A_{\gr,\xi}(b)\subseteq V.$  But if $\ga_b=\half,$ then, by Theorem \ref{Ser041}(i), $\gr=\xi=\half,$ a contradiction, since $S^{\dagger}(b)\ne\emptyset.$
\end{proof}
\begin{example}\label{egnon}
Let $T\subseteq\{(\mu,\nu)\in\ff\times\ff:\mu+\nu=1\},$ and let $S^{\circ}=T\cup\{(2,2)\},$ with $S^{\dagger}\ne\emptyset.$ Set
\[A=\ff a+\ff b_0+\sum_{(\mu,\nu)\in S^{\circ}}\ff b_{\mu,\nu}.
\]
and
\[
\textstyle{b=a+b_0+\sum_{(\mu,\nu)\in S^{\circ}}b_{\mu,\nu},\quad
b_{\mu,\nu}^2=0, \forall (\mu,\nu)\in T,\quad b_{2,2}^2=-\half b_0.}
\]
\[
b_{\mu,\nu}b_{\mu',\nu'}=0,\text{ for }(\mu,\nu)\ne (\mu',\nu').
\]
\[
\textstyle{b_0^2=\frac{3}{2} b_0,\quad b_0b_{2,2}=b_{2,2}b_0=-\frac{3}{2}b_{2,2}, \quad b_0b_{\mu,\nu}=b_{\mu,\nu}b_0=0,\ \forall (\mu,\nu)\in T.}
\]
Clearly $bb_{\mu,\nu}=\mu b_{\mu,\nu},$ and $b_{\mu,\nu}b=\nu b_{\mu,\nu},\ \forall (\mu,\nu)\in T.$

Write $a=\gb_ab+a_0+a_{2,2}+\sum_{(\mu,\nu)\in T}a_{\mu,\nu},$ with $a_0\in A_0(b),\ a_{\mu,\nu}\in A_{\mu,\nu}(b).$  The following hold:
\begin{enumerate}

\item
$b^2=a+b_0^2+b_{2,2}^2+(a+b_0)b_{2,2}+b_{2,2}(a+b_0)+\sum_{(\mu,\nu)\in T}b_{\mu,\nu}\\
= a+\frac{3}{2}b_0-\half b_0+\half b_{2,2}+\half b_{2,2}+\sum_{(\mu,\nu)\in T}b_{\mu,\nu}=b.$

Hence, $b$ is an idempotent.



\item
$A_0(b) =\ff (b_0+3b_{2,2}),\quad A_{2,2}(b)=\ff(b_0-b_{2,2}).$ Indeed
\begin{align*}
b(b_0+3b_{2,2})&=(a+b_0+b_{2,2})(b_0+3b_{2,2})\\
&\textstyle{=6b_{2,2}+\frac{3}{2}b_0-\frac{9}{2}b_{2,2}-\frac{3}{2}b_{2,2}-\frac{3}{2}b_0=0.}
\end{align*}
\begin{align*}
b(b_0-b_{2,2})&=(a+b_0+b_{2,2})(b_0-b_{2,2})\\
&=-2b_{2,2}+\frac{3}{2}b_0+\half b_0=2( b_0-b_{2,2}).
\end{align*}
\item
$a=b-\half(b_0+3b_{2,2})-\half(b_0-b_{2,2})-\sum_{(\mu,\nu)\in T} b_{\mu,\nu}.$ Consequently $a_0=-\half(b_0+3b_{2,2}),$  $a_{2,2}=-\half(b_0-b_{2,2}),$ and $a_{\mu,\nu}=-b_{\mu,\nu},$ for all $(\mu,\nu)\in T.$
\item
$(b_0+3b_{2,2})^2=b_0^2+9b_{2,2}^2+6b_0b_{2,2}=\frac{3}{2}b_0-\frac{9}{2}b_0-9b_{2,2}=-3b_0-9b_{2,2}=-3(b_0+3b_{2,2}).$ That is, $a_0^2=\frac{3}{2}a_0.$
\item
$(b_0+3b_{2,2})(b_0-b_{2,2})=b_0^2+2b_0b_{2,2}-3b_{2,2}^2=\frac{3}{2}b_0-3b_{2,2}+\frac{3}{2}b_0=3(b_0-b_{2,2}).$   That is $a_0a_{2,2}=-\frac{3}{2}a_{2,2}.$
\item
$(b_0-b_{2,2})^2=b_0^2+b_{2,2}^2-2b_0b_{2,2}=\frac{3}{2}b_0-\half b_0+3b_{2,2}=b_0+3b_{2,2}.$ That is $a_{2,2}^2=-\half a_{0}.$
\item $a_{2,2}b_{\mu,\nu}=-\half(b_0-b_{2,2})b_{\mu,\nu}=0.$
\item By (2), $A$ is a sum of paired eigenspaces of $b,$ so $b$ is an axis in $A.$ And (3)--(7), $a$ and $b$ act symmetrically, so $b$ satisfies all fusion rules.
\end{enumerate}
\end{example}

\begin{example}\label{egnon1}
Let $S^{\circ}\subseteq\{(\mu,\nu)\in\ff\times\ff: \mu+\nu=1\},$ with $(\half,\half)\in S^{\circ},$ and with  $S^{\dagger}\ne\emptyset.$
Set
\[A=\ff a+\ff b_0+\sum_{(\mu,\nu)\in S^{\circ}}\ff b_{\mu,\nu}.
\]
and
\[
\textstyle{b=a+b_0+\sum_{(\mu,\nu)\in S^{\circ}}b_{\mu,\nu},\quad
b_{\mu,\nu}^2=0, \forall (\mu,\nu)\in S^{\dagger},\quad b_{\half,\half}^2= b_0.}
\]
\[
b_{\mu,\nu}b_{\mu',\nu'}=0,\text{ for }(\mu,\nu)\ne (\mu',\nu').
\]
\[
\textstyle{b_0^2=0, \quad b_0b_{\mu,\nu}=b_{\mu,\nu}b_0=0,\ \forall (\mu,\nu)\in S^{\circ}.}
\]
Clearly $bb_{\mu,\nu}=\mu b_{\mu,\nu},$ and $b_{\mu,\nu}b=\nu b_{\mu,\nu},\ \forall (\mu,\nu)\in S^{\dagger}.$

Write $a=\gb_ab+a_0+a_{\half,\half}+\sum_{(\mu,\nu)\in S^{\dagger}}a_{\mu,\nu},$ with $a_0\in A_0(b),\ a_{\mu,\nu}\in A_{\mu,\nu}(b).$  The following hold:
\begin{enumerate}
\item
$A_0(b)=\ff b_0,$ $A_{\half,\half}(b)=b_0+\half b_{\half,\half}.$
\item
$a=b+b_0-2(b_0+\half b_{\half,\half})-\sum_{(\mu,\nu)\in S^{\dagger}}b_{\mu,\nu},$ so $a_0=b_0,$ $a_{\half,\half}=-2(b_0+\half b_{\half,\half}),$ and $a_{\mu,\nu}=-b_{\mu,\nu},\ \forall (\mu,\nu)\in S^{\dagger}.$
\item
$\ff a+\ff b_0+\ff b_{\half,\half}$ is the algebra $B(\half,1)$ in the notation of \cite{HRS}.
\end{enumerate}
\end{example}

\begin{thm}\label{Yoav1}
Assume that $A$ is not commutative and that $b$ is weakly primitive and satisfies the fusion rules, and $A_0(b)\ne 0.$  Then
\begin{enumerate}\eroman
\item
$\dim A_{\gr,\xi}(b)=1,$ for all $\gr,\xi.$

\item
By Lemma \ref{nonchalf}, $\ga_b\ne \half$ and $\gb_a\ne\half,$ so that $b_{\mu,\nu}^2=0$ for all $(\mu,\nu)\in S^{\dagger}.$ Furthermore,
\begin{enumerate}
\item $\mu+\nu=1,$ for all $\mu,\nu\in S^{\dagger}.$

\item
If $\ga_b\ne 0,$ then $A_0(b)=\ff w,$ where
\[
w=\frac{(\ga_b-1)^2}{\ga_b}a+b_0+\sum_{(\mu,\mu)\in S^{\circ}}\left(\frac{\varepsilon_{\mu,\mu}(2\ga_b-1)}{\ga_b}-1\right)b_{\mu,\mu}+\sum_{(\mu,\nu)\in S^{\dagger}}\frac{\ga_b-1}{\ga_b}b_{\mu,\nu}
\]


\item Suppose $\gb_a\ne 1.$ Then
\begin{itemize}
\item[(1)]
$A_{\mu,\mu}(a)=0,$ for all $(\mu,\mu)\in S^\circ(a)\setminus\{(\half,\half)\},$ and either $A_{\half,\half}(a)=0,$ or $A_{\half,\half}(a)=\ff a_{\half,\half},$ with $a_{\half,\half}^2=0.$
    \item[(2)] $\ga_b=0,$ and
    letting $\gr_0=0,$ $A_0(b)=\ff y_{\gr_0}.$
    \item[(3)] $bb_{\mu,\nu}=\nu b_{\mu,\nu}$ and $b_{\mu,\nu}b=\mu b_{\mu,\nu}.$
    \item[(4)] $A$ is as in Example \ref{E1}.
\end{itemize}

\item Suppose $\gb_a= 1.$ Then $\ga_b=1,$ and $A$ is as in Example \ref{egnon} or \ref{egnon1}.
\end{enumerate}

\end{enumerate}
\end{thm}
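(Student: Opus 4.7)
The plan is to exploit the symmetric role of $a$ and $b$ (available once (i) is established) in stages. First, I invoke Lemma \ref{nonchalf} to get $\ga_b \ne \half$, and after (i) its symmetric version gives $\gb_a \ne \half$. Lemma \ref{b0sq}, applied with the commutative $A_0(a) = \ff b_0$, then forces $b_{\mu,\nu}^2 = 0$ for every $(\mu,\nu) \in S^\dagger(a)$. Theorem \ref{Ser02}(vi) shows each such $b_{\mu,\nu}$ is a pure $(\rho,\xi)$-eigenvector of $b$ with $\rho + \xi = 1$ and $\rho \ne \xi$, and distinct pairs in $S^\dagger(a)$ land in distinct eigenspaces $A_{\rho,\xi}(b)$. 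Combining this with Theorem \ref{eigcom}(iii)(c)(1), which bounds $\dim A_{\rho,\rho}(b) \le 1$ for $\rho \ne \half$ under weak primitivity in the noncommutative case, together with Theorem \ref{Ser041}(iii) and Theorem \ref{Ser04}(v) to control the $(\half,\half)$-eigenspace, and matching the total eigenspace count of $b$ against $\dim A = |S^\circ(a)| + 2$, every eigenspace of $b$ is forced to have dimension $1$, proving (i). For (ii)(a), apply the $a \leftrightarrow b$ swap of Lemma \ref{Ser041}(i): each $b_{\mu,\nu}$ with $(\mu,\nu) \in S^\dagger$ lies in a single $A_{\rho,\xi}(b)$ with $(\rho,\xi) \in S^\circ(b)$, hence in the symmetric $V_b$, so the symmetric conclusion yields $\mu + \nu = 1$. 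For (ii)(b), the constraint $\ga_b \notin \{0,\half\}$ places us in case (iv) of Corollary \ref{rho0}; substituting $\varepsilon_{\mu,\nu} = 1$ from (a) into those formulas delivers the explicit $w$.

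For (ii)(c), assume $\gb_a \ne 1$; the pivotal claim is $\ga_b = 0$. Supposing $\ga_b \ne 0$, (b) gives $A_0(b) = \ff w$ with $w$ explicit, and a coefficient-matching argument in the expansion $a = \gb_a b + \gl w + \sum a_{\rho,\xi}$ (using (i) to identify each $a_{\rho,\xi}$ as a scalar multiple of $b_{\mu,\nu}$ or of a $(\rho,\rho)$-eigenvector from Theorem \ref{eigcom}(ii)) forces $\gb_a = 1$, a contradiction. Hence $\ga_b = 0$, and Corollary \ref{rho0}(iii) gives $A_0(b) = \ff y_{\rho_0}$. For (c)(1), any $(\mu,\mu) \in S^\circ(a)$ with $b_{\mu,\mu}^2 = 0$ lies in $A_{\half,\half}(b) \cap V_b$, so the symmetric form of (a) applied to this $(\mu,\mu)$-eigenvector of $a$ forces $2\mu = 1$; the case $b_{\mu,\mu}^2 \ne 0$ with $\mu \ne \half$ is eliminated by Theorem \ref{eigcom}(i), where $\ga_b = 0$ and $\theta_0 = 0$ (the latter forced by $y_{\rho_0} \in A_0(b)$) render the eigenvalue equations for any candidate eigenvector inconsistent. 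Item (c)(3) follows by substituting $\ga_b=0,\, \mu+\nu=1$ into the eigenvalue formula in Theorem \ref{Ser02}(vi), and (c)(4) is a direct comparison with Example \ref{E1}.

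For (ii)(d), swapping $a \leftrightarrow b$ in the argument of (c) gives $\ga_b \ne 1 \Rightarrow \gb_a = 0$; its contrapositive is $\gb_a = 1 \Rightarrow \ga_b = 1$. With $\ga_b = \gb_a = 1$, (i) permits at most one $(\mu,\mu) \in S^\circ(a)$ with $b_{\mu,\mu}^2 \ne 0$; the idempotency $b^2 = b$ via Theorem \ref{Ser02}(vii), combined with the $b$-fusion constraint $(A_0(b)+A_1(b))^2 \subseteq A_0(b)+A_1(b)$, produces a system pinning this exotic pair to $\mu \in \{\half, 2\}$ with $b_{\mu,\mu}^2 \in \{b_0, -\half b_0\}$ respectively, yielding Examples \ref{egnon1} and \ref{egnon}. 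The main obstacle is this final stage: pinning the exotic $\mu$ to precisely $\{\half, 2\}$ requires delicately balancing the formulas of Theorem \ref{Ser02}(vi)--(vii) against the $b$-fusion relations, plus the bookkeeping needed to verify that the full multiplication tables of the two target examples are indeed realized.
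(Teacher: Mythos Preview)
Your outline for (ii)(a)--(b) matches the paper, and your contrapositive at the start of (ii)(d) is correct. For (i), note that the paper's argument is much shorter than your dimension count: Theorem~\ref{eigcom}(iii)(c)(1) gives $\dim A_0(b)\le 1$ directly in the noncommutative case (hence $=1$ by hypothesis), and then Lemma~\ref{Ser23} applied with $a\leftrightarrow b$ makes every $A_{\rho,\xi}(b)$ one-dimensional; your intermediate claim that distinct $(\mu,\nu)\in S^\dagger(a)$ land in distinct $A_{\rho,\xi}(b)$ is not obviously true (the map $(\mu,\nu)\mapsto(\rho,\xi)$ from Theorem~\ref{Ser02}(vi) need not be injective) and is in any case unnecessary. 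The substantive gaps are in (ii)(c) and the remainder of (ii)(d).

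In (ii)(c) you have the logical dependency reversed. Your coefficient-matching to derive $\gb_a=1$ from $\ga_b\ne 0$ would need explicit control of the diagonal components $a_{\rho,\rho}$ in the $b$-decomposition of $a$, but Theorem~\ref{eigcom}(ii) expresses these through the unknowns $\theta_0,\theta_1,\theta'_0,\theta'_1$, which are undetermined until (c)(1) is known. The paper proves (c)(1) first, via Lemma~\ref{fu1}(ii)(a) with $a\leftrightarrow b$ (legitimate since $\bar S(b)=\emptyset$ by the swapped Lemma~\ref{b0sq} and $\gb_a\ne\half$): from $\gb_a\ne 1$ one gets $A_{\mu,\mu}(a)\subseteq V(b)$, whence the swapped Lemma~\ref{Ser041} forces $2\mu=1$. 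With (c)(1) in hand all $\theta$'s vanish, and then Corollary~\ref{rho0}(iv)(2) under $\ga_b\ne 0$ yields the contradiction $0=1$, giving (c)(2). For (ii)(d), your claim that (i) alone bounds the number of $(\mu,\mu)$ with $b_{\mu,\mu}^2\ne 0$ is unjustified; the paper obtains $S^\circ(a)\setminus S^\dagger(a)\subseteq\{(\half,\half),(\mu,\mu)\}$ only by computing $w^2\in\ff w$ and reading off that $\varepsilon_{\eta,\eta}-1$ takes a single nonzero value (claim (d4)). The identification $\mu\in\{\half,2\}$ is not a direct consequence of Theorem~\ref{Ser02}(vii) plus fusion: the paper passes to the commutative subalgebra $\mathcal{A}=\ff a+\ff b_0+\sum_{(\eta,\eta)}\ff b_{\eta,\eta}$, shows $b^\natural=a+b_0+\sum b_{\eta,\eta}$ is an axis there satisfying the fusion rules (claims (d5)--(d6)), and then invokes the commutative classification (Theorem~A) and the HRS list to recognize $\mathcal{A}$ as $B(2,1)$ or $B(\half,1)$ (claims (d7)--(d8)). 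Your sketch does not supply this reduction to the commutative case.
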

\begin{proof}
(i)  By Theorem \ref{eigcom}(iii)(c), $\dim A_0(b)=1,$ so (i) follows from Lemma \ref{Ser23} applied to $b.$
\medskip

(ii) (a) By Theorem \ref{Ser041}, $b_{\mu,\nu}\in A_{\gr,\xi}(b)$ (with $\gr+\xi=1$).  Hence $b_{\mu,\nu}\in V(b),$ so by Theorem \ref{Ser041} applied to $b,$ $\mu+\nu=1.$
\medskip

(b) Let $\gr_0=0.$  By Corollary \ref{rho0}(iii), since $\ga_b\ne 0,$  $A_0(b)\ne \ff y_{\gr_0}.$ Also $A_0(b)\ne\ff y'_{\gr_0},$ by Corollary \ref{rho0}(ii), since $\ga_b\ne\half.$ Hence (b) follows from  Corollary \ref{rho0}(iv), replacing $y$ by $w$ and using the fact that $\varepsilon_{\mu,\nu}=1,$  for  $(\mu,\nu)\in S^{\dagger}.$
\medskip

(c)
(1)  By Lemma  \ref{fu1}(ii)(a), applies to $b,$ since $\bar S(b)=\emptyset$ (by (ii) and by symmetry), $A_{\mu,\mu}(a)\subseteq V(b),$ for all $(\mu,\mu)\in S^{\circ}(a).$    By  Lemma \ref{Ser041} (applied to $b$), $S^{\circ}(a)\setminus S^{\dagger}\subseteq\{(\half,\half)\}.$
It follows that if $A_{\half,\half}(a)\ne 0,$ then, by Lemma \ref{Ser041}(iii) (applied to $b$), $A_{\half,\half}(a)=\ff a_{\half,\half},$ with $a_{\half,\half}^2=0.$
\medskip


(2)  First note that by (1), $b_{\mu,\nu}^2=0,$ for all $(\mu,\nu)\in S^{\circ}(a),$ so, in the notation of Corollary \ref{rho0}, $\theta_0=\theta_1=\theta'_0=\theta'_1=0.$ Assume $\ga_b\ne 0.$  Then the cases of Corollary \ref{rho0} (i), (ii) and (iii), do not hold, so we are in the case of Corollary \ref{rho0}(iv).  Thus we are in Corollary \ref{rho0}(iv)(2).  But then we get $0=1,$ a contradiction.  Hence $\ga_b=0,$ and by Corollary \ref{rho0}(iii), $A_0(b)=y_{\gr_0}.$
\medskip

(3)  This follows from Theorem \ref{Ser02}(v), since $\varepsilon_{\mu,\nu}=1,$ for all $(\mu,\nu)\in S^{\circ},$ and $\ga_b=0.$
\medskip

(4)  By (1), (2) and (3).
\medskip

(d)  Assume $\gb_a= 1.$ By (c)(4), and by symmetry, $\ga_b=1.$

We prove part (d) with a number of claims.
\begin{itemize}
\item[({\bf d1)}]
$w=b_0+\sum_{(\mu,\mu)\in S^{\circ}(a)}(\varepsilon_{\mu,\mu}-1)b_{\mu,\mu}.$
\smallskip

This follows from part (b).
\smallskip

\item[({\bf d2)}]
$b_0b_{\mu,\nu}=b_{\mu,\nu}b_0=0,$ for $(\mu,\nu)\in S^{\dagger}.$
\smallskip

This follows from Theorem \ref{Ser02}(ii), since $\ga_b=1,$ and $\varepsilon_{\mu,\nu}=1,$ for $(\mu,\nu)\in S^{\dagger}.$
\smallskip

\item[({\bf d3)}] $S^{\circ}(c)\setminus S^{\dagger}(c)\ne\emptyset,$ and $b_{\eta,\eta}^2\ne 0,$ for some $(\eta,\eta)\in S^{\circ}(c),$ for $c\in\{a,b\}.$
\smallskip

{\bf Proof of (d3).}
We prove the claim for $a,$ by symmetry it follows for $b$ as well. Suppose false.  Then, by (ii), $b_{\mu,\nu}^2=0,$ for all $(\mu,\nu)\in S^{\circ}(a).$  Thus, in the notation of Corollary \ref{rho0}, $\theta_1=\theta_0=\theta'_1=\theta'_0=0.$ Hence we are in Case (iv)(2) of Corollary \ref{rho0}, and then $1=0,$ a contradiction.
\medskip

\item[({\bf d4)}]
\begin{itemize}
\item[(1)]
$S^{\circ}(a)\setminus S^{\dagger}(a)\subseteq \{(\half,\half),(\mu,\mu)\},$ and $S^{\circ}(b)\setminus S^{\dagger}(b)\subseteq \{(\half,\half), (\gr,\gr)\},$ with $\mu,\gr\neq \half .$
\item[(2)]
$w=b_0+(2\mu-1)b_{\mu,\mu},$ if $(\mu,\mu)\in S^{\circ},$ and $w=b_0$ otherwise.
\item[(3)]
If $(\mu,\mu)\in S^{\circ}(a),$ then $b_0+4\mu(\mu-1)b_{\mu,\mu}^2-b_{\half,\half}^2=(1-2\mu)b_0.$
\end{itemize}
\smallskip
{\bf Proof of (d4)}.  (1) Since $\ga_b=1,$ and since $b_0b_{\eta,\eta}=b_{\eta,\eta}b_0=\frac{1-2\eta}{2},$ for all $(\eta,\eta)\in S^{\circ},$ we get by using Theorem \ref{Ser02}(ii)(a\&b), Theorem \ref{Ser02}(viii), and   part (ii)(b), that
\begin{gather*}
w^2=b_0^2+\sum_{(\eta,\eta)\in S^{\circ}(a)}(\varepsilon_{\mu,\mu}-1)^2b_{\mu,\mu}^2+2\sum_{(\eta,\eta)\in S^{\circ}(a)}(\varepsilon_{\mu,\mu}-1)b_0b_{\mu,\mu}\\
\qquad \qquad =b_0+\sum_{(\eta,\eta)\in S^{\circ}(a)}(\varepsilon_{\mu,\mu}^2-2\varepsilon_{\mu,\mu})b_{\mu,\mu}^2-\sum_{(\eta,\eta)\in S^{\circ}(a)}(\varepsilon_{\mu,\mu}-1)^2b_{\mu,\mu}.
\end{gather*}
Now, by the fusion rules, $w^2\in\ff b+\ff w.$  Thus $w^2\in\ff w$, seen by checking the coefficient of $a.$  Write $w^2=\gd w.$  Then $\gd (\varepsilon_{\eta,\eta}-1)=-(\varepsilon_{\eta,\eta}-1)^2,$ so $\gd=-(\varepsilon_{\eta,\eta}-1),$ for each $(\eta,\eta)\in S^{\circ}(a)\setminus\{(\half,\half)\}.$ We conclude that $S^\circ(a) \setminus S^\dagger(a)\subseteq \{(\mu,\mu),(\half,\half)\},$ with $\mu\ne\half.$  By symmetry $S^{\circ}(b)\subseteq \{(\half,\half), (\gr,\gr)\},$ with $\gr\ne\half.$
\smallskip

(2) Follows from (1), using part (ii)(b).
\smallskip

(3)  By the proof of (1), $w^2=(1-2\mu)w.$ Comparing the $(0,0)$ component, we get (3).
\smallskip

\item[({\bf d5)}]

Assume that $A_{\gr,\gr}(b)\ne 0.$  Let $0\ne y\in A_{\gr,\gr}(b).$  If, in the notation of Theorem \ref{eigcom}, $\theta_1=\theta'_1=0,$ then $y\in\ff b_0+\ff b_{\mu,\mu}.$
\medskip

{\bf Proof of (d5).}  First note that $y\notin\ff y_{\gr}.$ This is by Theorem \ref{eigcom}(i)(2\&3), for if $\theta_1=0,$ then $\gr\ne\ga_b.$  Normalizing we may write
\[
y=\ga_y a+b_0+\sum_{(\mu',\nu')\in S^{\circ}(a)}b_{\mu',\nu'}.
\]
By \eqref{671}, and since $\ga_b-\gr\ne 0,$ we see that $\ga_y=0.$  Since $\varepsilon_{\mu',\nu'}=1,$ for $(\mu',\nu')\in S^{\dagger},$ we see, using \eqref{5.4}, that $\gc_{\mu',\nu'}=0=\gc_{\hal,\half},$ for all $(\mu',\nu')\in S^{\dagger},$ proving (d5).
\medskip

\item[({\bf d6)}]

 Let
\[
\mathcal{A}=\ff a+\ff b_0+\sum_{(\eta,\eta)\in S^{\circ}(a)}\ff b_{\eta,\eta}.
\]
and let
\[
b^{\natural}=a+b_0+\sum_{(\eta,\eta)\in S^{\circ}(a)}b_{\eta,\eta}.
\]
Then $\mathcal{A}$ is a subalgebra of $A,$ and $b^{\natural}$ is an idempotent in $\mathcal{A}.$

If $A_{\eta,\eta}(b)\subseteq \mathcal{A}$ for all $(\eta,\eta) \in S^{\circ}(a),$ then $b^{\natural}$ is an axis  in $\mathcal{A}$ satisfying the fusion rules.
\medskip

{\bf Proof of (d6).} The fact that $\mathcal{A}$ is a subalgebra is obvious.  Also, by the fusion rules, by (ii)(a) and by (d2),
\[
\textstyle{b^2=(b^{\natural})^2+\sum_{(\mu',\nu')\in S^{\dagger}(a)}b_{\mu',\nu'},}
\]
Since $b^2=b,$ we see that $(b^{\natural})^2=b^{\natural}.$

Suppose $A_{\eta,\eta}(a)\subseteq \mathcal{A},$ for all $(\eta,\eta)\in S^{\circ}(a).$ Then for $z\in A_{\eta,\eta}(a),$
\[
\textstyle{bz\in b^{\natural}z+\sum_{(\mu',\nu')\in S^{\dagger}(a)}\ff b_{\mu',\nu'}.}
\]
Since $bz=\eta z\in\mathcal{A},$ we see that $\eta z=bz=b^{\natural}z.$  Similarly $b^{\natural}w=0,$ because $w\in\mathcal{A},$ by (d1).  Hence $\mathcal{A}$ is a direct sum of paired eigenspaces of $b^{\natural},$ so $b^{\natural}$ is an axis in $\mathcal{A},$ and obviously satisfies the fusion rules.
 \medskip

\item[({\bf d7)}]
If $\mid S^{\circ}(a)\setminus S^{\dagger}(a)\mid =1,$ then $A$ is as in Example \ref{egnon} or \ref{egnon1}.
\medskip

{\bf  Proof of (d7).}
\smallskip

Set $S^{\circ}(a)\setminus S^{\dagger}(a)=\{(\eta,\eta)\}.$  By (d3), $b_{\eta,\eta}^2\ne 0,$ so by Lemma~\ref{Ser041}(iii),
\begin{equation}\label{haha}
\text{If }A_{\half,\half}(b)\ne 0,\text{ then }A_{\half,\half}(b)\nsubseteq V(a).
\end{equation}
Suppose first that $\eta=\mu.$
By Theorem \ref{Ser04}(v)(b), $A_{\half,\half}(b)=0,$ since $\varepsilon_{\mu,\mu}\ne 1.$ By (1) of Claim (d4), $S^{\circ}(b)\setminus S^{\dagger}(b)=\{(\gr,\gr)\},$ with $\gr\ne\half.$   By part (3) of Claim (d4) and by Claim (d5), $A_{\gr,\gr}(b)\subseteq\mathcal{A}.$ By Claim (d6), $b^{\natural}$ is an axis in $\mathcal{A}$ satisfying the fusion rules.  By Theorem A, $\mathcal{A}$ is an HRS algebra.  By Theorem~(1.1) in~\cite{HRS} (see also the proof on p.~102), $\mathcal{A}=B(\mu,1),$
 because $\mu\ne\half,$ and $\gvp,$ in the notation of \cite{HRS}, is $\ga_b$ in our notation; hence $\mu=2,$ and $A$ is as in Example~\ref{egnon} (where $(\half,\half)\notin S^{\circ}(a)$).

Suppose next that $\eta=\half.$  Assume first that $A_{\half,\half}(b)\ne 0.$ Using~\eqref{haha} and Theorem \ref{Ser04}(v)(b)(3), $A_{\half,\half}(b)\subseteq \ff b_0+\ff b_{\half,\half}\subseteq\mathcal{A}.$  We thus see that $\mathcal{A}$ is the algebra $B(\half,1)$ of \cite{HRS}, so $A$ is as in~Example \ref{egnon1}.

Finally, if $A_{\half,\half}(b)=0,$ then by (d4), $S^{\circ}(b)\setminus S^{\dagger}(b)=\{(\gr,\gr)\}.$  Interchanging the roles of $a$ and $b$ we get a contradiction by the first part of this proof.
\medskip

\item[({\bf d8)}]
Finally assume that $S^{\circ}(a)=S^{\dagger}(a)\cup\{(\half,\half), (\mu,\mu)\},$ and $S^{\circ}(b)=S^{\dagger}(b)\cup\{(\half,\half), (\gr,\gr)\}.$ Then $A$ is as in Example \ref{egnon}, with $(\half,\half)\in S^{\circ}(a)$.
\medskip

{\bf Proof of (d8).}
By (d6) we need only  show that $A_{\eta,\eta}(a) \subseteq \mathcal{A}$ for all $\eta \in \{\half,\rho\}.$ Then, by Theorem A, $\mathcal{A}$ is as in Example \ref{egc4}, so $A$ is as in Example~\ref{egnon}.

Since $\varepsilon_{\mu,\mu}\ne 1,$ Theorem \ref{Ser04}(v)(b) shows that $A_{\half,\half}(b)\subseteq V(a).$  By Lemma \ref{Ser041}(iii) $A_{\half,\half}(b)\subseteq \ff b_{\half,\half}+\ff b_{\mu,\mu},$ and if $\zeta\in\{\half,\mu\},$ is such that $b_{\zeta,\zeta}^2\ne 0$ (see (d3)), then $b_{\zeta',\zeta'}^2=\gd b_{\zeta,\zeta}^2,$ for some $\gd\in\ff,$ where $S^{\circ}(a)=\{(\zeta,\zeta),(\zeta',\zeta')\}.$  But now, using part (3) of (d4), we see that $b_{\eta,\eta}^2\in \ff b_0,$ for $\eta\in\{\half,\mu\},$ so $\theta_1=\theta'_1=0,$ where $\theta_1,\theta'_1$ are as in (d5). Hence, by (d5), $A_{\gr,\gr}(b)\subseteq\mathcal{A}.$
\end{itemize}

Note that we have covered all cases, since if $|S^{\circ}(b)\setminus S^{\dagger}(b)|=1,$ we can interchange the role of $a$ and $b.$ This completes the proof of Theorem \ref{Yoav1}.
\end{proof}

    If $b$ is weakly primitive and satisfies the fusion rules and $A_0(b)=0$, then we can switch $a$ and $b$ and appeal to Theorem~\ref{b0zero}. Thus we have concluded the classification of 2-generated weak PAJ's where $\dim A_0(a)\le 1.$

\section{Some examples of   algebras generated by  idempotents}

Since our previous results included the description of $2$-generated axial algebras of dimension $<4,$ the following four-dimensional examples may be of interest. First note that in any associative algebra an idempotent has   only has left and right eigenvalues in  $\{0,1\}$ and obviously satisfies the fusion rules, by the Peirce decomposition. Here is one natural example.

\begin{example} Recall from \cite{W}
the {\it left-regular band} semigroup comprised of idempotents, satisfying $ab x = ab$ and  $ ba x= ba$ for all elements $a,b,x$.
   The   semigroup algebra $A$ of the left-regular band is associative, and in particular, for 2 generators $a,b$,  the  left-regular band  $\{ a,b,ab,ba\}$ yields a 4-dimensional 2-generated axial algebra with $a,b$ satisfying the fusion rules (but $a,b$ are not weakly primitive).
\end{example}

Here is a noncommutative, nonassociative 4-dimensional example, where $b$ is not an axis.

\begin{example}\label{side}$ $  A 4 dimensional  algebra $A$ generated by a non-weakly primitive axis $a$ satisfying the fusion rules,
and an idempotent  $b$ which is not an axis.  Imitating Example~\ref{E1}, we define $A := \lan\lan a,b \ran\ran   =
\ff a + \ff b +\ff y +\ff y'$ satisfying the relations
$$ ab = \mu y +\nu y' =ay  = y'a,  \quad   ba = \nu y +\mu y' =ay' =ya,    $$ $$     by = \nu y,\quad  yb = \mu y, \quad  y'b = \mu y',  \quad  by' =  \mu y',$$ $$  \quad   y^2 = 0 =  yy'= y'y= (y')^2 ,$$
where $\mu +\nu = 1.$ We check the eigenspaces: $$a(y+y') =  \mu y +\nu y'+  \nu y +\mu y' = y+ y' = (y+y')a,$$
$$a(b-y) = ab-ay=0,$$ $$ (b-y)a = ba -ya= 0,$$
$$a(y-y') = (\mu y +\nu y')-(\nu y +\mu y')=  (\mu - \nu)(y-y'),$$
$$(y-y')a = (\nu y +\mu y')-(\mu y +\nu y') =  (\nu - \mu)(y-y').$$

Hence $A_1(a) = \ff a + \ff (y+y'), \qquad   A_0(a)= \ff(b-y),$ and $$ A_{\mu - \nu,\nu - \mu}(a)= \ff(y-y').$$

$b(y+y') = \nu y+\mu y',$ whereas $(y+y')b = \mu (y+y'), $ so $b$ is not an axis.

We check the fusion rules for $a$:
$$A_0(a)^2= \ff(b-y)^2= \ff(b-by-yb)= \ff(b-y) = A_0(a),$$
$$A_1(a)^2 = ( \ff a + \ff (y+y'))^2 = \ff a + \ff (y+y') + 0 = A_1(a),$$
$$A_0(a)A_1(a) = \ff (b-y)(y+y') = \ff b(y+y') = \ff(y+y') \subseteq A_1(a),$$
$$A_1(a)A_0(a) = \ff (y+y')(b-y) = \ff (y+y')b = \ff(y+y') \subseteq A_1(a),$$
$$A_0(a)A_{\mu - \nu,\nu - \mu}(a) = \ff (b-y)(y-y') = \ff (\nu - \mu)(y-y') = A_{\mu - \nu,\nu - \mu}(a),$$
$$A_{\mu - \nu,\nu - \mu}(a)A_0(a) = \ff (y-y')(b-y) =  \ff (\mu - \nu)(y-y') = A_{\mu - \nu,\nu - \mu}(a),$$
$$A_1(a)A_{\mu - \nu,\nu - \mu}(a) =   a A_{\mu - \nu,\nu - \mu}(a) + \ff (y+y')(y-y') =  A_{\mu - \nu,\nu - \mu}(a)+0 ,$$
$$A_{\mu - \nu,\nu - \mu}(a)A_1(a)= A_{\mu - \nu,\nu - \mu}(a),$$
$$A_{\mu - \nu,\nu - \mu}(a)^2 = \ff (y-y')^2 =0.$$
\end{example}

\end{document}